\titleformat{\section}{\large\scshape\raggedright}{\thesection.}{0.5em}{#1}[\titlerule]
\titlespacing{\section}{0pt}{5pt}{5pt} 
\titleformat{\subsection}[runin]{\scshape}{}{0em}{\underline{#1}}
\titlespacing{\subsection}{0pt}{0pt}{1em}
\theoremstyle{plain}
    \newtheorem{theorem}{Theorem}[section]
    \newtheorem{lemma}[theorem]{Lemma}
    \newtheorem*{lemma*}{Lemma}
    \newtheorem*{proposition*}{Proposition}
    \newtheorem*{theorem*}{Theorem}
    \newtheorem{corollary}[theorem]{Corollary}
    \newtheorem{proposition}[theorem]{Proposition}
\theoremstyle{definition}
    \newtheorem{definition}[theorem]{Definition}
    \newtheorem{example}[theorem]{Example}
    \newtheorem{examples}[theorem]{Examples}
    \newtheorem{remark}[theorem]{Remark}
    \newtheorem{remarks}[theorem]{Remarks}
\theoremstyle{remark}
\numberwithin{equation}{section}
\newcounter{actr}
\newenvironment{alist}{\begin{list}
                         {\textup{(\alph{actr})}}
                         {\usecounter{actr}
                          \setlength{\topsep}{0.0truein}
                                                    \setlength{\itemsep}{-3pt}
                                                  \setlength{\labelwidth}{0.3truein}}}
                      {\end{list}}
\newcommand{\R}{\mathbb{R}}
\newcommand{\C}{\mathbb{C}}
\renewcommand{\H}{\mathcal{H}}
\newcommand{\dps}{d}
\newcommand{\deltaps}{\delta}
\newcommand{\Deltaps}{\Delta}
\newcommand{\weight}{w}
\newcommand{\cat}{{\textnormal{CAT}}}
\begin{document}   

\title{\large\scshape A Differential Complex for CAT(0) Cubical Spaces}  
\date{}

\author{\scshape J.~Brodzki%
\footnote{J.~B.~was supported in part by EPSRC grants EP/I016945/1 and EP/N014189/1.},  
E.~Guentner%
\footnote{E.~G.~was supported in part by a grant from the Simons Foundation (\#245398).}, 
N.~Higson%
\footnote{N.~H.~was supported in part by NSF grant DMS-1101382.}%
}
\AtEndDocument{\bigskip{\footnotesize%
  \textsc{Department of Mathematical Sciences, University of Southampton, Southampton SO17 1BJ, UK}\par
  \textit{E-mail address}:  \texttt{j.brodzki@soton.ac.uk} \par
\addvspace{\medskipamount}
  \textsc{Department of Mathematical Sciences, University of Hawaii at Manoa, Honolulu, HI 96822, USA} \par
  \textit{E-mail address}:  \texttt{erik@math.hawaii.edu} \par
\addvspace{\medskipamount}
  \textsc{Department of Mathematics, Penn State University, University Park, PA 16802, USA} \par
  \textit{E-mail address}:  \texttt{higson@psu.edu}
}}

\maketitle

\section{Introduction}

In the 1980's Pierre Julg and Alain Valette \cite{JulgValetteCRAS,JulgValetteQ_p}, and also Tadeusz Pytlik and  Ryszard Szwarc \cite{PytlikSzwarc}, constructed and studied a certain   Fredholm operator  associated to a simplicial tree.   The operator  can be defined  in at least two ways: from a combinatorial flow on  the tree, similar  to  the flows  in Forman's discrete Morse theory \cite{Forman98}, or from  the theory of unitary operator-valued cocycles \cite{Pimsner87,Valette90}.  There are applications of the theory surrounding the operator to 
$C^*$-algebra K-theory \cite{JulgValetteCRAS,JulgValetteQ_p}, to the theory of completely bounded representations of groups that act on trees \cite{PytlikSzwarc}, and to the Selberg principle in the representation
theory of $p$-adic groups \cite{JulgValetteSelberg1,JulgValetteSelberg2}.

The crucial property of the Fredholm operator   introduced by Julg and Valette is that it is the initial operator in a    \emph{continuous family  of Fredholm operators}  parametrized by a closed interval.   The applications all emerge from the properties of the family in the circumstance where a group $G$ acts properly on the underlying tree, in which case all the operators in the  family act  on Hilbert spaces that carry unitary representations of $G$. Roughly speaking, the family connects the  regular representation   of $G$  to the trivial representation within an index-theoretic context.

This calls to mind Kazhdan's property T \cite{Kazhdan67,BekkaHarpeValette}, or rather the negation of property T, as well as Haagerup's property \cite{Haagerup78,CherixEtAl}, which is a strong negation of property T.  Groups that act on trees are known to have the Haagerup property (this is essentially due to Haagerup himself), and the Julg-Valette, Pytlik-Szwarc construction is perhaps best viewed as a geometric incarnation of this fact.  An immediate consequence is the $K$-theoretic amenability of any group that acts property on a tree \cite{Cuntz,JulgValetteQ_p}, which is another strong negation of property T. 

The main aim of this paper is to extend the constructions of Julg and Valette, and Pytlik and Szwarc,  to  $\cat(0)$ cubical spaces (a one-dimensional $\cat(0)$ cubical space is the same thing as a  simplicial tree).  A secondary aim is to  illustrate the utility of the extended construction by   developing an application to operator $K$-theory and  giving a new proof of  $K$-amenability for groups that act properly on bounded-geometry $\cat(0)$-cubical spaces.  But we expect there will be other uses for our constructions, beyond operator $K$-theory.

 We shall associate to each  bounded geometry  $\cat(0)$ cubical space not a Fredholm operator but a \emph{differential complex} with finite-dimensional cohomology.  The construction is rather more challenging for general $\cat(0)$ cubical spaces than it is for trees. Whereas for trees there is a more or less canonical notion of \emph{flow} towards a distinguished base vertex in the tree, in higher dimensions this is not so, and for example a vertex is typically connected to a given base vertex by a large number of edge-paths.  In addition, the need to consider higher-dimensional cubes, and the need  to impose  the  condition $d^2 =0$, oblige us to carefully consider orientations of cubes in a way that is quite unnecessary for trees. 
 
 More interesting still is the problem of defining the final complex in the one-parameter family of complexes that we aim to construct.     To solve it we shall rely on the theory of \emph{hyperplanes} in $\cat(0)$ cubical spaces \cite{NibloReevesNormal}.   In the case of a tree the hyperplanes are simply the midpoints of edges, but in general they have a nontrivial geometry all of their own; in fact they are $\cat(0)$ cubical spaces in their own rights.  
 
 We shall also introduce and study a related notion of \emph{parallelism} among the cubes in a $\cat(0)$ cubical space.  In a tree, any two vertices are parallel, while no two distinct edges are parallel, but in higher dimensions  parallelism  is more subtle.   For instance in a finite tree  the number of vertices is precisely one plus the number of edges (this simple geometric fact is in fact an essential part of the Julg-Valette, Pytlik-Szwarc construction).   But the proof of the following generalization to higher dimensions  is quite a bit more involved.

\begin{proposition*} 
If X is finite $\cat(0)$ cubical space, then the number of vertices of $X$ is equal to the number of parallelism classes of cubes of all dimensions.
\end{proposition*}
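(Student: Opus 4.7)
The plan is to fix a base vertex $v_0 \in X$ and construct an explicit bijection from vertices of $X$ to parallelism classes. Throughout I identify a parallelism class with the set $H$ of hyperplanes dual to any cube representing it, so two cubes are parallel if and only if they span exactly the same set of hyperplanes. Call an edge $\{v,v'\}$ at a vertex $v$ \emph{descending} if $v'$ is strictly closer to $v_0$ than $v$.

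The first step is a Key Lemma: the descending edges at $v$ span a unique cube $D_v$ at $v$. Since vertex links in a $\cat(0)$ cube complex are flag simplicial complexes, it suffices to show that any two descending edges $\{v,v_1\}$, $\{v,v_2\}$ span a square at $v$. The $1$-skeleton of $X$ is bipartite by distance-parity from $v_0$, so $d(v_1,v_2)$ is even; combined with $d(v_1,v_2) \le 2$ and $v_1 \ne v_2$, this forces $d(v_1,v_2) = 2$. The median $m = m(v_0,v_1,v_2)$ cannot equal $v$, for that would place $v$ on a geodesic from $v_0$ to $v_1$ and contradict $d(v_0,v_1) < d(v_0,v)$. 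Hence the interval $[v_1,v_2]$ contains the two distinct midpoints $v$ and $m$, so the four vertices $v,v_1,m,v_2$ are the corners of a square in $X$ --- the required square at $v$.

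Define $\phi\colon V(X) \to \{\text{parallelism classes of cubes}\}$ by $\phi(v) := [D_v]$. For bijectivity I appeal to the product structure of hyperplane carriers. For each set $H$ of hyperplanes dual to some cube of $X$, the carrier $N(H) := \bigcup_{C \in [H]} C$ is a convex subcomplex of $X$ and decomposes isometrically as a product $N(H) \cong C_H \times F_H$, where $C_H \cong I^{|H|}$ and $F_H$ is a lower-dimensional $\cat(0)$ cube complex whose hyperplane set is disjoint from $H$. In this decomposition the cubes in the class $[H]$ are exactly the slices $C_H \times \{f\}$ for $f$ a vertex of $F_H$. By the gate property of convex subcomplexes in $\cat(0)$ cube complexes, $v_0$ has a unique nearest vertex $\pi = (c_0, \pi_F)$ in $N(H)$, with $c_0$ the corner of $C_H$ nearest $v_0$ and $\pi_F$ the gate of $v_0$ in $F_H$.

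I claim that $\phi^{-1}([H])$ consists of the single vertex $\tilde v := (c_v, \pi_F)$, where $c_v$ is the corner of $C_H$ opposite $c_0$. For injectivity, suppose $\phi(v) = [H]$ with $H \ne \emptyset$: then $v$ is the top corner of $D_v \in [H]$, so $v = (c_v, f_v)$ in the product. Maximality of $D_v$ forbids descending edges at $v$ dual to hyperplanes outside $H$. An edge at $v$ in the $F_H$-direction descends if and only if the corresponding edge at $f_v$ in $F_H$ descends toward $\pi_F$, so the absence of such an edge forces $f_v = \pi_F$. A descending edge from $v$ to a vertex outside $N(H)$ would force $v$ to be the point at which the geodesic $v_0 \to v$ first enters the convex subcomplex $N(H)$, i.e.\ $v = \pi$, contradicting $c_v \ne c_0$. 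Hence $v = \tilde v$. Conversely, at $\tilde v$ the same analysis shows the descending edges are exactly the $C_H$-edges, so $D_{\tilde v} = C_H \times \{\pi_F\}$ and $\phi(\tilde v) = [H]$. The case $H = \emptyset$ gives $\phi^{-1}(\emptyset) = \{v_0\}$. The main obstacle is the product decomposition and gate property for the carrier $N(H)$; these come from the hyperplane theory of $\cat(0)$ cubical spaces developed in the paper.
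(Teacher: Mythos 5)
Your map is the same as the paper's: to a vertex $v$ you assign the cube $D_v$ spanned by the descending edges at $v$, and this is precisely what the paper calls the first cube in the normal cube path from $v$ to the base vertex (the maximal cube at $v$ all of whose dual hyperplanes separate $v$ from $P_0$). What differs is how well-definedness and bijectivity are argued. For well-definedness, you prove the Key Lemma via bipartiteness, the median, and the flag condition on links; the paper instead delegates this to the Niblo--Reeves normal cube path machinery. (One small point: going from ``$v,v_1,m,v_2$ are corners of a $4$-cycle'' to ``they bound a square'' is an assertion in your write-up; it does follow, but the clean route is Lemma~\ref{lem-4-quadrants} to see the two dual hyperplanes cross, then Lemma~\ref{lem-k-hplanes} to get the $2$-cube at $v$.) For bijectivity, you invoke the product decomposition $N(H)\cong C_H\times F_H$ of the carrier together with the gate property of convex subcomplexes. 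These are standard facts, but the paper does not develop the full product structure of carriers; what it does develop is Proposition~\ref{prop-nearest-cube} (unique nearest cube in a parallelism class) together with the addition formula $d(P,S)=d(P,R)+d(R,S)$ and the ``hyperplane property,'' and the paper's bijectivity argument runs entirely on those. Your route buys a very transparent coordinate picture $(c,f)$ in exchange for importing a structural result the paper deliberately avoids; the paper's route is leaner given what it has already proved, at the cost of being less visual.

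Two points to tighten. First, in the injectivity step, the sentence about a descending edge leaving $N(H)$ forcing $v=\pi$ is both unnecessary (maximality of $D_v$ already rules out such an edge, since its dual hyperplane would lie outside $H$) and not obviously correct as a general statement about convex subcomplexes. Second, and more substantively, in the surjectivity step ``the same analysis'' does not quite apply: in the injectivity direction you had the hypothesis $D_v\in[H]$ to forbid descending edges dual to hyperplanes outside $H$, but at $\tilde v$ you are trying to \emph{prove} $D_{\tilde v}\in[H]$, so you must independently rule out a descending edge at $\tilde v$ dual to a hyperplane $L\notin H$. This is fillable: by the Key Lemma $D_{\tilde v}$ is a cube, so such an $L$ would cross every hyperplane in $H$, and then by Lemma~\ref{lem-k-hplanes} the $L$-edge at $\tilde v$ lies in a cube cut by all of $H$, hence in $N(H)$ --- i.e.\ it is an $F_H$-direction edge, which you have already shown does not descend since $f_{\tilde v}=\pi_F$. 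With that paragraph added your argument is complete.
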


 We expect  that parallelism and the other aspects of our constructions, will be of interest and value elsewhere in the theory  of $\cat(0)$ cube complexes.

One last  challenge comes in passing from $\cat(0)$ cubical geometry to Fredholm complexes and operator $K$-theory.  There are two standard paradigms in operator $K$-theory,   of \emph{bounded} cycles and \emph{unbounded} cycles, but the geometry we are faced with here forces us to consider a hybrid of the two. However once this is done we shall arrive at our application:

\begin{theorem*}
If a second countable and locally compact group   $G$ admits a proper action on a bounded geometry $\cat(0)$ cube complex, then $G$ is $K$-amenable. 
\end{theorem*}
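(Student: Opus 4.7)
The plan is to manufacture, for each proper $G$-action on a bounded geometry $\cat(0)$ cube complex $X$, an equivariant Kasparov cycle in $KK^G(\mathbb{C},\mathbb{C})$ which represents the identity and is simultaneously connected by an operator homotopy to a cycle whose underlying representation is a (sub)representation of a direct sum of copies of the left regular representation of $G$. By Cuntz's characterisation, this suffices to conclude that the canonical quotient $C^{*}(G)\to C^{*}_{r}(G)$ is a $KK$-equivalence, i.e.\ that $G$ is $K$-amenable.

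First I would assemble the differential complex of the main construction into an equivariant Kasparov cycle. The graded Hilbert space $\H=\bigoplus_{p}\H_{p}$ spanned by cubes of $X$, graded by parity of dimension, carries a natural unitary representation of $G$ because $G$ acts by permuting cubes and the stabilisers are compact; the differential $d$ and its adjoint together form an odd, self-adjoint operator $D=d+d^{*}$ whose bounded transform $F=D(1+D^{2})^{-1/2}$ satisfies $F^{2}-1\in\mathcal{K}(\H)$ (finite-dimensional cohomology) and $[F,g]\in\mathcal{K}(\H)$ for each $g\in G$, thanks to the local, geometric construction of $d$ rooted in the theory of hyperplanes and in parallelism. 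Second, I would use the promised one-parameter family $\{d_{t}:t\in[0,\infty)\}$ as a homotopy: at $t=0$ the cycle should essentially coincide with the classical cellular cycle, whose underlying representation is built from $\ell^{2}$-functions on the $G$-orbits of cubes and hence is regular in the relevant sense, while as $t\to\infty$ the complex concentrates onto the finite-dimensional cohomology at a fixed base vertex, producing a cycle that represents $1\in KK^{G}(\mathbb{C},\mathbb{C})$. Continuity of $t\mapsto F_{t}$ in operator norm (after passage to bounded transforms) then supplies the required homotopy of $G$-equivariant Kasparov cycles.

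The final step is descent: applying Kasparov's equivariant descent to the equality of cycles at the two endpoints of the family produces classes in $KK(C^{*}(G),C^{*}(G))$ and $KK(C^{*}_{r}(G),C^{*}_{r}(G))$ that fit together to exhibit the map $C^{*}(G)\to C^{*}_{r}(G)$ as a $KK$-equivalence. The hard part, flagged in the introduction itself, is analytic: the two ends of the family live in qualitatively different operator-theoretic regimes (bounded versus unbounded, compact versus non-compact resolvent) and so neither the standard bounded nor the standard unbounded Kasparov framework accommodates the homotopy cleanly. I would therefore devote the bulk of the argument to setting up the \emph{hybrid} bounded/unbounded picture alluded to in the excerpt, and to proving uniform-in-$t$ local compactness of $F_{t}^{2}-1$ and of $[F_{t},g]$, using the bounded geometry hypothesis on $X$ together with the parallelism-based identification of cubes that underlies the construction of $d$; equivariance must be tracked through each analytic manoeuvre, but the geometric naturality of hyperplanes and parallelism makes this straightforward once the analytic set-up is in place.
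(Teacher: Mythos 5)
Your overall strategy is correct: build an equivariant Fredholm complex from the cube complex, show it is homotopic to a cycle representing $1 \in KK^G(\mathbb{C},\mathbb{C})$, and observe that the cube-complex cycle's Hilbert spaces are weakly contained in the regular representation because the action is proper. The paper uses exactly this blueprint, packaged via Julg and Valette's criterion for $K$-amenability rather than an explicit descent argument, but that packaging difference is cosmetic.

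The genuine gap is in your identification of the end of the homotopy that represents the identity. You write that as $t\to\infty$ ``the complex concentrates onto the finite-dimensional cohomology at a fixed base vertex, producing a cycle that represents $1\in KK^G(\mathbb{C},\mathbb{C})$.'' That is the Julg-Valette cellular complex (placed at $t=\infty$ in the paper). Its cohomology is indeed one-dimensional and supported at the base vertex $P_0$, but this does \emph{not} give $1 \in KK^G(\mathbb{C},\mathbb{C})$. The Julg-Valette differential depends on a choice of base vertex which $G$ does not fix; consequently the complex is only equivariant modulo compacts, its cohomology does not carry a natural $G$-action, and you cannot read off the $KK^G$-class from the cohomology. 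The general principle --- that an equivariant Fredholm complex represents the class of its cohomology --- applies only when the differentials are \emph{exactly} $G$-equivariant. Solving this problem is the whole reason the paper constructs a second, inequivalent-looking complex (the Pytlik-Szwarc complex, built from parallelism classes of cube pairs rather than from cubes), whose differentials \emph{are} exactly equivariant and whose cohomology is the trivial one-dimensional representation. It is \emph{that} complex which represents $1$, and the continuous-field homotopy transfers this identification to the cellular Julg-Valette complex. Your proposal omits this construction entirely and so cannot close the argument.

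A second, smaller issue: you propose proving ``uniform-in-$t$ local compactness of $F_t^2-1$.'' Compactness of the resolvent in fact \emph{fails} at the Pytlik-Szwarc end ($t=0$), where the exactly equivariant Laplacian $\Delta_0$ acts on $\C[\H^p_q]$ by the scalar $p+q$ and so does not go to infinity. The paper circumvents this by perturbing $D_t$ with a fiberwise rank-one projection $P_t$ onto the base vertex and by splitting the integral representation of $D_t(P_t+D_t^2)^{-1/2}$ into pieces each of which is compact for its own reason; the argument is more delicate than a uniform compactness claim would allow.
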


Groups that act properly on $\cat(0)$ cube complexes are known to have the Haagerup property \cite{NibloRoller}, and they were proved to be $K$-amenable in \cite[Theorem 9.4]{HigsonKasparov}.  The advantage of the present approach is that the constructions in the proof are all tied to the finite-dimensional cube complex itself, whereas in \cite{HigsonKasparov} the authors rely on an auxilliary action of the group on an infinite-dimensional Euclidean space that is rather hard to understand directly.  

Here is a brief outline of the paper.  After reviewing the concept of hyperplane in Section 2 we shall study orientations and define our Julg-Valette complex in Section 3.  We shall introduce parallelism in Section 4 and define the final complex (we shall call it the Pytlik-Szwarc complex) in Section 5.  The one-parameter family of complexes connecting the two will be constructed in stages, in Sections 6, 7 and 8, and the application to operator $K$-theory will be the subject of Sections 9 and 10.
 

\section{Cubes and Hyperplanes}
\label{sec-cubes-etc}

We shall begin by  fixing some basic notation concerning the cubes and
hyperplanes in a $\cat(0)$ cube complex. We shall follow the
exposition of Niblo and Reeves in \cite{NibloReevesNormal},   with
some adaptations.  

Throughout the paper $X$ will denote a $\cat(0)$ cube complex as in
\cite[Section 2.2]{NibloReevesNormal}.  Though not everywhere
necessary, we shall assume throughout that $X$ is
finite-dimensional, and that it has \emph{bounded geometry} in
the sense that the number of cubes intersecting any one cube $C$ is
uniformly bounded as $C$ varies over all cubes.

Every  $q$-cube contains exactly  $2q$  codimension-one faces.  Each
such face is disjoint from precisely one other, which we shall call
the \emph{opposite} face.

We shall use the standard terms \emph{vertex} and \emph{edge} for
$0$-dimensional and $1$-dimensional cubes. 

The concept of a \emph{midplane} of a cube is introduced in
\cite[Section 2.3]{NibloReevesNormal}.  If we identify a $q$-cube with
the standard cube $[-\frac 12 , \frac 12 ]^q$ in $\R^q$, then the
midplanes are precisely the intersections of the cube with the
coordinate hyperplanes in $\R^q$ (thus the midplanes of a cube $C$ are
in particular closed subsets of $C$). A $q$-cube contains precisely
$q$ midplanes (and in particular a vertex contains no midplanes)

Niblo and Reeves describe an equivalence relation on the set of all
midplanes  in a cube complex: two midplanes
are \emph{\textup{(}hyperplane\textup{)} equivalent} if they can be arranged as the first
and last members of a finite sequence of midplanes for which the
intersection of any two consecutive midplanes is again a midplane.  

\begin{definition}\label{def-hyperplane}\textup{(See \cite[Definition 2.5]{NibloReevesNormal}.)}
A \emph{hyperplane} in $X$ is the union of the set of all midplanes in
an equivalence class of midplanes.  A hyperplane \emph{cuts}
a cube if it contains a midplane of that cube.  When a hyperplane cuts
an edge, we say that the edge \emph{crosses} the hyperplane.  See Figure~\ref{fig:hyperplane}.   

\end{definition}

\begin{figure}[ht] 
   \centering
   \includegraphics[scale=0.5]{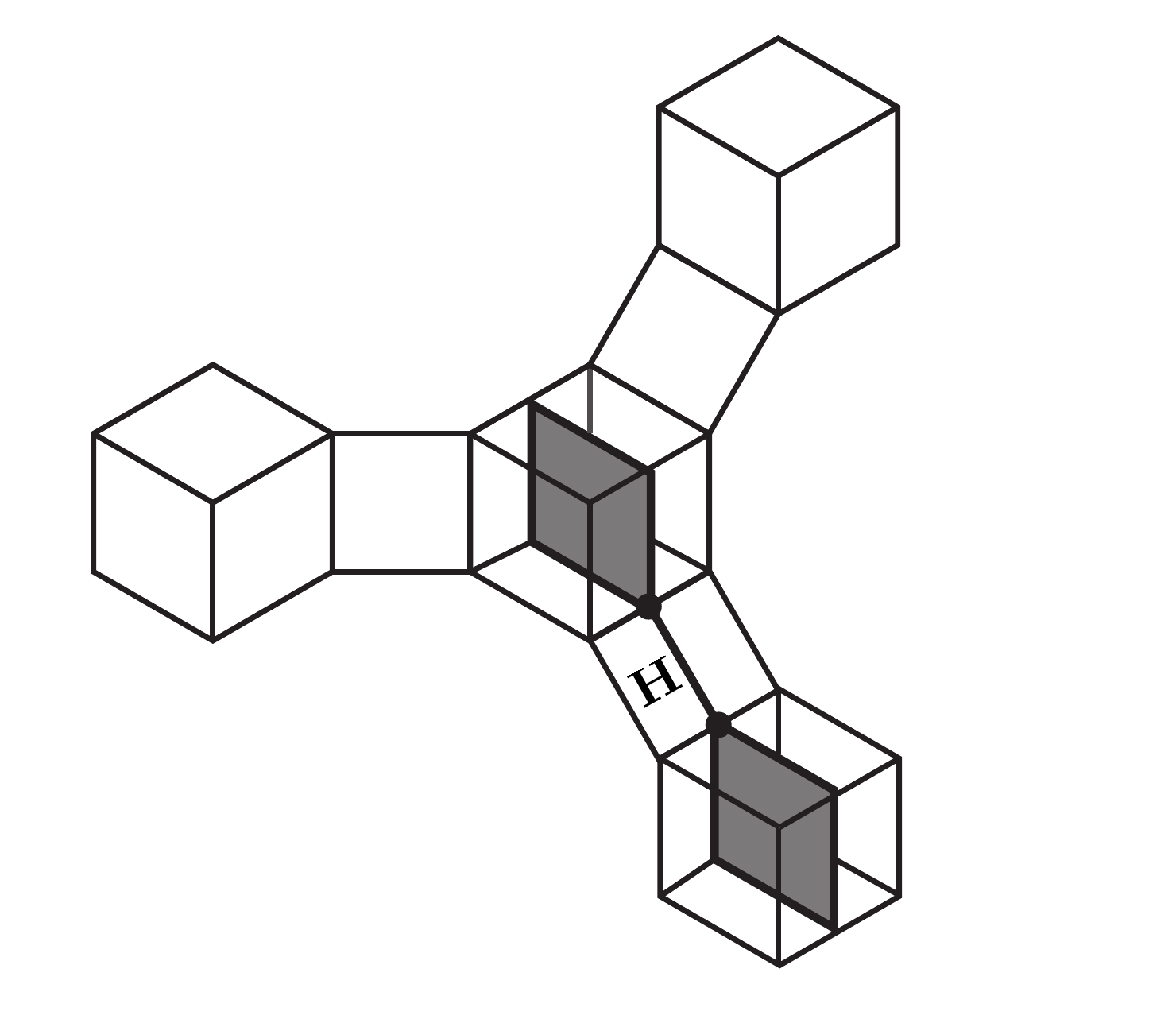} 
    \caption{The  hyperplane $H$ is the union of three midplanes.}
  \label{fig:hyperplane}
\end{figure}

\begin{examples}
If $X$ is a tree, then the hyperplanes are precisely the midpoints of
edges.  If $X$ is the plane, divided into cubes by the integer
coordinate lines, then hyperplanes are the half-integer coordinate
lines. 
\end{examples}

Hyperplanes are particularly relevant in the context of $\cat(0)$
cube complexes (such as the previous two examples) for the following
reason:

\begin{lemma}\textup{(See \cite[Theorem 4.10]{SageevEndsOfGroups} or \cite[Lemma 2.7]{NibloReevesNormal}.)}
If $X$ is a $\cat(0)$ cube complex, then every hyperplane is a totally
geodesic subspace of $X$ that separates $X$ into two connected
components. \qed 
\end{lemma}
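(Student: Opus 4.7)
The plan is to prove the two assertions in turn: first that $H$ separates $X$ into two components, and second that $H$ is totally geodesic. Both are classical facts, so I will indicate the standard routes.

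For the separation statement, I would argue combinatorially on the $1$-skeleton and then extend to all of $X$. Call two vertices $v,w$ \emph{$H$-equivalent} if some edge-path from $v$ to $w$ crosses $H$ an even number of times. The key point is to show that this parity is independent of the edge-path chosen, and for this it suffices (since $X$ is simply connected) to check that the boundary of any $2$-cube crosses $H$ an even number of times. But in a $2$-cube the opposite edges are parallel, and two parallel edges of a $2$-cube are cut by precisely the same hyperplanes: they both contain a midplane that is equivalent, via the midplane lying inside the $2$-cube itself, to the midplane cut by $H$, or neither does. Hence the boundary of any $2$-cube meets $H$ in either $0$ or $2$ edges, and the parity of crossings around any closed edge-loop is zero. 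There are exactly two $H$-equivalence classes, as witnessed by the two endpoints of any edge crossing $H$. Extending the partition of vertices to a partition of $X \setminus H$ using the cellular structure (no cube meets both classes in its vertex set unless it is cut by $H$, in which case removing the midplane separates it) produces the two connected components of the complement.

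For the totally geodesic statement, I would use the local-to-global principle for convexity in $\cat(0)$ spaces: a connected, locally convex, complete subspace of a $\cat(0)$ space is convex, hence totally geodesic. Local convexity of $H$ at an interior point of a midplane is immediate, since on the open cube that midplane is just a flat coordinate hyperplane. The delicate point is local convexity at points where several midplanes meet: one must verify that the link of $H$ at such a point is a full subcomplex of the link of $X$, which is the cubical analogue of Gromov's link condition. This is exactly the content of the midplane-intersection characterization of the equivalence relation defining $H$, since two midplanes sharing a face extend the hyperplane locally in a way that is cubically (and hence metrically) flat.

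The main obstacle, in both parts, is the careful handling of how hyperplanes interact with higher-dimensional cubes: for separation one needs the parallel-edge lemma within each $2$-cube, and for the totally geodesic property one needs the link-condition argument to upgrade the manifestly flat structure on individual midplanes to local convexity across midplane intersections. Once those facts are in hand, the separation statement follows from counting crossings modulo $2$, and the totally geodesic statement follows from the local-to-global convexity principle for $\cat(0)$ spaces.
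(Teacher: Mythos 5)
The paper itself offers no proof of this lemma; it simply cites Sageev's Theorem~4.10 and Niblo--Reeves's Lemma~2.7 and moves on. So any proof you supply is by definition taking a different route, and it is reasonable to ask whether your sketch actually closes the gap.

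For the separation statement, the parity argument you give is the right starting point, and the reduction to $2$-cubes (using simple-connectedness of $X$) correctly shows that the crossing parity of an edge-path between two fixed vertices is well defined, hence that the vertex set splits into exactly two parity classes. However, what this argument establishes is only that $X\setminus H$ has \emph{at least} two components: the two endpoints of an edge dual to $H$ cannot be joined off $H$. It does not show that each parity class is \emph{connected} in $X\setminus H$, which is the substantive content of the theorem. Knowing that every path from $v$ to $w$ crosses $H$ an even number of times does not, by itself, produce a path with zero crossings; reducing the crossing number requires an argument (Sageev does it via the facts that hyperplanes do not self-intersect and do not self-osculate, established earlier in his paper), and you don't supply one. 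Your final sentence, ``produces the two connected components of the complement,'' is asserting precisely the thing that still needs to be proved.

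For the totally geodesic statement, the local-to-global convexity strategy is the standard one and the reduction to checking local convexity at midplane intersections is correct. But the claim that the needed link condition ``is exactly the content of the midplane-intersection characterization of the equivalence relation defining $H$'' is too quick: the equivalence relation tells you which midplanes belong to $H$, but local convexity at a point where several midplanes meet is a statement about the link of $H$ being a full (flag) subcomplex of the link of $X$, and verifying this uses the flag condition on $X$ in a way that you only gesture at. Neither gap is fatal to the strategy---both are repaired in the cited references---but as written the proposal does not constitute a self-contained proof of either assertion.
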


The components of the complement of a hyperplane are the two
\emph{half-spaces} associated to the hyperplane.  The half-spaces are
open, totally geodesic subsets of $X$. Moreover the union of all cubes
contained in a given half-space is a $\cat(0)$ cube complex in its own
right, and a totally geodesic subcomplex of $X$.

Later on, it will be helpful to approximate an infinite complex by finite complexes, as follows. 

\begin{lemma}
\label{lem-increasing-union}
Every bounded geometry $\cat(0)$ cube complex $X$ is an increasing
union of \emph{finite}, totally geodesic  $\cat(0)$ subcomplexes $X_n$
whose hyperplanes  are precisely the nonempty  intersections of the
hyperplanes in $X$ with $X_n$. 
\end{lemma}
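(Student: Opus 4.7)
The plan is to fix a basepoint $v_0 \in X$ and let $V_n = \{v : d(v_0, v) \leq n\}$ denote the set of vertices of $X$ at combinatorial distance at most $n$ from $v_0$. Since $X$ is finite-dimensional and has bounded geometry, each $V_n$ is finite. I would then take $X_n$ to be the combinatorial convex hull of $V_n$ in $X$, i.e.\ the intersection of all half-space subcomplexes of $X$ whose associated open half-space contains $V_n$. Total geodesicity of $X_n$ is immediate from this description as an intersection of totally geodesic subcomplexes.

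The easier conditions I would dispatch first. The inclusions $X_n \subseteq X_{n+1}$ follow from $V_n \subseteq V_{n+1}$, and $\bigcup_n X_n = X$ because every cube of $X$ has finitely many vertices, all of which lie in $V_n$ for some large enough $n$, forcing the cube into $X_n$.

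For finiteness of $X_n$, call a hyperplane $H$ of $X$ \emph{relevant} if both half-spaces of $H$ meet $V_n$. Each pair of vertices $u, v \in V_n$ is separated by exactly $d(u,v) < \infty$ hyperplanes, so the set $\mathcal{H}_n$ of relevant hyperplanes is finite. A vertex of $X_n$ is determined by its induced choice of half-space at each hyperplane of $X$; at every non-relevant hyperplane this choice is forced to agree with $V_n$, leaving at most two options at each of the $|\mathcal{H}_n|$ relevant hyperplanes. Hence $X_n$ has at most $2^{|\mathcal{H}_n|}$ vertices, and bounded geometry then bounds the total number of cubes.

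The main obstacle is identifying the hyperplanes of $X_n$ with the nonempty intersections $H \cap X_n$. Each midplane of a cube of $X_n$ lies in a unique hyperplane $H$ of $X$, giving one direction. The reverse inclusion -- that any two midplanes of $X_n$ sitting inside a common hyperplane $H$ of $X$ can be joined by a chain of midplanes entirely within $X_n$ -- I would prove by a rectangle-of-squares construction: given edges $e_1, e_2$ of $X_n$ crossing $H$, pick a combinatorial geodesic in $X_n$ between endpoints of $e_1$ and $e_2$ lying on a common side of $H$, and then translate it across $H$ edge by edge to produce a sequence of squares connecting $e_1$ to $e_2$. Convexity of $X_n$ places the entire ladder inside $X_n$, producing the required chain of midplanes; the case of higher-dimensional midplanes reduces to this edge case by restricting to the edges perpendicular to them.
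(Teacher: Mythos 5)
Your proposal is correct and follows essentially the same route as the paper: fix a basepoint, intersect the half-space subcomplexes containing a combinatorial ball of radius $n$ about it (your convex hull of $V_n$ coincides with the paper's $X_n$, since a hyperplane separates two vertices of $V_n$ exactly when it lies within distance $n$ of the basepoint), and observe that the resulting subcomplex is finite, totally geodesic, and has connected hyperplane traces. You supply the ultrafilter count for finiteness and the square-ladder argument (which tacitly uses convexity of the carrier of $H$ together with total geodesicity of $X_n$) for hyperplane connectivity, at points where the paper simply asserts these facts.
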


\begin{proof}
Fix a base point   in $X$ and an integer   $n> 0$.   Form the set of
all hyperplanes whose distance  to the base point is $n$ or
greater, and then form the intersection of all the half-spaces for
these hyperplanes that contain the base point . Denote by $X_n$ the
union of all cubes that are included in this intersection; it is a
totally geodesic subset of $X$ and so a $\cat(0)$ cube
complex. Moreover the intersection of any hyperplane in $X$ with $X_n$ is
connected.  The union of all the $X_n$ as $n\to \infty$ is $X$ and,
since the set of hyperplanes of distance \emph{less} than $n$ to
the base point is finite, each $X_n$ is a finite subcomplex of $X$.  
\end{proof} 

\begin{definition}\label{def-vertex-adjacent}
A hyperplane and a vertex are \emph{adjacent} if the vertex is included in an edge that
crosses the hyperplane. 
\end{definition}

\begin{lemma}
\label{lem-3-hyperplanes}
If $k$  hyperplanes in a $\cat(0)$ cube complex intersect pairwise,
then all $k$ intersect within some $k$-cube.   
\end{lemma}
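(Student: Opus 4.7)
The plan is to locate a common point of $H_1, \ldots, H_k$ and then to extract the desired $k$-cube from the smallest cube of $X$ containing that point.

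First I will show $H_1 \cap \cdots \cap H_k \neq \emptyset$. Each hyperplane is totally geodesic (as recorded in the lemma above) and hence convex in the median-space structure underlying any $\cat(0)$ cube complex, so the existence of a common point reduces to the standard Helly property for pairwise intersecting convex subsets of a median space. The three-set case is the familiar median argument --- picking $a \in B \cap C$, $b \in A \cap C$, $c \in A \cap B$, the median $m(a,b,c)$ lies in $A \cap B \cap C$, since $b, c \in A$ forces $m(a,b,c) \in A$ by convexity of $A$, and similarly for $B$ and $C$ --- and induction on the number of sets handles the general case.

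Having fixed a common point $p$, let $C$ be the unique cube of $X$ whose open interior contains $p$. Since $p$ lies in no proper face of $C$, a short inspection of the midplane definitions shows that the condition $p \in H_i$ forces $p$ to lie on a midplane $M_i$ of $C$ belonging to the hyperplane-equivalence class of $H_i$. Distinct midplanes of a single cube in a $\cat(0)$ cube complex belong to distinct hyperplanes, so $M_1, \ldots, M_k$ are $k$ distinct midplanes of $C$, whence $\dim C \geq k$. Identify $C$ with $[-\frac{1}{2}, \frac{1}{2}]^{\dim C}$ so that $M_i = \{x_{\sigma(i)} = 0\} \cap C$ for distinct indices $\sigma(1), \ldots, \sigma(k)$, and let $F$ be the $k$-face of $C$ obtained by fixing each coordinate $x_j$ with $j \notin \{\sigma(1), \ldots, \sigma(k)\}$ to $\pm \frac{1}{2}$. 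Then $F$ is a $k$-cube whose midplane in direction $\sigma(i)$ is a subset of $M_i$ and is itself a midplane, so by the definition of hyperplane-equivalence it lies in $H_i$; hence $F$ is cut by every $H_i$.

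The main obstacle is the Helly step; the rest is careful bookkeeping of midplanes inside a single cube. A more combinatorial alternative --- perhaps more in keeping with the flavour of the paper --- would be to induct on $k$ using a ``perpendicular extension'' lemma that enlarges a $(k-1)$-cube cut by $H_1, \ldots, H_{k-1}$ to a $k$-cube via the product structure of the carrier $N(H_k) \cong H_k \times [-\frac{1}{2}, \frac{1}{2}]$, but this seems to demand more delicate combinatorics than the median approach sketched above.
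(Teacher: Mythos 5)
The paper itself gives no argument here: its ``proof'' is a one-line citation to Sageev's Theorem~4.14, which is precisely this Helly property. So you are genuinely supplying a proof where the authors chose to outsource one, and the proof you supply is essentially correct. The structure is the standard one: establish nonempty intersection via Helly in the median structure, then descend to the unique cube whose relative interior contains the common point and read off $k$ distinct midplanes, hence a $k$-face cut by all $k$ hyperplanes. The cube-extraction half is carefully done; in particular, the observation that a midplane of a cube $C'$ through a point $p$ in the relative interior of a face $C \subseteq C'$ restricts to a midplane of $C$ in the same equivalence class is exactly the right bookkeeping, and the assertion that distinct midplanes of one cube lie in distinct hyperplanes is a standard $\cat(0)$ fact (hyperplanes are embedded). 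The one step that is stated as an inference but is not actually one is ``totally geodesic \dots\ and \emph{hence} convex in the median-space structure.'' Totally geodesic refers to convexity in the $\ell^2$/$\cat(0)$ metric, while the median Helly argument needs convexity (closure under medians) in the $\ell^1$/median structure; these are distinct notions of convexity, and one is not a formal corollary of the other. Hyperplanes do enjoy both, and median-convexity of hyperplanes is itself a standard fact (equivalently, every halfspace is median-convex, which is how it is usually phrased), so this is a citation gap rather than a logical error --- but the word ``hence'' should be replaced by a direct appeal to the pocset/halfspace description of walls. With that repaired, your argument is a clean, self-contained alternative to the reference the paper invokes, and is probably closer to the modern median-algebra treatment than to Sageev's original combinatorial proof.
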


\begin{proof}
See \cite[Theorem 4.14]{SageevEndsOfGroups}.
\end{proof}

\begin{lemma}  
\label{lem-k-hplanes}
Assume that  $k$ distinct hyperplanes in a $\cat(0)$ cube complex have
a non-empty intersection. If  are they are all adjacent to a vertex,
then they intersect in a $k$-cube that contains that vertex.   
\end{lemma}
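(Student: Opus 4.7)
I would argue by induction on $k$. The case $k = 1$ is immediate: adjacency provides the required $1$-cube, namely the edge of $v$ crossing the hyperplane. For the inductive step I would reduce to the case $k = 2$ by invoking the standard fact that the link of $v$ in a $\cat(0)$ cube complex is a flag simplicial complex whose $m$-simplices correspond to $(m+1)$-cubes at $v$. Once the $k=2$ case is in hand, the edges $e_1,\dots,e_k$ at $v$ crossing $H_1,\dots,H_k$ pairwise span squares at $v$, so by flagness they span a $k$-cube $C$ at $v$, and by construction the hyperplanes cutting $C$ are precisely $H_1, \dots, H_k$.

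The heart of the argument is the case $k = 2$: given two hyperplanes $H_1, H_2$ both adjacent to $v$ and meeting in $X$, produce a square at $v$ cut by both. Lemma~\ref{lem-3-hyperplanes} provides a square $S \subseteq X$ cut by $H_1$ and $H_2$. Label its vertices so that $u$ lies on the same side of each $H_i$ as $v$, and let $u'$ be the vertex of $S$ opposite $u$. Writing $v_i$ for the endpoint of $e_i$ other than $v$, I would form $w = m(v_1, v_2, u')$ using the median structure on the $1$-skeleton of $X$. A hyperplane-by-hyperplane check via the majority-side rule shows that $w$ lies on the side of $H_1$ and of $H_2$ opposite to $v$, but on the same side as $v$ for every other hyperplane. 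Consequently $w$ is adjacent to $v_1$ across $H_2$ and to $v_2$ across $H_1$, while $d(v,w) = d(v_1,v_2) = 2$; the $4$-cycle $v, v_1, w, v_2$ is therefore chordless, and in a $\cat(0)$ cube complex such a cycle bounds a unique $2$-cube, which is the required square at $v$.

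The main obstacle is the construction of $w$: the hypothesis $H_1 \cap H_2 \neq \emptyset$ is global, while the desired conclusion is purely local at $v$. The median-graph formulation makes the passage short but implicitly invokes the identification of $\cat(0)$ cube complexes with median graphs. An alternative more in keeping with the paper's hyperplane-centric approach is to take a geodesic from $u$ to $v$ (which crosses neither $H_1$ nor $H_2$) and transport $S$ one step at a time toward $v$, invoking Lemma~\ref{lem-3-hyperplanes} at each step to produce the intermediate squares; the chordless $4$-cycle at $v$ emerges at the end of this process.
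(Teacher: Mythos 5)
The paper offers no argument of its own for this lemma; it simply cites \cite[Lemma 2.14 and Proposition 2.15]{NibloReevesNormal}. Your proposal therefore gives a genuinely self-contained alternative, and it is correct.

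Your reduction to $k=2$ via Gromov's flag-link condition is exactly the right skeleton: once each pair $H_i, H_j$ is known to span a square at $v$, the edges $e_1,\dots,e_k$ become pairwise adjacent vertices of the (flag) link of $v$, hence span a $(k-1)$-simplex there, which is a $k$-cube at $v$ cut by precisely $H_1,\dots,H_k$. (It is worth noting, though you do not say it explicitly, that the $e_i$ are well defined: in a $\cat(0)$ cube complex a vertex carries at most one edge dual to a given hyperplane, else the median of the two would-be endpoints yields a contradiction.) The heart of your argument, the $k=2$ case, is also sound: the majority-rule check that $w = m(v_1,v_2,u')$ is separated from $v$ by exactly $H_1$ and $H_2$ is correct, and the resulting isometric $4$-cycle $v,v_1,w,v_2$ spans a square because the vertex set $\{v,v_1,w,v_2\}$ is a convex hypercube subgraph of the median graph $X^{(1)}$.

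The principal difference from the paper is methodological. The paper stays close to Sageev's and Niblo–Reeves' hyperplane combinatorics and treats results of this sort as citable background. Your route imports two further pieces of standard theory that the paper never invokes directly: the identification of $\cat(0)$ cube complexes with median graphs (Chepoi's theorem) to manufacture the vertex $w$, and the flag-link characterization (Gromov's lemma) to go from pairwise squares to a $k$-cube. This buys you a short, self-contained argument, at the cost of broadening the toolkit beyond what the rest of the paper uses; your closing remark about transporting the square $S$ step-by-step toward $v$ along a geodesic, using Lemma~\ref{lem-3-hyperplanes} at each step, is the version that would stay entirely within the paper's hyperplane-centric vocabulary and is probably closer in spirit to the Niblo--Reeves proof being cited.
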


\begin{proof} See  {\cite[Lemma 2.14 and Proposition 2.15]{NibloReevesNormal}}.
\end{proof}

\begin{lemma}
\label{lem-4-quadrants}
If two hyperplanes $H$ and $K$ in a $\cat(0)$ cube complex $X$ are
disjoint, then one of the half-spaces of $H$ is contained in one of the half-spaces of $K$.
\end{lemma}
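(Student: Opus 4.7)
The plan is to reduce the statement to a short topological argument using only connectedness of the relevant pieces. The two ingredients needed are: (i) every hyperplane is connected; (ii) the two half-spaces of any hyperplane are open, disjoint, connected, and jointly form the complement of the hyperplane. Ingredient (ii) is precisely the content of the lemma following Definition~\ref{def-hyperplane}, together with the assertion there that half-spaces are open. Ingredient (i) is not stated explicitly in the excerpt, so the first step is to record a short justification: by Definition~\ref{def-hyperplane}, a hyperplane is the union of an equivalence class of midplanes in which any two elements are joined by a finite chain whose consecutive members meet in a (necessarily non-empty) midplane. Since midplanes are themselves connected, and consecutive midplanes in such a chain overlap, the entire union is path-connected.

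With those facts in hand, the argument is immediate. Because $H\cap K=\emptyset$, we have $K\subseteq X\setminus H=H^+\sqcup H^-$. The two subsets $K\cap H^+$ and $K\cap H^-$ are then open in $K$, disjoint, and together cover $K$; connectedness of $K$ therefore forces one of them to be empty, so $K$ is contained in a single half-space of $H$. Without loss of generality say $K\subseteq H^+$. Then $H^-\cap K=\emptyset$, and therefore $H^-\subseteq X\setminus K=K^+\sqcup K^-$. Applying the same open-disjoint-decomposition argument to the connected set $H^-$, we conclude that $H^-$ lies in one of $K^+$ or $K^-$, which is exactly the statement of the lemma.

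I do not foresee any substantive obstacle. Neither the reduction to finite subcomplexes via Lemma~\ref{lem-increasing-union}, nor the finer combinatorial lemmas~\ref{lem-3-hyperplanes} and~\ref{lem-k-hplanes} on intersecting hyperplanes, enter the argument. The only point that requires any comment at all is the connectedness of a hyperplane, and as observed above this is essentially built into its definition.
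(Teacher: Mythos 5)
Your proof is correct. The paper itself supplies no argument here — it simply cites \cite[Lemma 2.10]{GuentnerHigson} — so your self-contained proof is an addition rather than a reproduction. Your two-step connectedness argument is clean: the half-spaces are stated in the paper to be open, and the lemma following Definition~\ref{def-hyperplane} identifies them as the two connected components of the complement of a hyperplane, so once $K\cap H=\emptyset$ the open-disjoint-cover decomposition of the connected set $K$ immediately places $K$ in a single half-space of $H$, and the symmetric step then places the opposite $H$-half-space in a single half-space of $K$. The one lemma you had to supply — connectedness of a hyperplane — follows exactly as you say from the chain-of-overlapping-midplanes form of the equivalence relation, and is in any case implicit in the cited result that a hyperplane is a totally geodesic (hence connected) subspace of a $\cat(0)$ space. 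No gap, and the reduction lemmas you correctly flag as irrelevant (\ref{lem-increasing-union}, \ref{lem-3-hyperplanes}, \ref{lem-k-hplanes}) indeed play no role.
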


\begin{proof}
See \cite[Lemma 2.10]{GuentnerHigson}.
\end{proof}


\section{The  Julg-Valette Complex}
\label{sec-jv-cplx}

Let $X$ be a bounded geometry $\cat(0)$ cube complex of dimension $n$.  The aim of this
section is to define a differential complex
\[
 \C[X^0] \stackrel{\dps}{\longrightarrow} 
   \C[X^1]\stackrel{\dps}{\longrightarrow} \cdots  
      \stackrel{\dps}{\longrightarrow} \C[X^{n-1}]
      \stackrel{\dps}{\longrightarrow} \C[X^n] 
\] 
which generalizes the complex introduced by Julg and Valette in the
case of a tree \cite{JulgValetteCRAS,JulgValetteQ_p}.  To motivate the
subsequent discussion we recall their construction.  Let $T$ be a tree
with vertex set $T^0$ and edge set $T^1$.  Fix a base vertex $P_0$.
The Julg-Valette differential
\begin{equation*}
  d: \C[T^0] \longrightarrow \C[T^1]
\end{equation*}
is defined by mapping a vertex $P\neq P_0$ to the first edge $E$ on the
unique geodesic path from $P$ to $P_0$; $P_0$
itself is mapped to zero.  There is  an adjoint differential 
\begin{equation*}
  \delta:  \C[T^1] \longrightarrow \C[T^0]
\end{equation*}
that maps each edge to its furtherest vertex from $P_0$.  The
composite $d\delta$ is the identity on $\C[T^1]$, whereas $1-\delta d$
is the natural rank-one projection onto the subspace of $\C[T^0]$
spanned by the base vertex.  It follows easily that the cohomology of
the Julg-Valette complex is $\C$ in degree zero and $0$ otherwise.

For the higher-dimensional construction we shall need a concept of
orientation for the cubes in $X$, and we begin there. 

\begin{definition}
  A \emph{presentation} of a cube consists of a vertex in the cube,
  together with a linear ordering of the hyperplanes that cut the
  cube.  Two presentations are \emph{equivalent} if the edge-path
  distance between the two vertices has the same parity as the
  permutation between the two orderings.  An \emph{orientation} of a
  cube of positive dimension is a choice of equivalence class of
  presentations; an \emph{orientation} of a vertex is a choice of sign
  $+$ or $-$.
\end{definition}

\begin{remark}
  Every cube has precisely two orientations, and if $C$ is an oriented
  cube we shall write $C^*$ for the same underlying unoriented cube
  equiped with the opposite orientation.
\end{remark}

\begin{definition}
\label{def-cochain}
The space $\C[X^q]$ of \emph{oriented $q$-cochains on $X$} is the
vector space comprising the finitely-supported, \emph{anti-symmetric},
complex-valued functions on the set of oriented $q$-cubes $X^q$.
Here, a function $f$ is anti-symmetric if $f(C)+f(C^*)=0$ for every
oriented cube $C$.
\end{definition}

\begin{remark}\label{new-paradigm}
  The space $\C[X^q]$ is a subspace of the vector space of of
  \emph{all} finitely supported functions on $X^q$, which we shall
  call the \emph{full space of $q$-cochains}.  The formula
  \begin{equation*}
    f^*(C) = f(C^*)
  \end{equation*}
defines an involution on the full space of $q$-cochains.
We shall write $C$ for both
  the Dirac function at the oriented $q$-cube $C$ and for the cube
  itself;  in this way $C$ belongs to the full space of $q$-cochains.
  We shall write $\langle C \rangle$ for the oriented $q$-cochain
  \begin{equation*}
    \langle C \rangle = C-C^*\in \C[X^q],
  \end{equation*}
which is the difference of the Dirac functions at $C$ and $C^*$ (the
two possible meanings of the symbol $C^*$ agree).
\end{remark}

Next, we introduce some geometric ideas that will allow us to define
the Julg-Valette differential in higher dimensions.  The first is the
following generalization of the notion of adjacency introduced in
Definition~\ref{def-vertex-adjacent}.

\begin{definition}
\label{def-adjacency}
A $q$-cube $C$ is \emph{adjacent} to a hyperplane $H$ if it is
disjoint from $H$ and if  there exists a $(q{+}1)$-cube containing $C$
as a codimension-one face that is cut by $H$.
\end{definition}

\begin{lemma}\label{unique-adjacent}
A $q$-cube $C$ is adjacent to a hyperplane $H$ if and only if it is
not cut by $H$ and all of its vertices are adjacent to $H$. 
\end{lemma}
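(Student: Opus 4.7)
The forward direction should be routine. Suppose $C$ is adjacent to $H$, witnessed by a $(q{+}1)$-cube $D$ having $C$ as a codimension-one face and cut by $H$. Then $D$ decomposes across its midplane in $H$ into $C$ and its opposite face $C'$, joined by $q{+}1$ ``vertical'' edges; by construction these edges are precisely the edges of $D$ that cross $H$. Each vertex of $C$ is the endpoint of one of these vertical edges, hence is adjacent to $H$, and of course $C$ is disjoint from $H$, so not cut by $H$.

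For the harder direction, suppose $C$ is not cut by $H$ and every vertex of $C$ is adjacent to $H$. Fix a vertex $v$ of $C$ and let $H_1,\dots,H_q$ be the hyperplanes cutting $C$. Each $H_i$ is adjacent to $v$ via the unique edge of $C$ at $v$ that crosses $H_i$, and $H$ is adjacent to $v$ by hypothesis. The plan is to exhibit a $(q{+}1)$-cube $D$ containing $v$ whose cutting hyperplanes are exactly $H,H_1,\dots,H_q$, and then to check that $C$ is the codimension-one face of $D$ on the $v$-side of $H$. Granting this, $D$ witnesses the adjacency of $C$ to $H$. Producing $D$ itself is clean: the hyperplanes $H_1,\dots,H_q$ meet pairwise at the center of $C$, and once we know $H$ meets each $H_i$, Lemma~\ref{lem-3-hyperplanes} gives joint intersection of all $q{+}1$ hyperplanes, whereupon Lemma~\ref{lem-k-hplanes} promotes that intersection to a $(q{+}1)$-cube $D$ containing $v$ (necessarily cut by exactly those $q{+}1$ hyperplanes). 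To identify $C$ as a face of $D$, note the face of $D$ on $v$'s side of $H$ is a $q$-cube containing $v$ and cut by $H_1,\dots,H_q$; applying Lemma~\ref{lem-k-hplanes} in dimension $q$ shows that such a cube is unique, and hence equals $C$.

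The main obstacle is the pairwise intersection step: showing $H\cap H_i\neq\emptyset$ for each $i$, using only the hypothesis that every vertex of $C$ is adjacent to $H$. I expect to argue by contradiction via Lemma~\ref{lem-4-quadrants}. Assume $C\subset H^+$ (since $H$ does not cut $C$) and suppose $H\cap H_i=\emptyset$. Choose an edge of $C$ crossing $H_i$, with endpoints $v_+\in H_i^+$ and $v_-\in H_i^-$; by hypothesis each $v_\pm$ lies on an edge $e_\pm$ crossing $H$, with the far endpoint $v_\pm'$ in $H^-$. Because edges cross exactly one hyperplane, $e_\pm$ does not cross $H_i$, so $v_\pm'$ is on the same side of $H_i$ as $v_\pm$. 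Thus $H^+$ contains points of both $H_i^+$ and $H_i^-$ (namely $v_\pm$), and $H^-$ contains points of both $H_i^+$ and $H_i^-$ (namely $v_\pm'$). Neither half-space of $H$ is contained in a half-space of $H_i$, contradicting Lemma~\ref{lem-4-quadrants}. This is the one place where the full strength of the vertex-adjacency hypothesis gets used, and handling it carefully is what makes the lemma work.
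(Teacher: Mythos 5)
Your proposal is correct and follows essentially the same route as the paper: reduce to showing that $H$ meets every hyperplane $H_i$ cutting $C$, prove pairwise intersection via the four-quadrant argument and Lemma~\ref{lem-4-quadrants}, then upgrade to a $(q{+}1)$-cube containing $C$ as a face. The paper is considerably terser (it only cites Lemma~\ref{lem-3-hyperplanes} for the reduction and leaves the forward direction as ``clearly''), whereas you explicitly invoke Lemma~\ref{lem-k-hplanes} to anchor the $(q{+}1)$-cube at a vertex of $C$ and to identify $C$ as the correct face, which is a useful bit of care the paper elides.
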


\begin{proof}
Clearly, if the cube $C$ is adjacent to $H$ then so are all of its vertices. 
For the converse, assume that all of the vertices of $C$ are adjacent to
$H$.  By Lemma~\ref{lem-3-hyperplanes} it suffices to show that every
hyperplane $K$ that cuts $C$ must also cross $H$.  For this, let $P$
and $Q$ be vertices of $C$ separated only by $K$, and denote by 
$P^{op}$ and $Q^{op}$ the vertices separated from $P$ and $Q$ only by
$H$, respectively.  These four vertices belong to the four distinct half-space
intersections associated with the hyperplanes $H$ and $K$, so that by
Lemma~\ref{lem-4-quadrants} these hyperplanes intersect. 
\end{proof}

\emph{We shall now fix a base vertex $P_0$ in the complex $X$.}
 
\begin{definition}\label{definition-wedge}
Let $H$ be a  hyperplane in $X$.  Define an operator 
\[
H\wedge\underbar{\,\,\,\,}\,\colon 
   \C[X^q] \longrightarrow \C[X^{q+1}]
\]
as follows.
Let $C$ be  an oriented $q$-cube in $X$.   
\begin{alist}
\item We put $H\wedge C =0$ if   $C$ is not adjacent to $H$.
\item In addition, we put  $H\wedge C =0$ if $C$ is adjacent to $H$,
  but $C$ lies in the same $H$-half-space as the base point  $P_0$. 
\item  If $C$ is adjacent to $H$, and is separated by $H$ from the
  base point, then we define $H\wedge C$  to be  the unique cube 
containing $C$ as a codimension-one face that is cut by $H$. 
\end{alist}
As for the orientations in (c), if $C$ has positive dimension and is
oriented by the vertex $P$, and by the listing on
hyperplanes
$
H_1,\dots, H_{q},
$
then we  orient $H\wedge C$  by the  vertex that is separated from $P$
by the hyperplane $H$ alone, and by the listing of hyperplanes 
$
 H, H_1,\dots, H_q.
$
If $C$ is a vertex with orientation $+$ then $H\wedge C$ is oriented
as above; if $C$ has orientation $-$ then $H\wedge C$ receives the
opposite orientation.
\end{definition}

\begin{remark}\label{ops-in-new-paradigm}
  The linear operator $H\wedge \underbar{\,\,\,\,}$ of the previous
  definition is initially defined on the full space of $q$-cochains by
  specifying its values on the oriented $q$-cubes $C$, which form a
  basis of this space.  We omit the elementary check that for
  an oriented $q$-cube $C$ we have 
  \begin{equation}\label{wedge-involution}
    H\wedge C^* = (H\wedge C)^*,
  \end{equation}
  which allows us to restrict $H\wedge \underbar{\,\,\,\,}$ to an
  operator on the spaces of oriented $q$-cochains.  We shall employ
  similar conventions consistently throughout, so that all linear
  operators will be defined initially on the full space of
  cochains and then restricted to the space of oriented
  cochains.  Some formulas will hold only for the restricted
  operators and we shall point these few instances out.
\end{remark}

\begin{definition} 
\label{def-JV-differential}
The \emph{Julg-Valette differential} is the linear map  
\[
d \colon  \C[X^q] \longrightarrow \C[X^{q+1}]
\]
given by the formula
\[
d \, C  = \sum_{H}  H\wedge C  , 
\]
where the sum is taken over all hyperplanes in $X$. Note that only
finitely many terms in this sum are nonzero.  
\end{definition}

 \begin{example}
   In the case of a tree, if $P$ is any vertex distinct from the base
   point $P_0$, then $H\wedge P $ is the first edge on the geodesic
   edge-path from $P$ to $P_0$ and our operator $d$ agrees the one
   defined by Julg and Valette.  Once a base point is chosen every
   edge (in any $\cat(0)$ cube complex) is canonically oriented by
   selecting the vertex nearest to the base point; vertices are
   canonically oriented by the orientation $+$.  Thus, because the
   original construction of Julg and Valette involves only vertices
   and edges and assumes a base point, orientations do not appear
   explicitly.
\end{example}

\begin{lemma}
\label{lemma-antisymmetry1}
If  $H_1$ and $H_2 $ are any two   hyperplanes, and if $C$ is any oriented cube, then 
\begin{alist}
\item $H_1 \wedge H_2 \wedge C$ is nonzero if and only if $H_1$ and
  $H_2$ are distinct,  they are  both adjacent to $C$, and  they both
  separate $C$ from $P_0$. 
\item
$H_1 \wedge H_2 \wedge  C= (H_2 \wedge H_1 \wedge C)^*$.
\end{alist}
\end{lemma}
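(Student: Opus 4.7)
My plan for (a) is to unwind the definition of $H\wedge\underbar{\,\,\,\,}$ twice and translate the resulting nonvanishing conditions into symmetric conditions on $C$, $H_1$, $H_2$. By Definition~\ref{definition-wedge}, the cochain $H_2\wedge C$ is nonzero precisely when $H_2$ is adjacent to $C$ and separates $C$ from $P_0$, in which case $C':=H_2\wedge C$ is a specific oriented $(q+1)$-cube containing $C$ as a codimension-one face. Then $H_1\wedge C'$ is nonzero iff $H_1$ is adjacent to $C'$ and separates $C'$ from $P_0$. Since $C\subseteq C'$, provided $H_1$ does not cut $C'$ (equivalently $H_1\neq H_2$, using that $H_1$ does not cut $C$), the separation condition on $C'$ is equivalent to $H_1$ separating $C$ from $P_0$. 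By Lemma~\ref{unique-adjacent}, adjacency of $H_1$ to $C'$ reduces further to each vertex of $C'$ being adjacent to $H_1$; the vertices of $C$ are automatic from $H_1$ being adjacent to $C$, leaving the $H_2$-opposites $P^{\mathrm{op},2}$ as the essential case.

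The main obstacle is establishing this last adjacency, which I reduce to showing $H_1\cap H_2\neq\emptyset$. Given non-empty intersection, Lemma~\ref{lem-k-hplanes} applied at a vertex $P$ of $C$ (to which both hyperplanes are adjacent) produces a $2$-cube at $P$ cut by both $H_1$ and $H_2$; the vertex of this $2$-cube diagonally opposite $P$ is joined to $P^{\mathrm{op},2}$ by an edge crossing $H_1$, confirming the required adjacency. To prove $H_1\cap H_2\neq\emptyset$, I argue by contradiction via Lemma~\ref{lem-4-quadrants}: if the hyperplanes were disjoint, one of the four quadrants determined by the half-spaces of $H_1$ and $H_2$ would be empty. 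The cube $C$ and the base point $P_0$ occupy two of these quadrants, in diagonally opposite positions. For each $i\in\{1,2\}$, the $(q+1)$-cube $C^+_i$ witnessing adjacency of $H_i$ to $C$ has midplanes only for $H_i$ and for the hyperplanes cutting $C$; in particular $H_{3-i}$ does not cut $C^+_i$, so $C^+_i$ lies entirely on the non-$P_0$ side of $H_{3-i}$, and its face $C^{\mathrm{op},i}$ opposite $C$ across $H_i$ lies in one of the two off-diagonal quadrants. The two choices $i=1,2$ populate the two distinct off-diagonal quadrants, so all four quadrants are non-empty---a contradiction.

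For part (b), the symmetric characterization in (a) shows that $H_1\wedge H_2\wedge C$ and $H_2\wedge H_1\wedge C$ vanish simultaneously. When nonzero, both are oriented versions of the same $(q+2)$-cube $C''$: the hyperplanes $H_1, H_2, K_1,\ldots, K_q$ (where $K_j$ cut $C$) pairwise intersect---with $H_1\cap H_2\neq\emptyset$ from the previous paragraph and the remaining pairs meeting inside $C$, $C^+_1$, or $C^+_2$---so Lemma~\ref{lem-3-hyperplanes} produces a unique $(q+2)$-cube $C''$ cut by all of them and containing $C$ as a codimension-two face. It then remains to compare orientations. Tracing Definition~\ref{definition-wedge} twice, $H_1\wedge H_2\wedge C$ is presented by the vertex $(P^{\mathrm{op},2})^{\mathrm{op},1}$ with hyperplane ordering $(H_1, H_2, K_1,\ldots, K_q)$, while $H_2\wedge H_1\wedge C$ is presented by $(P^{\mathrm{op},1})^{\mathrm{op},2}$ with ordering $(H_2, H_1, K_1,\ldots, K_q)$. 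The two vertices coincide in $C''$ (the $H_1$- and $H_2$-opposite operations commute in a cube), the orderings differ by a single transposition, and the edge-path distance between the vertices is zero; hence the presentations have opposite parities and represent opposite orientations, yielding $H_1\wedge H_2\wedge C=(H_2\wedge H_1\wedge C)^*$. When $C$ is a vertex the same argument applies, with the $\pm$ sign convention carried through identically on both sides.
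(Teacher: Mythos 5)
Your proof is correct and follows essentially the same route as the paper: part (a) rests on Lemmas~\ref{lem-k-hplanes} and \ref{lem-4-quadrants}, and part (b) is the orientation bookkeeping from Definition~\ref{definition-wedge}, with the two iterated wedges landing on the same vertex and hyperplane orderings differing by a single transposition. You simply spell out the details (the four-quadrant contradiction and the adjacency check at the $H_2$-opposite vertices) that the paper compresses into a one-line citation.
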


\begin{remark}
Here, $H_1 \wedge H_2 \wedge C$ means $H_1 \wedge (H_2 \wedge C)$, and
so on. 
\end{remark}

\begin{proof}
  Item (a) follows from Lemmas~ \ref{lem-k-hplanes} and
  \ref{lem-4-quadrants}.  To prove (b), note first that as a result of
  (a) the left hand side is nonzero if and only if the right hand side
  is nonzero.  In this case, both have the same underlying unoriented
  $(q+2)$-cube, namely the unique cube containing $C$ as a
  codimension-two face and cut by $H_1$ and $H_2$.  As for
  orientation, suppose $C$ is presented by the ordering $K_1, \dots,
  K_q$ and the vertex $P$.  The cube $H_1 \wedge H_2 \wedge C$ is then
  presented by the ordering $H_1, H_2, K_1, \dots, K_q$ and the vertex
  $Q$, the vertex immediately opposite both $H_1$ and $H_2$ from $P$;
  the cube $H_2 \wedge H_1 \wedge C$ is presented by the ordering
  $H_2, H_1, K_1, \dots, K_q$ and the same vertex.  The same argument
  applies when $C$ is a vertex with the orientation $+$, and the
  remaining case follows from this and the identity
  (\ref{wedge-involution}).
\end{proof}

\begin{lemma}
The Julg-Valette differential $\dps$, regarded as an operator on the
space of oriented cochains, satisfies $\dps^2 = 0$.  
\end{lemma}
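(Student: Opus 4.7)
The plan is to reduce $d^2 = 0$ on oriented cochains to the antisymmetry identity already recorded in Lemma 3.8(b). I would first compute $d^2$ on the full space of cochains as a double sum, then exploit that the involution $f \mapsto f^*$ intertwines with $d$, and finally restrict to the antisymmetric subspace.

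First I would expand, for any oriented cube $C$,
\[
d^2 C \;=\; \sum_{H_1, H_2} H_1 \wedge H_2 \wedge C,
\]
where the sum ranges over ordered pairs of hyperplanes. By Lemma 3.8(a), the diagonal terms $H \wedge H \wedge C$ vanish, so the sum reduces to ordered pairs of \emph{distinct} hyperplanes. Note the sum is finite, so reindexing operations are legitimate.

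Next I would apply Lemma 3.8(b), which gives $H_1 \wedge H_2 \wedge C = (H_2 \wedge H_1 \wedge C)^*$. Swapping the dummy variables $H_1$ and $H_2$ in the resulting sum, and using that the involution is linear, I obtain
\[
d^2 C \;=\; \sum_{H_1 \neq H_2}(H_2 \wedge H_1 \wedge C)^* \;=\; \left(\sum_{H_1 \neq H_2} H_1 \wedge H_2 \wedge C\right)^{\!*} \;=\; (d^2 C)^*.
\]
Thus in the full space of cochains, $d^2 C$ is fixed by the involution.

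Finally, recall from (\ref{wedge-involution}) that each $H \wedge \underbar{\,\,\,\,}$ (and hence $d$ itself) intertwines the involution: $d(f^*) = (df)^*$. Applying this twice, $d^2(C^*) = (d^2 C)^*$. Combining with the previous step, the oriented cochain $\langle C \rangle = C - C^*$ satisfies
\[
d^2 \langle C \rangle \;=\; d^2 C - d^2 C^* \;=\; d^2 C - (d^2 C)^* \;=\; 0.
\]
Since the elements $\langle C \rangle$ span $\C[X^q]$, this yields $d^2 = 0$ on the space of oriented cochains. The main substantive step is Lemma 3.8(b), which is already in hand; the remaining work is the bookkeeping with the involution, and the only subtlety to flag is that $d^2 C$ need not vanish in the full space of cochains — it is merely symmetric there — so one must restrict to antisymmetric cochains before the conclusion holds.
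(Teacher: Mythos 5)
Your proof is correct and relies on the same key ingredient as the paper's, namely Lemma~\ref{lemma-antisymmetry1}(b). The only difference is in how the involution bookkeeping is organized: the paper expands $d^2\langle C\rangle$ directly and pairs terms so that $H_1\wedge H_2\wedge\langle C\rangle + H_2\wedge H_1\wedge\langle C\rangle = 0$, whereas you first establish $d^2 C = (d^2 C)^*$ on the full cochain space and then conclude via the intertwining $d(f^*) = (df)^*$ — two equivalent routes to the same cancellation.
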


\begin{proof}
Let $C$ be any $q$-cube, so that
\begin{equation*}
  \dps^2\, \langle C \rangle = 
    \sum_{H_1,H_2}  H_1\wedge H_2 \wedge \langle C \rangle, 
\end{equation*}
As a consequence of Lemma~\ref{lemma-antisymmetry1} we have 
$H_1\wedge H_2\wedge \langle\, C \,\rangle +
  H_2\wedge H_1\wedge \langle\, C \,\rangle = 0$, and the sum
vanishes.  It is important here that we work on
$\C[X^q]$ and not on the larger full space of $q$-cochains, where the
result is not true.  See Remarks~\ref{new-paradigm} and
\ref{ops-in-new-paradigm}.
\end{proof}

\begin{definition}\label{definition-hook}
Let $H$ be a   hyperplane and let $q\ge 1$.  Define an operator 
\[
H\righthalfcup\underbar{\,\,\,\,}\,\colon 
   \C[X^q] \longrightarrow \C[X^{q-1}]
\]
as follows.  Let $C$ be  an oriented $q$-cube in $X$.   
\begin{alist}
  \item If $H$ does not cut $C$, then  $H\righthalfcup C = 0$.  
  \item If $H$ does cut $C$ then we define $H\righthalfcup C$ to be
    the codimension-one face of $C$ that lies entirely in the
    half-space of $H$ that is separated from the base point by $H$.
\end{alist}
As for orientations in (b), if $C$ is presented by the ordered list
$H,H_1,\dots,H_{q-1}$ and the vertex $P$, and $P$ is \emph{not}
separated from the base point  by $H$, then $H\righthalfcup C$ is
presented by the ordered list $H_1,\dots,H_{q-1}$ and the vertex
separated from $P$ by $H$ alone.  If $C$ is an edge
presented by the vertex $P$ \emph{not} separated from the base point  by
$H$ then $H\righthalfcup C=P^{op}$, the vertex of $C$ opposite to $P$,
with the orientation $+$; if $C$ is
presented by the vertex $P$ and $P$ \emph{is} separated from the
base point  by $H$ then $H\righthalfcup C=P$ with the orientation $-$.
\end{definition}

\begin{remark}
For convenience we shall  define the operator
$H\righthalfcup\underbar{\,\,\,\,}$ to be zero on vertices. 
\end{remark}

\begin{example}
  Let us again consider a tree $T$ with a selected base vertex $P_0$.
  If $E$ is any edge then $H\righthalfcup E$ is zero unless $H$ cuts
  $E$.  In this case $H\righthalfcup E=P$, where $P$ is the vertex
  of $E$ which is farthest away from $P_0$; we choose the orientation
  $-$ if $E$ was oriented by the vertex $P$, and the orientation $+$
  otherwise. 
\end{example}

\begin{definition}
\label{def-deltaps}
Let $q\ge 0$.  Define an operator 
\[
\deltaps \colon \C[X^{q+1}] \longrightarrow \C[X^{q}]
\]
 by 
\[
\deltaps \, C = \sum_{H}    H\righthalfcup C .
\]
\end{definition}

\begin{definition}\label{inner-product}
  The oriented $q$-cubes are a vector space basis for the full space
  of $q$-cochains.  We equip this space with an inner product by
  declaring this to be an orthogonal basis and each oriented $q$-cube
  to have length $1/\sqrt{2}$.  The subspace $\C[X^q]$ of oriented
  $q$-cochains inherits an inner product in which
\begin{equation*}
\langle \langle C_1 \rangle, \langle C_2 \rangle \rangle = 
\begin{cases} 
1, & \text{if $C_1 = C_2$}\\
- 1, & \text{if $C_1 = C_2^*$}\\
0, & \text{otherwise}.
\end{cases}
\end{equation*}
Thus, selecting for each unoriented $q$-cube one of its possible
orientations gives a collection of oriented $q$-cubes for which the
corresponding $\langle C \rangle$ form an orthonormal basis of the
space of oriented $q$-cochains; this basis is canonical up to signs
coming from the relations $-\langle C \rangle = \langle C^* \rangle$.
\end{definition}

 \begin{proposition}
   The operators $\dps$ and $\deltaps$ of
   Definitions~\ref{def-JV-differential} and \ref{def-deltaps} are
   formally adjoint and bounded with respect to the inner products in
   Definition~\ref{inner-product}.
\end{proposition}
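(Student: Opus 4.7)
The plan is to first reduce the proposition to a pair of per-hyperplane statements, then handle formal adjointness combinatorially, and finally obtain boundedness by a Schur-test argument that exploits bounded geometry.

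Concretely, I would first show that for each individual hyperplane $H$, the operators $H\wedge\underbar{\,\,\,\,}$ and $H\righthalfcup\underbar{\,\,\,\,}$ are formal adjoints with respect to the inner product of Definition~\ref{inner-product}. The key geometric observation is that these two operations, restricted to oriented cubes on which they are nonzero, are mutually inverse. Namely, $H\wedge\underbar{\,\,\,\,}$ is nonzero precisely on oriented $q$-cubes $C$ adjacent to $H$ and separated from $P_0$ by $H$, and sends such a $C$ to the unique oriented $(q{+}1)$-cube $D$ containing $C$ as its far-from-$P_0$ codimension-one face; conversely, $H\righthalfcup\underbar{\,\,\,\,}$ sends each oriented $(q{+}1)$-cube cut by $H$ back to that same face. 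That these operations are inverse as \emph{unoriented} bijections is immediate. The orientation bookkeeping is routine but must be done carefully: given a presentation of $C$ by a vertex $P$ and a list $H_1,\dots,H_q$, Definition~\ref{definition-wedge} presents $H\wedge C$ by the list $H,H_1,\dots,H_q$ and the vertex $Q$ separated from $P$ by $H$ alone; then $Q$ is not separated from $P_0$ by $H$, so Definition~\ref{definition-hook} recovers the presentation $(H_1,\dots,H_q;P)$ of $C$. The edge/vertex cases use the same check on the special conventions in Definitions~\ref{definition-wedge} and~\ref{definition-hook}. A parallel verification in the other direction establishes the inverse relation. Combined with the identities $H\wedge C^*=(H\wedge C)^*$ and the analogous $H\righthalfcup C^*=(H\righthalfcup C)^*$, the formal adjointness on the orthonormal basis $\{\langle C\rangle\}$ follows, since both inner products $\langle H\wedge\langle C_1\rangle,\langle C_2\rangle\rangle$ and $\langle\langle C_1\rangle,H\righthalfcup\langle C_2\rangle\rangle$ take the value $+1$, $-1$, or $0$ in exactly the same circumstances dictated by the bijection.

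Summing over $H$ then yields $\langle d\alpha,\beta\rangle=\langle\alpha,\deltaps\beta\rangle$ on finitely supported oriented cochains, provided the sums defining $d\langle C\rangle$ and $\deltaps\langle D\rangle$ are finite. For the first, Lemma~\ref{unique-adjacent} identifies each $H$ contributing to $d\langle C\rangle$ with a $(q{+}1)$-cube containing $C$ as a codimension-one face, and bounded geometry bounds the number of such cubes by the uniform constant $N$. For the second, each hyperplane contributing to $\deltaps\langle D\rangle$ cuts $D$, and a $(q{+}1)$-cube has exactly $q+1\le n$ such hyperplanes.

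Boundedness follows by Schur's test applied to the matrix of $d$ in the orthonormal basis $\{\langle C\rangle\}$. Every entry is one of $0,\pm 1$; the column indexed by an oriented $q$-cube $C$ has at most $N$ nonzero entries, and the row indexed by an oriented $(q{+}1)$-cube $D$ has at most $2(q{+}1)\le 2n$ nonzero entries, one for each codimension-one face of $D$. Schur's test gives $\|d\|^2\le 2nN$, and the same bound transfers to $\deltaps$ by adjointness (or by a symmetric argument).

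The only genuinely delicate step is the orientation matching in step~one: one must trace through Definitions~\ref{definition-wedge} and \ref{definition-hook} carefully, especially in the vertex/edge special cases, to see that the signs $\pm 1$ produced by the two sides of the adjoint pairing agree rather than differ. The rest is formal.
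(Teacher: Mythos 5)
Your proposal is correct and follows essentially the same route as the paper: the central observation (that $H\wedge C = D$ if and only if $H\righthalfcup D = C$, with the orientation conventions of Definitions~\ref{definition-wedge} and~\ref{definition-hook} matching up) is exactly what the paper invokes for formal adjointness, and boundedness is attributed to bounded geometry in both. You simply fill in the details the paper leaves terse — tracing the orientations explicitly and making the boundedness quantitative via Schur's test.
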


\begin{proof}
  The fact that the operators are bounded follows from our assumption
  that the complex $X$ has bounded geometry.  The fact that they are
  adjoint follows from the following assertion: for a hyperplane $H$,
  an oriented $q$-cube $C$ and an oriented $(q+1)$-cube $D$ we have
  that $H\wedge C = D$ if and only if $H\righthalfcup D = C$.  
  See Definitions \ref{definition-wedge} and
  \ref{definition-hook}.
\end{proof}

To conclude the section, let us compute the cohomology of the
Julg-Valette complex.   We form the \emph{Julg-Valette Laplacian}
\begin{equation}
\label{eq-jv-laplacian}
\Delta = (d + \delta)^2 = d \delta +  \delta d,
\end{equation}
where all operators are defined on the space of oriented cochains (and
not on the larger full space of cochains), where we have available the
formula $d^2=0$ and hence also $\delta^2=0$.

\begin{proposition}
\label{prop-delta-w-diagonal}
If $C$ is an oriented $q$-cube then
\begin{equation*}
  \Deltaps \langle C \rangle =  \left( q + p(C) \right) \langle C \rangle,
\end{equation*}
where $p(C)$ is the number of hyperplanes that are adjacent to $C$ and
separate $C$ from $P_0$.  In particular, the $\langle C \rangle$ form
an orthonormal basis of eigenvectors of $\Deltaps$, which is
invertible on the orthogonal complement of $\langle P_0 \rangle$
\textup{(}and so also on the space of oriented $q$-cochains for
$q>0$\textup{)}. 
\end{proposition}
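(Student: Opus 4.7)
The plan is to expand the Julg-Valette Laplacian as a double sum over hyperplanes,
\[
\Deltaps \;=\; d\delta + \delta d \;=\; \sum_{H,K} \bigl(\, H\wedge(K\righthalfcup\,\cdot\,) + K\righthalfcup(H\wedge\,\cdot\,)\,\bigr),
\]
and to treat the diagonal terms ($H=K$) and the off-diagonal terms ($H\neq K$) separately. This is in the spirit of the anti-commutation calculation that computes the number operator on a Fermionic Fock space.

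For the diagonal, I would fix a hyperplane $H$ and split on the geometric relationship between $H$ and the oriented $q$-cube $C$. If $H$ cuts $C$, then $C$ is not adjacent to $H$, so $H\righthalfcup(H\wedge\langle C\rangle)=0$; meanwhile $H\righthalfcup\langle C\rangle$ is the codimension-one face of $C$ on the side of $H$ away from $P_0$, which is adjacent to $H$ and separated from $P_0$, so $H\wedge(H\righthalfcup\langle C\rangle)$ is the unique $q$-cube containing this face and cut by $H$, namely $\langle C\rangle$ itself. Symmetrically, if $H$ is adjacent to $C$ and separates $C$ from $P_0$, then $H\righthalfcup\langle C\rangle=0$ and $H\righthalfcup(H\wedge\langle C\rangle)=\langle C\rangle$. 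These two alternatives are mutually exclusive, and the remaining case contributes zero on both sides. Since $q$ hyperplanes cut $C$ and $p(C)$ are adjacent to and separate $C$ from $P_0$, the diagonal terms sum to $(q+p(C))\langle C\rangle$. Of course the signs must be checked, but the conventions in Definitions~\ref{definition-wedge} and \ref{definition-hook} are set up so that applying $H\wedge$ then $H\righthalfcup$ (or the reverse) reinserts the same vertex at the head of the same hyperplane listing, returning $C$ with sign $+1$.

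The main obstacle is the off-diagonal cancellation: for $H\neq K$ I need
\[
H\wedge(K\righthalfcup\langle C\rangle) \;+\; K\righthalfcup(H\wedge\langle C\rangle) \;=\; 0.
\]
The strategy mirrors the proof of Lemma~\ref{lemma-antisymmetry1}(b). First I would observe that either both terms vanish or neither does, and that when both are nonzero they produce the same underlying unoriented $q$-cube $D$: in the left-hand composition $D$ is built by removing the $K$-face of $C$ and then attaching an $H$-edge, while in the right-hand composition one first attaches an $H$-edge to all of $C$ (producing a $(q{+}1)$-cube $D'$ cut by both $H$ and $K$) and then takes its $K$-face. Both routes land in the same $(q+1)$-cube $D'$ and produce the same face $D$. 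For the orientations, I would present $C$ with the vertex $P$ and listing $H_1,\dots,H_q$ (handling the auxiliary cases—edges and vertices, as well as the possibility that $P$ lies on either side of $H$ or $K$—by a single application of the involution $C\mapsto C^*$ together with (\ref{wedge-involution})). Unpacking the definitions, one composition prepends $H$ and then contracts $K$, while the other contracts $K$ and then prepends $H$; the single transposition of $H$ and $K$ in the listing produces the sign $-1$ that forces cancellation.

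Combining the two contributions yields $\Deltaps\langle C\rangle=(q+p(C))\langle C\rangle$, which exhibits the $\langle C\rangle$ as an orthonormal eigenbasis (by Definition~\ref{inner-product}) with eigenvalues the non-negative integers $q+p(C)$. The only zero eigenvector occurs when $q=0$ and $p(C)=0$, which forces $C=P_0$, so $\Deltaps$ is invertible on the orthogonal complement of $\langle P_0\rangle$, and in particular on all of $\C[X^q]$ for $q>0$.
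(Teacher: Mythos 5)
Your proposal is correct and follows essentially the same route as the paper: expand $\Delta = d\delta + \delta d$ as a double sum over hyperplanes, evaluate the diagonal terms via the two formulas for $H\righthalfcup(H\wedge C)$ and $H\wedge(H\righthalfcup C)$, and show the off-diagonal terms cancel in pairs. The only structural difference is that the paper isolates the off-diagonal cancellation as a separate statement (Lemma~\ref{lemma-wedge-int}, which says $H_1\righthalfcup H_2\wedge C = H_2\wedge H_1\righthalfcup C^*$, and which is closer to what you need than Lemma~\ref{lemma-antisymmetry1}(b)); as you note, the sign comes from the transposition of $H$ and $K$ in the presentation, but to state this cleanly one has to pass through the involution $C\mapsto C^*$ in the main case and not just the auxiliary ones, and then observe that $*$ acts as $-1$ on the anti-symmetric cochains $\langle C\rangle$.
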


\begin{proof}
We shall show that each oriented $q$-cube $C$ is an eigenvector of
$d\delta+\delta d$ acting on the full space of $q$-cochains, with
eigenvalue as in the statement.   If $P$ is a vertex, then 
$d\delta P = 0$ for dimension reasons while  
$\deltaps\dps P = p(P) P$, irrespective of the choice of orientation. 
In higher dimensions, if $q\ge 1$ and $C$ is an oriented $q$-cube,
then 
\begin{equation*}
\deltaps \dps  C = \sum_{H_1, H_2} \, H_1\righthalfcup H_2 \wedge C
\end{equation*}
and similarly 
\begin{equation*}
\dps \deltaps   C =  \sum_{H_1, H_2} \, H_1\wedge H_2 \righthalfcup C.
\end{equation*}
Adding these, and separating the sum into terms where $H_1=H_2$ and
terms where $H_1\ne H_2$ we obtain
\begin{equation}
\label{eqn-kd+dk}
  (d\delta + \delta d) C = \sum_{H}  \left( H \righthalfcup H \wedge C + 
                H \wedge H \righthalfcup  C \right) + 
     \sum_{H_1\ne H_2} \,  \left( H_1\righthalfcup H_2 \wedge C + 
             H_2\wedge H_1 \righthalfcup C \right).
\end{equation}

It follows from Lemma~\ref{lemma-wedge-int} below that (each term of)
the second sum in (\ref{eqn-kd+dk}) is zero.  To understand the first
sum in (\ref{eqn-kd+dk}), observe that if $H$ is any hyperplane and
$C$ is any oriented cube, then
\[
 H \wedge (H\righthalfcup C) = \begin{cases} C,  & \text{if $H$ cuts $C$} \\ 
    0, & \text{otherwise},
    \end{cases}
\]
and also 
\[
H \righthalfcup (H\wedge C)=  
\begin{cases}
C, & \text{if $C$ is adjacent to $H$ and is separated by $H$ from $P_0$}\\
0, & \text{otherwise}.
\end{cases}
\] 
The proposition now follows.
\end{proof}

\begin{lemma}
\label{lemma-wedge-int}
If $H_1$ and $H_2$ are distinct hyperplanes, then
 \[
 H_1\righthalfcup H_2 \wedge C = H_2\wedge H_1\righthalfcup C^*
 \]
 for every oriented cube $C$.
\end{lemma}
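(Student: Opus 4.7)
The plan is to verify the identity in three steps: determining when both sides vanish, identifying the underlying unoriented cubes, and tracking the orientations.

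I first check that both sides vanish under the same conditions. The left hand side is nonzero precisely when $C$ is adjacent to $H_2$ and separated from $P_0$ by $H_2$ (so $H_2\wedge C$ exists) and when $H_1$ cuts $H_2\wedge C$; since $H_1\ne H_2$, the last condition reduces to $H_1$ cutting $C$. The right hand side is nonzero precisely when $H_1$ cuts $C$ (so $F := H_1\righthalfcup C^*$ exists) and when $F$ is adjacent to $H_2$ and separated from $P_0$ by $H_2$. Using Lemma~\ref{unique-adjacent}, I will argue that $F$ is adjacent to $H_2$ if and only if $C$ is: the forward implication is immediate since the vertices of $F$ are vertices of $C$; the reverse uses Lemma~\ref{lem-k-hplanes} applied at each vertex of $F$ to the pair $H_1,H_2$, producing $2$-cubes that carry the adjacency across $H_1$ to the remaining vertices of $C$. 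The half-space condition on $F$ transfers to $C$ because $F\subset C$ and $C$ lies on one side of $H_2$.

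Assuming both sides are nonzero, I identify the underlying cube by working in coordinates inside $D := H_2\wedge C$. Viewing $D$ as $[0,1]^{q+1}$ with $C=\{x_{q+1}=1\}$, $H_2=\{x_{q+1}=1/2\}$, and $H_1=\{x_1=1/2\}$, both $H_1\righthalfcup D$ and the unique $q$-cube containing $H_1\righthalfcup C$ that is cut by $H_2$ coincide with the face $\{x_1=1\}$.

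For orientations, the identity $H\wedge C^*=(H\wedge C)^*$ of Remark~\ref{ops-in-new-paradigm} together with its analogue $H\righthalfcup C^*=(H\righthalfcup C)^*$ (a direct inspection of Definition~\ref{definition-hook}) reduces the claim to $H_1\righthalfcup H_2\wedge C = (H_2\wedge H_1\righthalfcup C)^*$. I pick a presentation $(P; H_1, K_2,\ldots,K_q)$ of $C$ with $P$ not separated from $P_0$ by $H_1$. Applying the definitions step by step gives the right hand side the presentation $(\overline{P}'; H_2, K_2,\ldots, K_q)$, where $\overline{P}$ is the $H_1$-opposite of $P$ in $C$ and $\overline{P}'$ is the $H_2$-opposite of $\overline{P}$ in $D$. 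For the left hand side, $H_2\wedge C$ is presented by $(P'; H_2, H_1, K_2,\ldots, K_q)$ with $P'$ the $H_2$-opposite of $P$; this is equivalent, via one transposition of orderings paired with one edge crossing $H_2$ back to $P$, to the presentation $(P; H_1, H_2, K_2,\ldots,K_q)$, to which $H_1\righthalfcup$ applies directly to yield $(\widetilde{P}; H_2, K_2,\ldots, K_q)$, with $\widetilde{P}$ the $H_1$-opposite of $P$ in $D$. Since $\widetilde{P}=\overline{P}$ and $\overline{P}'$ is its $H_2$-opposite, the two presentations share the same ordering but have vertices at distance one, and so present opposite orientations. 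The main obstacle is precisely this sign bookkeeping: the minus sign ultimately comes from the transposition of $H_1$ past $H_2$ needed to bring $H_1$ to the front of the ordering on the left hand side, which has no counterpart on the right and accounts for the $C^*$ in the identity.
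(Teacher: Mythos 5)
Your proof is correct and takes essentially the same route as the paper's: determine when both sides vanish, then track orientations through Definitions~\ref{definition-wedge} and~\ref{definition-hook} using an explicit presentation of $C$. The differences are cosmetic --- you present $C$ by a vertex $P$ that is \emph{not} separated from $P_0$ by $H_1$ and reduce to $H_1\righthalfcup H_2\wedge C = (H_2\wedge H_1\righthalfcup C)^*$, while the paper presents $C$ by the vertex that \emph{is} separated and unwinds $C^*$ directly; both choices leave the one complementary orientation of an edge ($q=1$) implicitly to the reader.

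One small point worth tightening: in arguing that ``$F$ adjacent to $H_2$'' forces ``$C$ adjacent to $H_2$'' you invoke Lemma~\ref{lem-k-hplanes}, whose hypothesis is that $H_1\cap H_2\ne\emptyset$. At that stage this is not yet known. It can be supplied first from Lemma~\ref{lem-4-quadrants}: a vertex $v$ of $F$, its $H_1$-opposite in $C$, and its $H_2$-opposite in the $(q+1)$-cube witnessing $F\,\mathrm{adj}\,H_2$ occupy three of the four quadrants cut out by $H_1$ and $H_2$, so those hyperplanes must cross. The paper states the vanishing conditions without justification, so your treatment is if anything more explicit, but this intermediate step should be included.
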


\begin{proof}
If $C$ is a vertex then both sides of the formula are zero.  More
generally, if $C$ is a $q$-cube and one of the following two
conditions fails then both sides of the formula are zero:
\begin{alist}
  \item $H_2$ is adjacent to $C$, and separates it from the base point;
  \item $H_1$ cuts $C$ and crosses $H_2$.
\end{alist}
Assume both of these conditions, and suppose that $C$ may be presented
by the listing of hyperplanes $H_1,K_2,\dots,K_q$ and vertex $P$,
\emph{and that $H_1$ separates $P$ from the base point }; if $C$ is not
an edge this is always possible.  We shall leave the exceptional case in
which $C$ is an edge oriented by its vertex closest to the base point 
to the reader.

Now, let $Q$ be the vertex of $C$ separated from $P$ by $H_1$ alone,
and let $P^{op}$ and $Q^{op}$ be the vertices directly opposite $H_2$
from $P$ and $Q$, respectively.  The cube $H\wedge C$ is presented by
the listing $H_2,H_1,K_2,\dots,K_q$ together with the vertex $P^{op}$,
hence also by the listing $H_1,H_2,K_2,\dots,K_q$ and the vertex
$Q^{op}$.  It follows that $H_1\righthalfcup H_2\wedge C$ is presented
by the listing $H_2,K_2,\dots,K_q$ and the vertex $P^{op}$.  As for
the right hand side, $C^*$ is presented by the same listing as $C$ but
with the vertex $Q$, so that $H_1\righthalfcup C^*$ is presented by
the listing $K_2,\dots,K_q$ and the vertex $P$.  It follows that
$H_2\wedge H_1\righthalfcup C^*$ is presented by the listing
$H_2,K_2,\dots,K_q$ and the vertex $P^{op}$, as required.
\end{proof}

\begin{corollary}
The cohomology of the Julg-Valette complex is $\C$ in degree zero and
$0$ otherwise.  
\end{corollary}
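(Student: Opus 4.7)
The plan is to apply the Laplacian identity of the preceding proposition and then invoke a standard Hodge-theoretic argument. Since $\Deltaps = \dps \deltaps + \deltaps \dps$ acts diagonally in the orthonormal basis $\{\langle C\rangle\}$ with eigenvalue $q + p(C)$ on a $q$-cube $C$, it is manifestly invertible on $\C[X^q]$ for every $q \geq 1$; moreover its inverse is also diagonal and therefore preserves finite support, so it acts on $\C[X^q]$ rather than just some completion. In degree zero the eigenvalue $p(P)$ vanishes precisely when $P = P_0$: for any other vertex $P$, the first hyperplane crossed by a geodesic edge-path from $P$ to $P_0$ is adjacent to $P$ and separates it from $P_0$, so $p(P)\ge 1$. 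Thus $\ker \Deltaps$ in degree zero is $\C\langle P_0\rangle$, and $\ker\Deltaps = 0$ in degrees $q\ge 1$.

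From here the argument is routine. The commutation relation
\[
\dps\Deltaps \;=\; \dps\deltaps\dps \;=\; \Deltaps\dps
\]
follows at once from $\dps^2 = 0$, and hence $\dps$ also commutes with $\Deltaps^{-1}$ on the subspace where the latter is defined. Given a cocycle $\alpha \in \C[X^q]$ with $q \geq 1$, I would set $\gamma = \deltaps \Deltaps^{-1}\alpha \in \C[X^{q-1}]$ and compute
\[
\dps \gamma \;=\; \dps\deltaps\Deltaps^{-1}\alpha \;=\; (\Deltaps - \deltaps\dps)\Deltaps^{-1}\alpha \;=\; \alpha - \deltaps \Deltaps^{-1}\dps \alpha \;=\; \alpha,
\]
realising $\alpha$ as a coboundary. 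This gives $H^q = 0$ for $q \geq 1$.

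For degree zero, the operator $\deltaps$ is zero on $\C[X^0]$ for dimensional reasons, so every cocycle $\alpha\in\C[X^0]$ automatically satisfies $\Deltaps \alpha = \dps\deltaps\alpha + \deltaps\dps\alpha = 0$, which forces $\alpha \in \C\langle P_0\rangle$. Since the image of $\dps$ in degree zero is trivial, $H^0 = \C\langle P_0\rangle \cong \C$. There is no substantive obstacle beyond the preceding Laplacian computation itself; the only detail requiring separate notice is the geometric fact that $p(P)\ge 1$ for $P\ne P_0$, which is immediate from the structure of geodesic edge-paths in a \cat(0) cube complex.
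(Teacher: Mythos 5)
Your proof is correct and takes essentially the same route as the paper: use the diagonalization of $\Deltaps$ from Proposition~\ref{prop-delta-w-diagonal}, the commutation $\dps\Deltaps^{-1}=\Deltaps^{-1}\dps$ (from $\dps^2=0$), and the identity $\alpha=\dps(\deltaps\Deltaps^{-1}\alpha)$ to kill cohomology in positive degree, with the degree-zero kernel identified via $p(P)=0 \iff P=P_0$. Your additional remarks (that $\Deltaps^{-1}$ preserves finite support since it is diagonal, and the geodesic argument for $p(P)\ge 1$) are accurate details that the paper leaves implicit.
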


\begin{proof}
In degree $q=0$ the kernel of $\dps$ is one
dimensional and is spanned by $\langle P_0 \rangle$.  In
degrees $q\geq 1$ proceed as follows.  From $\dps^2=0$ it follows that
$\dps \Deltaps = \dps\deltaps\dps =\Deltaps \dps$, so that also 
$\dps\Deltaps ^{-1}  = \Deltaps ^{-1}\dps$.   Now the calculation 
\begin{equation*}
  f = \Deltaps \Deltaps^{-1} f =
  (\dps\deltaps+\deltaps\dps)\Deltaps^{-1} f =
  \dps(\deltaps\Deltaps^{-1}) f
\end{equation*}
shows that an oriented $q$-cocycle $f$ is also an oriented $q$-coboundary.
\end{proof}

We conclude the section with a slight generalization that will be needed later.

\begin{definition} 
\label{def-weight-fn-and-differential}
  A \emph{weight function} for $X$ is a positive-real-valued function $w$ on
  the set of hyperplanes in $X$.  The \emph{weighted Julg-Valette
    differential} is the linear map
\[
d_w \colon  \C[X^q] \longrightarrow \C[X^{q+1}]
\]
given by the formula
\[
d_w \, C  = \sum_{H}  \weight(H)\,  H\wedge C . 
\]
In addition the adjoint operator 
\[
\delta_w \colon \C[X^{q+1}] \longrightarrow  \C[X^{q}]
\]
is defined    by 
\[
\delta_w \, C = \sum_{H} \weight(H)\;   H\righthalfcup C .
\]
\end{definition}

\begin{remark}
We are mainly interested
in the following examples, or small variations on them: 
\begin{alist}
\item $\weight (H) \equiv 1$.
\item $\weight (H) = $ the minimal edge-path distance to the base
  point $P_0$ from a vertex adjacent to $H$. 
\end{alist} 
\end{remark}

The calculations in this section are easily repeated in the weighted
context: the operators $d_w$ and $\delta_w$ are formally adjoint,
although \emph{unbounded} in the case of an unbounded weight function
as, for example, in (b); both are differentials when restricted to the
spaces of oriented cochains; and the
cohomology of either complex is $\C$ in degree zero and $0$ otherwise.
We record here the formula for the weighted Julg-Valette Laplacian.
Compare Proposition~\ref{prop-delta-w-diagonal}.

\begin{proposition}
\label{prop-delta-w-diagonal2}
If $C$ is an oriented $q$-cube then 
\begin{equation*}
  \Delta_w \langle C \rangle =  ( q_w(C) + p_w(C) ) \langle C \rangle,
\end{equation*}
where $q_w(C)$ is the sum of the squares of the weights of the hyperplanes that
cut $C$ and $p_w(C)$ is the sum of the squares of the weights
of the hyperplanes that are adjacent to $C$ and separate $C$ from the
base vertex.  \qed
\end{proposition}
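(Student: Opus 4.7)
The plan is to repeat the argument of Proposition~\ref{prop-delta-w-diagonal} essentially verbatim, tracking the extra weight factors that now appear. The key observation is that weights always enter as products $w(H_1)w(H_2)$ in each individual term of $d_w\delta_w + \delta_w d_w$, so the antisymmetric cancellation of off-diagonal terms is unaffected, while the diagonal terms acquire exactly the factor $w(H)^2$ that produces the stated eigenvalue.

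More concretely, I would first expand
\[
(d_w\delta_w + \delta_w d_w)\, C \;=\; \sum_{H_1,H_2} w(H_1)w(H_2)\bigl(H_1\righthalfcup H_2\wedge C + H_1\wedge H_2\righthalfcup C\bigr)
\]
and split the double sum into the diagonal part $H_1=H_2=H$ and the off-diagonal part $H_1\neq H_2$. For the off-diagonal part, apply Lemma~\ref{lemma-wedge-int}, which gives $H_1\righthalfcup H_2\wedge C = H_2\wedge H_1\righthalfcup C^*$; after passing to $\langle C\rangle = C - C^*$, the pair of terms corresponding to $(H_1,H_2)$ and $(H_2,H_1)$ cancel, exactly as in the unweighted case. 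Because both cancelling terms carry the same coefficient $w(H_1)w(H_2)$, the introduction of weights does not affect this step.

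For the diagonal part, I would use the two identities already recorded in the proof of Proposition~\ref{prop-delta-w-diagonal}: $H\wedge(H\righthalfcup C)=C$ precisely when $H$ cuts $C$ (and is $0$ otherwise), while $H\righthalfcup(H\wedge C)=C$ precisely when $C$ is adjacent to $H$ and $H$ separates $C$ from $P_0$ (and is $0$ otherwise). Consequently each hyperplane $H$ that cuts $C$ contributes $w(H)^2\,C$, and each hyperplane adjacent to $C$ and separating it from $P_0$ also contributes $w(H)^2\,C$; summing gives the eigenvalue $q_w(C) + p_w(C)$ on $\langle C\rangle$.

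I do not expect any genuine obstacle: the only subtlety is to verify that the weighted analogue of Lemma~\ref{lemma-wedge-int} goes through, but since that lemma is purely about orientations and incidence of cubes (neither $w$ nor the $w$-dependent coefficients appear inside the identity itself), the weighted statement is immediate from the unweighted one by multiplying both sides by $w(H_1)w(H_2)$. The vertex case $q=0$ is handled exactly as before: $d_w\delta_w\langle P\rangle = 0$ for dimension reasons, and $\delta_w d_w \langle P\rangle = p_w(P)\langle P\rangle$ by the same computation as above applied to the single sum defining $d_w$.
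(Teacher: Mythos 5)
Your proposal is correct, and it fills in exactly the computation that the paper leaves implicit: the paper states Proposition~\ref{prop-delta-w-diagonal2} with no separate proof, remarking only that "the calculations in this section are easily repeated in the weighted context," with the unweighted case Proposition~\ref{prop-delta-w-diagonal} as the template. Your expansion of $(d_w\delta_w + \delta_w d_w)\,C$ is the weighted analogue of equation~\eqref{eqn-kd+dk}, and your two key observations are the right ones: the off-diagonal coefficients $w(H_1)w(H_2)$ are symmetric in $H_1,H_2$, so the antisymmetric cancellation via Lemma~\ref{lemma-wedge-int} (which is a pure incidence/orientation statement, independent of $w$) survives multiplication by these coefficients; and the diagonal terms now carry $w(H)^2$, which is precisely what produces $q_w(C)+p_w(C)$ in place of $q+p(C)$. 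The vertex case is handled exactly as you say. No gap.
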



\section{Parallelism Classes of Cubes}
\label{sec-parallelism}
 
The remaining aspects of our generalization of the Julg-Valette and
Pytlik-Szwarc theory to $\cat(0)$ cube complexes all rest on the
following geometric concept:  
 
\begin{definition}
\label{def-parallel}
Two   cubes $D_1$ and $D_2$ in a $\cat(0)$ cube complex $X$ are
\emph{parallel} if they have the same dimension, and if every
hyperplane that cuts $D_1$   also cuts $D_2$.  
\end{definition} 

 \begin{figure}[ht] 
    \centering
    \includegraphics[scale=0.8]{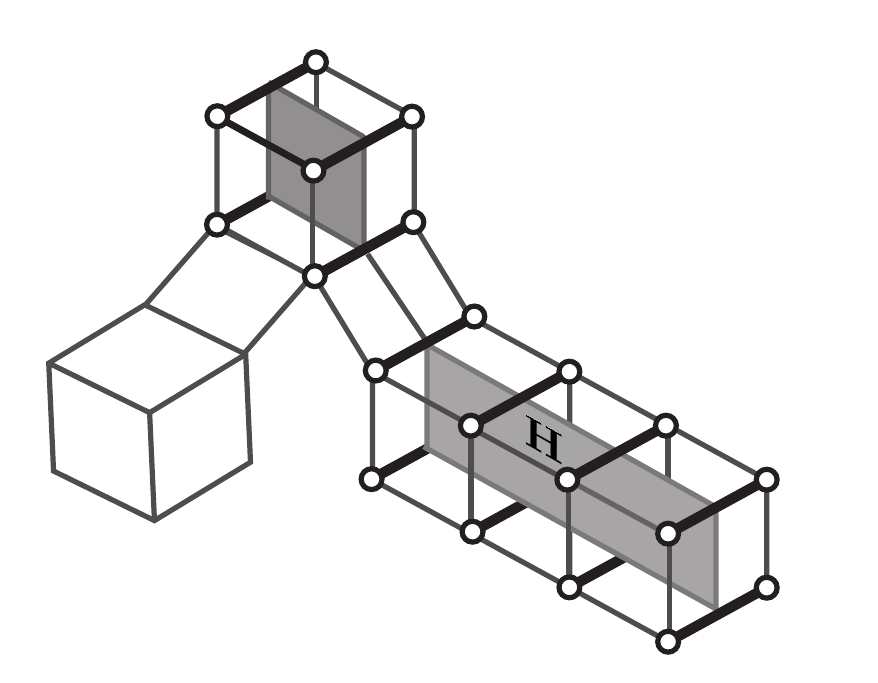} 
       \caption{ The darker edges form a parallelism class determined by the hyperplane $H$, see Definition~\ref{def-parallel}.} 
   \label{fig:parallel}
 \end{figure}

Every parallelism class  of $q$-cubes in $X$  is determined by, and
determines, a set  of $q$ pairwise intersecting hyperplanes, namely
the hyperplanes that cut all the cubes in the parallelism class.
Call these the \emph{determining hyperplanes} for the parallelism
class.  
 
\begin{proposition}\label{general-Sageev}
The intersection of the determining hyperplanes associated to a
parallelism class of $q$-cubes carries the structure of  a $\cat(0)$ cube
complex in which the $p$-cubes are the intersections of  this space
with the $(p{+}q)$-cubes in $X$ that are cut by every determining
hyperplane. 
\end{proposition}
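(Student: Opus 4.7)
My plan is to argue by induction on $q$, the dimension of the parallelism class. The base case $q=0$ is trivial, since with no determining hyperplanes the intersection is all of $X$ and the condition ``cut by every determining hyperplane'' is vacuous. The base case $q=1$ is essentially the well-known fact that a single hyperplane $H$ in a $\cat(0)$ cube complex is itself a $\cat(0)$ cube complex in which the $p$-cubes are the intersections $H\cap C$ as $C$ ranges over the $(p+1)$-cubes of $X$ that are cut by $H$; this follows from the definitions in Section~\ref{sec-cubes-etc} together with the results of \cite{SageevEndsOfGroups, NibloReevesNormal}.

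For the inductive step I would let $H_1,\dots, H_q$ be the determining hyperplanes and set $Y' = H_1\cap\cdots\cap H_{q-1}$. By the inductive hypothesis, $Y'$ is a $\cat(0)$ cube complex whose $p$-cubes are the sets $Y'\cap C$ for $(p+q-1)$-cubes $C$ of $X$ that are cut by $H_1,\dots,H_{q-1}$. The central claim is that $Y := H_q\cap Y'$ is a hyperplane of $Y'$; granting this, a second application of the $q=1$ case, now inside $Y'$, identifies $Y$ as a $\cat(0)$ cube complex whose $p$-cubes are the sets $Y\cap D$ for $(p+1)$-cubes $D$ of $Y'$ cut by $H_q$. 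Writing such a $D$ as $Y'\cap C$ for a $(p+q)$-cube $C$ of $X$ cut by $H_1,\dots,H_{q-1}$, the condition that $D$ is cut by $H_q$ translates directly into the condition that $C$ itself is cut by $H_q$, reproducing exactly the description of the $p$-cubes asserted in the proposition.

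The main obstacle is the central claim that $Y = H_q\cap Y'$ really is a hyperplane of $Y'$ in the sense of Definition~\ref{def-hyperplane}. Here Lemma~\ref{lem-3-hyperplanes} is the key input: for every cube $C$ of $X$ cut by all of $H_1,\dots,H_q$, these $q$ hyperplanes meet in a $q$-subcube of $C$, and in particular $H_q$ meets the cube $Y'\cap C$ of $Y'$ in exactly one midplane. I would then check two things: first, that every midplane of a $Y'$-cube arising from the restriction of $H_q$ has this form; and second, that these midplanes constitute a single midplane-equivalence class in $Y'$. The second point is the delicate step, and I would reduce midplane-equivalence inside $Y'$ to midplane-equivalence inside $X$ by observing, via the inductive description of the cubes of $Y'$, that two $Y'$-cubes sharing a midplane must come from two $X$-cubes sharing that same midplane and the determining hyperplanes $H_1,\dots,H_{q-1}$. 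Total geodesicity of $Y$, as an iterated intersection of totally geodesic hyperplanes, is automatic; combined with the cube structure just described this completes the identification of $Y$ as a $\cat(0)$ cube complex with the asserted cubes.
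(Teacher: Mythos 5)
Your overall strategy coincides with the paper's: induct on the number of determining hyperplanes, with the base case $q=1$ being Sageev's theorem that a hyperplane is a $\cat(0)$ cube complex whose $p$-cubes are intersections with the $(p+1)$-cubes it cuts, and the inductive step peeling off one hyperplane and checking that its trace on the intersection of the others is itself a hyperplane there. You have also correctly identified the one nontrivial point, which is showing that the midplanes of $Y'$-cubes lying in $H_q$ form a single equivalence class of midplanes \emph{in $Y'$}.

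It is exactly at that point that your argument has a gap. You propose to ``reduce midplane-equivalence inside $Y'$ to midplane-equivalence inside $X$'' using the observation that two $Y'$-cubes sharing a midplane come from two $X$-cubes sharing a midplane and cut by $H_1,\dots,H_{q-1}$. But that observation only translates $Y'$-data to $X$-data; it does not let you go the other way. The midplanes you want to link are all $X$-equivalent (they all belong to $H_q$), so there is an $X$-chain $m_1=n_0, n_1,\dots, n_k=m_2$ of midplanes with consecutive ones meeting in a midplane — but nothing forces the intermediate $n_i$ to lie in cubes of $X$ that are cut by $H_1,\dots,H_{q-1}$. The chain can perfectly well wander away from $Y'$, and then intersecting with $Y'$ does not produce a chain of $Y'$-midplanes. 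What is needed is a reason to choose the chain \emph{inside} $Y'$, and your observation does not supply one. The paper closes this gap by a different route: it notes that $Y'$ is a totally geodesic subspace of $X$, hence so is $H_q\cap Y'$, and in particular $H_q\cap Y'$ is connected; connectedness of this union of midplanes of $Y'$-cubes is what yields a single equivalence class in $Y'$, with no need to control an $X$-chain. To repair your argument you should replace the proposed reduction with this total-geodesicity/connectedness argument (or otherwise prove that an $X$-chain can be chosen to stay in cubes cut by all of $H_1,\dots,H_{q-1}$, which would amount to re-deriving the same geometric fact).
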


\begin{proof}
  The case when $q=0$ is the assertion that $X$ itself is a $\cat(0)$
  cube complex.  The case when $q=1$ is the assertion that a
  hyperplane in $\cat(0)$ cube complex $X$ is itself a $\cat(0)$ cube
  complex in the manner described above, and this is proved by Sageev
  in \cite[Thm.~4.11]{SageevEndsOfGroups}.
  
  For the general result, we proceed inductively as follows.  Suppose
  given $k$ distinct hyperplanes $K_1,\dots,K_k$ in $X$.  The
  intersection $Z=K_2\cap\dots\cap K_k$ is then a $\cat(0)$ cube
  complex as described in the statement, and the result will follow
  from another application of \cite[Thm.~4.11]{SageevEndsOfGroups}
  once we verify that $K_1\cap Z$ is a hyperplane in $Z$.  Now the
  cubes, and so also the midplanes of $Z$ are exactly the non-empty
  intersections of the cubes and midplanes of $X$ with $Z$.  So, we
  must show that if two midplanes belonging to the hyperplane $K_1$ of
  $X$ intersect $Z$ non-trivially then their intersections are
  hyperplane equivalent \emph{in $Z$}.  But this follows from the fact
  that $Z$ is a totally geodesic subspace of $X$.
 
\end{proof}

\begin{proposition}
\label{prop-nearest-cube}
Let $X$ be a $\cat(0)$ cube complex and let $P$  be a   vertex in $X$.
In each parallelism class of $q$-cubes there is   a unique cube that
is closest to   $P$, as measured by   the distance from  closest point
in the cube  to $P$  in the edge-path metric. 
\end{proposition}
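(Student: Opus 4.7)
The parallelism class $\mathcal{C}$ is determined by a set of $q$ pairwise intersecting hyperplanes $H_1, \dots, H_q$, and each cube $C \in \mathcal{C}$ has a distinguished vertex $V(C)$, namely the unique vertex of $C$ lying on the same side as $P$ of every $H_i$. By Lemma~\ref{lem-k-hplanes} the assignment $C \mapsto V(C)$ is a bijection between $\mathcal{C}$ and the set $S$ of vertices of $X$ that are adjacent to every $H_i$ and lie on the $P$-side of every $H_i$.

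The first step is to check that the closest point of $C$ to $P$ in the edge-path metric is $V(C)$ itself, so that $d(P, C) = d(P, V(C))$. A hyperplane that does not cut $C$ either separates all of $C$ from $P$ or none of it, and so contributes equally to the edge-path distance from $P$ to every vertex of $C$; each $H_i$, on the other hand, separates a vertex of $C$ from $P$ precisely when that vertex lies on the non-$P$-side. Hence $V(C)$ is the unique vertex of $C$ minimising the edge-path distance to $P$, and since this distance is a convex function on the cube $C$, its minimum over $C$ is realised at a vertex. The problem therefore reduces to finding a unique vertex in $S$ closest to $P$.

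The core of the argument is to realise $S$ as the vertex set of a convex subcomplex of $X$. The common carrier $N(Z) := N(H_1) \cap \dots \cap N(H_q)$ is convex, being the intersection of the convex carriers $N(H_i)$, and a repeated application of Proposition~\ref{general-Sageev} produces a product decomposition $N(Z) \cong Z \times [-\tfrac{1}{2}, \tfrac{1}{2}]^q$ of cube complexes, where $Z = H_1 \cap \dots \cap H_q$. Under this identification $S$ is the vertex set of the ``corner'' subcomplex $Z \times \{(\tfrac{1}{2}, \dots, \tfrac{1}{2})\}$ on the $P$-side of every $H_i$; since a $\cat(0)$ geodesic in a product has constant coordinate in any factor collapsed to a point, the corner is convex inside $N(Z)$, hence convex in $X$.

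Uniqueness is then the standard median argument. If $V_1, V_2 \in S$ both realise the minimum of $d(P, V)$ for $V \in S$, then the median $m = m(P, V_1, V_2)$ lies on the interval $[V_1, V_2]$ and so belongs to $S$ by convexity; the defining identities $d(P, V_i) = d(P, m) + d(m, V_i)$, for $i = 1, 2$, combined with the minimality of $d(P, V_i)$, force $d(m, V_i) = 0$, yielding $V_1 = m = V_2$. The delicate point is the product decomposition of $N(Z)$, which one establishes by induction on $q$, peeling off one hyperplane at a time via Proposition~\ref{general-Sageev}; the remaining ingredients are routine facts about the median geometry of $\cat(0)$ cube complexes.
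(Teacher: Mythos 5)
Your proof is correct, but it takes a genuinely different route from the paper's. The paper fixes a distance-minimizing vertex $R$ among all vertices of cubes in the class, and then proves an \emph{addition formula} $d(P,S)=d(P,R)+d(R,S)$; this in turn is reduced to a \emph{hyperplane property} (every hyperplane separating $P$ from $R$ misses some determining hyperplane), which is established by walking along the normal cube path from $R$ to $P$ in the sense of Niblo--Reeves. You instead identify the relevant set of vertices $S$ as the vertex set of one corner of the carrier $N(H_1)\cap\dots\cap N(H_q)\cong Z\times[-\tfrac12,\tfrac12]^q$, invoke convexity of that corner, and finish with the standard gate/median argument. Both are valid; your argument is arguably more conceptual (closest-point uniqueness falls out of median convexity), but it leans on several standard facts --- convexity of hyperplane carriers, the product decomposition of the carrier of a family of pairwise-crossing hyperplanes, and the equivalence of CAT(0)-convexity with combinatorial (median) convexity for full subcomplexes --- none of which is proved or cited in the paper, whereas the paper's route is self-contained modulo the normal cube path machinery it explicitly cites. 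One further difference worth noting: the paper's proof yields the addition formula and the hyperplane property as explicit byproducts, and both are reused later (in Proposition~\ref{prop-nearest-cube2} and in the proof of Lemma~\ref{lemma-new-7.2}); your median argument does recover the addition formula once uniqueness is known, but it does not directly produce the hyperplane property. A small wording quibble: the sentence ``since this distance is a convex function on the cube $C$, its minimum over $C$ is realised at a vertex'' is unnecessary and slightly confused, since the edge-path distance is only defined on vertices in the first place; the preceding observation that $V(C)$ minimizes $d(P,\cdot)$ over the vertices of $C$ is all that is needed.
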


Before beginning the proof, we recall that the edge-path distance
between two vertices is equal to the number of hyperplanes separating
the vertices; see for example \cite[Theorem 4.13]{SageevEndsOfGroups}.
In addition, let us make note of the following simple fact: 

\begin{lemma}
\label{lem-cuts-determining}
A hyperplane that separates two vertices of distinct cubes in the same
parallelism class must intersect every determining hyperplane. 
\end{lemma}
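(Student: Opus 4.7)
The plan is to argue by contradiction, using two basic facts about hyperplanes: (i) two distinct hyperplanes that each cut a common cube must intersect, since their midplanes inside that cube automatically meet; and (ii) disjoint hyperplanes satisfy the half-space containment given by Lemma~\ref{lem-4-quadrants}. Let $D_1$ and $D_2$ be distinct cubes in the parallelism class, with vertices $v_1 \in D_1$ and $v_2 \in D_2$ separated by $H$, and let $K$ be a determining hyperplane. The case $H=K$ is trivial, so I may assume $H \neq K$ and aim to show $H \cap K \neq \emptyset$.

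The key observation is the dichotomy forced by parallelism: the cutting hyperplanes of $D_1$ and $D_2$ coincide, so either $H$ cuts both cubes or it cuts neither. First I would dispose of the case in which $H$ cuts $D_1$. Then $H$ also cuts $D_2$ by parallelism, and $K$ cuts $D_2$ because it is determining; by (i) above, $H \cap K \neq \emptyset$.

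Otherwise $H$ cuts neither $D_1$ nor $D_2$. Each cube then lies entirely inside one of the two half-spaces of $H$, and since $H$ separates $v_1$ from $v_2$ these must be opposite half-spaces; say $D_1$ lies in $H^-$ and $D_2$ lies in $H^+$. Assume for contradiction that $H \cap K = \emptyset$. By Lemma~\ref{lem-4-quadrants}, some half-space of $H$ is contained in some half-space of $K$, so at least one of $D_1$ or $D_2$ sits inside a single half-space of $K$. This contradicts $K$ being a determining hyperplane, which by definition cuts both cubes.

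There is not really a serious obstacle; the only thing to be careful about is pinning down the parallelism dichotomy for $H$ at the start, since it is what makes the two cases above exhaustive and independent of which vertex one picks in each cube. Once that is in place, the geometric content is carried entirely by Lemma~\ref{lem-4-quadrants} and the elementary midplane intersection fact.
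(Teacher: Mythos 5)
Your proof is correct. The case split is clean and both branches are handled properly: when $H$ cuts one of the cubes, parallelism forces it to cut both, and then two distinct midplanes in a common cube meet, so $H$ and $K$ intersect; when $H$ cuts neither, the cubes lie in opposite half-spaces of $H$, and disjointness of $H$ and $K$ together with Lemma~\ref{lem-4-quadrants} traps one of the cubes in a single $K$-half-space, contradicting $K$ being determining.

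The paper's own proof runs along the same case boundary (``$H$ is a determining hyperplane'' is precisely your ``$H$ cuts both cubes'' case) but differs in the second half: rather than invoking Lemma~\ref{lem-4-quadrants}, the paper notes that $H$ separates the two cubes and hence separates two midplanes belonging to $K$, and then concludes directly from the connectedness of $K$ that $H$ must meet $K$. Your route pays a small premium by citing Lemma~\ref{lem-4-quadrants}, which is itself established via connectedness of hyperplanes; the paper's argument cuts out the middleman. Both buy the same result with essentially the same geometry, but the paper's version is marginally more self-contained for this lemma, while yours has the advantage of reusing a lemma already catalogued in Section~\ref{sec-cubes-etc}, which keeps the structure of reasons explicit. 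One small thing worth keeping in mind: after assuming $H \cap K = \emptyset$ and getting $H^- \subseteq K^-$, say, you should note that the contradiction comes from whichever $D_i$ lies in $H^-$ (not ``at least one'' in a way that needs further inspection); you do effectively say this, but making the containment chain $D_i \subseteq H^- \subseteq K^-$ explicit would tighten the last step.
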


\begin{proof}
This is obvious if the hyperplane is one of the determining
hyperplanes.  Otherwise, the hyperplane must in fact separate two
cubes in the parallelism class, and so it must separate two midplanes
from each determining hyperplane.  Since hyperplanes are connected the
result follows. 
\end{proof}

\begin{proof}[Proof of Proposition~\ref{prop-nearest-cube}]
Choose a  vertex $R$ from among the cubes in the parallelism class such that 
\begin{equation}
\label{eq-distinguished-minimizer}
d(P,R)  \le d (P,S)
\end{equation}
for every other such vertex $S$.  We shall prove the \emph{addition formula} 
\begin{equation}
\label{eq-addition-fmla}
d(P,S) = d(P,R) + d(R,S),
\end{equation}
and this will certainly prove the uniqueness of $R$.

The  addition formula \eqref{eq-addition-fmla} is  a consequence of the following \emph{hyperplane property} of any   $R$ satisfying \eqref{eq-distinguished-minimizer}: 
 \emph{every  hyperplane that separates $P$ from $R$  is parallel to \textup{(}that is, it does not intersect\textup{)} at least one determining hyperplane.}
Indeed,  it follows from Lemma~\ref{lem-cuts-determining} and  
the hyperplane property that no hyperplane
can separate $R$ from both $P$ and $S$, so that  (\ref{eq-addition-fmla}) follows from the
characterization of the edge path distance given above.

It remains to prove the hyperplane property for any $R$ satisfying \eqref{eq-distinguished-minimizer}. For this we shall use the notion of \emph{normal cube path} from \cite[Section 3]{NibloReevesNormal}.
There exists a normal cube path from $R$ to $P$ with vertices
\[
R=R_1,\dots ,R_l=P .
\]
This means that every pair of consecutive $R_i$ are diagonally opposite a cube, called a \emph{normal cube}, all of whose hyperplanes separate $R$ from $P$, and every such separating hyperplane cuts exactly one normal cube.  It also means that  every hyperplane $K$ separating $R_i$ from $R_{i+1}$ is parallel to at least one of the hyperplanes $H$ separating $R_{i-1}$ from $R_{i}$ (so each normal cube is, in turn, as large as possible).  Note that the hyperplane $K$  is  contained completely in the half-space of $H$ that contains $P$.

No hyperplane $H$ separating $R=R_1$ from  $R_2$ can intersect every
determining hyperplane, for if it did, then it would follow from
Lemma~\ref{lem-k-hplanes} that $H$  and the determining hyperplanes
would intersect in a $(q{+}1)$-cube having $R$ as a vertex. The vertex
$S$ separated from $R$ by $H$ alone would then belong to a cube in the
parallelism class, and would be
strictly closer to $P$ than $R$.      

Consider the second normal cube, with opposite vertices $R_3$ and $R_2$.  Any hyperplane $K$ separating $R_3$ from $R_2$ is parallel to some hyperplane $H$ separating $R_2$ from $R_1$, and this is in turn parallel to some determining hyperplane.  But $K$   is  contained completely in the half-space of $H$ that contains $P$, while the determining hyperplane is contained completely in the half-space of $H$ that contains $R$.  So $K$ does not meet this determining hyperplane.

Continuing in this fashion with successive normal cubes, we find that every hyperplane that separates $P$ from $R$ is indeed parallel to some determining hyperplane, as required.
\end{proof}

We can now verify the formula mentioned in the introduction:

\begin{proposition}
  If X is finite $\cat(0)$ cubical space, then the number of vertices
  of $X$ is equal to the number of parallelism classes of cubes of all
  dimensions. 
\end{proposition}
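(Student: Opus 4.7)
The plan is to construct an explicit bijection between the vertex set of $X$ and the set of parallelism classes of cubes. Fix the base vertex $P_0$ of Section~\ref{sec-jv-cplx}. For each vertex $v$, let $\mathcal{F}(v)$ denote the collection of hyperplanes that are adjacent to $v$ in the sense of Definition~\ref{def-vertex-adjacent} and that separate $v$ from $P_0$. I will show that $\mathcal{F}(v)$ is the set of determining hyperplanes of a cube $C_v$ containing $v$, and that the map $v \mapsto [C_v]$ is a bijection onto the parallelism classes of all cubes in $X$.

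The first step is to verify that the hyperplanes in $\mathcal{F}(v)$ pairwise intersect, so that by Lemma~\ref{lem-k-hplanes} they span a cube $C_v$ with $v$ as its vertex on the far side of every hyperplane in $\mathcal{F}(v)$. Suppose instead that $H_1, H_2 \in \mathcal{F}(v)$ were disjoint; by Lemma~\ref{lem-4-quadrants} one half-space of $H_1$ would be contained in one half-space of $H_2$. Since $P_0$ lies on a specified side of each, only two of the four possible containments are compatible with the position of $P_0$, and each of those two is ruled out by noting that the neighbor of $v$ across $H_1$ must lie on the same side of $H_2$ as $v$ itself (the connecting edge crosses only $H_1$), and analogously with the roles of $H_1$ and $H_2$ swapped.

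Surjectivity of $v \mapsto [C_v]$ runs as follows. Let $[D]$ be a parallelism class with determining hyperplanes $K_1, \dots, K_q$, let $D^*$ be the unique closest cube supplied by Proposition~\ref{prop-nearest-cube}, and let $v_D$ be the vertex of $D^*$ farthest from $P_0$. The inclusion $\{K_1, \dots, K_q\} \subseteq \mathcal{F}(v_D)$ is built into the construction. Conversely, if there were an extra $H \in \mathcal{F}(v_D) \setminus \{K_1,\dots,K_q\}$, then by the previous step the hyperplanes $H, K_1, \dots, K_q$ would span a $(q{+}1)$-cube $\tilde D$ at $v_D$, and the $H$-face of $\tilde D$ opposite $v_D$ would be a cube in $[D]$ whose closest vertex to $P_0$ is one step nearer than that of $D^*$, contradicting Proposition~\ref{prop-nearest-cube}. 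Hence $\mathcal{F}(v_D) = \{K_1, \dots, K_q\}$ and $[C_{v_D}] = [D]$.

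The main obstacle is injectivity: showing that if $\mathcal{F}(v) = \{K_1, \dots, K_q\}$ then $v = v_D$. The key is the addition formula established in the proof of Proposition~\ref{prop-nearest-cube}, applied with the closest vertex of any cube in $[D]$ as reference and then combined for $v$ and $v_D$: it yields $d(v, P_0) = d(v_D, P_0) + d(v_D, v)$, which forces every hyperplane separating $v$ from $v_D$ also to separate $v$ from $P_0$. Now I will invoke the normal cube path from $v$ to $v_D$ used in the proof of Proposition~\ref{prop-nearest-cube}: its first normal cube sits at $v$, and every hyperplane cutting that cube is adjacent to $v$, separates $v$ from $v_D$ and hence from $P_0$, and is not any $K_i$ since $v$ and $v_D$ lie on the same side of each $K_i$. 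This would exhibit an element of $\mathcal{F}(v) \setminus \{K_1,\dots,K_q\}$ unless $v = v_D$, completing the bijection.
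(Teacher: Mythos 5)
Your proof is correct and takes essentially the same approach as the paper: the cube $C_v$ you build from the pairwise-intersecting family $\mathcal{F}(v)$ is exactly the first cube of the normal cube path from $v$ to $P_0$, and both surjectivity and injectivity rest, just as in the paper's proof, on Proposition~\ref{prop-nearest-cube} together with the addition formula and the hyperplane property established in its proof. The only step you take a bit quickly is the identity $d(v,P_0)=d(v_D,P_0)+d(v_D,v)$; subtracting the two instances of the addition formula with the nearest vertex $R^*$ as reference gives $d(v,P_0)-d(v_D,P_0)=d(R^*,v)-d(R^*,v_D)$, and identifying the right-hand side with $d(v_D,v)$ uses the additional (easy) observation that $v_D$ lies on a geodesic from $R^*$ to $v$, because no determining hyperplane separates $v$ from $v_D$ (both are far vertices of their cubes) while only determining hyperplanes separate $R^*$ from $v_D$.
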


\begin{proof}
Fix a base vertex $P$ and associate to each vertex $Q$ the first cube
in the normal cube path from $Q$ to $P$.  This correspondence induces
a bijection from vertices to parallelism classes of cubes.   

Indeed it follows from the hyperplane property that if $C$ is the nearest cube to $P$ within its parallelism class, and if $Q$ is the vertex of $C$ furthest from $P$, then $C$ is the first cube in the normal cube path from $Q$ to $P$. So our map is surjective. On the other hand it follows from the addition formula that if $C$ is \emph{not} nearest to $P$ within its equivalence class, and if $Q$ is the vertex of $C$ furthest from $P$, then any hyperplane that  separates $Q$ from the nearest cube also separates $Q$ from $P$.  Choosing a hyperplane that is adjacent to $Q$ but not does not cut $C$, we find that  $C$ is not the first cube in the normal cube path from $Q$ to $P$, and our map is injective.
\end{proof}

\begin{proposition}
\label{prop-nearest-cube2}
Let $X$ be a $\cat(0)$ cube complex and let $P$ and $Q$ be     vertices  in $X$ that  are separated by a single hyperplane $H$. The nearest $q$-cubes to $P$ and $Q$ within a parallelism class are either the same, or are opposite faces, separated by  $H$, of a $(q{+}1)$-cube that is cut by $H$.
\end{proposition}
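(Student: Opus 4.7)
Write $R_P$ and $R_Q$ for the closest vertices to $P$ and $Q$ within the parallelism class, write $K_1, \dots, K_q$ for the determining hyperplanes, and write $V^P$, $V^Q$ for the vertex sets on the two sides of $H$. The plan is to assume $C_P \ne C_Q$ (the case $C_P = C_Q$ being trivial) and show that $C_P$ and $C_Q$ sit as opposite $q$-faces of a single $(q{+}1)$-cube cut by $H$.

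The first step is to pin down the position of $H$ relative to the $K_i$: the claim is that under $C_P \ne C_Q$ the hyperplane $H$ is distinct from every $K_i$ and meets every $K_i$. Suppose first that $H$ coincides with some $K_i$, so $H$ cuts every cube in the parallelism class. The hyperplane property in the proof of Proposition~\ref{prop-nearest-cube} forces $R_P \in V^P$; the vertex of $C_P$ opposite $R_P$ across $H$ then lies in $V^Q$ at distance $d(P,R_P)$ from $Q$, and a short comparison shows that this distance is minimal over the vertex set of the parallelism class, so by the uniqueness clause of Proposition~\ref{prop-nearest-cube} it coincides with $R_Q$, forcing $C_P = C_Q$. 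Suppose instead that $H$ is disjoint from some $K_i$. Then Lemma~\ref{lem-4-quadrants} places one half-space of $H$ inside a half-space of $K_i$, and because every cube of the class is cut by $K_i$ but not by $H$, all cubes of the class lie on a single side of $H$; a direct comparison of distances again yields $R_P = R_Q$ and $C_P = C_Q$. Neither possibility is consistent with $C_P \ne C_Q$.

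With $H$ distinct from and transverse to every $K_i$, the hyperplane property gives $R_P \in V^P$ and, symmetrically, $R_Q \in V^Q$. The crux of the argument is the addition formula
\begin{equation*}
d(P,S) = d(P,R_P) + d(R_P,S)
\end{equation*}
of Proposition~\ref{prop-nearest-cube}, valid for any vertex $S$ of a cube in the class. Applying it at $S = R_Q$ together with $d(P,R_Q) = d(Q,R_Q)+1$ (since $H$ is the only hyperplane separating $P$ from $Q$ and $R_Q \in V^Q$), and then adding the symmetric equation obtained from the $Q$-addition formula, one obtains $d(R_P,R_Q) = 1$. So $R_P$ and $R_Q$ are joined by an edge crossing $H$; in particular $H$ is adjacent to $R_P$, and so are $K_1,\dots,K_q$ since $R_P \in C_P$. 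These $q+1$ pairwise-intersecting hyperplanes have non-empty intersection by Lemma~\ref{lem-3-hyperplanes}, and Lemma~\ref{lem-k-hplanes} then produces a unique $(q{+}1)$-cube $\tilde C$ containing $R_P$ and cut by all of them. Its two $q$-faces perpendicular to $H$ are cut by exactly $K_1,\dots,K_q$, they contain $R_P$ and $R_Q$ respectively, and so by the uniqueness clause of Lemma~\ref{lem-k-hplanes} they must be $C_P$ and $C_Q$. The main obstacle is the addition-formula computation giving $d(R_P,R_Q) = 1$: this is what forces the two distance-minimizers into a single ambient $(q{+}1)$-cube, from which the hyperplane lemmas take over.
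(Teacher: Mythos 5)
Your proof is correct, but it takes a noticeably more roundabout route than the paper's. The paper applies the addition formula directly to an \emph{arbitrary} hyperplane $K$ separating $R_P$ from $R_Q$: the addition formula at $P$ forces $K$ to separate $P$ from $R_Q$, the addition formula at $Q$ forces it to separate $Q$ from $R_P$, and combining these two facts shows $K$ must separate $P$ from $Q$, hence $K = H$. This immediately gives $d(R_P, R_Q) \le 1$ and dispenses with your whole preliminary case analysis ruling out $H = K_i$ and $H \cap K_i = \emptyset$; those situations are absorbed automatically. (The paper then notes that if $H$ is a determining hyperplane the two nearest vertices $R_P$, $R_Q$ lie in the same $q$-cube, which matches your handling of the $H = K_i$ branch.) Your approach, by contrast, first pins down the position of $H$ relative to the $K_i$, then invokes the hyperplane property to place $R_P$ and $R_Q$ on opposite sides of $H$, and finally runs the addition formula from both $P$ and $Q$ and adds the equations. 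That gets there, and the extra information you extract ($H$ transverse to all $K_i$) is exactly what you need for the last step, where you apply Lemmas~\ref{lem-3-hyperplanes} and \ref{lem-k-hplanes} to exhibit the $(q{+}1)$-cube explicitly. In this respect your write-up is actually \emph{more} complete than the paper's, which leaves the construction of the ambient $(q{+}1)$-cube implicit after establishing $d(R_P,R_Q)=1$. So: correct argument, different emphasis—the paper's shortcut ``any separating hyperplane must be $H$'' is the cleaner route to $d(R_P,R_Q)\le 1$, while your version spells out the cube construction at the end that the paper glosses over.
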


\begin{proof}
Denote by $R$  and $S$ the nearest  vertices to $P$ and $Q$, respectively,  among the vertices of cubes in the equivalence class, and suppose  that a hyperplane $K$ separates $R$ from $S$. Then it must separate $P$ from $S$ by the addition formula \eqref{eq-addition-fmla} applied to the nearest point $R$, and also separate $Q$ from $R$, by the addition formula applied to the nearest point $S$.  So it  must separate $P$ from $Q$, and hence must be $H$.  So either there is no hyperplane separating $R$ from $S$, in which case of course $R=S$ and the nearest cubes to $P$ and $Q$ are the same, or $R$ is opposite $S$ across $H$.  If $H$ is a determining hyperplane, then $R$ and $S$ are vertices of the same $q$-cube in the parallelism class; if $H $ is not a determining hyperplane, then $R$ and $S$ belong to $q$-cubes that are opposite to one another across $H$, as required. 
\end{proof}



\section{The Pytlik-Szwarc Complex}
\label{sec-eq-complex}

As described in the introduction, our ultimate goal involves deforming
the Julg-Valette complex into what we call the \emph{Pytlik-Szwarc
  complex\/}, a complex with the same cohomology but which is
equivariant in the case of a group acting on the $\cat(0)$ cube
complex.  In this short section we describe the (algebraic)
Pytlik-Szwarc complex.

As motivation for what follows we consider how to compare orientations
on parallel cubes.  The key observation is that a vertex in a $q$-cube
is uniquely determined by its position relative to the cutting
hyperplanes $K_1,\dots,K_q$.  Thus, there is a natural isometry
between (the vertex sets of) any two parallel $q$-cubes.  We shall say
that parallel $q$-cubes of positive dimension are \emph{compatibly
  oriented} if their orientations are presented by vertices $P_1$ and
$P_2$ which correspond under this isometry and a common listing of the
cutting hyperplanes $K_1,\dots,K_q$; vertices are \emph{compatibly
  oriented} if they are oriented by the same choice of sign.

We shall now generalize these considerations to pairs comprising a
cube and one of its faces.

\begin{definition}
\label{def-parallel-pairs}
A \emph{cube pair} is a pair $(C,D)$ in which $C$ is a cube containing
$D$ as a face.  Two cube pairs $(C_1,D_1)$ and $(C_2,D_2)$ are
\emph{parallel} if the cubes $C_1$ and $C_2$ are parallel, and the
cubes $D_1$ and $D_2$ are parallel too.  When $D$ is a $q$-cube, and
$C$ is a $(p+q)$-cube, we shall call $(C,D)$ a $(p,q)$-cube pair,
always keeping in mind that in this notation $p$ is the
\emph{codimension} of $D$ in $C$.
\end{definition}

We may describe the parallelism class of a
$(p,q)$-cube pair $(C,D)$ by grouping the determining hyperplanes of
the parallelism class of $C$ into a symbol
\begin{equation}
\label{unoriented-symbol}
      \left\{\, H_1,\dots,H_p \,|\, K_1,\dots,K_q \,\right\},
\end{equation}
in which the $K_1, \dots, K_q$ determine the parallelism class of $D$.
The hyperplanes $H_1, \dots, H_p$ which cut $C$ but not $D$ are the
\emph{complementary hyperplanes} of the cube pair, or of the
parallelism class.

An \emph{orientation} of a cube pair $(C,D)$ is an orientation of the
face $D$.  In order to compare orientations of parallel cube pairs
$(C_i,D_i)$ we can compare the orientations on the faces $D_i$, which
are themselves parallel cubes, but must also take into account the
position of the faces within the ambient cubes $C_i$.  For this we
introduce the following notion.

\begin{definition}\label{def-pair-parity}
Two parallel  cube pairs $(C_1,D_1)$ and $(C_2,D_2)$  have the
\emph{same parity} if the number of complementary hyperplanes
that separate $D_1$ from $D_2$, is
\emph{even}.   Otherwise they have the \emph{opposite parity}. 
\end{definition}

\begin{definition}\label{def-equivalent-presentations}
  Let $(C_1,D_1)$ and $(C_2,D_2)$ be parallel cube pairs, each
  with an orientation.  The orientations are \emph{aligned}
  if one of the following conditions holds:
\begin{alist}
\item   $(C_1,D_1)$ and $(C_2,D_2)$ have the  {same parity}, and 
  $D_1$ and $D_2$ are compatibly oriented; or
\item  $(C_1,D_1)$ and $(C_2,D_2)$ have the  {opposite parity}, and 
  $D_1$ and $D_2$ are not compatibly oriented.
\end{alist}
\end{definition}

In the symbol (\ref{unoriented-symbol}) describing the parallelism
class of a cube pair $(C,D)$, the hyperplanes are not ordered; the
only relevant data is which are to the left, and which to the right of
the vertical bar.  
If the cube pair $(C,D)$ is oriented, then the symbol receives
additional structure coming from the orientation of $D$.  We group the
determining hyperplanes as before, and include a vertex $R$ of $D$ into
a new symbol
\begin{equation}
\label{oriented-symbol}
   \left\{\, H_1,\dots,H_p \,|\, K_1,\dots,K_q \,|\, R \,\right\}.
\end{equation}
Here, in the case $q>0$, the hyperplanes $K_1,\dots,K_q$ form an
\emph{ordered list} which, together with the vertex $R$ are a
presentation of the oriented cube $D$.  In the case $q=0$ this\ list is
empty and we replace it by the sign representing the orientation of
the vertex $D=R$, obtaining a symbol of the form
\begin{equation}
  \label{oriented-symbol-vrtx}
  \left\{\, H_1,\dots,H_p \,|\, + \,|\, R \,\right\} 
    \quad\text{or}\quad
  \left\{\, H_1,\dots,H_p \,|\, - \,|\, R \,\right\}. 
\end{equation}
In either case the hyperplanes $H_1,\dots,H_p$ remain an
\emph{unordered set}.  Conversely, a formal expression as in
(\ref{oriented-symbol}) or (\ref{oriented-symbol-vrtx}) is the symbol
of some oriented $(p,q)$-cube pair precisely when the hyperplanes
$H_1,\dots,K_q$ are distinct and have nonempty (pairwise)
intersection, and the vertex $R$ is adjacent to all of them.

The following definition captures the notion of alignment of
orientations in terms of the associated symbols.

\begin{definition}\label{def-symbol-orientation}
Symbols 
\begin{equation*}
  \left\{\, H_1,\dots,H_p \,|\, K_1,\dots,K_q \,|\, R \,\right\} 
    \quad\text{and}\quad
  \left\{\, H'_1,\dots,H'_p \,|\, K'_1,\dots,K'_q \,|\, R' \,\right\}
\end{equation*}
of the form (\ref{oriented-symbol}) are \emph{equivalent} if 
\begin{alist}
  \item the sets $\{\, H_1,\dots,H_p \,\}$ and $\{\, H'_1,\dots,H'_p \,\}$ are equal; 
  \item the $K_1,\dots,K_q$ are a permutation of the
    $K'_1,\dots,K'_q$; and 
  \item the number of hyperplanes among the $H_1,\dots,K_q$ separating
    $R$ and $R'$ has the same parity as the permutation in (b).
\end{alist}
In the case of symbols of the form (\ref{oriented-symbol-vrtx}) we
omit (b) and replace (c) by
\begin{alist}
\item[(c$'$)] the number of hyperplanes among the $H_1,\dots,H_p$ separating
  $R$ and $R'$ is even if the orientation signs agree, and odd otherwise.
\end{alist}
An \emph{oriented $(p,q)$-symbol} is an equivalence class of symbols.
We shall denote the equivalence class of the symbol
(\ref{oriented-symbol}) by
\begin{equation*}
    [\, H_1,\dots,H_p \,|\, K_1,\dots,K_q \,|\, R \,],
\end{equation*}
or simply by $[\, H \,|\, K \,|\, R \,]$ when no confusion can arise,
and we use similar notation in the case of symbols of the form
(\ref{oriented-symbol-vrtx}). 
We shall denote the set of oriented $(p,q)$-symbols by $\H^p_q$, and
the (disjoint) union $\H_q^0 \cup \cdots \cup \H_q^{n-q}$ by $\H_q$.
\end{definition}

\begin{proposition}
  The oriented symbols associated to oriented $(p,q)$-cube pairs agree
  precisely when the orientations of the cube pairs are aligned.  \qed
\end{proposition}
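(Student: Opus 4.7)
The plan is to unpack the two notions of equivalence and reduce both to a single parity count of separating hyperplanes. First I would verify well-definedness: two presentations of the same oriented cube pair yield equivalent symbols. For $q>0$ this amounts to observing that changing a presentation of the orientation of $D$ (by permuting the $K_j$'s and changing the vertex $R$ to one whose edge-path distance in $D$ has matching parity) is precisely what condition (c) of Definition~\ref{def-symbol-orientation} allows; the vertex case is handled the same way using (c$'$).

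Given two parallel oriented cube pairs $(C_1,D_1)$ and $(C_2,D_2)$, parallelism forces the sets of determining hyperplanes to agree both for the $C$'s and for the $D$'s, so I can present their orientations using a common unordered set $H_1,\dots,H_p$ and a common \emph{ordered} list $K_1,\dots,K_q$. With such choices, symbol-equivalence conditions (a) and (b) are automatic, the permutation in (b) is the identity, and the entire question reduces to the parity of the total number $M$ of hyperplanes in $\{H_1,\dots,H_p,K_1,\dots,K_q\}$ that separate $R_1$ from $R_2$.

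I would then split $M$ into two contributions. For each complementary $H_i$: since $H_i$ cuts $C_j$ but not $D_j$, the face $D_j$ lies entirely on one side of $H_i$, so $H_i$ separates $R_1$ from $R_2$ if and only if it separates $D_1$ from $D_2$; the number of such $H_i$ is precisely the count $N$ appearing in Definition~\ref{def-pair-parity}. For each $K_j$: both $D_1$ and $D_2$ are cut by $K_j$, and the canonical isometry $D_1\to D_2$ (defined by matching half-space assignments to the cutting hyperplanes) preserves each $K_j$-side, so writing $R_2^{\mathrm{iso}}$ for the image of $R_1$, the number of $K_j$ separating $R_1$ from $R_2$ equals the edge-path distance $d_{D_2}(R_2^{\mathrm{iso}},R_2)$ within $D_2$. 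Hence $M \equiv N + d_{D_2}(R_2^{\mathrm{iso}},R_2) \pmod 2$.

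The two parity conditions now match perfectly: $N$ is even exactly when $(C_1,D_1)$ and $(C_2,D_2)$ have the same parity, while $d_{D_2}(R_2^{\mathrm{iso}},R_2)$ is even exactly when the chosen presentations of the orientations of $D_1$ and $D_2$ (with common listing) make them compatibly oriented. Thus $M$ is even---equivalently, condition (c) holds for the identity permutation---precisely when the pairs are aligned in the sense of Definition~\ref{def-equivalent-presentations}. The case $q=0$ is formally the same, with (c$'$) replacing (c) and ``compatibly oriented'' reducing to ``same sign''. The main obstacle I foresee is pinning down the canonical isometry $D_1\to D_2$ and verifying that it preserves $K_j$-sides; beyond that the argument is careful bookkeeping.
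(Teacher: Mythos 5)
Your proof is correct, and it supplies the careful verification that the paper leaves implicit (the proposition is stated with a bare \qed). The key decomposition of the parity count $M$ into the complementary-hyperplane contribution $N$ and the cutting-hyperplane contribution $d_{D_2}(R_2^{\mathrm{iso}},R_2)$, together with the mod-$2$ additivity of edge-path distance in a cube, is exactly what reconciles Definition~\ref{def-equivalent-presentations} with Definition~\ref{def-symbol-orientation}. The ``obstacle'' you flag at the end is actually no obstacle at all: the natural isometry between parallel $q$-cubes is \emph{defined} by matching a vertex to the unique vertex of the other cube lying on the same side of each $K_j$, so preservation of $K_j$-sides is built into the definition rather than something to verify. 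One small point you could make explicit in the well-definedness step: the complementary hyperplanes $H_i$ never separate two vertices of the same face $D$, so the count in condition (c) reduces to a count over the $K_j$, which equals edge-path distance in $D$ and hence has the parity of the permutation, as required.
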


Our generalization of the Pytlik-Szwarc complex will be a differential
complex designed to capture the combinatorics of oriented, aligned
cube pairs:
\begin{equation}
\label{eq-ps-cplx}
   \C[\H_0]\stackrel{d} \longrightarrow \C[\H_1]
     \stackrel{d}\longrightarrow 
         \cdots  \stackrel{d}\longrightarrow \C[\H_{n-1}]
         \stackrel{d}\longrightarrow \C[\H_n].
\end{equation}

\begin{definition}
\label{def-ps-cochain}
The space of oriented $q$-cochains of type $p$ in the Pytlik-Szwarc
complex is the space of finitely supported, \emph{anti-symmetric},
complex-valued functions on $\H^p_q$.  Here, a function is
anti-symmetric  if
\begin{equation*}
  f([\, H \,|\, K \,|\, R \,]) + 
        f([\, H \,|\, K \,|\, R \,]^*) = 0,
\end{equation*}
where we have used the involution on $\H^p_q$ defined by reversing the
orientation of the symbol.  
We shall denote this space by $\C[\H^p_q]$.  The space of oriented
$q$-cochains is defined similarly using the oriented symbols of type
$(p,q)$ for all $0\leq p \leq n-q$.  It splits as the direct sum
  \begin{equation*}
      \C[\H_q]=\C[\H_q^0]\oplus\dots\oplus\C[\H_q^{n-q}].
  \end{equation*}
\end{definition}

\begin{remark}
\label{ps-cochain-remark}
  As with the Julg-Valette cochains, the space of oriented
  Pytlik-Szwarc $q$-cochains of type $p$ is a subspace of the
  \emph{full space of Pytlik-Szwarc $q$-cochains of type $p$}, which
  is the vector space of \emph{all} finitely supported functions on
  the set $\H_q^p$.  We shall follow conventions similar to those in
  Section~\ref{sec-jv-cplx}: we write
  \begin{equation*}
      \left[\, H_1,\dots,H_p \,|\, K_1,\dots,K_q \,|\, R  \,\right]
      \quad\text{or}\quad
      \left[\, H \,|\, K \,|\, R  \,\right]
  \end{equation*}
  for both the Dirac function at an oriented symbol and the symbol
  itself, and 
\begin{equation*}
     \left\langle\, H \,|\, K \,|\, R \,\right\rangle
       = \left[\, H \,|\, K \,|\, R \,\right] -
          \left[\, H \,|\, K \,|\, R \,\right]^* 
                  \in \C[\H_q^p]
\end{equation*}
for the difference of the Dirac functions.  Further, linear operators
will be defined on the full space of cochains by
specifying their values on the basis of Dirac functions at the
oriented symbols.  We shall typically omit the elementary check that
an operator commutes with the involution and so restricts to an
operator on the spaces of oriented cochains.
\end{remark}

We now define the differential in the Pytlik-Szwarc complex
(\ref{eq-ps-cplx}).

\begin{definition}
\label{def-ps-differential}
The Pytlik-Szwarc differential is the linear map
$d:\C[\H_q]\to\C[\H_{q+1}]$ which is $0$ on oriented symbols of type
$(0,q)$ and which satisfies
\begin{equation*}
  d  \left[\, H_1,\dots,H_p \,|\, K_1,\dots,K_q \,|\, R \,\right] =
            \sum_{i=1}^p \,
         \left[\, H_1,\dots,\widehat{H_i},\dots,H_p \,|\, H_i,K_1,\dots,K_q \,|\, R_i \,\right]
\end{equation*}
for oriented $(p,q)$-symbols with $p, q\geq 1$.  Here, $R_i$ is the
vertex separated from $R$ by $H_i$ alone and, as usual, a `hat'
means that an entry is removed.  When $q=0$ the same formula is used
for symbols of the form $[H\,|+|\,R]$ which, together with the requirement
that $d$ commute with the involution, determines $d$ on symbols of the
form $[H\,|-|\,R]$.  Since $d$ maps an oriented symbol of type $(p,q)$ to
a linear combination of oriented symbols of type $(p-1,q+1)$ in all
cases, it splits as the direct sum of linear maps
\begin{equation*}              
  d:\C[\H^p_q] \longrightarrow                   \C[\H^{p-1}_{q+1}]
\end{equation*}
for $0< p\leq n-q$, and is $0$ on the $\C[\H^0_q]$. \end{definition}

\begin{lemma}
  The Pytlik-Szwarc differential $d$, regarded as an operator on the
  space of oriented cochains, satisfies $d^2=0$.  \qed 
\end{lemma}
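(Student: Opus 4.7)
The plan is to mimic the proof of the corresponding statement for the Julg-Valette differential (Lemma~3.10): expand $d^2$ on a basis element and pair off terms that differ by a transposition so that, on the anti-symmetric subspace, they cancel.

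First I would check that $d$ commutes with the involution $*$, so that
\[
d\, \langle\, H_1,\dots,H_p \,|\, K_1,\dots,K_q \,|\, R \,\rangle
   = \sum_{i=1}^p \langle\, H_1,\dots,\widehat{H_i},\dots,H_p \,|\, H_i,K_1,\dots,K_q \,|\, R_i \,\rangle,
\]
where $R_i$ is the vertex separated from $R$ by $H_i$ alone (this compatibility with $*$ is clear when $q\ge 1$ since a transposition of two entries in the $K$-list changes the orientation; when $q=0$ it holds by the way $d$ on $[H\,|-|\,R]$ is defined). Applying $d$ a second time and using the obvious bookkeeping for the `hat' notation yields
\[
d^2\, \langle\, H \,|\, K \,|\, R \,\rangle
   = \sum_{\substack{1\le i,j\le p\\ i\neq j}}
     \langle\, \widehat{H_i}\,\widehat{H_j} \,|\, H_j,H_i,K_1,\dots,K_q \,|\, R_{ij} \,\rangle,
\]
where $R_{ij}$ denotes the vertex separated from $R_i$ by $H_j$ alone.

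The key geometric input is that $R_{ij}=R_{ji}$: since the symbol is a valid oriented $(p,q)$-symbol, the vertex $R$ is adjacent to all of $H_1,\dots,H_p,K_1,\dots,K_q$, and by Lemma~\ref{lem-k-hplanes} these hyperplanes meet in a $(p+q)$-cube containing $R$ as a vertex; inside this cube, $R_{ij}$ and $R_{ji}$ are both characterised as the vertex separated from $R$ by exactly the pair $\{H_i,H_j\}$, so they coincide. With this in hand, I would pair the $(i,j)$-term with the $(j,i)$-term: both have the same (unordered) $H$-set $\{H_1,\dots,H_p\}\setminus\{H_i,H_j\}$, the same vertex $R_{ij}$, and $K$-lists that differ by the single transposition of $H_i$ and $H_j$. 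By clause (c) of Definition~\ref{def-symbol-orientation}, such a transposition changes the equivalence class of the symbol to its $*$-partner, so
\[
\langle\, \widehat{H_i}\,\widehat{H_j} \,|\, H_i,H_j,K_1,\dots,K_q \,|\, R_{ij} \,\rangle
   = -\, \langle\, \widehat{H_i}\,\widehat{H_j} \,|\, H_j,H_i,K_1,\dots,K_q \,|\, R_{ij} \,\rangle.
\]
Summing over the pairs $\{i,j\}$ shows that $d^2\langle H\,|\,K\,|\,R\rangle=0$.

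The main obstacle is a bookkeeping one: one must confirm that the vertex $R_{ij}$ really exists in $X$ (so that the symbol is well defined), and that after the double application of $d$ it is independent of the order in which $H_i$ and $H_j$ are removed from the $H$-set. Both points are settled by Lemma~\ref{lem-k-hplanes}. The cases $q=0$ and $p\le 1$ require only that one either carry along the sign $\pm$ on the vertex entry (using clause (c$'$)) or observe that the sum defining $d^2$ is empty, so no separate argument is needed. As in the Julg-Valette case, the identity $d^2=0$ holds only after restriction to anti-symmetric cochains; this is where the cancellation above uses the relation $\langle C \rangle = -\langle C^* \rangle$.
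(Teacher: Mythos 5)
Your proof is correct. The paper states this lemma with a $\qed$ and no argument — it is treated as a routine analogue of the Julg-Valette case — and the argument you give is exactly the expected one: expand $d^2$, observe by Lemma~\ref{lem-k-hplanes} (together with Lemma~\ref{lem-3-hyperplanes}) that the intermediate vertices $R_{ij}$ exist and satisfy $R_{ij}=R_{ji}$, and then pair off the $(i,j)$ and $(j,i)$ terms, whose symbols differ by a single transposition of the leading two $K$-entries with the same vertex, hence are $*$-conjugate by clause (c) of Definition~\ref{def-symbol-orientation}; after restriction to the anti-symmetric subspace these cancel.
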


\begin{example}
Let $T$ be a tree.  The Pytlik-Szwarc complex has the form
\begin{equation*}
    d: \C\oplus \C[\H^1_0] \longrightarrow \C[\H^0_1],
\end{equation*}
where $d$ is $0$ on $\C$ and, after identifying each of $\C[\H^1_0]$
and $\C[\H^0_1]$ with the space of finitely supported functions on the
set of edges of $T$, the identity $\C[\H^1_0]\to \C[\H^0_1]$.  For
the identifications, note that both $\H^0_1$ and $\H^1_0$ are
identified with the set of oriented edges in $T$ and that the
involution acts by reversing the orientation.  So the space
of anti-symmetric functions on each identifies with the
space of finitely supported functions on the set of edges.
\end{example}

Our goal for the remainder of this section is to analyze the
Pytlik-Szwarc complex.  Emphasizing the similarities with the
Julg-Valette complex we begin by providing a formula for the formal
adjoint of the Pytlik-Szwarc differential.

\begin{definition}
\label{def-ps-delta}
Let $\delta:\C[\H_q] \to \C[\H_{q-1}]$ be the linear map which is $0$
on oriented symbols of type $(p,0)$ and which satisfies 
\begin{equation*}
   \delta  \left[\, H_1,\dots,H_p \,|\, K_1,\dots,K_q \,|\, R \,\right]   =      
   \sum_{j=1}^q (-1)^{j} \left[\, H_1,\dots,H_p,K_j \,|\, 
                K_1,\dots,\widehat{K_j},\dots,K_q \,|\, R \,\right],
\end{equation*}
for oriented symbols of type $(p,q)$ with $q\geq 1$.  
Again a `hat' means that an entry is removed.  Since $\delta$ maps an
oriented symbol of type $(p,q)$ to a linear combination of oriented
symbols of type $(p+1,q-1)$ it splits as a direct sum of linear maps
\begin{equation*}
  \delta:\C[\H^p_q] \to \C[\H^{p+1}_{q-1}]
\end{equation*}
for $0<q\leq n-p$, and is $0$ on the $\C[\H^p_0]$.
\end{definition}

\begin{definition}\label{def-ps-inner-product}
  We define an inner product on the full space of Pytlik-Szwarc
  $q$-cochains by declaring that the elements of $\H_q$ are
  orthogonal, and that each has length $1/\sqrt{2}$.  The subspace
  $\C[\H_q]$ of oriented Pytlik-Szwarc $q$-cochains inherits an inner
  product in which
  \begin{equation*}
   \left\langle
      \left\langle\, H \,|\, K \,|\, R \,\right\rangle ,
      \left\langle\, H' \,|\, K'  \,|\, R' \,\right\rangle
   \right\rangle =
   \begin{cases}
       1, &  [\, H \,|\, K \,|\, R \,] =  [\, H' \,|\, K'  \,|\, R' \,] \\
       -1, &  [\, H \,|\, K  \,|\, R \,] = [\, H' \,|\, K'  \,|\, R' \,]^* \\
       0, & \text{otherwise}
   \end{cases}
  \end{equation*}
\end{definition}

\begin{lemma}
  The operators $\dps$ and $\deltaps$ of
   Definitions~\ref{def-ps-differential} and \ref{def-ps-delta} are
   formally adjoint and bounded with respect to the inner products in
   Definition~\ref{def-ps-inner-product}. \qed
\end{lemma}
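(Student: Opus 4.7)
The plan is to establish boundedness by a term count and to verify adjointness by a direct matrix-coefficient calculation on the orthonormal basis of oriented symbols, the latter reducing to a sign-tracking argument.

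For boundedness, note that every oriented $(p,q)$-symbol satisfies $p+q\leq n=\dim X$, so the sums defining $\dps$ and $\deltaps$ each contain at most $n$ summands. The summands in $\dps[\sigma]$ produce pairwise orthogonal basis vectors, since they are distinguished by which complementary hyperplane $H_i$ has been transferred onto the face side of the symbol; similarly for $\deltaps[\sigma]$. It follows that $\|\dps\langle\sigma\rangle\|^2\leq n$ and $\|\deltaps\langle\sigma\rangle\|^2\leq n$ for every basis vector, whence $\|\dps\|,\|\deltaps\|\leq\sqrt{n}$.

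For adjointness, I would verify the identity $\langle\dps\langle\sigma\rangle,\langle\sigma'\rangle\rangle=\langle\langle\sigma\rangle,\deltaps\langle\sigma'\rangle\rangle$ on oriented basis symbols $\sigma=[H_1,\dots,H_p\,|\,K_1,\dots,K_q\,|\,R]$ of type $(p,q)$ and $\sigma'=[H_1',\dots,H_{p-1}'\,|\,K_1',\dots,K_{q+1}'\,|\,R']$ of type $(p-1,q+1)$. Both sides vanish unless the underlying unoriented cube pairs share their ambient $(p+q)$-cube and the face of $\sigma'$ contains the face of $\sigma$ as a codimension-one face; equivalently, unless there is a (unique) pair $(i^*,j^*)$ with $H_{i^*}=K'_{j^*}$. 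Under this assumption only the $i=i^*$ summand of $\dps[\sigma]$ and the $j=j^*$ summand of $\deltaps[\sigma']$ contribute.

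The rest is a sign computation. Let $\pi$ be the permutation sending $(K'_1,\dots,\widehat{K'_{j^*}},\dots,K'_{q+1})$ to $(K_1,\dots,K_q)$, and let $N$ be the number of hyperplanes among $H\cup K$ which separate $R$ from $R'$. Unwinding Definitions~\ref{def-ps-delta} and~\ref{def-symbol-orientation}, the $j=j^*$ contribution to the right side equals $(-1)^{j^*}\operatorname{sgn}(\pi)(-1)^N$. On the left, the $i=i^*$ summand of $\dps[\sigma]$ has K-list $(K'_{j^*},K_1,\dots,K_q)$ and vertex $R_{i^*}$; aligning it with $\sigma'$ produces the factor $(-1)^{j^*-1}\operatorname{sgn}(\pi)$ from cyclically moving $K'_{j^*}$ into first position, times the factor $(-1)^{N'}$ arising from comparing $R_{i^*}$ to $R'$. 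Since $R_{i^*}$ and $R$ differ only by crossing $H_{i^*}$, exactly one entry in the separating-hyperplane count flips, so $N'=N\pm 1$ and $(-1)^{N'}=-(-1)^N$. The product is $(-1)^{j^*}\operatorname{sgn}(\pi)(-1)^N$, matching the right side. I expect the only real obstacle to be this sign bookkeeping, with the degenerate cases (notably $q=0$, where the K-list permutation is replaced by the $\pm$-sign convention of Definition~\ref{def-symbol-orientation}(c$'$)) handled by the same recipe applied to the variant equivalence condition.
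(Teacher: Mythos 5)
The paper supplies no proof here (the lemma is stated with a terminal \qed, i.e.\ treated as immediate), so there is no ``paper's approach'' to compare against, only correctness to check. Your adjointness verification is correct: the reduction to the unique matching pair $(i^*,j^*)$ is right, the permutation-sign factor $(-1)^{j^*-1}\operatorname{sgn}(\pi)$ from the $K$-list comparison is right, and the crucial observation that the separating-hyperplane count flips parity exactly once (because $R_{i^*}$ and $R$ differ only across $H_{i^*}$, which is a determining hyperplane of the ambient cube) is precisely what converts $(-1)^{j^*-1}$ into the $(-1)^{j^*}$ appearing in the definition of $\delta$. The degenerate $q=0$ case is indeed handled by the same mechanism via condition (c$'$).

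The boundedness step, however, contains a genuine logical gap. You establish $\|d\langle\sigma\rangle\|^2=p\le n$ for each orthonormal basis vector $\langle\sigma\rangle$, and then write ``whence $\|d\|\le\sqrt n$.'' That inference is invalid in general, and in particular the images of distinct basis vectors here are \emph{not} pairwise orthogonal, so the clean Pythagorean calculation you are implicitly invoking does not go through. A concrete failure already occurs in the square: with hyperplanes $A,B$, the two $(1,1)$-symbols $\sigma_1=[\,A\,|\,B\,|\,(0,0)\,]$ and $\sigma_2=[\,B\,|\,A\,|\,(0,0)\,]$ are orthogonal, yet $d\sigma_1=[\,\,|\,A,B\,|\,(1,0)\,]$ and $d\sigma_2=[\,\,|\,B,A\,|\,(0,1)\,]$ satisfy $d\sigma_1=(d\sigma_2)^*$, so $\langle d\langle\sigma_1\rangle,d\langle\sigma_2\rangle\rangle=-1\ne 0$. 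The repair is not difficult: observe in addition that each target symbol $\sigma'$ of type $(p-1,q+1)$ arises from at most $q+1$ sources $\sigma$ (move one $K'_j$ back to the $H$-side), so the matrix of $d$ in the orthonormal basis has entries in $\{0,\pm1\}$ with at most $p$ nonzero entries per column and at most $q+1$ per row; the Schur test then gives $\|d|_{\C[\H^p_q]}\|\le\sqrt{p(q+1)}$, which is bounded since $p+q\le n$. Alternatively, once adjointness is known one can appeal to Proposition~\ref{prop-ps-laplacian}, which yields $\|d\|^2\le\|d\delta+\delta d\|=p+q\le n$ directly on each summand $\C[\H^p_q]$.
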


\begin{proposition}
\label{prop-ps-laplacian}
The Pytlik-Szwarc Laplacian
\[
 \Delta = (d+\delta)^2=d\delta + \delta d : 
     \C[\H_q] \longrightarrow \C[\H_q]
\]
acts on the summand $\C [\H^p_q]$ as scalar multiplication by $p+q$. 
\end{proposition}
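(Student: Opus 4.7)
The plan is to carry out a standard Koszul-style calculation of $d\delta+\delta d$ on a generator, in close analogy with the analysis of the Julg-Valette Laplacian in Proposition~\ref{prop-delta-w-diagonal}. Write $\omega$ for the symbol $[H_1,\dots,H_p\,|\,K_1,\dots,K_q\,|\,R]$, viewed as a Dirac function in the full space of Pytlik-Szwarc cochains as in Remark~\ref{ps-cochain-remark}. I will expand $\delta d\omega$ and $d\delta\omega$ using Definitions~\ref{def-ps-differential} and~\ref{def-ps-delta}, and sort the resulting terms into \emph{diagonal} contributions, in which a single hyperplane moves across the vertical bar and back, and \emph{cross} contributions, in which two distinct hyperplanes move.

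First I would record the diagonal contributions. In $\delta d\omega$, the diagonal terms come from the $\ell=1$ slot of $\delta$ applied to each $d$-term, and sum to
\[
  -\sum_{i=1}^{p} [H_1,\dots,H_p\,|\,K_1,\dots,K_q\,|\,R_i],
\]
while in $d\delta\omega$ the diagonal terms come from the portion of $d$ that returns $K_j$ to the front of the right-hand list, and sum to
\[
  \sum_{j=1}^{q} (-1)^j [H_1,\dots,H_p\,|\,K_j,K_1,\dots,\widehat{K_j},\dots,K_q\,|\,R^j],
\]
where $R_i$ denotes the vertex separated from $R$ by $H_i$ alone, and $R^j$ the vertex separated from $R$ by $K_j$ alone. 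Next I would verify that the cross terms cancel in pairs: in $\delta d$ the cross term indexed by $(i,m)$ takes the form
\[
  (-1)^{m+1}[H_1,\dots,\widehat{H_i},\dots,H_p,K_m\,|\,H_i,K_1,\dots,\widehat{K_m},\dots,K_q\,|\,R_i],
\]
while in $d\delta$ the same underlying symbol appears with the sign $(-1)^m$, so the two contributions precisely annihilate.

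The main obstacle, and the final step, is to identify each surviving diagonal term with $\pm\omega$ modulo the equivalence of Definition~\ref{def-symbol-orientation}, which is where the orientation bookkeeping must be done carefully. For the first sum, $[H\,|\,K\,|\,R_i]$ differs from $[H\,|\,K\,|\,R]$ only in the vertex, which is separated from $R$ by the single hyperplane $H_i$; by condition~(c) of Definition~\ref{def-symbol-orientation} the two parities disagree, so $[H\,|\,K\,|\,R_i]=[H\,|\,K\,|\,R]^{*}$, and after anti-symmetric projection the corresponding term contributes $-\omega$, making the overall minus sign yield $+p\,\omega$. For the second sum, cycling $K_j$ to the front of the right-hand list is a permutation of parity $j-1$ and the vertex change $R\to R^j$ contributes separating parity $1$, so the resulting symbol is equivalent to $[H\,|\,K\,|\,R]$ when $j$ is even and to $[H\,|\,K\,|\,R]^{*}$ when $j$ is odd; combined with the prefactor $(-1)^j$, every $j$-th term contributes $+\omega$ after anti-symmetrization, for a total of $+q\,\omega$. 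Adding the two diagonal contributions produces $(p+q)\,\omega$, as desired.
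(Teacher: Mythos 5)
Your proof is correct and follows essentially the same route as the paper: expand $\delta d + d\delta$ on a Dirac symbol, observe that the cross terms (two distinct hyperplanes moved) cancel exactly, and then account for the diagonal terms via the orientation equivalence of Definition~\ref{def-symbol-orientation}. Your version is in fact more scrupulous than the paper's about tracking the vertex changes $R\to R_i$ and $R\to R^j$ and the resulting parity shifts; the paper's displayed formulas quietly already identify $[\,H\,|\,K\,|\,R_i\,]$ with $-[\,H\,|\,K\,|\,R\,]$, a step that is only valid after restricting to the anti-symmetric subspace, whereas you make this step explicit.
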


\begin{proof}
  We prove the above statement for the operator $d\delta + \delta d$ defined on the full space of cochains. This operator equals 
  $\Delta$ when restricted to the subspace of oriented cochains. The proof is a direct calculation.  The result of applying 
  $\delta d$ to an oriented symbol 
$[\, H_1,\dots,H_p \,|\, K_1,\dots,K_q \,|\, R \,]$ of type $(p,q)$ 
is the sum 
\begin{align*}
     \sum_{i=1}^p [\, H_1,\dots,H_p \,|\,  &K_1,\dots,K_q \,|\, R \,] \ +\\
     &+\sum_{i=1}^p \sum_{j=1}^q (-1)^j 
        [\, H_1,\dots,\widehat H_i,\dots,H_p,K_j \,|\, H_i,K_1,\dots,\widehat
                  K_j,\dots, K_q \,|\, R\,],
\end{align*}
whereas the result of applying $d\delta$ is
\begin{align*}
       \sum_{j=1}^q (-1)^{j+1} [\, H_1,&\dots,H_p \,|\, 
               K_j,K_1,\dots,\widehat K_j,\dots,K_q \,|\, R \,] \ + \\
     &+\sum_{j=1}^q (-1)^{j+1} \sum_{i=1}^p 
        [\, H_1,\dots,\widehat H_i,\dots,H_p,K_j \,|\, H_i,K_1,\dots,\widehat
        K_j,\dots, K_q \,|\, R \,].
\end{align*}
When these are added, the second summands cancel and the first
summands combine to give 
$(p+q) [\, H_1,\dots,H_p \,|\, K_1,\dots,K_q \,|\, R \,]$.
\end{proof}

\begin{corollary}
\label{cor-ps-cohomology}
  The cohomology of the Pytlik-Szwarc complex is $\C$ in dimension
  zero and $0$ otherwise. \qed
\end{corollary}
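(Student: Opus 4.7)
The plan is to derive the corollary directly from the spectral formula of Proposition~\ref{prop-ps-laplacian} by a Hodge-type argument, with extra care at the degree-zero end of the complex. The essential algebraic input is that $d^2 = 0$ implies $d\Delta = d\delta d = \Delta d$, so $d$ commutes with $\Delta$ and, on any subspace where $\Delta$ is invertible, also with $\Delta^{-1}$. Since $\Delta$ acts on $\C[\H_q^p]$ as the scalar $p+q$, its only zero eigenspace in the whole complex is $\C[\H_0^0]$, while $\Delta$ is invertible on every $\C[\H_q]$ with $q \geq 1$.

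For $q \geq 1$, any cocycle $f \in \C[\H_q]$ satisfies
\[
   f = \Delta \Delta^{-1} f = (d\delta + \delta d)\Delta^{-1} f = d(\delta\Delta^{-1} f) + \Delta^{-1}\delta(df) = d(\delta\Delta^{-1} f),
\]
using $df = 0$, so every $q$-cocycle is a $q$-coboundary and $H^q = 0$.

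The degree-zero case requires a separate argument since there are no coboundaries to consider. Since $\delta$ lowers the $q$-index, it vanishes identically on $\C[\H_0]$, whence $\Delta = \delta d$ there. On each summand $\C[\H_0^p]$ with $p \geq 1$ the Laplacian acts as the nonzero scalar $p$, so $df = 0$ forces $\Delta f = pf = 0$ and hence $f = 0$. On $\C[\H_0^0]$, by contrast, $d$ itself vanishes by definition, so every element is a cocycle. It remains to observe that $\C[\H_0^0]$ is one-dimensional: a $(0,0)$-symbol has the form $[\,|\,\pm\,|\,R\,]$ with an empty list of complementary hyperplanes, and condition (c$'$) of Definition~\ref{def-symbol-orientation} collapses any two such symbols sharing a given sign to a single equivalence class, since the number of separating complementary hyperplanes is vacuously zero. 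Hence $\H_0^0$ has precisely two elements interchanged by the involution, and the space of antisymmetric functions on it is one-dimensional. The only subtlety in the whole argument is this boundary behavior in degree zero, where the Hodge identity cannot be applied directly and must be replaced by inspection of the Laplacian kernel on each summand.
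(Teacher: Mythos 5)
Your proof is correct and follows essentially the same route the paper intends (the corollary is stated with \qed precisely because it is the direct analogue of the corollary for the Julg-Valette complex, whose proof via the invertible Laplacian and the identity $f = d(\delta\Delta^{-1}f)$ is given in full in Section 3). Your added care in degree zero — splitting $\C[\H_0]$ into the $p$-graded pieces, noting $\delta$ vanishes there so $\Delta = \delta d$, and then checking $\C[\H_0^0]\cong\C$ from condition (c$'$) — is a legitimate and correct fleshing-out of what the paper leaves tacit. One small cosmetic remark: the intermediate term you wrote as $\Delta^{-1}\delta(df)$ should, if you want the commutation to be literal, be $\delta\Delta^{-1}(df)$, but both vanish when $df=0$ so nothing is at stake. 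It is perhaps worth noting that the paper also implicitly gives a cleaner packaging of the same computation in the proof of Theorem~\ref{thm-p-s-is-fredholm}: the operator $h = \frac{1}{p+q}\,\delta$ on $\C[\H_q^p]$ (with $h=0$ on $\C[\H_0^0]$) is a single chain homotopy satisfying $dh + hd = I - P_0$, where $P_0$ is the projection onto $\C[\H_0^0]$, which yields the cohomology in one stroke without the case split between $q=0$ and $q\geq 1$.
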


  
\section{Continuous Fields of Hilbert Spaces}
\label{sec-cont-field}

Our objective over the next several sections is to construct a family
of complexes that continuously interpolates between the Julg-Valette
complex and the Pytlik-Szwarc complex. We shall construct the
interpolation within the Hilbert space context, using the concept of a
\emph{continuous field} of Hilbert spaces.

We refer the reader to \cite[Chapter 10]{Dixmier} for a comprehensive
treatment of continuous fields of Hilbert spaces.  In brief, a
continuous field of Hilbert spaces over a topological space $T$ consists of  a
family of Hilbert spaces parametrized by the points of $T$, together with a distinguished
family $\Sigma$ of sections that satisfies several axioms, of which the most
important is that the pointwise inner product of any two
sections in $\Sigma$  is a \emph{continuous} function on $T$.  See
\cite[Definition~10.1.2]{Dixmier}.  The following theorem gives a
convenient means of constructing continuous fields. 

\begin{theorem} 
\label{continuous-field}
Let $T$ be a topological space, let $\{{\mathfrak H}_t\} $ be a family
of Hilbert spaces parametrized by the points of $T$, and let $\Sigma_0$ be a
family of sections that satisfies the following conditions:
\begin{alist}
\item The pointwise inner product of any two sections in $\Sigma_0$
  is a continuous function on $T$. 
\item For every $t\in T$ the linear span of  
  $\{ \sigma(t)\, : \, \sigma\in\Sigma \, \}$ is dense in ${\mathfrak H}_t$.  
\end{alist}
There is a unique enlargement of $\Sigma_0$ that gives 
$\{{\mathfrak H}_t\}_{t\in T}$ the structure of a continuous field of
Hilbert spaces. 
\end{theorem}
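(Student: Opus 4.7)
My plan is to construct $\Sigma$ by closing $\Sigma_0$ under finite $\C$-linear combinations and local uniform approximation. Explicitly, I would define $\Sigma$ to be the collection of all sections $\sigma$ of $\{\mathfrak{H}_t\}$ with the following property: for every $t_0 \in T$ and every $\epsilon > 0$ there exist a neighborhood $U$ of $t_0$ and a finite $\C$-linear combination $\tau$ of elements of $\Sigma_0$ such that $\|\sigma(t) - \tau(t)\| < \epsilon$ for all $t \in U$. It is immediate that $\Sigma$ contains $\Sigma_0$ and is a $\C$-linear subspace of $\prod_{t} \mathfrak{H}_t$, and the density condition (b) carries over to $\Sigma$ automatically from the inclusion $\Sigma \supseteq \Sigma_0$.

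The central step is to show that the pointwise inner product of two sections $\sigma, \sigma' \in \Sigma$ is a continuous function on $T$. Given $t_0 \in T$ and $\epsilon > 0$, I would select a neighborhood $U$ of $t_0$ and finite $\C$-linear combinations $\tau, \tau'$ of elements of $\Sigma_0$ approximating $\sigma, \sigma'$ uniformly within $\epsilon$ on $U$. Bilinearity and hypothesis (a) imply that $\langle \tau(t),\tau'(t)\rangle$ is continuous on $T$; in particular $\|\tau(t)\|$ and $\|\tau'(t)\|$ are locally bounded. A routine $\epsilon/3$ estimate combined with the Cauchy--Schwarz inequality then yields continuity of $\langle\sigma(t),\sigma'(t)\rangle$ at $t_0$. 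This is the main technical obstacle, and care is required to arrange that the approximating sections stay uniformly bounded on a common neighborhood before their inner products can be compared with those of $\sigma$ and $\sigma'$.

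Next I would verify that $\Sigma$ is locally uniformly closed --- that is, if a section $\sigma$ admits, near each point, uniform approximation by elements of $\Sigma$, then $\sigma \in \Sigma$. This follows by a two-step $\epsilon/2$ argument: first approximate $\sigma$ on a neighborhood by some $\sigma_0 \in \Sigma$, then approximate $\sigma_0$ on a smaller neighborhood by a finite $\C$-linear combination of elements of $\Sigma_0$. Together with the continuity of inner products and the density condition (b), this shows that $\Sigma$ equips $\{\mathfrak{H}_t\}_{t\in T}$ with a continuous field structure in the sense of \cite[Definition 10.1.2]{Dixmier}.

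Finally, uniqueness is forced by the axioms themselves: any continuous field enlargement of $\Sigma_0$ must be closed under finite $\C$-linear combinations, and by the local closure axiom built into the definition of a continuous field it must also contain all sections locally uniformly approximable by such combinations --- that is, all of $\Sigma$. Conversely, no continuous field enlargement can strictly contain $\Sigma$, because any section in such an enlargement is itself a local uniform limit of finite $\C$-linear combinations of elements of $\Sigma_0$ (again by the local closure axiom applied within that enlargement), hence lies in $\Sigma$.
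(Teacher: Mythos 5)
Your construction of $\Sigma$ and the verification of Dixmier's axioms follow the same path as the paper, which defines $\Sigma$ identically and simply refers to \cite[Proposition~10.2.3]{Dixmier} for the work you spell out. The body of your argument --- the $\epsilon/3$ estimate giving continuity of the pointwise inner product, and the two-step approximation giving local uniform closure of $\Sigma$ --- is sound.

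There is, however, a gap in the uniqueness argument, specifically in the inclusion $\Sigma' \subseteq \Sigma$ for a continuous-field enlargement $\Sigma'$ of $\Sigma_0$. You claim that every section in $\Sigma'$ is a local uniform limit of finite $\C$-linear combinations of elements of $\Sigma_0$ ``by the local closure axiom applied within that enlargement.'' But the local closure axiom runs in the wrong direction: it says that a section locally uniformly approximable by elements of $\Sigma'$ must belong to $\Sigma'$; it does not say that every element of $\Sigma'$ is locally approximable by combinations from a prescribed total subfamily. What you actually need is the norm-continuity axiom (Dixmier's condition that $t \mapsto \|x(t)\|$ is continuous for $x$ in the field) combined with your hypothesis (b). Given $x \in \Sigma'$, a point $t_0$, and $\epsilon > 0$, use (b) to choose a finite $\C$-linear combination $\tau$ of elements of $\Sigma_0$ with $\|x(t_0) - \tau(t_0)\| < \epsilon/2$; since $x - \tau \in \Sigma'$, the function $t \mapsto \|x(t) - \tau(t)\|$ is continuous, hence $< \epsilon$ on a neighborhood of $t_0$. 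This is the step that promotes a single-fiber approximation to a local one, and without it the inclusion $\Sigma' \subseteq \Sigma$ --- and hence uniqueness --- is not established.
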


\begin{proof}
  The enlargement $\Sigma$  consists of all sections $\sigma$
  such that for every $t_0\in T$ and every $\varepsilon >0$ there is a
  section $\sigma_0$ in the linear span of $\Sigma_0$ such that
\[
 \|\sigma_0(t) - \sigma(t)\|_t < \varepsilon
 \]
 for all $t$ in some neighborhood of $t_0$.  See
 \cite[Proposition~10.2.3]{Dixmier}.    
\end{proof}

\begin{definition}\label{generating-sections}
We shall call a family  $\Sigma_0$, as in the statement of
Theorem~\ref{continuous-field}, a \emph{generating family} of sections
for the associated continuous field of Hilbert spaces.  
\end{definition}

Ultimately we shall use the parameter space $T = [0,\infty]$, but in
this section we shall concentrate on the open subspace $(0,\infty]$,
and then extend to $[0,\infty]$ in the next section. In both this
section and the next we shall deal only with the construction of
continuous fields of Hilbert spaces; we shall construct the
differentials acting between these fields in
Section~\ref{sec-field-ps-ops}.

We begin by completing the various cochain spaces from
Section~\ref{sec-jv-cplx} in the natural way so as to obtain Hilbert
spaces. 

\begin{definition}
\label{def-analytic-JV}
Denote by $\ell^2 (X^q)$ the Hilbert space completion of the
Julg-Valette oriented cochain space $\C[X^q]$ in the inner product of
Definition~\ref{inner-product} in which the basis comprised of the
oriented cochains $\langle\, C \,\rangle$ is orthonormal.
\end{definition}

\begin{remark}
  As was the case in Section~\ref{sec-jv-cplx}, we shall also consider
  the full cochain space comprised of the square-summable functions on
  the set of oriented $q$-cubes.  This is the completion of the full
  space of Julg-Valette $q$-cochains in the inner product of
  Definition~\ref{inner-product}, and contains the space $\ell^2(X^q)$
  of the previous definition as the subspace of anti-symmetric
  functions.
\end{remark}

We shall now construct, for every $q\geq 0$, families of Hilbert
spaces parametrized by the topological space $(0,\infty]$.  These will
be completions of the spaces of Julg-Valette $q$-cochains, both full
and oriented, but with respect to a family of pairwise distinct inner
products.  Considering the oriented cochains, we obtain a family of
Hilbert spaces $\ell_t^2(X^q)$ each of which is a completion of the
corresponding $\C[X^q]$.  The Hilbert space $\ell^2_\infty(X^q)$ will
be the space $\ell^2(X^q)$ just defined.

\begin{definition}
\label{def-distance-parallel-cubes}
If $D_1$ and $D_2$ are $q$-cubes in $X$, and if $D_1$ and $D_2$ are parallel and have compatible orientations, then denote by $d(D_1,D_2)$ the number of hyperplanes in $X$ that are disjoint from $D_1$ and $D_2$ and  that separate $D_1$ from $D_2$.  If $D_1$ and $D_2$ are $q$-cubes in $X$, but are not parallel, or 
have incompatible orientations, then set $d(D_1,D_2)=\infty$.
\end{definition}

If $D_1$ and $D_2$ are (compatibly oriented) vertices, then
$d(D_1,D_2)$ is the edge-path distance from $D_1$ to $D_2$.  In higher
dimensions, if $D_1$ and $D_2$ are parallel then they may be
identified with vertices in the $\cat(0)$ cube comples which is the
intersection of the determining hyperplanes for the parallelism class.
If in addition they are compatibly oriented, then $d(D_1,D_2)$ is the
edge-path distance in this complex. Compare
Theorem~\ref{general-Sageev}.

\begin{definition}\label{def-inner-product}
Let $t > 0$ and $q\geq 0$. For every two oriented $q$-cubes $D_1$ and
$D_2$ define  
\[
\bigl\langle D_1,D_2\bigr\rangle_t = 
    \tfrac{1}{2} \exp \bigl(-\tfrac 12 t^2 d(D_1,D_2)\bigr ),
\]
where of course we set $\exp (-\tfrac 12 t^2 d(D_1,D_2) ) = 0$ if
$d(D_1,D_2) = \infty$, and then extend by linearity to a sesqui-linear
form on the full space of Julg-Valette $q$-cochains.
\end{definition}

Note that the formula in the definition makes sense when $t=\infty$,
where
\[
\tfrac{1}{2} \exp \bigl(-\tfrac 12 t^2 d(D_1,D_2)\bigr ) = 
      \begin{cases} \tfrac{1}{2}, & D_1 = D_2 \\ 0, & D_1 \ne D_2.
\end{cases}
\]
In particular, the form $\langle \,\, ,\, \rangle_\infty$ is the one
underlying Definition~\ref{inner-product} that we used to define
$\ell^2 (X^q)$.

\begin{theorem}\label{thm-inner-product}
The sesqui-linear form $\langle\,\,,\,\rangle_t$ is positive semi-definite.
\end{theorem}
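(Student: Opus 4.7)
The plan is to split the form into orthogonal blocks indexed by parallelism-plus-orientation classes of oriented $q$-cubes and then, on each block, reduce to a classical positivity result for the edge-path metric on a $\cat(0)$ cube complex.

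First, I would observe that $\langle D_1,D_2\rangle_t=0$ whenever $d(D_1,D_2)=\infty$, that is, whenever $D_1$ and $D_2$ are either not parallel, or parallel but incompatibly oriented. Partitioning the oriented $q$-cubes into classes under the relation ``parallel and compatibly oriented'' therefore renders the form block diagonal with respect to the basis of Dirac functions at oriented cubes, and it suffices to show each block is positive semi-definite.

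Next, I would identify each block with the edge-path kernel of a smaller $\cat(0)$ cube complex. Given such a class, let $Y$ be the intersection of its determining hyperplanes; by Proposition~\ref{general-Sageev} this $Y$ is a $\cat(0)$ cube complex whose vertex set $Y^0$ is in bijection with the cubes in the class via $D\mapsto D\cap Y$. By Lemma~\ref{lem-3-hyperplanes} the hyperplanes of $Y$ are precisely the traces on $Y$ of those hyperplanes of $X$ that meet every determining hyperplane without being one of them. Using Lemma~\ref{lem-4-quadrants} one checks that a hyperplane of $X$ disjoint from $D_1$ and $D_2$ that separates them is exactly a hyperplane of $Y$ separating the corresponding vertices, so that $d(D_1,D_2)$ coincides with the edge-path distance $d_Y$. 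Each block therefore has matrix $\tfrac12\exp(-\tfrac12 t^2 d_Y(v,w))$ on $Y^0\times Y^0$.

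Finally, I would invoke Schoenberg's theorem. Fix a basepoint $v_0\in Y^0$ and, for each $v\in Y^0$, let $\chi_{S_v}\in\ell^2$ be the characteristic function of the finite set of hyperplanes of $Y$ separating $v_0$ from $v$. Since $\chi_{S_v}-\chi_{S_w}$ takes the value $\pm 1$ on the hyperplanes of $Y$ that separate $v$ from $w$ and $0$ elsewhere,
\[
    d_Y(v,w)=\|\chi_{S_v}-\chi_{S_w}\|^2_{\ell^2},
\]
which exhibits $d_Y$ as a conditionally negative definite kernel on $Y^0$. Schoenberg's theorem then yields that $\exp(-\tfrac12 t^2 d_Y)$ is positive definite for every $t>0$, which completes the argument. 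The step requiring most care will be the identification in the preceding paragraph—matching the parallelism structure on $X$ faithfully with $Y$ and its edge-path distance, including the bijective correspondence between separating hyperplanes—while the Schoenberg step is the standard ``wall space gives negative type'' observation underlying the Haagerup property for groups acting on $\cat(0)$ cube complexes.
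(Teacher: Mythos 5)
Your proposal is correct and follows essentially the same route as the paper: block-diagonalize by parallelism-plus-orientation class, identify each block with the vertex set of the $\cat(0)$ cube complex $Y$ from Proposition~\ref{general-Sageev} so that the kernel becomes $\tfrac12\exp(-\tfrac12 t^2 d_Y)$, and invoke positivity of the Gaussian kernel associated to the edge-path metric. The only difference is that the paper cites the $q=0$ positivity directly (the Technical Lemma of Niblo--Reeves, or \cite[Prop.~3.6]{GuentnerHigson}), whereas you unpack it as the wall-space embedding into $\ell^2$ followed by Schoenberg's theorem; that is precisely the content of the cited result, so this is a more self-contained write-up of the same argument rather than a genuinely different one.
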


\begin{proof}
  Consideration of oriented, as opposed to unoriented, cubes merely
  gives two (orthogonal copies) of each space of functions.  Aside
  from this, the result is proved in 
  \cite[Technical Lemma, p.6]{NibloReevesNormal} in the case $q=0$.
  See also \cite[Prop.~3.6]{GuentnerHigson}.  The case $q>0$ reduces
  to the case $q=0$ using Theorem~\ref{general-Sageev}.
\end{proof}

\begin{definition}
  For $t\in (0,\infty]$ denote by $\ell^2_t(X^q)$ the Hilbert space
  completion of the Julg-Valette oriented cochain space $\C[X^q]$ in
  the inner product $\langle\,\,\, , \,\, \rangle_t$.  

\end{definition}

\begin{remark} 
  The Hilbert spaces of the previous definition are completions of the
  \emph{quotient of $\C[X^q]$ by the elements of zero norm}.  We shall
  soon see that every nonzero linear combination of oriented $q$-cubes
  has nonzero $\ell^2_t$-norm for every $t$, so the natural maps from
  $\C[X^q] $ into the $\ell^2 _t(X^q)$ are injective.
\end{remark} 

Next, we define a generating family of sections, using either one of
the following lemmas; on the basis of Theorem~\ref{continuous-field},
it is easy to check that the continuous fields arising from the lemmas
are one and the same.

\begin{lemma}
\label{lemma-cont-fld1}
Let $t\in (0,\infty]$. The set of all   sections of the form
\[
t\mapsto f\in \C[X^q]\subseteq \ell^2 _t (X^q) ,
\]
indexed by all $f\in \C[X^q]$, is a generating family of sections for a continuous field. \qed
\end{lemma}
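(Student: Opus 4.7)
The plan is to verify the two conditions of Theorem~\ref{continuous-field} for the family $\Sigma_0$ consisting of the constant sections $\sigma_f:t\mapsto f$, indexed by $f \in \C[X^q]$.

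Condition (b) is immediate from the construction: the set $\{\sigma_f(t) : f \in \C[X^q]\}$ equals $\C[X^q]$, which is by definition dense in the Hilbert space completion $\ell^2_t(X^q)$ for every $t \in (0, \infty]$. So the only substantive issue is condition (a), the continuity in $t$ of the pointwise inner products $\langle \sigma_f(t), \sigma_g(t)\rangle_t = \langle f, g\rangle_t$.

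For condition (a), I would first reduce by sesquilinearity: since any $f, g \in \C[X^q]$ are finitely supported, it suffices to check that for any two oriented $q$-cubes $D_1$ and $D_2$ the function
\[
  t \longmapsto \langle D_1, D_2 \rangle_t
  = \tfrac{1}{2} \exp\bigl(-\tfrac{1}{2} t^2 d(D_1, D_2)\bigr)
\]
is continuous on $(0, \infty]$. On the open subinterval $(0, \infty)$ this is elementary calculus; the only point requiring care is continuity at $t = \infty$. I would split this into three cases according to whether $d(D_1, D_2)$ equals $0$, is positive and finite, or equals $\infty$: in the first case the function is the constant $1/2$; in the second it decays exponentially to $0$, matching its prescribed value at $t=\infty$; and in the third it is identically $0$. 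In all three cases, continuity at $t = \infty$ holds.

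I do not expect any real obstacle here. The substantive content for this family lies in Theorem~\ref{thm-inner-product}, which ensures that each $\langle\,\cdot\,,\,\cdot\,\rangle_t$ is genuinely positive semi-definite and hence that the Hilbert spaces $\ell^2_t(X^q)$ are well defined; continuity in $t$ is then a routine consequence of the explicit exponential formula in Definition~\ref{def-inner-product}.
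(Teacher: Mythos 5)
Your proof is correct, and it fills in exactly the argument that the paper treats as obvious (the lemma is stated with a terminal \qed and no written proof). Verifying Theorem~\ref{continuous-field}(a) and (b), reducing (a) by sesquilinearity to the basis elements $D_1, D_2$, and then checking continuity of $t\mapsto \tfrac12\exp(-\tfrac12 t^2 d(D_1,D_2))$ on $(0,\infty]$ by the three cases $d=0$, $0<d<\infty$, $d=\infty$ is the natural and complete argument; your observation that the genuinely substantive input is Theorem~\ref{thm-inner-product} (positive semi-definiteness, so the fibers exist at all) is also the right thing to flag.
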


 \begin{lemma}
 \label{lemma-cont-fld2}
The set of all   sections of the form 
\[
t\mapsto f(t)\, \langle C \rangle\in  \ell^2 _t (X^q) ,
\]
where $f$ is a continuous scalar function on $(0,\infty]$ and $C$ is
an oriented $q$-cube, is a generating family of sections for a
continuous field. \qed 
\end{lemma}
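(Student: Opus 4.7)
The plan is to verify the two hypotheses of Theorem~\ref{continuous-field} for the family of sections described in the statement, namely continuity of pointwise inner products and fiberwise density of the span.

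For density, fix $t_0 \in (0,\infty]$. Taking the scalar function $f$ to be the constant function $1$ shows that the span of the values at $t_0$ contains $\langle C \rangle$ for every oriented $q$-cube $C$.  These elements span $\C[X^q]$, which is by definition dense in $\ell^2_{t_0}(X^q)$.  So (b) holds without further work.

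The remaining task is to verify continuity of the inner product.  For two sections $\sigma_i(t) = f_i(t)\,\langle C_i\rangle$ with $i = 1,2$, one has the factorization
\[
\bigl\langle\sigma_1(t), \sigma_2(t)\bigr\rangle_t
    = f_1(t)\overline{f_2(t)} \cdot \bigl\langle\langle C_1\rangle, \langle C_2\rangle\bigr\rangle_t,
\]
and the first factor is continuous by hypothesis on $f_1$ and $f_2$.  For the second factor, expand $\langle C_i\rangle = C_i - C_i^*$ and apply Definition~\ref{def-inner-product} to obtain a sum of at most four terms, each of the form $\pm\tfrac12 \exp\bigl(-\tfrac12 t^2 d(D_1,D_2)\bigr)$ for appropriate oriented cubes $D_j \in \{C_j, C_j^*\}$.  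Each such term is manifestly a continuous function of $t$ on $(0,\infty)$.

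The only delicate point, which is really the main thing to check, is continuity at $t=\infty$ under the convention of Definition~\ref{def-inner-product}.  If $d(D_1,D_2) = \infty$ (that is, the cubes are non-parallel or incompatibly oriented), the term is identically zero.  If $d(D_1,D_2)$ is a finite positive integer then $\tfrac12 \exp\bigl(-\tfrac12 t^2 d(D_1,D_2)\bigr) \to 0$ as $t\to\infty$, matching its prescribed value of $0$ at $\infty$.  If $d(D_1,D_2)=0$ then $D_1=D_2$ and the term is identically $\tfrac12$.  In every case the function is continuous on $(0,\infty]$, establishing (a) and completing the verification.
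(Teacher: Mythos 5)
Your proof is correct, and it is the natural direct verification of the two hypotheses of Theorem~\ref{continuous-field} — which is exactly what the paper has in mind by stating the lemma with a bare \qed. The expansion of $\langle\,\langle C_1\rangle,\langle C_2\rangle\,\rangle_t$ into terms of the form $\pm\tfrac12\exp\bigl(-\tfrac12 t^2 d(D_1,D_2)\bigr)$, and the case split on whether $d(D_1,D_2)$ is $0$, finite positive, or $\infty$ (the last being the only delicate point as $t\to\infty$), is precisely the argument one wants; the density check is likewise correct since constant functions are among the allowed scalar factors.
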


The continuous fields that we have constructed are not particularly
interesting as continuous fields.  In fact they are isomorphic to
constant fields (they become much more interesting when further
structure is taken into account, as we shall do later in the paper).
For the sequel it will be important to fix a particular isomorphism,
and we conclude this section by doing this.
 
The required unitary isomorphism will be defined using certain cocycle
operators $W_t(C_1,C_2)$, which are analogues of those studied by
Valette in \cite{Valette90} in the case of trees. In the case $q=0$
the cocycle operators for general $\cat(0)$ cube complexes were
constructed in \cite{GuentnerHigson}.  The case where $q>0$ involves
only a minor elaboration of the $q=0$ case, and so we shall refer to
\cite{GuentnerHigson} for details in what follows.

\begin{definition}\label{cocycle0}
If $D$ is a $q$-cube that is adjacent to a hyperplane $H$,
then define $D^{op}$ to be  the opposite face to $D$ in the unique
$(q+1)$-cube that is cut by $H$ and contains $D$ as a $q$-face (such
a cube exists by Lemma \ref{unique-adjacent}).  In the case $D$ is
oriented, we orient $D^{op}$ compatibly.  In either case, we shall
refer to a pair such as $D$ and $D^{op}$ as being adjacent across
$H$. 
\end{definition}

\begin{definition}\label{cocycle}
Let $C$ and $C^{op}$ be adjacent across a hyperplane $H$, as in the
previous definition. If $D$ is any oriented $q$-cube that is adjacent
to $H$, then for $t\in (0,\infty]$ we define 
\[
W_t(C^{op},C)D = 
\begin{cases}
(1-e^{-  t^2})^{1/2} D - e^{-\frac 12 t^2}D^{op}, & \text{if $D$ is separated from $C$ by $H$}\\
e^{-\frac 12 t^2}D^{op} + (1-e^{-  t^2})^{1/2} D, & \text{if $D$ is not separated from $C$ by $H$;}
\end{cases}
\]
in addition we define 
\[
W_t(C^{op},C)D = D \quad \text{if $D$ is not adjacent to $H$}.
\]
We extend $W_t(C^{op},C)$ by linearity to a linear operator on the
spaces of (full and oriented) Julg-Valette $q$-cochains.
\end{definition}

For example 
\[
W_0(C^{op},C) C = C^{op}\quad\text{and} \quad W_0(C^{op},C) C^{op} = - C,
\]
while
\[
W_\infty (C^{op},C)C= C \quad \text{and} \quad  W_\infty (C^{op},C) C^{op} = C^{op},
\]
and indeed $W_\infty(C^{op},C)$ is the identity operator. More
generally,  when restricted to the two-dimensional space spanned by
the ordered basis $(D,D^{op})$ with $D$ adjacent to $H$ but \emph{not}
separated from $C$ by $H$, the operator $W_t(C^{op},C)$ acts as the
unitary matrix 
\[
\begin{bmatrix}
(1-e^{t^2})^{1/2} & - e^{-\frac 12 t^2} \\
  e^{-\frac 12 t^2}& (1-e^{t^2})^{1/2}
\end{bmatrix} .
\]
In particular, $W_t(C^{op},C)$ extends to a unitary operator on the
completed cochain spaces of Definition~\ref{def-analytic-JV} and
subsequent remark.

Let us now assume that two $q$-cubes $C_1$ and $C_2$ are parallel, but
not necessarily adjacent across a hyperplane. It follows from
Theorem~\ref{general-Sageev}  that that there exists a path of
$q$-cubes $E_1, E_2, \dots, E_n$, with $E_1 = C_1$ and $E_n = C_2$,
where each consecutive pair $E_i$, $E_{i+1}$  
consists  of parallel and adjacent $q$-cubes. For all $t\geq 0$ let us define
\begin{equation}
\label{eq-def-cocycle}
W_t(C_1,C_2) = W_t(E_1,E_2)W_t(E_2,E_3)\dots W_t(E_{n-1}, E_n). 
\end{equation}
This notation, which omits mention of the path, is justified by the following  result:

\begin{proposition}
\label{prop-cocycle}
The unitary operator $W_t(C_1, C_2)$ is independent of the path  from $C_1$ to $C_2$.
\end{proposition}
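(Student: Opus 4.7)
The plan is to reduce path-independence to two local identities on the cocycle operators, together with the simple connectivity of a certain auxiliary $\cat(0)$ cube complex.

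By Proposition~\ref{general-Sageev}, the intersection $Z$ of the determining hyperplanes of the parallelism class of $C_1$ and $C_2$ is itself a $\cat(0)$ cube complex whose vertices are in bijection with the parallel $q$-cubes in that class and whose edges correspond to pairs of adjacent parallel $q$-cubes. A path of pairwise parallel and adjacent $q$-cubes from $C_1$ to $C_2$ is therefore the same thing as an edge-path in $Z$ between the corresponding vertices. Because $Z$ is simply connected, any two such edge-paths are related by a finite sequence of elementary moves: (i) insertion or deletion of a back-tracking segment of the form $\ldots,E,E',E,\ldots$, and (ii) replacement of one pair of adjacent sides of a $2$-cube of $Z$ by the complementary pair. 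It therefore suffices to show that each elementary move leaves the product on the right-hand side of (\ref{eq-def-cocycle}) unchanged.

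For the back-tracking move I would use the matrix formula in Definition~\ref{cocycle} to observe that $W_t(E',E)$ acts as a $2\times 2$ orthogonal rotation on each span of a pair $(D,D^{op})$ of $q$-cubes adjacent across the separating hyperplane $H$, and as the identity on cubes not adjacent to $H$; interchanging $E$ with $E'$ negates the off-diagonal entries of this rotation, so $W_t(E,E')\,W_t(E',E)=I$ and inserting or deleting a back-track does not alter the overall product. For the square relation, suppose $C_{00},C_{10},C_{11},C_{01}$ are the vertices of a $2$-cube of $Z$, with $C_{00},C_{10}$ and $C_{01},C_{11}$ adjacent across a hyperplane $H_1$ and $C_{00},C_{01}$ and $C_{10},C_{11}$ adjacent across a hyperplane $H_2$. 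One then needs to verify
\begin{equation*}
W_t(C_{00},C_{10})\,W_t(C_{10},C_{11}) \ = \ W_t(C_{00},C_{01})\,W_t(C_{01},C_{11}).
\end{equation*}
I would analyze this by cases on the adjacency of a basis cube $D$ with $H_1$ and $H_2$: if $D$ is adjacent to neither both sides act as the identity on $D$; if it is adjacent to exactly one only a single factor on each side is non-trivial and the equality is immediate; the essential case is when $D$ is adjacent to both $H_1$ and $H_2$. In that case Lemmas~\ref{lem-3-hyperplanes} and \ref{lem-k-hplanes} produce a $(q+2)$-cube containing $D$ cut by both hyperplanes, and the quadruple $D,D^{op_1},D^{op_2},D^{op_1op_2}$ spans a four-dimensional invariant subspace on which each composite acts as a tensor product of a $2\times 2$ rotation in the $H_1$-direction and one in the $H_2$-direction. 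Since $C_{00}$ and $C_{01}$ lie in the same half-space of $H_1$, and $C_{00}$ and $C_{10}$ in the same half-space of $H_2$, the two rotations that appear in each composite are identical, and equality reduces to the trivial identity $(R_1\otimes I)(I\otimes R_2)=(I\otimes R_2)(R_1\otimes I)$.

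The main obstacle will be the sign and orientation bookkeeping in the square case, in particular confirming that the opposite-face conventions of Definition~\ref{cocycle0} remain compatible under simultaneous use of $H_1$ and $H_2$, so that the $4\times 4$ identity holds on the nose and not merely up to a sign. The analogous assertion for $q=0$ is treated in detail in \cite{GuentnerHigson}, and Theorem~\ref{general-Sageev} extends that argument verbatim to $q>0$ by allowing us to treat the quadruple $D,D^{op_1},D^{op_2},D^{op_1op_2}$ as a single parallel family inside the appropriate totally geodesic subcomplex of $X$.
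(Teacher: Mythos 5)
Your proof is correct, and it takes the same first step as the paper: passing to the auxiliary $\cat(0)$ cube complex $Z$ furnished by Proposition~\ref{general-Sageev}, whose vertices are the cubes in the parallelism class of $C_1$ and $C_2$. After that the two arguments diverge. The paper simply asserts that this reduces the whole proposition to the $q=0$ case and cites \cite[Lemma~3.3]{GuentnerHigson}, while you instead use the simple connectivity of $Z$ (a consequence of $\cat(0)$) only to reduce to the two elementary moves (backtrack insertion/deletion and the square relation), and then verify directly at the level of the $q$-dimensional operators $W_t$ that each move leaves the product unchanged. Your approach is more self-contained and, in one respect, more careful: the operators $W_t(E_i,E_{i+1})$ act on all of $\ell^2(X^q)$, not merely on the span of cubes parallel to $C_1$, so the paper's ``reduction to $q=0$'' implicitly requires decomposing $\ell^2(X^q)$ by parallelism classes and matching the restricted operators with cocycle operators on the various auxiliary complexes $Z_D$ — a step the paper does not spell out. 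Your direct verification of the backtrack relation $W_t(E,E')W_t(E',E)=I$ and the square relation (via the $2\times2\to 4\times4$ tensor-product block structure on $\operatorname{span}\{D, D^{\mathrm{op}_1}, D^{\mathrm{op}_2}, D^{\mathrm{op}_1\mathrm{op}_2}\}$) sidesteps that bookkeeping. The one point you flag but do not fully dispatch — that the compatible-orientation conventions of Definition~\ref{cocycle0} compose consistently so that $D^{\mathrm{op}_1\mathrm{op}_2}=D^{\mathrm{op}_2\mathrm{op}_1}$ with matching orientations — is indeed the only delicate piece, but it follows from Lemma~\ref{lem-k-hplanes} (the four cubes are faces of a common $(q+2)$-cube cut by $H_1$ and $H_2$) together with the functoriality of the vertex bijection used to define compatible orientations, so the argument goes through.
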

\begin{proof}
Let $\gamma$ and $\gamma'$ be two cube paths connecting cubes $C_1$ and $C_2$. 
As the cubes $C_1$ and $C_2$ are parallel, by Theorem~\ref{general-Sageev} they can be thought of as vertices in the $\cat(0)$ cube complex  created from their parallelism class. The paths $\gamma$ and $\gamma'$ then  give rise to vertex paths in this $\cat(0)$ cube complex with common beginning and end vertices. In this way we reduce the general case of the proposition  to the zero dimensional case, which has been proved in 
\cite[Lemma~3.3]{GuentnerHigson}. 
\end{proof}

In what follows we shall  use the base vertex $P_0$ that was selected
during  the construction of the Julg-Valette complex. 

\begin{definition}
\label{def-Ut0}
Let $t\in (0,\infty]$. For every oriented $q$-cube $D$ let 
\[
U_t D  = W_t(D_0,D)D,
\]
where $D_0$ is the cube nearest to the base vertex $P_0$ in the
parallelism class of $D$ (see Proposition~\ref{prop-nearest-cube}).
Extend $U_t$ by linearity to a linear operator on the spaces of full
and oriented Julg-Valette $q$-cochains; in particular, on oriented
cochains we have
\begin{equation*}
U_t: \C[X^q]\longrightarrow \C[X^q].
\end{equation*}
\end{definition}

\begin{lemma}\label{Ut-isomorphism}
The linear operator  $U_t$  is a vector space isomorphism.
\end{lemma}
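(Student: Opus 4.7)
The plan is to decompose $U_t$ by parallelism classes and reduce to the zero-dimensional case essentially treated in \cite{GuentnerHigson}. First, I would verify that $U_t$ preserves the direct sum decomposition
\[
\C[X^q] \;=\; \bigoplus_{\mathcal{P}} \C[\mathcal{P}]
\]
over parallelism classes of oriented $q$-cubes. Each elementary cocycle operator $W_t(E',E)$ of Definition~\ref{cocycle} acts as the identity off cubes adjacent to the hyperplane $H$ between $E$ and $E'$, and otherwise couples a cube $F$ with its opposite face $F^{op}$ across $H$; because $F$ and $F^{op}$ are codimension-one faces of a $(q{+}1)$-cube cut precisely by $H$ together with the hyperplanes cutting $F$, they lie in the same parallelism class. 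Iterating the definition, $U_tD = W_t(D_0,D)D$ is a finite linear combination of cubes in $\mathcal{P}_D$. Via Proposition~\ref{general-Sageev}, each $\C[\mathcal{P}]$ may be identified---up to a sign convention accounting for orientations---with $\C[Z^0]$ for the $\cat(0)$ cube complex $Z$ associated to $\mathcal{P}$, and under this identification the restricted operator $U_t|_{\C[\mathcal{P}]}$ is precisely the $q{=}0$ operator of Definition~\ref{def-Ut0} for $Z$ with base vertex the image of $D_0$. It therefore suffices to prove the lemma when $q=0$.

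In the case $q=0$, my strategy is to exhibit a triangular structure for $U_t$ with respect to the ordering of vertices of $Z$ by edge-path distance from the base vertex $v_0$. For a vertex $v$, fix a geodesic $v_0=w_0,w_1,\dots,w_n=v$ and write $H_i$ for the hyperplane separating $w_i$ from $w_{i+1}$. Unwinding the product $W_t(v_0,w_1)W_t(w_1,w_2)\cdots W_t(w_{n-1},v)$ applied to $v$, one shows by induction on $n$ that every vertex appearing in $U_tv$ is obtained from $v$ by a sequence of flips across some of the $H_i$. Two facts drive the argument: each intermediate vertex remains on the $v$-side of any $H_j$ that has not yet been flipped across, and each $H_i$ separates $v_0$ from $v$ by the addition formula \eqref{eq-addition-fmla}. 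Consequently every flip strictly decreases edge-path distance to $v_0$, so $U_tv$ is supported on vertices $w$ with $d(v_0,w)\le d(v_0,v)$, with equality only at $w=v$. Tracking the scalar factors from Definition~\ref{cocycle} further shows that the coefficient of $v$ in $U_tv$ equals $(1-e^{-t^2})^{k/2}$, where $k\ge 1$ is the number of $H_i$ to which $v$ is adjacent---in particular nonzero for $t>0$.

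Granting the triangular structure, the lemma follows by standard arguments. For surjectivity, an induction on $d(v_0,v)$ shows that every Dirac basis vector $v$ lies in the image of $U_t$: writing $U_tv = c_v v + \sum_{d(v_0,w) < d(v_0,v)} a_w w$ with $c_v\neq 0$, one solves for $v$ modulo closer vertices and then applies the inductive hypothesis. For injectivity, any hypothetical kernel element would contain an uncancelled contribution at a vertex of maximal distance from $v_0$. The main obstacle and heart of the proof is to verify the triangular structure cleanly, especially that no vertex at greater distance can enter $U_tv$; this requires a careful inductive propagation through the product of cocycle operators of the cases appearing in Definition~\ref{cocycle}, tracking at each step which side of the current $H_i$ every intermediate vertex lies on.
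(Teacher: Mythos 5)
Your proposal is correct and takes essentially the same route as the paper: both exhibit a triangular structure for $U_t$ with respect to the filtration of cochains by distance from $P_0$, with nonzero diagonal entries for $t>0$, and conclude by triangularity. The paper simply cites \cite[Lemma 4.7]{GuentnerHigson} for the key triangular estimate that you unpack more explicitly, and your preliminary reduction to $q=0$ via parallelism classes is a harmless repackaging of the observation that $U_t$ manifestly preserves each parallelism-class summand.
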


\begin{proof}
  Consider the increasing filtration of the cochain space, indexed by
  the natural numbers, in which the $n$th space is spanned by those
  cubes whose nearest vertex to $P_0$ in the edge-path metric is of
  distance $n$ or less from $P_0$.  The operator $U_t$ preserves this
  filtration. In fact, a simple direct calculation (see \cite[Lemma
  4.7]{GuentnerHigson}) shows that
\begin{equation*}
\begin{aligned}
{U}_t D & = W_t (D_0,D)D \\
& = \text{constant}\cdot D + \text{linear combination of cubes  closer to $P_0$ than $D$.}
\end{aligned}
\end{equation*}
This formula shows that the induced map on associated  graded spaces
is an isomorphism.  So $U_t$ is an isomorphism. 
\end{proof}

\begin{lemma}\label{unitary-isomorphism} 
If  $D_1$ and $ D_2$ are any two oriented $q$-cubes in $X$, then
\[
\langle U_t D_1 , U_t D_2\rangle = \langle  D_1 ,   D_2\rangle_t,
\]
where the inner product on the left hand side is that of $\ell^2(X^q)$.
\end{lemma}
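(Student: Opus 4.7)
The plan is to split into cases based on whether $D_1$ and $D_2$ are parallel with compatible orientations. If they are not, then $d(D_1,D_2) = \infty$ and the right-hand side vanishes by definition; I will show the left-hand side vanishes too. The key observation is that $U_t D = W_t(D_0, D) D$ lies, for every oriented $q$-cube $D$, in the linear span of the oriented cubes in the parallelism class of $D$ equipped with orientations compatible with $D$. This follows inductively from Definition~\ref{cocycle}: each elementary factor $W_t(C^{op},C)$ sends an oriented cube adjacent to the relevant hyperplane into the span of itself and its opposite (which is parallel and compatibly oriented by Definition~\ref{cocycle0}), and fixes all other oriented cubes. Since distinct oriented cubes are orthogonal in $\ell^2(X^q)$, when $D_1$ and $D_2$ lie in different parallelism classes or carry incompatible orientations, $U_t D_1$ and $U_t D_2$ are supported on disjoint subsets of an orthogonal basis, whence $\langle U_t D_1, U_t D_2 \rangle = 0$.

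Assume now that $D_1$ and $D_2$ are parallel with compatible orientations, so by Proposition~\ref{prop-nearest-cube} they share a common nearest cube $D_0$ to $P_0$ in the parallelism class. Each elementary cocycle $W_t(C^{op},C)$ is unitary on the full cochain space, being a direct sum of $2 \times 2$ unitary blocks and the identity, so $W_t(D_0,D_i)$ is unitary for $i=1,2$. The path independence from Proposition~\ref{prop-cocycle} gives the cocycle relation $W_t(A,B)W_t(B,C) = W_t(A,C)$, and in particular $W_t(D_0,D_1)^{-1} = W_t(D_1,D_0)$. Combining these facts yields
\begin{equation*}
\langle U_t D_1, U_t D_2 \rangle
= \langle D_1, W_t(D_1,D_0)\, W_t(D_0,D_2)\, D_2 \rangle
= \langle D_1, W_t(D_1,D_2)\, D_2 \rangle.
\end{equation*}
It therefore remains to establish
\begin{equation*}
\langle D_1, W_t(D_1,D_2)\, D_2 \rangle = \tfrac{1}{2}\exp\bigl(-\tfrac{1}{2}t^2 d(D_1,D_2)\bigr).
\end{equation*}

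For this last identity I would reduce to the case $q=0$ via Theorem~\ref{general-Sageev}. The parallelism class of $D_1$ corresponds bijectively to the vertex set of the $\cat(0)$ cube complex $Z$ obtained as the intersection of the determining hyperplanes, and under this correspondence $d(D_1,D_2)$ of Definition~\ref{def-distance-parallel-cubes} agrees with the edge-path distance of the corresponding vertices of $Z$. Moreover, the hyperplanes of $Z$ are precisely the restrictions to $Z$ of those hyperplanes of $X$ that are disjoint from the determining hyperplanes but cross all of them, so when cubes $E,E'$ in the parallelism class are adjacent across such a hyperplane $H$ in $X$, the elementary cocycle $W_t(E,E')$ restricted to the span of the parallelism class coincides with the vertex cocycle on $Z$ across $H \cap Z$. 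The desired identity then reduces to its vertex analogue, which is the content of \cite[Prop.~3.6]{GuentnerHigson} (or the Technical Lemma of \cite{NibloReevesNormal}).

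The main obstacle is this reduction step: making precise the claim that the ambient cocycle, restricted to parallelism classes, matches the intrinsic vertex cocycle of $Z$. This requires a careful check of how the elementary $2\times 2$ blocks in Definition~\ref{cocycle} act on arbitrary cubes in the parallelism class (not just on those along a fixed path from $D_0$ to $D_i$), together with orientation bookkeeping to ensure that compatibly oriented parallel cubes in $X$ correspond to a consistent choice of vertex orientations in $Z$.
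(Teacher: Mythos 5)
Your proposal follows essentially the same route as the paper: dispose of the non-parallel/incompatible case by orthogonality, then use unitarity of $W_t$ together with the cocycle relation $W_t(D_0,D_1)^{-1}W_t(D_0,D_2)=W_t(D_1,D_2)$ from Proposition~\ref{prop-cocycle} to reduce to a single matrix coefficient $\langle D_1, W_t(D_1,D_2)D_2\rangle$, and finally invoke the vertex-level computation from \cite{GuentnerHigson}. The ``obstacle'' you flag at the end---matching the ambient cocycle restricted to a parallelism class with the intrinsic vertex cocycle on the auxiliary complex $Z$ of Theorem~\ref{general-Sageev}---is precisely the step the paper dispatches by asserting equation (\ref{eq-cocycle-calc}) as ``an elaboration of \cite[Proposition~3.6]{GuentnerHigson}'' without further detail, so your candor about it does not indicate a gap relative to the paper.
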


\begin{remark}
The lemma implies that the sesqui-linear form  
$\langle \,\,\, ,\,\,\rangle_t$ is positive definite for each $t > 0$,
since  $\langle \,\,\, , \,\,\rangle$ is positive-definite and $U_t$
is an isomorphism.  
\end{remark}

\begin{proof}[Proof of the lemma]
  We can assume that the $q$-cubes $D_1$ and $D_2$ are parallel and
  compatibly oriented since otherwise both sides of the formula are
  zero.  Let $D_0$ denote the $q$-cube in the parallelism class that
  is nearest to the base vertex $P_0$.  Then the unitarity of $W_t$
  and Proposition~\ref{prop-cocycle} give
\[
\begin{aligned}
\langle U_t D_1, U_t D_2\rangle & = \langle W_t(D_0,D_1)D_1, W_t(D_0,D_2)D_2\rangle \\
    & = 
\langle W_t(D_0,D_2)^* W_t(D_0,D_1)D_1, D_2\rangle \\
& = \langle W_t(D_2, D_0) W_t(D_0,D_1)D_1, D_2\rangle  \\
& = \langle W_t(D_2,D_1)D_1, D_2\rangle .
\end{aligned}
\]
But, by an elaboration of \cite[Proposition~3.6]{GuentnerHigson} we have
\begin{equation}
\label{eq-cocycle-calc}
W_t(D_2,D_1)D_1 = e^{-\tfrac 12 t^2 d(D_2,D_1)}D_2 + \;  
    \text{multiples of oriented cubes  other than $D_2$}.
\end{equation}
 Hence we conclude that
\[
\langle W_t(D_2,D_1)D_1, D_2\rangle  = 
   \tfrac{1}{2}e^{-\tfrac 12 t^2 d(D_2,D_1)} = \langle D_1,D_2\rangle_t ,
\]
as required. 
\end{proof}

The following results are immediate consequences of the above:

\begin{theorem}
For all $t\in (0,\infty]$ the map 
\[
U_t\colon \C[X^q] \longrightarrow \C[X^q]
\]
extends to a unitary isomorphism 
\begin{equation*}
\pushQED{\qed} 
U_t: \ell_t^2 (X^q)\longrightarrow \ell^2_\infty(X^q).
\qedhere
\popQED
\end{equation*}
\end{theorem}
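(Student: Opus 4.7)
The proof is a straightforward extension-by-continuity argument, with essentially all the content already packaged into the two preceding lemmas.

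The plan is to observe that Lemma \ref{unitary-isomorphism} says precisely that $U_t\colon \C[X^q]\to\C[X^q]$ is an isometry when the domain carries the inner product $\langle\,\,,\,\,\rangle_t$ and the codomain carries $\langle\,\,,\,\,\rangle_\infty$. Here a subtle point needs to be addressed: strictly speaking, $\ell^2_t(X^q)$ was defined as the completion of the quotient of $\C[X^q]$ by elements of zero $\langle\,\,,\,\,\rangle_t$-norm. However, the remark following Lemma \ref{unitary-isomorphism} (which is itself an immediate consequence of that lemma together with Lemma \ref{Ut-isomorphism}) shows that the form $\langle\,\,,\,\,\rangle_t$ is actually positive definite, so no quotient is needed and the natural map $\C[X^q]\to\ell^2_t(X^q)$ is injective with dense image.

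Given this, the isometry $U_t\colon(\C[X^q],\langle\,\,,\,\,\rangle_t)\to(\C[X^q],\langle\,\,,\,\,\rangle_\infty)$ extends uniquely by continuity to an isometric linear map
\[
U_t\colon \ell^2_t(X^q)\longrightarrow \ell^2_\infty(X^q).
\]
To conclude it is unitary, I need only check surjectivity. But by Lemma \ref{Ut-isomorphism} the map $U_t$ on $\C[X^q]$ is already a vector space isomorphism, so its image in $\ell^2_\infty(X^q)$ contains the dense subspace $\C[X^q]$. An isometric linear map between Hilbert spaces with dense image is a unitary isomorphism, which finishes the proof.

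There is no real obstacle here; the work has been done in Lemmas \ref{Ut-isomorphism} and \ref{unitary-isomorphism}, and the present theorem is just the packaging of those two facts in terms of Hilbert space completions. The only point that requires any care, as noted above, is confirming that $\langle\,\,,\,\,\rangle_t$ is genuinely positive definite on $\C[X^q]$ so that the completion process does not identify any nonzero cochains.
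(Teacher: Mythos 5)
Your proof is correct and is exactly the argument the paper has in mind: the paper presents this theorem (and the one that follows it) with a bare \qed under the heading ``immediate consequences of the above,'' and the consequence you spell out --- isometry from Lemma~\ref{unitary-isomorphism}, positive-definiteness of the form so that no quotient is taken, extension by continuity, and surjectivity from the dense image supplied by Lemma~\ref{Ut-isomorphism} --- is precisely what is being left implicit.
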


\begin{theorem}
The unitary operators $U_t$ determine a unitary isomorphism from the continuous field
 $\{\ell_t^2(X^q)\} _{t\in (0,\infty]}$   generated by sections in Lemmas \ref{lemma-cont-fld1} and \ref{lemma-cont-fld2}
 to the constant field with fiber $\ell^2(X^q)$. \qed
  \end{theorem}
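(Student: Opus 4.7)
The plan is to invoke the generating-family characterization of continuous fields from Theorem~\ref{continuous-field}: a fibrewise unitary family $\{U_t\}$ assembles into a unitary isomorphism of continuous fields once one verifies that the images of a generating family of sections of the source field are continuous sections of the target field. Taking the generating family of constant sections $t \mapsto D$, $D$ an oriented $q$-cube, from Lemma~\ref{lemma-cont-fld1}, and noting that the target is the constant field with fibre $\ell^2(X^q)$, the problem reduces to showing that $t \mapsto U_t D$ is norm-continuous as a map $(0,\infty] \to \ell^2(X^q)$ for each oriented $q$-cube $D$.

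To establish this, we would unpack the definition $U_t D = W_t(E_1, E_2) \cdots W_t(E_{n-1}, E_n) D$ along a fixed cube path $E_1 = D_0,\dots,E_n = D$ of parallel adjacent cubes in the parallelism class of $D$. Each elementary factor $W_t(E_i, E_{i+1})$, when applied to a single basis vector, produces at most two basis vectors with coefficients drawn from $\{1,\,\pm e^{-t^2/2},\,(1-e^{-t^2})^{1/2}\}$; these extend continuously across $(0,\infty]$, with limits $1$ and $0$ at $t = \infty$, so $W_\infty$ reduces to the identity (consistent with $U_\infty D = D$). Iterating the product therefore expresses $U_t D = \sum_j c_j(t)\, D_j$ as a \emph{finite} linear combination of basis vectors whose coefficients $c_j(t)$ are continuous functions of $t$, from which norm-continuity into the constant fibre $\ell^2(X^q)$ follows at once.

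To upgrade from morphism to isomorphism, we would observe that the image family $\{t\mapsto U_t D\}$ satisfies both hypotheses of Theorem~\ref{continuous-field} as a generating family for the target fibres: the pointwise inner products $\langle U_t D_1, U_t D_2\rangle = \langle D_1, D_2\rangle_t$ are continuous by Definition~\ref{def-inner-product} (and Lemma~\ref{unitary-isomorphism}), and their pointwise span is dense by unitarity of each $U_t$. The uniqueness clause of Theorem~\ref{continuous-field} then forces the continuous field structure generated on $\{\ell^2(X^q)\}$ by these sections to agree with the constant structure, so $\{U_t^{-1}\}$ too carries generating sections of the constant field to continuous sections of $\{\ell^2_t(X^q)\}$, delivering the isomorphism in both directions. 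No substantive obstacle is anticipated here: the content is a direct matrix-entry continuity check, and the only subtlety is to confirm that the continuity extends properly to the endpoint $t = \infty$, which the limits above supply.
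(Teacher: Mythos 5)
Your proof is correct and follows the route the paper implicitly has in mind: using the generating-family characterization from Theorem~\ref{continuous-field}, the inner-product identity of Lemma~\ref{unitary-isomorphism}, and the finite cocycle-product expansion to get norm-continuity of $t\mapsto U_t D$. The paper leaves this as an immediate \qed{} after Lemma~\ref{unitary-isomorphism} and the fiberwise unitarity theorem, and your write-up is a faithful fleshing-out of exactly that reasoning.
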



\section{Extension of the Continuous Field}
\label{sec-extension-field}

In this section we shall extend the continuous fields over
$(0,\infty]$ defined in Section~\ref{sec-cont-field} by adding the following  fibers
at $t=0$.  

 \begin{definition}
\label{def-analytic-PS}
We shall denote by $\ell^2_0 (X^q)$ the completion of the space of
oriented Pytlik-Szwarc $q$-cochains in the inner product of
Definition~\ref{def-ps-inner-product}.  It is the subspace of
anti-symmetric functions in the Hilbert space of all square-summable
functions on the set of oriented symbols $\H_q$.
\end{definition}

The following two definitions focus on the particular continuous sections that we shall extend.

\begin{definition} 
\label{def-basic-cochain}
Let $p,q\ge 0$ and let  $(C,D)$ be  an oriented $(p,q)$-cube pair.
The associated  \emph{basic $q$-cochain of type $p$} is the linear 
combination
\[
 f_{C, D} = \sum_{E \parallel_C D} (-1)^{d(D,E)}  E
\]
in the full cochain space.  Here, the sum is over those $q$-cubes $E$
in $C$ that are parallel to $D$, each of which is given the
orientation compatible with the orientation of $D$.  The associated
basic \emph{oriented} cochain is
\begin{equation*}
  f_{\langle C,D \rangle} = f_{C,D}-f_{C,D^*} =
     \sum_{E \parallel_C D} (-1)^{d(D,E)}  \langle\, E \,\rangle,
\end{equation*}
belonging to the space $\C[X^q]$ of oriented $q$-cochains.
\end{definition}

\begin{example}  
  For $q\ge 0$, a basic $q$-cochain of type $p=0$ is just a single
  oriented $q$-cube. A basic $0$-cochain of type $1$ is a
  difference of vertices across an edge. Finally, if $p+q > \dim (X)$
  then there are no basic $q$-cochains of type $p$, since there are no
  $(p+q)$-cubes in $X$.
\end{example}

\begin{definition}
\label{def-basic-sec}
A \emph{basic section of type $p$} of the continuous field 
$\{\, \ell^2_t (X^q) \,\}_{t\in (0,\infty]}$ is a continuous
section of the form 
\[
(0,\infty] \ni  t\longmapsto  t^{-p} f_{\langle C,D \rangle}\in \ell^2 _t (X^q),
\]
where $(C,D)$ is an oriented $(p,q)$-cube pair.
\end{definition}

We shall extend the basic sections to sections over $[0,\infty]$ by assigning to each of them a value at $t=0$ in the Hilbert space $\ell^2 _0(X^q)$, namely the  Pytlik-Szwarc symbol associated to the cube pair $(C,D)$, as in Section~\ref{sec-eq-complex}.  We shall write it as 
\[
\langle\, C,D \,\rangle = [C,D] - [C,D]^*\in \ell^2 _0(X^q) .
\]
Compare Definition~\ref{def-symbol-orientation} and Remark~\ref{ps-cochain-remark}.
We shall prove the following result.

\begin{theorem}
\label{thm-basic-sections}
Let $q\ge 0$.
 \begin{alist}
 \item The pointwise inner product 
 \[
 \bigl \langle t^{-p_1}f_{C_1,D_1}, t^{-p_2}f_{C_2,D_2}\bigr \rangle_t
 \]
  of any two basic sections \textup{(}of possibly different
  types\textup{)} extends to a continuous function on $[0,\infty]$. 
 
 \item The value of this continuous function at $0\in [0,\infty]$ is
   equal to the inner product  
 \[
\bigl \langle [C_1,D_1], [C_2,D_2] \bigr \rangle _0.
 \]
\end{alist}
\end{theorem}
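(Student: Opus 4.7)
The plan is to reduce the computation to a finite-dimensional combinatorial calculation inside the $\cat(0)$ cube complex associated with the parallelism class of $D_1$ and $D_2$, and then to carry out an explicit asymptotic expansion of the resulting alternating sum of exponentials.

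If $D_1$ and $D_2$ are not parallel as $q$-cubes, then every pair $(E_1,E_2)$ with $E_i \parallel_{C_i} D_i$ consists of non-parallel cubes, so every term $\langle \langle E_1 \rangle, \langle E_2\rangle\rangle_t$ vanishes for all $t>0$; at the same time the two symbols $[C_i,D_i]$ have distinct lists of determining hyperplanes of $D_i$, so they are neither equal nor involution-related and are therefore orthogonal in $\ell^2_0(X^q)$.  When $D_1$ and $D_2$ are parallel but incompatibly oriented, the substitution $D_2 \leadsto D_2^*$ flips the sign of both $f_{C_2,D_2}$ and $[C_2,D_2]$, so we may assume compatible orientation.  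Under this hypothesis Proposition~\ref{general-Sageev} realises the parallelism class of $D_1$ and $D_2$ as the vertex set of a $\cat(0)$ cube complex $Y$, in which $\tilde C_i$ is a $p_i$-cube containing the vertex $\tilde D_i$; by Lemma~\ref{lem-cuts-determining} the distance $d$ of Definition~\ref{def-distance-parallel-cubes} on parallel $q$-cubes coincides with the edge-path distance $d_Y$ between the corresponding vertices of $Y$, reducing the problem to the $q=0$ situation inside $Y$.

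In that situation the inner product becomes
\[
\sum_{\tilde E_1 \in \tilde C_1,\, \tilde E_2 \in \tilde C_2} (-1)^{d(\tilde D_1, \tilde E_1) + d(\tilde D_2, \tilde E_2)} \, e^{-\frac{1}{2}t^2 d_Y(\tilde E_1, \tilde E_2)}.
\]
Setting $u = 1 - e^{-t^2/2}$ and using the factorisation $e^{-\frac{1}{2}t^2 d_Y(\tilde E_1, \tilde E_2)} = \prod_K \bigl(1 - u\,\chi_K(\tilde E_1, \tilde E_2)\bigr)$, where $\chi_K$ is the indicator that the hyperplane $K$ of $Y$ separates $\tilde E_1$ from $\tilde E_2$, I expand the product and interchange the summations to rewrite the inner product as $\sum_S (-u)^{|S|}\,T(S)$, with $T(S)$ an alternating sum over the vertex pairs for which every $K\in S$ separates $\tilde E_1$ from $\tilde E_2$.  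The alternating sum over any unconstrained coordinate of either $\tilde E_i$ vanishes, so $T(S)=0$ unless $S$ contains the full set $A_i$ of hyperplanes of $Y$ cutting $\tilde C_i$ for both $i=1,2$.  Since $u \sim t^2/2$ and $|A_1 \cup A_2| = p_1+p_2 - |A_1 \cap A_2| \ge (p_1+p_2)/2$, this shows that $t^{-(p_1+p_2)}$ times the inner product extends continuously to $[0,\infty]$, and that the limit at $t=0$ is nonzero only when $p_1=p_2$ and $A_1=A_2$, which is precisely the condition for the symbols $[C_1,D_1]$ and $[C_2,D_2]$ to share the same hyperplane data.

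In this extremal case the leading contribution comes from $S = A_1 = \{H_1,\dots,H_p\}$: the constraints imposed by distinct $H_j$ decouple, and each $H_j$ contributes a factor $+2$ or $-2$ to $T(A_1)$ according to whether or not $H_j$ separates $\tilde D_1$ from $\tilde D_2$, giving $T(A_1) = (-2)^p (-1)^\Sigma$ where $\Sigma$ is the number of complementary hyperplanes separating the two faces.  Combined with $(-u)^p \sim (-t^2/2)^p$ this yields the limit $(-1)^\Sigma$.  This sign is exactly the parity invariant of Definitions~\ref{def-pair-parity}--\ref{def-equivalent-presentations} distinguishing whether the oriented symbols $[C_1,D_1]$ and $[C_2,D_2]$ coincide or differ by the involution, and so it equals $\langle [C_1,D_1],[C_2,D_2]\rangle_0$ in all cases.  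The main obstacle is the coordinate-level verification that $T(S)$ vanishes whenever $S \not\supseteq A_1 \cup A_2$ and the careful orientation bookkeeping in the leading-order coefficient:  both rely on translating between the geometric data of Section~\ref{sec-eq-complex} and the hyperplane structure of $Y$ supplied by Proposition~\ref{general-Sageev}, where the asymmetric interaction between $\tilde C_1$ and $\tilde C_2$ makes the combinatorics genuinely subtle.
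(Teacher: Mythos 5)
Your proposal follows a genuinely different path from the paper's. The paper computes the inner products by transporting them through the unitaries $U_t$ (built from the cocycle operators $W_t$), so that $\langle f_1,f_2\rangle_t = \langle U_tf_1,U_tf_2\rangle_\infty$, and then isolates leading-order behaviour by the inductive Lemma~\ref{lem-7.1-target}, which says $\sum_{E\parallel_CD}(-1)^{d(D,E)}W_t(D,E)E = (-t)^pD^{\mathrm{op}} + O(t^{p+1})$; this is done case-by-case in Lemmas~\ref{lemma-equiv-basic1}, \ref{lemma-equiv-basic2}, and \ref{lemma-inequiv-basic}. Your approach bypasses the cocycles entirely: you reduce to the vertex case via Proposition~\ref{general-Sageev}, write the distance as a sum of separation indicators, expand $\exp(-\tfrac12 t^2 d) = \prod_K(1-u\chi_K)$ with $u = 1-e^{-t^2/2}$, interchange sums, and kill all but the leading subset $S$ by a reflection/cancellation argument. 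Both routes are valid; the paper's machinery is reused later for the differentials (Section~\ref{sec-field-ps-ops}), so the cocycle formalism earns its keep, while yours is more self-contained for this particular theorem and makes the combinatorial content very transparent.

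A few points you should tighten. First, you dropped the normalization: $\langle E_1,E_2\rangle_t = \tfrac12 \exp(-\tfrac12t^2d(E_1,E_2))$, so the expression you display should carry a factor $\tfrac12$ and your final limit is $\tfrac12(-1)^\Sigma$, not $(-1)^\Sigma$ — which is what is actually needed to match $\langle[C_1,D_1],[C_2,D_2]\rangle_0 \in \{0,\pm\tfrac12\}$. Second, the reduction to compatible orientations by ``the substitution $D_2\leadsto D_2^*$ flips the sign of $f_{C_2,D_2}$'' is wrong as stated at the level of the full cochain space: $f_{C_2,D_2^*}$ is a sum over the opposite orientations, not $-f_{C_2,D_2}$. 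The incompatibly-oriented case is in fact trivial (every summand $\langle E_1,E_2\rangle_t$ vanishes identically, so the inner product is $0$ for all $t$), and that is the argument you should give, rather than a sign flip. Third, the identification of the distance of Definition~\ref{def-distance-parallel-cubes} with edge-path distance in $Y$ is correct, but it is a consequence of Proposition~\ref{general-Sageev} (and is noted in the text following Definition~\ref{def-distance-parallel-cubes}); Lemma~\ref{lem-cuts-determining} alone does not establish it. Finally, the reflection argument for $T(S)=0$ when $S\not\supseteq A_1\cup A_2$ and the per-hyperplane factorization of $T(A_1)$ (giving $+2$ if $H_j$ separates $\tilde D_1$ from $\tilde D_2$, $-2$ if not, hence $T(A_1) = 2^p(-1)^{p-\Sigma}$) are both correct but need to be written out; you rightly flag these as the places where care is required, and they are where the bulk of a full write-up would go.
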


\begin{example}\label{example-tree}
  Suppose that $X$ is a tree.  When $q=1$, the only basic sections are
  those of type $p=0$, and they are the functions $t\mapsto E$, where
  $E$ is an oriented edge in $X$.  Theorem~\ref{thm-basic-sections} is
  easily checked in this case. When $q=0$ there are basic sections
  $t\mapsto Q$ of type $p=0$, which are again easily handled, but also
  basic sections of type $p=1$.  These have the form
\[
t \mapsto t^{-1}( P - Q),
\]
where $P$ and $Q$ are adjacent vertices in the tree.  One calculates
that 
\[
\bigl \langle  t^{-1}( P - Q), t^{-1}( P - Q)\bigr \rangle_t = 
        2 t^{-2}(1-e^{-\frac 12 t^2}) ,
\]
which converges to $1$ as $t\to 0$, in agreement with
Theorem~\ref{thm-basic-sections}.  In addition if $t^{-1}(R-S)$ is a
second, distinct basic cochain, and if the vertices $P,Q,R,S$ are
arranged in sequence along a path in the tree, then a short
calculation reveals that if $d$ is the distance between $Q$ and $R$,
then
\[
\bigl \langle  t^{-1}( P - Q), t^{-1}( R - S)\bigr \rangle_t =  
     -t^{-2} e^{-d \frac 12 t^2}(1-e^{-\frac 12 t^2})^2 = O(t^2) .
\]
In particular the inner product converges to $0$ as $t\searrow 0$, again
in agreement with Theorem~\ref{thm-basic-sections}.   

\end{example}

\begin{definition}
\label{def-extended-basic-sec}
An \emph{extended basic section of type $p$} of the continuous field of Hilbert spaces
$\{\, \ell^2_t (X^q) \,\}_{t\in [0,\infty]}$ is a section of the
form
\begin{equation*}
    t \longmapsto  \begin{cases}
              \langle C,D \rangle, & t=0 \\
              t^{-p} f_{\langle C,D \rangle}, & t>0,
  \end{cases}
\end{equation*}
where $(C,D)$ is an oriented $(p,q)$-cube pair.
\end{definition}

The basic sections form a generating family of sections for the
continuous field $\{ \ell ^2 _t (X^q) \}_{t\in (0,\infty]}$, and of
course the symbols $\langle C,D\rangle$ span $\ell_0^2 (X^q)$.  So it
follows from the theorem that the extended basic sections form a
generating family of sections for a continuous field over $[0,\infty]$
with fibers $\ell^2_t (X^q)$, whose restriction to $(0,\infty]$ is the
continuous field of the previous section.

We shall   prove Theorem~\ref{thm-basic-sections} by carrying out a
sequence of smaller calculations.  The following formula is common to
all of them, and it will also be of use in
Section~\ref{sec-field-ps-ops}.  Here, and subsequently, we shall
write $O(t^p)$ for any finite sum of oriented $q$-cubes times
coefficient functions, each of which is bounded by a constant times
$t^{p}$ as $t\searrow 0$.

\begin{lemma}
\label{lem-7.1-target}
  If $(C,D)$ is an oriented $(p,q)$-cube pair then
  \begin{equation}
\label{eq-7.1-target}
     \sum_{E\parallel_CD} (-1)^{d(D,E)} W_t(D,E)E 
      =  (-t)^p D^{\mathrm op} + O(t^{p+1}),
  \end{equation}
where $D^{\mathrm op}$ is the $q$-face of $C$ separated from $D$ by
the complementary hyperplanes of the pair $(C,D)$, with compatible
orientation.  
\end{lemma}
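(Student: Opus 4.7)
Proof plan:

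Label the $p$ complementary hyperplanes of $(C,D)$ as $H_1, \ldots, H_p$, and index the $q$-faces of $C$ parallel to $D$ by subsets $S \subseteq \{1,\ldots,p\}$: let $E_S$ be the face separated from $D = E_\emptyset$ by exactly those $H_i$ with $i \in S$, all oriented compatibly with $D$. Then $d(D, E_S) = |S|$ and $D^{\mathrm{op}} = E_{\{1,\ldots,p\}}$, so the sum to be evaluated is $\sum_{S \subseteq \{1,\ldots,p\}} (-1)^{|S|} W_t(D, E_S) E_S$.

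The plan is to exhibit a tensor-product decomposition of the cocycle operators that computes the sum in closed form. By Proposition~\ref{general-Sageev} the parallelism class of $D$ is itself a $\cat(0)$ cube complex in which the faces $\{E_S\}$ form the vertex set of a $p$-dimensional sub-cube. Identifying the linear span of $\{E_S\}$ with $(\C^2)^{\otimes p}$ via $E_S \leftrightarrow \bigotimes_i e_{[i\in S]}$ (with standard basis $e_0, e_1$ of $\C^2$), I claim that each elementary cocycle $W_t(E_{T\setminus\{i\}}, E_T)$ with $i \in T$ acts as the unitary $M = \bigl(\begin{smallmatrix} b & a \\ -a & b \end{smallmatrix}\bigr)$ on the $i$-th tensor factor, where $a = e^{-t^2/2}$ and $b = (1-e^{-t^2})^{1/2}$, and as the identity on the other factors. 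A direct case-check using the two clauses of Definition~\ref{cocycle} verifies this, and also shows that $M$ depends only on $H_i$, not on $T$. Telescoping $W_t(D, E_S)$ into elementary cocycles across the $H_i$ for $i \in S$ via Proposition~\ref{prop-cocycle}, and using that operators on distinct tensor factors commute, yields that $W_t(D, E_S)$ is the tensor product with $M$ in the slots $i \in S$ and identity elsewhere.

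Applying this decomposition to $E_S$ replaces $e_1$ by $Me_1 = ae_0 + be_1$ in the slots $i \in S$ and leaves $e_0$ fixed elsewhere, so the signed sum factors coordinatewise:
\[
    \sum_S (-1)^{|S|} W_t(D,E_S) E_S = \bigotimes_{i=1}^p \bigl[ e_0 - (ae_0 + be_1) \bigr] = \bigotimes_{i=1}^p \bigl[ (1-a)e_0 - be_1 \bigr].
\]
Expanding and translating back to the basis $\{E_{S'}\}$ gives the closed form $\sum_{S'} (-b)^{|S'|}(1-a)^{p-|S'|} E_{S'}$. The Taylor expansions $1-a = t^2/2 + O(t^4)$ and $b = t + O(t^3)$ then finish the argument: the term $S' = \{1,\ldots,p\}$ contributes $(-b)^p E_{\{1,\ldots,p\}} = (-t)^p D^{\mathrm{op}} + O(t^{p+2})$, while for every other $S'$ the coefficient has order $t^{|S'| + 2(p-|S'|)} = t^{p + (p-|S'|)}$ with $p - |S'| \geq 1$, hence contributes $O(t^{p+1})$.

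The main obstacle is the tensor-product factorization of the cocycle. It rests on path-independence (Proposition~\ref{prop-cocycle}), which lets one telescope via any path inside $C$, together with the elementary observation that the cocycle across a complementary hyperplane is the same $2\times 2$ rotation regardless of which parallelism-class representative is taken as reference, so that cocycles across distinct complementary hyperplanes act on disjoint coordinate blocks and commute.
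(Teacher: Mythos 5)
Your proof is correct, and it takes a genuinely different route from the paper's. The paper argues by induction on $p$: it chooses one complementary hyperplane $H$, splits $C$ across $H$ into the codimension-one faces $C_\pm$, derives the two-term recurrence $g_{C,D} = (1-e^{-t^2/2})\,g_{C_+,D_+} - (1-e^{-t^2})^{1/2}\,g_{C_-,D_-}$, and then propagates the $O$-estimates through the induction. Your tensor-product factorization is exactly the closed form obtained by unrolling that recurrence: once you check (via the two clauses of Definition~\ref{cocycle}) that each elementary cocycle across $H_i$ preserves the span of the $E_S$ and acts there only in the $i$-th tensor slot by the fixed $2\times 2$ rotation $M$, path-independence (Proposition~\ref{prop-cocycle}) together with commutativity of operators on disjoint slots collapses the signed sum to $\bigotimes_{i=1}^p\bigl[(1-a)e_0 - be_1\bigr]$, i.e. to the exact expression $\sum_{S'} (-b)^{|S'|}(1-a)^{p-|S'|}\,E_{S'}$, and the asymptotics $(-t)^p D^{\mathrm{op}} + O(t^{p+1})$ are immediate from $1-a = O(t^2)$ and $b = t + O(t^3)$. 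What you gain is the exact coefficient on every $E_{S'}$, which makes both the leading term and the size of the error transparent in one step; what the paper gains is a lighter setup, since by peeling off one hyperplane at a time it never needs to verify globally that the span of $\{E_S\}$ is invariant under all the elementary cocycles or that the same $2\times 2$ block appears at each rung of the ladder — points which your write-up asserts with a sketch and which should be recorded carefully if this version were to be used in the paper.
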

\begin{proof}
We shall prove the lemma by induction on $p$.
The case $p=0$ is clear.  As for the case $p>0$, let $H$ be a
hyperplane that cuts $C$ but not $D$.  Our aim is to apply the
induction hypothesis to the codimension-one faces of $C$ separated by
$H$.  Denote these faces by $C_\pm$ with $C_+$ being the face
containing $D$; denote $D_+=D$ and $D_-$ the face of $C_-$ directly
across $H$ from $D$; and finally denote by $D_\pm^{\mathrm op}$ the
face in $C_\pm$ separated from $D_\pm$ by all the complementary
hyperplanes of the pair $(C,D)$ \emph{except} $H$.  We have, in
particular, $D^{\mathrm op} = D_-^{\mathrm op}$.

Now, the expression on the left hand side of (\ref{eq-7.1-target})
depends on the cube pair $(C,D)$ and for the course of the proof we
shall denote it by $g_{C,D}$.  We compute the summand of $g_{C,D}$
corresponding to a face $E$ that belongs to $C_-$ using the path from
$D_+$ to $D_-$ and on to $E$.  Doing so, we see that
\begin{align*}
  g_{C,D} &= g_{C_+,D_+} - W_t(D_+,D_-) g_{C_-,D_-} \\
         &= (1-e^{-\frac 12 t^2}) \, g_{C_+,D_+} - 
                 (1-e^{-t^2})^{\frac 12} \, g_{C_-,D_-}.
\end{align*}
Here, we have used that the coefficient of $g_{C_+,D_+}$ at a face $E$
of $C_+$ equals the coefficient of $g_{C_-,D_-}$ at the face of $C_-$
which is directly across $H$ from $E$.  By the induction hypothesis,
$g_{C_+,D_+} = (-t)^{p-1}D_+^{\mathrm op}+O(t^p)$, which is
$O(t^{p-1})$.  Since $1-e^{-\frac 12 t^2}$ is $O(t^2)$ the first term
in this expression is $O(t^{p+1})$.  As for the second term, again by
induction we have $g_{C_-,D_-} = (-t)^{p-1}D_-^{\mathrm op} + O(t^p)$,
which is $O(t^{p-1})$.  It follows that
\begin{align*}
   - (1-e^{-t^2})^{\frac 12} \, g_{C_-,D_-} &= 
        -t \, g_{C_-,D_-} + (t- (1-e^{-t^2})^{\frac 12}) \, g_{C_-,D_-} \\
     &= (-t)^p D_-^{\mathrm op} + O(t^{p+1}) +
                (t- (1-e^{-t^2})^{\frac 12}) O(t^{p-1}) \\
     &=  (-t)^p D^{\mathrm op} + O(t^{p+1}),
\end{align*}
where we have used that $t- (1-e^{-t^2})^{\frac 12}$ is
$O(t^3)$. Putting things together, the lemma is proved.
\end{proof}

In the previous section we defined unitary isomorphisms
$U_t:\ell^2_t(X^q)\to \ell^2(X^q)$.  While these were defined using a specific 
choice of base point within each parallelism class of $q$-cubes, the
choice is not important as far as the unitarity of $U_t$ is concerned.  We shall exploit this by making judicious choices of
base point to calculate the inner products in
Theorem~\ref{thm-basic-sections}.

\begin{lemma}
\label{lemma-equiv-basic1}
Let $(C,D)$ be an oriented $(p,q)$-cube pair, and let $f_{C,D}$ be the
associated basic $q$-cochain of type $p$. The pointwise inner product
\[
\bigl\langle t^{-p}f_{C,D},t^{-p}f_{C,D}\bigr\rangle_t 
\]
converges to $\tfrac{1}{2}$ as $t\searrow 0$.  
\end{lemma}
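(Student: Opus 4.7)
The plan is to transport the computation into the fixed Hilbert space $\ell^2_\infty(X^q)$ via the unitary isomorphism $U_t$ of Definition~\ref{def-Ut0}, and then read off the leading behaviour as $t\searrow 0$ from Lemma~\ref{lem-7.1-target}.

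First I would observe that every $q$-cube $E$ appearing in $f_{C,D}$ is parallel to $D$, so all the $E$'s lie in a single parallelism class with a common nearest cube $D_0$ to the base vertex $P_0$.  Hence
\[
U_t f_{C,D} = \sum_{E\parallel_C D} (-1)^{d(D,E)}\, W_t(D_0,E)\,E,
\]
and the cocycle property of Proposition~\ref{prop-cocycle} lets me factor $W_t(D_0,E) = W_t(D_0,D)\,W_t(D,E)$.  Since $W_t(D_0,D)$ does not depend on $E$, it comes out of the sum, leaving exactly the expression handled by Lemma~\ref{lem-7.1-target}:
\[
U_t f_{C,D} = W_t(D_0,D)\Bigl(\,\sum_{E\parallel_C D} (-1)^{d(D,E)} W_t(D,E)\,E\,\Bigr)
   = W_t(D_0,D)\bigl((-t)^p D^{\mathrm{op}} + O(t^{p+1})\bigr).
\]

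Next I would divide by $t^p$ to obtain
\[
t^{-p} U_t f_{C,D} = W_t(D_0,D)\bigl((-1)^p D^{\mathrm{op}} + O(t)\bigr),
\]
where the $O(t)$ term is a finite linear combination of oriented $q$-cubes whose coefficients vanish linearly in $t$, and whose $\ell^2_\infty$-norm is therefore $O(t)$.  Using the unitarity of $U_t$ (Lemma~\ref{unitary-isomorphism}) and of $W_t(D_0,D)$, I get
\[
\bigl\langle t^{-p} f_{C,D},\, t^{-p} f_{C,D}\bigr\rangle_t
   = \bigl\| (-1)^p D^{\mathrm{op}} + O(t) \bigr\|_{\ell^2_\infty(X^q)}^2.
\]

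Finally, since each oriented $q$-cube has length $1/\sqrt 2$ in $\ell^2_\infty(X^q)$, the leading term contributes $\|D^{\mathrm{op}}\|^2 = 1/2$, and the cross and remainder terms are $O(t)$ as $t\searrow 0$.  This gives the stated limit $1/2$.  The only point requiring care is the bookkeeping of orientations (so that every $E$ in the sum really does carry the orientation compatible with $D$, and so that $D^{\mathrm{op}}$ receives the orientation in Lemma~\ref{lem-7.1-target}); this follows directly from our conventions in Definitions \ref{def-basic-cochain} and \ref{cocycle0}.
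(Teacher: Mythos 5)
Your proof is correct and takes essentially the same approach as the paper: unitarity of $U_t$ reduces everything to a computation in $\ell^2_\infty(X^q)$, and Lemma~\ref{lem-7.1-target} supplies the leading term $(-t)^p D^{\mathrm{op}}$. The only cosmetic difference is that the paper achieves the disappearance of $W_t(D_0,D)$ by choosing $D$ itself as the base cube of the parallelism class when defining $U_t$ (a freedom it explicitly announces just before this lemma), whereas you keep the standard $U_t$ with base cube $D_0$ and peel off the unitary prefactor $W_t(D_0,D)$ via the cocycle property of Proposition~\ref{prop-cocycle}; both routes give the identical norm computation.
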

\begin{proof}
  Choose $D$ as the base point for defining the unitary isomorphisms
  $U_t$.  Then $U_t f_{C,D}$ is exactly the expression
  (\ref{eq-7.1-target}) in the previous lemma.  It follows from
  the lemma that 
  \begin{align*}
    \bigl\langle t^{-p}f_{C,D},t^{-p}f_{C,D}\bigr\rangle_t &= 
        \bigl\langle t^{-p}U_t f_{C,D},  t^{-p}U_t f_{C,D} \bigr\rangle_\infty\\
        &= \bigl\langle (-1)^p D^{\mathrm op} + O(t), 
               (-1)^p D^{\mathrm op} + O(t) \bigr\rangle_\infty \\
        &= \tfrac{1}{2}+O(t),
  \end{align*}
and the result follows.
\end{proof}

\begin{lemma}
\label{lemma-equiv-basic2}
Let $(C_1,D_1)$ and $(C_2,D_2)$ be parallel $(p,q)$-cube
pairs of the same parity, in which the $q$-dimensional faces are
compatibly oriented.  The pointwise inner product
\[
\bigl\langle t^{-p}f_{C_1,D_1},t^{-p}f_{C_2,D_2}\bigr\rangle_t 
\]
converges to $\tfrac{1}{2}$ as $t\searrow 0$.
\end{lemma}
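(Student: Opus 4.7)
The proof extends that of Lemma~\ref{lemma-equiv-basic1}. Since $D_1 \parallel D_2$ compatibly, they lie in a common parallelism class; we take $D_1$ as the base point for $U_t$ on that class. By unitarity of $U_t$, the pointwise inner product equals
\(
\bigl\langle t^{-p} U_t f_{C_1,D_1},\, t^{-p} U_t f_{C_2,D_2}\bigr\rangle_\infty.
\)
Lemma~\ref{lem-7.1-target} gives $U_t f_{C_1,D_1} = (-t)^p D_1^{\mathrm{op}} + O(t^{p+1})$ directly, and, after factoring $W_t(D_1,E) = W_t(D_1,D_2)\,W_t(D_2,E)$ via Proposition~\ref{prop-cocycle}, also $U_t f_{C_2,D_2} = (-t)^p W_t(D_1,D_2)\, D_2^{\mathrm{op}} + O(t^{p+1})$. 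Using the unitarity of $W_t$ to control cross-terms,
\[
\bigl\langle t^{-p} f_{C_1,D_1},\, t^{-p} f_{C_2,D_2}\bigr\rangle_t =
\bigl\langle D_1^{\mathrm{op}},\, W_t(D_1,D_2) D_2^{\mathrm{op}}\bigr\rangle_\infty + O(t).
\]

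The main task, and principal obstacle, is to show that the inner product on the right tends to $\tfrac{1}{2}$ as $t \searrow 0$. Since $W_t(D_1,D_2)$ is a finite product of norm-continuous elementary cocycles from Definition~\ref{cocycle}, this inner product is continuous in $t$, so it suffices to evaluate at $t = 0$. Choose a geodesic cube path $D_2 = E_n, \ldots, E_1 = D_1$ in the parallelism class (viewed as a $\cat(0)$ cube complex via Proposition~\ref{general-Sageev}), with consecutive $E_{i+1}, E_i$ adjacent across a hyperplane $H^i$. By Lemma~\ref{lem-k-hplanes} applied inductively to the collection $\{H^i, H_1, \ldots, H_p\}$ at a common vertex, each intermediate $E_i$ admits a compatibly oriented opposite face $E_i^{\mathrm{op}}$ across the complementary hyperplanes $H_1, \ldots, H_p$, and consecutive $E_i^{\mathrm{op}}, E_{i+1}^{\mathrm{op}}$ are themselves adjacent across $H^i$ inside a common $(p+q+1)$-cube. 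The $t = 0$ formulas of Definition~\ref{cocycle} then transport $D_2^{\mathrm{op}}$ through the $E_i^{\mathrm{op}}$'s to $D_1^{\mathrm{op}}$, picking up a factor of $-1$ at each step where $H^i$ lies among the complementary hyperplanes $H_1,\ldots,H_p$ (the ``separated from $C$'' clause of Definition~\ref{cocycle}, since only then does $H^i$ cut the ambient $(p+q)$-cube containing $E_{i+1}$ and $E_{i+1}^{\mathrm{op}}$).

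Because a geodesic crosses each separating hyperplane exactly once, the total accumulated sign is $(-1)^s$, where $s$ counts the complementary hyperplanes separating $D_1$ from $D_2$. The same-parity hypothesis makes $s$ even, so $W_0(D_1,D_2) D_2^{\mathrm{op}} = D_1^{\mathrm{op}}$ and the inner product tends to $\langle D_1^{\mathrm{op}}, D_1^{\mathrm{op}}\rangle_\infty = \tfrac{1}{2}$, as required.
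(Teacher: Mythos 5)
Your approach is genuinely different from the paper's, and it is essentially correct, but it contains one gap that needs to be filled. The paper first makes a simplifying reduction: since replacing $D_2$ by the face of $C_2$ lying on the same side of every complementary hyperplane as $D_1$ changes $f_{C_2,D_2}$ by a sign $(-1)^s$, and the same-parity hypothesis makes $s$ even, one may assume from the outset that no complementary hyperplane separates $D_1$ from $D_2$. After that, the separating hyperplanes for $D_1,D_2$ coincide with those for $D_1^{\mathrm op}, D_2^{\mathrm op}$, and one is on the same side of each; so $W_t(D_1,D_2)=W_t(D_1^{\mathrm op},D_2^{\mathrm op})$ as operators (by path-independence of the cocycle), and \eqref{eq-cocycle-calc} finishes the argument immediately. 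You instead skip the reduction and transport $D_2^{\mathrm op}$ step by step along a geodesic, tracking signs, so that the same-parity hypothesis is consumed at the end. This is a legitimate alternative, and it has the small virtue of making the role of the parity hypothesis visible as a sign count; but it trades the paper's one-line reduction for a nontrivial geometric claim that you do not actually establish.

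The gap is in the sentence constructing the $E_i^{\mathrm op}$. You assert, invoking Lemma~\ref{lem-k-hplanes}, that each intermediate $E_i$ has an opposite face across $H_1,\dots,H_p$, and that consecutive $E_i^{\mathrm op},E_{i+1}^{\mathrm op}$ sit with $E_i,E_{i+1}$ in a common $(p+q+1)$-cube cut by $H^i,H_1,\dots,H_p$ and the determining hyperplanes. For Lemma~\ref{lem-k-hplanes} to apply you need two things that you do not verify: (i) all these hyperplanes pairwise intersect, and (ii) they are all adjacent to a common vertex of $E_i$. Point (i) can be salvaged — $H^i$ separates $D_1$ from $D_2$, both of which are adjacent to every $H_j$, and an argument via Lemma~\ref{lem-4-quadrants} shows $H^i$ must then meet each $H_j$. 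Point (ii) is the real issue: there is no reason given why the complementary hyperplanes $H_1,\dots,H_p$ should be adjacent to the intermediate cubes $E_i$ at all. They are adjacent to $D_1$ and $D_2$, but a priori the geodesic between them (taken inside the parallelism class viewed as a $\cat(0)$ cube complex via Proposition~\ref{general-Sageev}) could wander away. The needed fact is true: it follows from the convexity of hyperplane carriers in a $\cat(0)$ cube complex, applied to the images of $H_1,\dots,H_p$ in the parallelism class; since $D_1$ and $D_2$ both lie in the carrier of each $\bar H_j$, so does every geodesic intermediate. But carrier convexity is nowhere in the paper's toolkit (none of Lemmas~\ref{lem-3-hyperplanes}, \ref{lem-k-hplanes}, \ref{lem-4-quadrants}, or \ref{unique-adjacent} gives it), and you do not supply it either, so as written the existence of the $E_i^{\mathrm op}$ is unproved. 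Once that is supplied, the rest of your sign computation and the final evaluation $\langle D_1^{\mathrm op}, D_1^{\mathrm op}\rangle_\infty = \tfrac12$ are correct.
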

\begin{proof}
  We may assume that $D_2$ lies on the same side of each of the
  complementary hyperplanes of the parallelism class as $D_1$; indeed
  replacing $D_2$ by this face, if necessary, does not change the
  corresponding basic cochain.  Choose $D_1$ as the base point for
  defining the unitary isomorphisms $U_t$, so that by
  Lemma~\ref{lem-7.1-target} we have
  \begin{equation*}
    U_t f_{C_1,D_1} = (-t)^p D_1^{\mathrm op} + O(t^{p+1})
  \end{equation*}
and also, using the identity $W_t(D_1,E) = W_t(D_1,D_2)W_t(D_2,E)$ for
the $q$-dimensional faces $E$ of $C_2$, 
\begin{equation*}
    U_t f_{C_2,D_2} = (-t)^p W_t(D_1,D_2) D_2^{\mathrm op} + O(t^{p+1}).
\end{equation*}
But, the hyperplanes separating $D_1$ and $D_2$ are precisely those
separating $D_1^{\mathrm op}$ and $D_2^{\mathrm op}$, so that 
by (\ref{eq-cocycle-calc}) we have
\begin{align*}
  W_t(D_1,D_2) D_2^{\mathrm op} &= 
        W_t(D_1^{\mathrm op},D_2^{\mathrm op}) D_2^{\mathrm op} \\ 
       &= e^{-\frac{1}{2}d(D_1,D_2) t^2} D_1^{\mathrm op} + 
                \text{terms orthogonal to $D_1^{\mathrm op}$}. 
\end{align*}
Putting everything together we get
\begin{align*}
  \bigl\langle t^{-p}f_{C_1,D_1},t^{-p}f_{C_2,D_2}\bigr\rangle_t  &=
      \bigl\langle t^{-p}U_tf_{C_1,D_1},t^{-p}U_tf_{C_2,D_2}\bigr\rangle_\infty \\ 
          &= e^{-\frac{1}{2}d(D_1,D_2) t^2} 
  \bigl\langle (-1)^p D_1^{\mathrm op}, (-1)^p D_1^{\mathrm op} \bigr\rangle_\infty + O(t)
\end{align*}
and the result follows from this.
\end{proof}

\begin{lemma}
\label{lemma-inequiv-basic}
Let $(C_1,D_1)$ and $(C_2,D_2)$ be oriented cube pairs of types
$(p_1,q)$ and $(p_2,q)$, respectively, and let $f_{C_1,D_1}$ and
$f_{C_2,D_2}$ be the associated basic $q$-cochains.  If $(C_1,D_1)$
and $(C_2,D_2)$ are not parallel, or if $D_1$ and $D_2$ are not
compatibly oriented, then the pointwise inner product
\[
\bigl\langle t^{-p_1}f_{C_1,D_1},t^{-p_2}f_{C_2,D_2}\bigr\rangle_t 
\]
converges to $0$ as $t\searrow 0$.  In particular, this is the case if
$p_1\neq p_2$.
\end{lemma}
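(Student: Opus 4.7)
The plan is to split into two cases according to whether $D_1$ and $D_2$ are parallel with compatible orientations. If they are not, then by transitivity of parallelism every $E_1\parallel_{C_1}D_1$ and every $E_2\parallel_{C_2}D_2$ likewise fail to be parallel or compatibly oriented, so each pointwise inner product $\langle E_1,E_2\rangle_t$ vanishes by Definitions~\ref{def-distance-parallel-cubes} and~\ref{def-inner-product}, and the inner product of the basic cochains is identically zero in $t$. This case also covers the final assertion of the lemma, since when $p_1\ne p_2$ the pairs cannot be parallel.

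In the remaining case, $D_1$ and $D_2$ are parallel with compatible orientations but the pairs $(C_1,D_1)$ and $(C_2,D_2)$ are not. I would follow the strategy of Lemma~\ref{lemma-equiv-basic2}: choose $D_1$ as the base point for $U_t$ and apply Lemma~\ref{lem-7.1-target} to both basic cochains, using the cocycle identity $W_t(D_1,E)=W_t(D_1,D_2)W_t(D_2,E)$ to factor $W_t(D_1,D_2)$ out of the second sum. This reduces the pointwise inner product to
\[
\bigl\langle t^{-p_1}f_{C_1,D_1},t^{-p_2}f_{C_2,D_2}\bigr\rangle_t = (-1)^{p_1+p_2}\bigl\langle D_1^{\mathrm{op}},W_t(D_1,D_2)D_2^{\mathrm{op}}\bigr\rangle_\infty + O(t).
\]
The matrix coefficient is a finite product of the elementary continuous functions of $t$ appearing in Definition~\ref{cocycle}, so it extends continuously to $t=0$; it suffices to show that this value is zero.

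At $t=0$ each operator $W_0(C^{\mathrm{op}},C)$ sends every oriented cube to a signed oriented cube, so $W_0(D_1,D_2)D_2^{\mathrm{op}}=\epsilon E$ for some sign $\epsilon$ and some $q$-cube $E$ in the parallelism class of $D_2^{\mathrm{op}}$. The key sub-claim, which I would prove by induction on the edge-path distance from $D_1$ to $D_2$ in the parallelism class complex of Proposition~\ref{general-Sageev}, is that $E=D_1^{\mathrm{op}}$ precisely when the pairs $(C_1,D_1)$ and $(C_2,D_2)$ are parallel: writing $W_0(D_1,D_2)=W_0(D_1,F)W_0(F,D_2)$ for the first step of a geodesic path across a hyperplane $H$, the inductive hypothesis describes the intermediate image $W_0(F,D_2)D_2^{\mathrm{op}}$, and a case analysis according to whether $H$ lies in the complementary set $\mathcal{H}_2$ of $(C_2,D_2)$, combined with the explicit formulas of Definition~\ref{cocycle} for $W_0(D_1,F)$, identifies the final image as the unique cube obtained from $D_1$ by flipping across exactly the hyperplanes of $\mathcal{H}_2$. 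Since $D_1^{\mathrm{op}}$ is obtained instead by flipping $D_1$ across $\mathcal{H}_1$, equality forces $\mathcal{H}_1=\mathcal{H}_2$, contrary to our non-parallelism hypothesis. The main obstacle lies in the inductive step, where one must carefully track signs and adjacency conditions through Definition~\ref{cocycle} under the case split on $H\in\mathcal{H}_2$.
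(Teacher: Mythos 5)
Your handling of the first case (when $D_1$ and $D_2$ are not parallel or not compatibly oriented) matches the paper and is fine. Your reduction in the second case — choosing $D_1$ as base point and using Lemma~\ref{lem-7.1-target} and the cocycle identity to boil the inner product down to $\bigl\langle D_1^{\mathrm op}, W_t(D_1,D_2)D_2^{\mathrm op}\bigr\rangle_\infty + O(t)$ — is also sound, and it is a reasonable alternative strategy.

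The problem is your ``key sub-claim.'' You assert that $W_0(D_1,D_2)D_2^{\mathrm op}$ is, up to sign, the cube obtained from $D_1$ by flipping across the hyperplanes of $\mathcal{H}_2$. This is not correct, and in many cases is not even well-defined, because the hyperplanes of $\mathcal{H}_2$ (which are adjacent to $D_2$) need not be adjacent to $D_1$ at all. A small tree example already shows this: take $q=0$, $D_1=P$, $D_2=Q$ at edge-path distance $2$ with intermediate vertex $M$, and $\mathcal{H}_2=\{H\}$ where $H$ bisects the $MQ$ edge. Then $D_2^{\mathrm op}=M$, and a direct computation from Definition~\ref{cocycle} gives $W_0(P,Q)M=-Q$; this is not ``$P$ flipped across $H$'' (the flip is undefined, since $H$ is not adjacent to $P$), and it is not obtained from $P$ by moving across any hyperplane in $\mathcal{H}_2$. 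The true behaviour of the cocycle $W_0$ under parallel transport is more delicate than the sub-claim suggests, so the inductive step — which you yourself flag as ``the main obstacle'' and do not carry out — would in fact fail as written. You would need to discover and prove a different, correct characterization of $W_0(D_1,D_2)D_2^{\mathrm op}$, and then show it cannot equal $\pm D_1^{\mathrm op}$ unless the pairs are parallel.

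The paper avoids this computation entirely. Because $D_1\parallel D_2$ and the cube pairs are not parallel, $C_1$ and $C_2$ are not parallel, so (up to swapping indices) there is a hyperplane $H$ that cuts $C_2$ but not $C_1$, and which cuts neither $D_1$ nor $D_2$. One then chooses the base point for $U_t$ to be a $q$-dimensional face $D$ of $C_2$ on the same side of $H$ as $C_1$. With this choice, every cube appearing in $U_t f_{C_1,D_1}$ lies on the $D$-side of $H$, while the leading term $D^{\mathrm op}$ of $t^{-p_2}U_t f_{C_2,D}$ lies on the opposite side of $H$; the inner product of the leading terms is therefore zero, and the whole inner product is $O(t)$. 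This is both shorter and more robust than trying to compute the limiting cocycle explicitly.
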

\begin{proof}
  If $D_1$ and $D_2$ fail to be parallel or have incompatible
  orientations, then $f_{C_1,D_1}$ and $f_{C_2,D_2}$ are orthogonal in
  the full cochain space for all $t>0$, and the lemma is proved.  So
  we can assume that $D_1$ and $D_2$ \emph{are} parallel and
  compatibly oriented, and therefore that $C_1$ and $C_2$ are not
  parallel.  There is then, after reindexing if necessary, a
  hyperplane $H$ that passes through $C_2$ but not $C_1$, and through
  neither $D_1$ nor $D_2$.  Choose as a base point for the unitary
  $U_t$ a $q$-dimensional face $D$ of $C_2$ which is parallel to the
  $D_i$, compatibly oriented, and on the same side of $H$ as the cube
  $C_1$.  So $f_{C_2,D_2}=\pm f_{C_2,D}$ and also
  \begin{align*}
    \bigl\langle t^{-p_1}f_{C_1,D_1},t^{-p_2}f_{C_2,D}\bigr\rangle_t &=
    \bigl\langle t^{-p_1}U_t f_{C_1,D_1},t^{-p_2}U_t f_{C_2,D}\bigr\rangle_\infty\\
    &= \bigl\langle t^{-p_1}U_t f_{C_1,D_1},(-1)^{p_2} D^{\mathrm op} + O(t) \bigr\rangle_\infty,
  \end{align*}
  where $D^{\mathrm op}$ is the face of $C_2$ separated from $D$ by
  all the complementary hyperplanes of the pair $(C_2,D)$.  In
  particular, $D$ and $D^{\mathrm op}$ are on opposite sides of $H$.
  Now, it follows from the definition of $U_t$ and basic properties of
  the cocycle $W_t$ that all cubes
  appearing in the support of $U_t f_{C_1,D_1}$ are on the same side
  of $H$ as $D$.  Further, from Lemma~\ref{lem-7.1-target} we have
  that  $U_t f_{C_1,D_1}$ is $O(t^{p_1})$, so that the inner product
  above is $O(t)$.  
\end{proof}

\begin{proof}[Proof of Theorem~\ref{thm-basic-sections}]
  The possible values of the inner product in (b) are $0$ and $\pm
  1/2$: the positive value occurs when the oriented cube pairs
  $(C_1,D_1)$ and $(C_2,D_2)$ are parallel and aligned; the negative
  value occurs when they are parallel and not aligned; and $0$ occurs
  when they are not parallel.  The result now follows from
  Lemmas~\ref{lemma-equiv-basic2} and \ref{lemma-inequiv-basic}.
\end{proof}

As we already pointed out, Theorem~\ref{thm-basic-sections} allows us
to extend our continuous field to $[0,\infty]$. In the sequel it will
be convenient to work with the following generating family of
continuous bounded sections.  
\begin{definition}
\label{def-bdd-cts-sec}
A (not necessarily continuous) section $\sigma$ of the continuous
field $\{ \ell^2 _t (X^q)\} _{t\in [0,\infty]}$ is \emph{geometrically
  bounded} if there is a finite set $A\subseteq X_q$ such that $s(t)$
is supported in $A$ for all $t\in (0,\infty]$.
\end{definition}

\begin{proposition}
\label{prop-bdd-cts-sec}
The space of geometrically bounded, continuous sections of  the
continuous field $\{ \ell^2 _t (X^q)\} _{t\in [0,\infty]}$ is spanned
over $C[0,\infty]$ by the extended basic continuous sections.
\end{proposition}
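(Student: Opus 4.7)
The inclusion of the $C[0,\infty]$-span of extended basic sections into the space of geometrically bounded continuous sections is immediate.  Each extended basic section $\tilde\beta_{(C,D)}$ is continuous by Theorem~\ref{thm-basic-sections} and, for $t>0$, has value $t^{-p}f_{\langle C,D\rangle}$ supported on the finitely many $q$-faces of $C$ parallel to $D$; a finite $C[0,\infty]$-combination inherits both properties.

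For the reverse inclusion, fix a geometrically bounded continuous section $\sigma$ with support (for $t>0$) in a finite set $A\subseteq X^q$ closed under $*$.  I plan to reduce to the case $\sigma(0)=0$ and then inductively peel off the singular contributions by codimension.  For the reduction, recall that $\ell^2_0(X^q)$ has an orthogonal basis of oriented Pytlik-Szwarc symbols, so $\sigma(0)=\sum_{(C,D)}\mu_{(C,D)}\langle C,D\rangle$ is a finite sum.  Using Theorem~\ref{thm-basic-sections} together with the geometric boundedness of $\sigma$, I would verify that the only contributing pairs are those in the finite set
\[
  \mathcal{CP}(A) \;=\; \bigl\{\,(C,D) \;:\; \text{every $q$-face of $C$ parallel to $D$ lies in $A$}\,\bigr\},
\]
by computing the limits of inner products $\langle\sigma(t),\tilde\beta_{(C,D)}(t)\rangle_t$ via the cancellation patterns exhibited in the proofs of Lemmas~\ref{lemma-equiv-basic2} and \ref{lemma-inequiv-basic}.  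Setting $\sigma^{(1)}=\sum_{(C,D)\in\mathcal{CP}(A)}\mu_{(C,D)}\tilde\beta_{(C,D)}$ then gives a $C[0,\infty]$-linear combination of extended basic sections (with constant scalar coefficients) agreeing with $\sigma$ at $t=0$, so that $\tau:=\sigma-\sigma^{(1)}$ is geometrically bounded, continuous, and satisfies $\tau(0)=0$.

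For the remaining task, expand $\tau(t)=\sum_{E\in A'}c_E(t)\langle E\rangle$ for $t>0$, where $A'\subseteq A$ chooses one orientation per underlying cube; the $c_E$ are continuous on $(0,\infty]$ but may be singular as $t\searrow 0$.  The plan is to absorb this singular behaviour into the $t^{-p}$ factors carried by higher-type extended basic sections.  After transporting to $\ell^2_\infty(X^q)$ via the unitary $U_t$ of Section~\ref{sec-cont-field}, Lemma~\ref{lem-7.1-target} and the cocycle identity give the key asymptotic $U_t\,\tilde\beta_{(C,D)}(t) = (-1)^p\,W_t(D_0,D)\,\langle D^{\mathrm{op}}\rangle + O(t)$, so that extended basic sections of different codimensions $p$ provide corrections whose leading orders $t^{-p}$ are matched by distinct geometric data.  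A finite combinatorial induction on $p$, in the spirit of Hadamard's lemma and peeling off one order of $t$ at a time, then produces continuous coefficients $\alpha_{(C,D)}\in C[0,\infty]$ with $\alpha_{(C,D)}(0)=0$ and $\tau=\sum_{(C,D)}\alpha_{(C,D)}\tilde\beta_{(C,D)}$.  Combining the two steps gives $\sigma=\sum(\mu_{(C,D)}+\alpha_{(C,D)})\tilde\beta_{(C,D)}$, as required.

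The principal obstacle will be the combinatorial bookkeeping in the second step: verifying that the linear system over $C[0,\infty]$ relating the $c_E$ to the $\alpha_{(C,D)}$ admits a continuous solution on all of $[0,\infty]$ with the correct vanishing at $t=0$.  The required non-degeneracy is underwritten by Proposition~\ref{prop-nearest-cube}, which furnishes a canonical distinguished cube in each parallelism class, together with the leading-order expansion of Lemma~\ref{lem-7.1-target} that identifies which type-$p$ section contributes to each singular order of the $c_E(t)$.
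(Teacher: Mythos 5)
Your approach is genuinely different from the paper's, and while the overall plan is plausible, it leaves the decisive step unresolved.  The paper's own proof is a short argument: after passing to a suitable finite subcomplex (Lemma~\ref{lem-increasing-union}), the continuous field $\{\ell^2_t(X^q)\}_{t\in[0,\infty]}$ has \emph{constant finite fiber dimension}, which makes it a (locally trivial) Hermitian vector bundle over the interval; the extended basic sections span every fiber, so they span all continuous sections over $C[0,\infty]$.  The constancy of the fiber dimension is exactly where the enumeration result of Section~\ref{sec-parallelism} (number of vertices equals number of parallelism classes, applied fiberwise via Proposition~\ref{general-Sageev}) pays off, and it replaces all of your proposed bookkeeping.

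The gap in your proposal is in the second step.  You ``plan'' to peel off the singular orders of the coefficients $c_E(t)$ by a finite induction on the codimension $p$, but you explicitly acknowledge that verifying the solvability over $C[0,\infty]$ of the resulting linear system is the principal obstacle, and you do not resolve it.  In fact, after transporting by $U_t$, the quantity $U_t\tau(t)$ need not have a limit as $t\searrow 0$ (the $U_t$ trivialize the field only over $(0,\infty]$), so it is not clear that the leading singular coefficient is even well-defined term by term, nor that subtracting a single $\alpha_{(C,D)}\tilde\beta_{(C,D)}$ strictly lowers the singular order.  What actually underwrites the argument is the non-degeneracy of the Gram matrix of extended basic sections at $t=0$ (which is the content of Theorem~\ref{thm-basic-sections}), together with finite fiber dimension — and once you invoke that, you have in effect rediscovered the paper's vector-bundle argument.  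There is also a smaller issue in your first step: you claim that only cube pairs $(C,D)\in\mathcal{CP}(A)$ can contribute to $\sigma(0)$ and say you ``would verify'' this via inner-product limits, but since the coefficients $c_E(t)$ can blow up as $t\searrow 0$, the convergence of $\langle\sigma(t),\tilde\beta_{(C,D)}(t)\rangle_t$ is not immediate from Lemma~\ref{lemma-inequiv-basic} alone.  The clean way to control $\sigma(0)$, as in the paper, is to observe that $\sigma$ factors through the field associated to a finite totally geodesic subcomplex (Lemma~\ref{lem-increasing-union}), whose fiber at $0$ is the span of the relevant symbols.
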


\begin{proof}
Every basic continuous section is certainly geometrically bounded.  If
$X$ is a finite complex, then the converse is true since the fiber
dimension of the continuous field is finite and constant in this case,
and   so the continuous field is a vector bundle, while   the basic
continuous sections span each fiber of the bundle.   In the general
case, we can regard any geometrically bounded continuous section as  a
section of the continuous field associated to a suitable finite
subcomplex, as in Lemma~\ref{lem-increasing-union}, and so express it
as a combination of basic continuous sections. 
\end{proof}


\section{Differentials on the  Continuous Field}
\label{sec-field-ps-ops}

The purpose of this section is to construct differentials
\[
\ell_t^2 (X^0) \stackrel{  d_t}\longrightarrow \ell_t^2 (X^1) \stackrel{  d_t}\longrightarrow \cdots 
 \stackrel{  d_t}\longrightarrow  \ell_t^2 (X^{n-1}) \stackrel{  d_t}\longrightarrow   \ell_t^2 (X^{n})
\]
that continuously interpolate between the Julg-Valette differentials
at $t=\infty$  and the Pytlik-Szwarc differentials at $t=0$.   
For later purposes it will be important to use 
\emph{weighted} versions of the Julg-Valette differentials, as in
Definition~\ref{def-weight-fn-and-differential}. But first we shall
proceed without the weights, and then indicate at the end of this
section how the weights are incorporated. 

 Recall that the operators  
\[
U_t\colon  \C[X^q]    \longrightarrow   \C[ X^{q}]
\]
from Definition~\ref{def-Ut0} were proved to be   isomorphisms in
Lemma~\ref{Ut-isomorphism}.  

\begin{definition}  For $t\in (0,\infty]$ we define 
     \begin{equation*}
  d_t = U_t^{-1} d  U_t  \colon  \C[X^q]    \longrightarrow \C[ X^{q+1}], 
\end{equation*}
where  $d$ is the Julg-Valette differential from
Definition~\ref{def-JV-differential}.   In addition, we define  
\[
d_0 \colon  \C[\H_q]\longrightarrow  \C[\H_{q+1}]
\]
to be the Pytlik-Szwarc differential from Definition~\ref{def-ps-differential}.
\end{definition}

We aim to prove the following continuity statement concerning these operators:

\begin{theorem} 
\label{thm-Kailua-redux}
If $\{ \sigma(t) \}$ is any continuous and geometrically bounded section of
the  continuous field    $\{ \ell _t ^2 (X^q) \} _{t\in [0,\infty]}$,
then the pointwise differential   $\{ d_t \sigma(t)\}$  is a continuous and
geometrically bounded section of  $\{ \ell _t ^2 (X^{q+1}) \} _{t\in [0,\infty]}$.  
  \end{theorem}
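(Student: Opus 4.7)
The plan is to reduce the statement to a computation on extended basic sections and then verify the limiting behavior at $t=0$ by matching the Julg-Valette and Pytlik-Szwarc differential formulas. By Proposition~\ref{prop-bdd-cts-sec}, every continuous, geometrically bounded section is a finite $C[0,\infty]$-linear combination of extended basic sections, and since $d_t$ is linear and commutes with multiplication by a scalar-valued continuous function of $t$, it suffices to prove the theorem when $\sigma(t)$ is an extended basic section $\sigma_{C,D}$ associated to an oriented $(p,q)$-cube pair $(C,D)$.

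Geometric boundedness is the easier part. For $t\in(0,\infty]$ we have $d_t\sigma_{C,D}(t)=t^{-p}U_t^{-1}dU_tf_{\langle C,D\rangle}$. The operator $U_t$ spreads the support of a Dirac function only within the finite edge-path in the parallelism class connecting a cube to the nearest representative $D_0$; the Julg-Valette $d$ has finite propagation in the usual sense; and $U_t^{-1}$ behaves analogously. So the support of $d_t\sigma_{C,D}(t)$ is contained in a finite set of $(q+1)$-cubes depending only on $(C,D)$.

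Continuity on the open interval $(0,\infty]$ is also comparatively easy: the unitaries $U_t$ of Section~\ref{sec-cont-field} identify the continuous field over $(0,\infty]$ with the constant field with fiber $\ell^2(X^q)$ and conjugate $d_t$ into the fixed operator $d$; so continuity of $d_t\sigma_{C,D}(t)$ on $(0,\infty]$ reduces to the continuity in $t$ of the finitely-supported vector $U_tf_{\langle C,D\rangle}$, which is manifest from the continuous dependence of the cocycle operators $W_t$ on $t$.

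The main obstacle, and the bulk of the work, is establishing continuity at $t=0$, that is, showing that $d_t\sigma_{C,D}(t)$ converges in the continuous-field sense to $d_0\langle C,D\rangle$ given by the Pytlik-Szwarc formula of Definition~\ref{def-ps-differential}. The approach is to compute $d_t\sigma_{C,D}(t)$ explicitly in terms of extended basic sections of type $p-1$. By Lemma~\ref{lem-7.1-target} and the cocycle identity $W_t(D_0,E)=W_t(D_0,D)W_t(D,E)$, we have $U_tf_{C,D}=(-t)^pW_t(D_0,D)D^{\mathrm{op}}+O(t^{p+1})$. Applying $d=\sum_HH\wedge\underline{\,\,\,}$ and then $U_t^{-1}$, we must separate the contributions hyperplane by hyperplane: for each complementary hyperplane $H_i$ of $(C,D)$, wedging with $H_i$ promotes $D$ to the $(q+1)$-face $D_i$ of $C$ cut by $H_i,K_1,\dots,K_q$, and the resulting terms reassemble, after division by $t^p$, into an extended basic section of type $p-1$ attached to the cube pair $(C,D_i)$ with the orientation prescribed by the symbol $[\,H_1,\dots,\widehat{H_i},\dots,H_p\,|\,H_i,K_1,\dots,K_q\,|\,R_i\,]$; for hyperplanes $H$ that are not complementary, the corresponding contributions turn out to be $O(t^{p+1})$ after an argument parallel to the inductive step of Lemma~\ref{lem-7.1-target}, and so disappear after multiplication by $t^{-p}$ and evaluation at $t=0$. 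The delicate point will be the combinatorial bookkeeping of signs and orientations needed to match Definition~\ref{def-ps-differential}; this will rely on Definition~\ref{def-symbol-orientation} together with the orientation conventions inherent in the operators $H\wedge\underline{\,\,\,}$ and $W_t$.
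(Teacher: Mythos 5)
Your high-level strategy is correct and matches the paper's: reduce to extended basic sections via Proposition~\ref{prop-bdd-cts-sec}, dispatch geometric boundedness and continuity on $(0,\infty]$ by unitarity and finite propagation, and then concentrate on the behavior as $t\searrow 0$ using Lemma~\ref{lem-7.1-target}. Your starting formula
\[
U_t f_{C,D} = (-t)^p\, W_t(D_0,D)\,D^{\mathrm op} + O(t^{p+1})
\]
is right.

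But there is a genuine gap immediately after that point, and it is precisely where the paper's Lemma~\ref{lemma-new-7.2} does the real work. You want to apply $d=\sum_H H\wedge\underline{\,\,\,}$ to this expression and extract the Pytlik-Szwarc terms of type $p-1$, but $W_t(D_0,D)D^{\mathrm op}$ is not a single cube: it is a $t$-dependent vector spread across the parallelism class between $D^{\mathrm op}$ and the cube nearest $P_0$. Your sketch speaks of ``wedging with $H_i$ promot[ing] $D$ to the $(q{+}1)$-face $D_i$ of $C$,'' which treats the $H\wedge$ computation as taking place at the cube pair $(C,D)$ itself. That is not where it happens: the set of hyperplanes $H$ with $H\wedge E\neq 0$ depends on the position of $E$ relative to $P_0$, and for the leading-order cube this set changes entirely. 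The missing step is to identify $W_t(D_0,D_1)D = F + O(t)$, where $F$ is the $q$-face of the cube $C_0$ nearest $P_0$ in the parallelism class of $C$ that is separated from $P_0$ by exactly the complementary hyperplanes, and then to show --- this is a geometric statement about $F$, $C_0$, the hyperplane property of Proposition~\ref{prop-nearest-cube}, and Lemma~\ref{lem-4-quadrants}, not a computation ``parallel to the inductive step of Lemma~\ref{lem-7.1-target}'' --- that $H\wedge F\neq 0$ precisely when $H$ is a complementary hyperplane, in which case $H\wedge F\subseteq C_0$. Without this translation to the base point, the claim that the non-complementary contributions are $O(t^{p+1})$ is unsupported, and the identification of the surviving terms with $\sum_i \sigma_{C,H_i\wedge D}$ cannot be made.

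A second, smaller omission: after reducing the left side to $\sum_i H_i\wedge F + O(t)$, one must still show that each $U_t\bigl(t^{-(p-1)} f_{C,H_i\wedge D}\bigr)$ equals $H_i\wedge F + O(t)$. This follows from the same two lemmas applied to the $(p{-}1,q{+}1)$-pair $(C,H_i\wedge D)$, but one must take care that the relevant base cube $D_0$ is replaced by the nearest $(q{+}1)$-cube in the parallelism class of $H_i\wedge D$, which is a different object; your sketch does not address this matching. In short: the skeleton is the paper's, but the decisive geometric lemma relating the leading-order behavior of $U_t f_{C,D}$ to a single cube near the base point, and characterizing the hyperplanes that act nontrivially there, is neither stated nor proved, and the argument does not close without it.
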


According to Proposition~\ref{prop-bdd-cts-sec}, the space of continuous and
geometrically bounded sections is  generated as a module
over $C[0,\infty]$ by the extended basic sections, so  it
suffices to prove Theorem~\ref{thm-Kailua-redux} for such a section.
This we shall now do, following two preliminary lemmas.

\begin{lemma}
\label{lemma-new-ut-calc}
Let $(C,D)$ be an oriented $(p,q)$-cube pair and assume that all the
complementary hyperplanes of the pair $(C,D)$ separate $D$ from the
base point $P_0$.  The associated basic $q$-cochain of type $p$
satisfies
\begin{equation*}
  t^{-p} U_t f_{C,D} = W_t(D_0,D_1) D + O(t),
\end{equation*}
where  $D_0$ is the $q$-cube in $X$ that is closest to the base point 
$P_0$ among cubes parallel to $D$, and  $D_1$ is the face of $C$ that is parallel to $D$ and separated from $D$ by all the 
complementary hyperplanes.
\end{lemma}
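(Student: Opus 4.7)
The plan is to expand $U_t f_{C,D}$ directly from the definitions, use the cocycle property of $W_t$ to pull out the factor $W_t(D_0,D_1)$, and then reduce to Lemma~\ref{lem-7.1-target} applied to the cube pair $(C,D_1)$ rather than $(C,D)$.

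First I would combine Definitions~\ref{def-basic-cochain} and \ref{def-Ut0} to write
\begin{equation*}
U_t f_{C,D} \;=\; \sum_{E \parallel_C D} (-1)^{d(D,E)}\, W_t(D_0, E)\, E,
\end{equation*}
and then use the cocycle identity $W_t(D_0, E) = W_t(D_0, D_1)\, W_t(D_1, E)$, valid because $D_0$, $D_1$ and every $E$ in the sum lie in a common parallelism class (Proposition~\ref{prop-cocycle}), to factor:
\begin{equation*}
U_t f_{C,D} \;=\; W_t(D_0, D_1) \sum_{E \parallel_C D} (-1)^{d(D,E)}\, W_t(D_1, E)\, E.
\end{equation*}

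Next I would align the signs with those appearing in Lemma~\ref{lem-7.1-target}. By Proposition~\ref{general-Sageev}, the $q$-faces of $C$ parallel to $D$ are the vertices of a $p$-cube in the $\cat(0)$ cube complex attached to the parallelism class, and in that $p$-cube $D$ and $D_1$ are opposite vertices. Consequently $d(D,E)+d(D_1,E)=p$ for every such $E$, so $(-1)^{d(D,E)} = (-1)^{p}(-1)^{d(D_1,E)}$. Then I would apply Lemma~\ref{lem-7.1-target} to the cube pair $(C,D_1)$, whose complementary hyperplanes coincide with those of $(C,D)$ and whose opposite face $D_1^{\mathrm{op}}$ is exactly $D$, to get
\begin{equation*}
\sum_{E \parallel_C D_1} (-1)^{d(D_1,E)}\, W_t(D_1,E)\, E \;=\; (-t)^{p} D + O(t^{p+1}).
\end{equation*}

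Combining the three steps gives $U_t f_{C,D} = (-1)^{p} W_t(D_0,D_1)\bigl[(-t)^{p} D + O(t^{p+1})\bigr] = t^{p} W_t(D_0,D_1) D + O(t^{p+1})$. Here I use that $W_t(D_0,D_1)$ is a product of finitely many matrices whose entries are uniformly bounded in $t$ (the $2\times 2$ blocks displayed after Definition~\ref{cocycle}), so it preserves $O(t^{p+1})$ estimates in the coefficient sense. Dividing by $t^{p}$ yields the stated formula.

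The only real obstacle is the bookkeeping in the sign identity $d(D,E)+d(D_1,E)=p$ together with the correct reindexing of Lemma~\ref{lem-7.1-target} from $(C,D)$ to $(C,D_1)$; after that everything is formal. The hypothesis that all complementary hyperplanes separate $D$ from $P_0$ is not needed in the algebraic derivation itself, but it fixes $D_1$ as the face of $C$ closest to $P_0$ among faces parallel to $D$, which is what makes the main term $W_t(D_0,D_1)D$ the geometrically natural one for the subsequent applications.
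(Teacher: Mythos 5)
Your proof is correct and follows essentially the same route as the paper's: factor out $W_t(D_0,D_1)$ via the cocycle property and apply Lemma~\ref{lem-7.1-target} to the pair $(C,D_1)$, whose opposite face is $D$. Your explicit sign identity $d(D,E)+d(D_1,E)=p$ is just the unwound form of the paper's opening observation $f_{C,D}=(-1)^p f_{C,D_1}$, and your remark that $W_t(D_0,D_1)$ (a finite product of bounded matrices) preserves $O(t^{p+1})$ estimates fills in a step the paper leaves implicit.
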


\begin{proof} According to our definitions, 
$f_{C,D} = (-1)^{p} f_{C,D_1}$ and 
\[
\begin{aligned}
U_t   f_{C,D_1}  
	& = \sum_{E\parallel_CD} (-1)^{d(D_1,E)} W_t(D_0,E)E \\
	& = W_t(D_0,D_1) \sum_{E\parallel_CD} (-1)^{d(D_1,E)} W_t(D_1,E)E\\
        & = (-t)^p D + O(t^{p+1}),
\end{aligned}
\]
where we have applied Lemma~\ref{lem-7.1-target}.  The result
follows.  
\end{proof}

\begin{lemma}
\label{lemma-new-7.2}
Let $(C,D)$ and $D_1$ be as in the previous lemma.  Let $C_0$ be the
nearest cube to $P_0$ in the parallelism class of $C$, let $F$ be the
face of $C_0$ which is parallel to $D$ and separated from the base
point $P_0$ by the complementary hyperplanes, and let $F_1$ be the
face of $C_0$ that is parallel to $D$ and separated from $F$  by the complementary
hyperplanes.  Then
\begin{alist}
  \item $H\wedge F$ is nonzero if and only if $H$ is a complementary
    hyperplane, in which case $H\wedge F\subseteq C_0$;
  \item $d(C_0,C)=d(F,D)=d(F_1,D_1)$;
  \item $W_t(D_0,D_1)D = F + O(t)$.
\end{alist}
\end{lemma}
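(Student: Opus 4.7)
The plan is to handle the three parts in turn, making heavy use of the hypothesis (inherited from Lemma~\ref{lemma-new-ut-calc}) that every complementary hyperplane separates $D$ from $P_0$. By parallelism the same holds for $F$; so $F$ lies on the far side of each $H_l$, while $F_1$ lies on the near side and in particular contains the nearest vertex $R$ of $C_0$ to $P_0$. The cube pairs $(C_0,F)$ and $(C,D)$ are then parallel in the sense of Section~\ref{sec-parallelism}.

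\emph{Part (a).} The ``if'' direction is immediate: each $H_l$ cuts $C_0$ but not $F$, and $F\subseteq H_{l,-}$; so the $(q+1)$-subface of $C_0$ spanned by $F$ and the edges of $C_0$ across $H_l$ realises $H_l\wedge F$ as a face of $C_0$. For the converse, assume $H\wedge F\neq 0$; then $H$ is adjacent to $F$, disjoint from $F$, and separates $F$ from $P_0$. Applying Lemma~\ref{unique-adjacent} to the $(q+1)$-cube witnessing adjacency forces $H$ to intersect each $K_i$. If $H$ cuts $C_0$ it is a cutting hyperplane of $C_0$ that is not a $K_i$, hence some $H_l$, and we are done. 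Otherwise $C_0\subseteq H_-$, so $R\in H_-$ and $H$ separates $R$ from $P_0$. The hyperplane property from the proof of Proposition~\ref{prop-nearest-cube} then forces $H$ to be parallel to some cutting hyperplane of $C_0$, which must be some $H_j$. Supposing $H\neq H_j$, Lemma~\ref{lem-4-quadrants} leaves two cases. In the case $H_-\subseteq H_{j,-}$, we get $F_1\subseteq H_{j,+}\subseteq H_+$ while $F\subseteq H_-$, so $H$ separates $F$ from $F_1$ and thus cuts $C_0$, contradiction. In the case $H_+\subseteq H_{j,+}$, the $(q+1)$-cube $F'$ witnessing adjacency has $F\subseteq H_{j,-}$ and its other $q$-face $F^{\mathrm{op}}\subseteq H_+\subseteq H_{j,+}$, so $F'$ is cut by both $H$ and $H_j$; but disjoint hyperplanes cannot share a cut cube. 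Hence $H=H_j$.

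\emph{Part (b).} I show that the three sets of hyperplanes (disjoint from and separating the respective pairs) coincide. Any hyperplane separating $C_0$ from $C$ disjointly separates all their vertices, and in particular separates $F$ from $D$ and $F_1$ from $D_1$ disjointly. Conversely, if $h$ separates $F$ from $D$ disjointly, then $h$ is not a $K_i$ (these cut $F$) and not an $H_l$ (both $F,D\subseteq H_{l,-}$), so $h$ is not a cutting hyperplane of $C_0$ or of $C$; hence it is disjoint from both and separates them. The same argument works for $(F_1,D_1)$ using $F_1,D_1\subseteq H_{l,+}$. So the three counts agree.

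\emph{Part (c).} I factor via the cocycle identity $W_t(D_0,D_1)\,D=W_t(D_0,F_1)\,W_t(F_1,D_1)\,D$ and identify the leading term at $t=0$; the $O(t)$ error follows from the smooth $t$-dependence of the unitary $W_t$ (each two-dimensional block differs from its $t=0$ value by $O(t)$). For $W_0(D_0,F_1)F$: a geodesic from $D_0$ to $F_1$ in the parallelism class of $F$ (Proposition~\ref{general-Sageev}) uses only hyperplanes that do not cut $C_0$, because the hyperplanes of the class that do cut $C_0$ are the $H_l$ and these do not separate $D_0$ from $F_1$ (both lie on the $P_0$-side). Hence $F$ and $F_1$ lie on the same side of every traversed hyperplane, putting $F$ on the far side; by part (a) no such non-complementary hyperplane is adjacent to $F$ from its far side, so $W_0$ fixes $F$ throughout. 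For $W_0(F_1,D_1)D$: by (b) a geodesic from $F_1$ to $D_1$ traverses exactly the hyperplanes separating $F$ from $D$; running the cocycle $W_0$ along this path slides the current cube step by step from $D$ to $F$, with the sign at each step being $+$ because the current cube lies on the same side of the traversed hyperplane as the step's forward endpoint. The main obstacle is precisely this sign bookkeeping, which I would set up cleanly using the parallelism-of-pairs correspondence between $(C_0,F_1)$ and $(C,D_1)$: this correspondence matches, at each step, the side occupied by the current cube with the ``forward'' endpoint of the step, so every flip contributes sign $+$ and the accumulated result is exactly $F$.
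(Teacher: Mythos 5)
Your treatment of parts (a) and (b) is essentially correct and tracks the paper's reasoning, although you work directly in general $q$, whereas the paper first reduces the \emph{entire} lemma to the case $q=0$ (so $D_0=P_0$) via Proposition~\ref{general-Sageev}. That reduction is worth internalizing: it converts the statement into one about vertices in the auxiliary $\cat(0)$ complex, where the hyperplane property from the proof of Proposition~\ref{prop-nearest-cube} applies with no extra translation, and it shortens the verifications considerably.

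Part (c) is where there is a genuine gap. You correctly factor $W_t(D_0,D_1)D = W_t(D_0,F_1)W_t(F_1,D_1)D$ and correctly argue $W_t(D_0,F_1)F=F$. But the remaining piece, $W_0(F_1,D_1)D = F$, is not actually proved: you acknowledge the sign bookkeeping as ``the main obstacle'' and only sketch how you would organize it. That bookkeeping is precisely the content of (c), and it is delicate because $D$ is \emph{not} on the path from $D_1$ to $F_1$; at each step one must verify that the current cube is adjacent to the next traversed hyperplane (this needs Lemma~\ref{lem-k-hplanes}) and determine which side of that hyperplane it lies on to get the sign. The paper sidesteps this entirely with an algebraic trick: the hyperplanes along a geodesic from $F_1$ to $D_1$ all cross every determining hyperplane, so $W_t(F,F_1)=W_t(D,D_1)$ (both are the cocycle along the complementary hyperplanes) commutes with $W_t(F_1,D_1)$; inserting $W_t(F,F_1)W_t(F_1,F)=I$ and commuting collapses $W_t(F_1,D_1)D$ to $W_t(F,D)D$, where the explicit cocycle formula applies immediately and no sign analysis is needed because $D$ sits at an endpoint of the defining path. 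Carrying out your stepwise sign calculation honestly would essentially amount to reproving this commutativity; as written, the proposal does not supply that argument.
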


\begin{proof}
  Consider first the case $q=0$.  In this case, $D_0=P_0$ and the
  vertex $F_1$ is characterized by the following \emph{hyperplane
    property} from the proof of Proposition~\ref{prop-nearest-cube}:
  every hyperplane separating $P_0$ and $F_1$ is parallel to at least
  one determining hyperplane of parallelism class of $C$ (and $C_0$).  

  For (a), $H\wedge F$ is nonzero exactly when $H$ is adjacent to $F$
  and separates it from $P_0$.  The hyperplanes cutting $C_0$
  certainly satisfy this condition.  Conversely, a hyperplane
  satisfying this condition must intersect all determining hyperplanes
  by Lemma~\ref{lem-4-quadrants}, so cannot separate $F_1$ from $P_0$
  and so must cut $C_0$.

  For (b), no determining hyperplane (of the parallelism class of $C$)
  separates $F$ and $D$.  It follows easily that a hyperplane
  separates $C$ and $C_0$ if and only if it separates $F$ and $D$.
  The same argument applies to $F_1$ and $D_1$.

For (c), from the cocycle property we have
\begin{equation*}
  W_t(D_0,D_1)D = W_t(D_0,F_1) W_t(F_1,D_1) D.
\end{equation*}
To evaluate this, observe that a hyperplane appearing along
(a geodesic) path from $F_1$ to $D_1$ must cross every determining
hyperplane.  It follows that $W_t(F,F_1)=W(D,D_1)$ commutes with
$W_t(F_1,D_1)$ and we have
\begin{align*}
  W_t(F_1,D_1) D &= W_t(F,F_1) W(D_1,D)  W_t(F_1,D_1) D \\ 
       &= W_t(F,D) D \\ 
       &= e^{\frac{1}{2}dt^2} F + O(t),
\end{align*}
where $d=d(F,D)$, and the last equality follows from an
elaboration of \cite[Proposition~3.6]{GuentnerHigson}.  Finally, no
hyperplane separating $D_0$ and $F_1$ is adjacent to $F$ so that
$W_t(D_0,F_1)F=F$.  Putting things together, the result follows.

We reduce the general case to the case $q=0$ using
Proposition~\ref{general-Sageev}, according to which the set of
$q$-cubes parallel to $D$ is the vertex set of a $\cat(0)$ cube
complex in such a way that the $(p+q)$-cubes in $X$ correspond to the
$p$-cubes in this complex.  The key observation is that the $p$-cube in
this complex corresponding to the $(p+q)$ cube $C_0$ in the statement
of the lemma is the $p$-cube closest to the vertex corresponding to
$D_0$.
\end{proof}

\begin{proof}[Proof of Theorem~\ref{thm-Kailua-redux}]
  Let $(C,D)$ be an oriented $(p,q)$-cube pair, with associated
  extended basic $q$-cochain
  \begin{equation*}
    \sigma_{C,D}(t) = \begin{cases}
           [C,D], & t=0 \\ t^{-p} f_{C,D}, & t>0 .
                 \end{cases}
  \end{equation*}
We shall show that the section  $\{ d_t\sigma(t) \}_{t\in [0,\infty]}$   is a
linear combination of extended basic cochains, plus a term that  is
geometrically bounded and $O(t)$.   
  
After possibly changing a sign, we can assume that $D$ is the furthest
from the base point among the $q$-dimensional faces of $C$ parallel to
$D$.  In other words, we can assume that the complementary hyperplanes
$H_1,\dots,H_p$ of the pair $(C,D)$ separate $D$ from the base point.
Each $H_i\wedge D$ is therefore a $(q+1)$-dimensional face of $C$, and
we shall show that
  \begin{equation*}
    d_t (\sigma_{C,D}(t)) = \sum_{i=1}^p \sigma_{C,H_i\wedge D}(t)
      + O(t).
  \end{equation*}
We have equality when $t=0$, 
so   it suffices to show that
\begin{equation*}
 d_t (t^{-p} f_{C,D})  =  t^{-(p-1)}  \sum_{i=1}^p f_{C,H_i\wedge D} 
     + O(t) 
\end{equation*}
for $t>0$, or equivalently  that
\begin{equation}
\label{eq-d-t-formula2}
  d\, U_t ( t^{-p} f_{C,D} )  =  
       \sum_{i=1}^p U_t ( t^{-(p-1)} f_{C,H_i\wedge D} ) + O(t) .
\end{equation}  

As for the left hand side of (\ref{eq-d-t-formula2}), applying
Lemmas~\ref{lemma-new-ut-calc} and \ref{lemma-new-7.2} we have
\begin{equation*}
  d\, U_t ( t^{-p} f_{C,D} )  =  d\,  F  + O(t) 
          = \sum_{i=1}^p H_i\wedge F + O(t) ,
\end{equation*}
where $F$ is as in the statement of Lemma~\ref{lemma-new-7.2}.  So, to
complete the verification of  (\ref{eq-d-t-formula2}) it suffices to
check that
\begin{equation*}
  U_t ( t^{-(p-1)} f_{C,H_i\wedge D} ) = H_i\wedge F + O(t).
\end{equation*}
But this follows from  
Lemmas~\ref{lemma-new-ut-calc} and \ref{lemma-new-7.2}, applied to the
$(p-1,q+1)$-cube pair $(C,H_i\wedge D)$ (although a little care must be taken
here since the base cube $D_0$ that is nearest to $P_0$ within the
parallelism class of $D$  should be replaced by an analogous base
cube  for the parallelism class of $H_i\wedge D$).
\end{proof}

Consider now the adjoint operators 
\begin{equation}
\label{eq-deltat-def}
  \delta_t = U^{-1}_t \delta  U_t  \colon \C [X^q] \longrightarrow \C [X^{q+1}] 
\end{equation}
for $t>0$, together with the adjoint Pytlik-Szwarc differential
\[
   \delta_0  \colon \C [\mathcal H^q] \longrightarrow \C [\mathcal H^{q+1}] 
\]

\begin{theorem} 
\label{thm-Kailua-delta-redux}
If $\{\sigma(t)\}$ is any continuous and geometrically bounded section
of the  continuous field    
$\{ \ell _t ^2 (X^{q+1}) \} _{t\in [0,\infty]}$, then $\{ \delta_t \sigma(t)\}$ 
is a continuous and geometrically bounded  section of the continuous
field  $\{ \ell _t ^2 (X^{q}) \} _{t\in [0,\infty]}$.   
  \end{theorem}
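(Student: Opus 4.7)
The strategy is to follow, \emph{mutatis mutandis}, the proof of Theorem~\ref{thm-Kailua-redux}. By Proposition~\ref{prop-bdd-cts-sec} it suffices to establish the statement on extended basic sections. So I would fix an oriented $(p, q{+}1)$-cube pair $(C,D)$ with extended basic section $\sigma_{C,D}$, and, by possibly reversing the orientation of $D$, assume that the complementary hyperplanes $H_1,\dots,H_p$ of the pair all separate $D$ from the base point $P_0$. Writing $K_1,\dots,K_{q+1}$ for the hyperplanes that cut $D$, the target identity is
\begin{equation*}
  \delta_t(\sigma_{C,D}(t)) = \sum_{j=1}^{q+1} \varepsilon_j\, \sigma_{C,\, K_j\righthalfcup D}(t) + O(t)
\end{equation*}
for appropriate signs $\varepsilon_j\in\{\pm 1\}$. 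At $t=0$ this reduces to Definition~\ref{def-ps-delta}, and the right-hand side is patently geometrically bounded.

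For $t>0$, since $\delta_t = U_t^{-1}\delta U_t$, the identity above is equivalent to
\begin{equation*}
  \delta\bigl(U_t(t^{-p} f_{C,D})\bigr) =
     \sum_{j=1}^{q+1} \varepsilon_j\, U_t(t^{-(p+1)} f_{C,\, K_j\righthalfcup D}) + O(t).
\end{equation*}
By Lemmas~\ref{lemma-new-ut-calc} and \ref{lemma-new-7.2}, the left-hand side equals $\delta F + O(t)$, where $F$ is the face of $C_0$ (the cube in the parallelism class of $C$ nearest to $P_0$) that is parallel to $D$ and separated from $P_0$ by $H_1,\dots,H_p$. Since the hyperplanes cutting $F$ are exactly $K_1,\dots,K_{q+1}$, we have $\delta F = \sum_j K_j\righthalfcup F$ plus the terms with hyperplanes not cutting $F$, which vanish. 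A second application of the same two lemmas to each $(p{+}1,q)$-cube pair $(C, K_j\righthalfcup D)$ — whose complementary hyperplanes are $H_1,\dots,H_p,K_j$ — shows that the corresponding face of $C_0$ is precisely $K_j\righthalfcup F$, so $U_t(t^{-(p+1)} f_{C,\, K_j\righthalfcup D}) = K_j\righthalfcup F + O(t)$.

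The principal obstacle will be matching signs. The Pytlik-Szwarc formula of Definition~\ref{def-ps-delta} carries alternating factors $(-1)^j$ reflecting the position of the removed hyperplane within an ordered list, whereas the geometric operator $K_j\righthalfcup$ of Definition~\ref{definition-hook} has no such explicit factor. A direct orientation-bookkeeping argument, tracking the presentations of $D$, $K_j\righthalfcup D$ and $K_j\righthalfcup F$ (together with how a presentation of $F$ is inherited from one of $D$ via the cocycle $W_t$), should produce signs $\varepsilon_j$ agreeing with the $(-1)^j$ of the Pytlik-Szwarc formula; a small separate case is required when $q{+}1 = 1$, where the exceptional vertex-orientation convention of Definition~\ref{definition-hook} comes into play. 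Once the signs are reconciled, the theorem follows exactly as in the proof of Theorem~\ref{thm-Kailua-redux}.
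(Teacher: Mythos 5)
Your plan is a genuinely different route from the one the paper actually takes.  The paper itself observes, at the outset of its proof, that one \emph{could} argue by computations parallel to those in the proof of Theorem~\ref{thm-Kailua-redux}, which is exactly what you propose; but it then takes a much shorter path.  The paper's shortcut: by Lemma~\ref{lem-increasing-union}, any geometrically bounded continuous section lives inside the continuous field associated to a finite totally geodesic subcomplex; over a finite complex the fibers all have the same finite dimension, so the field is a vector bundle; the already-established continuity of $\{d_t\}$ (Theorem~\ref{thm-Kailua-redux}) means $\{d_t\}$ is a bundle map; and the pointwise adjoint of a continuous endomorphism of a finite-rank vector bundle is automatically continuous. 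This avoids any new asymptotic estimates and, crucially, sidesteps all orientation bookkeeping for the $\hook$ operator.

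Your approach would work in principle, and the verification that the relevant faces of $C_0$ match up is essentially correct (one does need the same parenthetical care that appears at the end of the proof of Theorem~\ref{thm-Kailua-redux}, namely that the nearest cube $D_0$ must be replaced by the analogous base cube for the parallelism class of $K_j\hook D$).  However, the proposal as written has a genuine gap: the sign-matching step is only asserted, not carried out, and it is not entirely routine.  The Pytlik--Szwarc $\delta$ of Definition~\ref{def-ps-delta} produces the symbol $[\,H_1,\dots,H_p,K_j\,|\,K_1,\dots,\widehat{K_j},\dots,K_{q+1}\,|\,R\,]$, which keeps the vertex $R$, whereas $K_j\hook D$ is the face of $D$ separated from $P_0$ by $K_j$, which may or may not contain $R$; reconciling these via the equivalence relation of Definition~\ref{def-symbol-orientation} contributes extra signs depending on whether $K_j$ separates $R$ from $P_0$, on top of the $(-1)^j$ factors and the exceptional vertex-orientation convention in Definition~\ref{definition-hook} when $q+1=1$.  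Until that bookkeeping is actually done, the argument is a plausible sketch rather than a proof.  The paper's vector-bundle shortcut is preferable precisely because it renders all of this moot; what your route would buy, if completed, is an explicit first-order formula for $\delta_t$ on basic sections, but that is not needed for the theorem.
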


\begin{proof}
While this could be approached   through computations similar to those used to prove  Theorem \ref{thm-Kailua-redux}, there is a shortcut.   Each continuous and geometrically bounded section can be viewed as associated to a finite subcomplex of $X$ as in Lemma~\ref{lem-increasing-union}.  In the case of a finite complex the differentials $\{d_t\}$ constitute a map of vector bundles, and their pointwise adjoints $ \{ \delta_t\}$   automatically give a map of vector bundles too.
\end{proof}

Finally, we return to the issue of weights, which will be important in the next section when we work in the context of Kasparov theory.   Let $\weight_t$ the function on hyperplanes defined by the formula
\begin{equation}
\label{eq-t-weight}
\weight_t(H) = \begin{cases}
1+t\operatorname{dist} (H,P_0) , & 0 < t \leq 1\\
1+  \operatorname{dist} (H,P_0)  & 1\leq t \leq \infty. 
\end{cases}
\end{equation}
In the next section we shall work with the weighted operators
     \begin{equation}
\label{eq-t-weight2}
  d_t = U_t^{-1} d_{w_t} U_t  \colon  \C[X^q]    \longrightarrow \C[ X^{q+1}], 
\end{equation}
for $t>0$, where as before
$
U_t 
$
is the isomorphism from Definition~\ref{def-Ut0},   and where $ d_{w_t}$ is   the weighted Julg-Valette differential described in Definition~\ref{def-weight-fn-and-differential}.  
     
If  $t>0$, then operator in \eqref{eq-t-weight2}  does \emph{not} extend from $\C[X^q]$ to a bounded operator between $\ell^2_t$-spaces.    
But since the pointwise values of a  geometrically bounded section lie in $\C[X^q]$,   Theorem~\ref{thm-Kailua-redux}  makes sense in the weighted case without extending the domains of the operators $d_t$ in \eqref{eq-t-weight2} beyond  $\C[X^q]$.  Moreover the theorem remains true for the weighted family of operators. The proof reduces immediately to the unweighted case because the weighted and unweighted differentials, applied to a continuous and  geometrically bounded section, differ by an $O(t)$ term.  The same applies to Theorem~\ref{thm-Kailua-delta-redux}.

 
\section{Equivariant Fredholm Complexes}

We shall  assume from now on that a second countable, locally compact Hausdorff  topological group\footnote{The topological restrictions on the group $G$ are not really necessary, but they will allow us to easily fit  the concept of equivariant Fredholm complex into the context of Kasparov's $KK$-theory in the next section.}  $G$ acts on our $\cat(0)$ cube complex $X$ (preserving the cubical structure). We shall \emph{not} assume that $G$ fixes any base point in $X$.  

Our goal in this section to place the Julg-Valette and Pytlik-Szwarc complexes  within the context of equivariant Fredholm complexes, and we need to  begin with some definitions.

\begin{definition}
\label{def-fredholm-cplx}
A \emph{Fredholm complex} of Hilbert spaces is a bounded complex of Hilbert spaces and bounded operators for which the identity morphism on the complex is chain homotopic, through a chain homotopy consisting of bounded operators, to a morphism consisting of compact Hilbert space operators.
\end{definition}

In other words, a {Fredholm complex} of Hilbert spaces is a complex of the form
\[
\mathfrak{H}^0 \stackrel d \longrightarrow \mathfrak{H}^1\stackrel d  \longrightarrow \cdots \stackrel d \longrightarrow \mathfrak{H}^n ,
\]
with each $\mathfrak{H}^p$ a Hilbert space and each differential a bounded   operator. Moreover there exist bounded operators 
\[
h : \mathfrak{H}^p \longrightarrow \mathfrak{H}^{p-1} \qquad (p = 1,\dots, n)
\]
such that each operator 
\[
d h + hd : \mathfrak{H}^p \longrightarrow \mathfrak{H}^p \qquad (p = 0,\dots, n)
\]
  is a compact perturbation of the identity operator.   
 
 The Fredholm condition implies that the cohomology groups of a
 Fredholm complex are all finite-dimensional, which is the main reason
 for the definition.  But we are interested in  the following concept
 of \emph{equivariant} Fredholm complex, for which the cohomology
 groups are not so relevant.   
 
 \begin{definition}
 \label{def-eq-fredholm-cplx}
 Let $G$ be a second countable Hausdorff locally compact topological group.  A \emph{$G$-equivariant Fredholm complex} of Hilbert spaces is a bounded complex of separable Hilbert spaces and bounded operators for which 
 \begin{alist}
 
 \item Each Hilbert space carries a continuous  unitary representation of $G$.
 
 \item  The differentials $d$ are not necessarily equivariant, but the differences $d - gdg^{-1}$ are compact operator-valued and norm-continuous functions of $g\in G$.
 
 \item The identity morphism on the complex is chain homotopic, through a chain homotopy consisting of bounded operators, to a morphism consisting of compact Hilbert space operators.
 
 \item The operators $h$ in the chain homotopy above are again  not necessarily equivariant, but the differences $h - ghg^{-1}$ are compact operator-valued and norm-continuous functions of $g\in G$.
 \end{alist}
 
 \end{definition}
 
\begin{remark}
Because the differentials are not necessarily equivariant, the cohomology groups of an equivariant Fredholm complex of Hilbert spaces do not necessarily carry actions of $G$, and so are not of direct interest themselves as far as $G$ is concerned.  Nevertheless the above definition, which is due to Kasparov  (in a minor variant form), has played an important role in a number of mathematical areas, most notably the study of the Novikov conjecture in manifold topology \cite{KasparovEqKK}  (see \cite{BCH93} for a survey of other topics). 
\end{remark}

We are going to manufacture equivariant Fredholm complexes from the
Julg-Valette and Pytlik-Szwarc complexes.  The Julg-Valette complex is
the more difficult of the two  to understand.  Disregarding the group
action, the Julg-Valette differentials from
Definition~\ref{def-JV-differential} extend to bounded operators on
the Hilbert space completions of the cochain spaces associated to the
inner products in \eqref{inner-product}, and the resulting complex of
Hilbert spaces and bounded operators is Fredholm, as in
Definition~\ref{def-fredholm-cplx}.  Moreover the group $G$ certainly
acts unitarily. But the Julg-Valette differentials typically fail to
be $G$-equivariant, since they are defined using a choice of base
point  in the complex $X$ which need not be fixed by $G$. This means
that the technical items (b) and (d) in
Definition~\ref{def-eq-fredholm-cplx}  need to be considered
carefully.  

In fact to handle these technical items it will be necessary to
finally make use of the weight functions $w(H)$ that we introduced in
Definition~\ref{def-weight-fn-and-differential}.  The following
computation will be our starting point. Assemble together all the
Julg-Valette cochain spaces so as to form the single space  
\[
\C [X^\bullet ] = \bigoplus_{q=0}^{\dim (X)}   \C [X^q],
\]
and then form the Hilbert space completion 
\[
\ell ^2 (X^\bullet ) = \bigoplus_{q=0}^{\dim (X)}  \ell^2 (X^q).
\]

\begin{lemma}
For any weight function $w(H)$ the Julg-Valette  operator 
\[
D = \dps + \deltaps
 \, : \,  \ell^2  (X^\bullet) \longrightarrow   \ell^2  (X^\bullet)  ,
\]
viewed as  a densely-defined operator with domain $ \C[X^\bullet]$, is essentially self-adjoint.
\end{lemma}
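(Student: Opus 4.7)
The plan is to exhaust $\C[X^\bullet]$ by finite-dimensional subspaces that are invariant under $D$, and then invoke the standard essential self-adjointness criterion: a symmetric operator whose domain contains a dense union of finite-dimensional invariant subspaces is essentially self-adjoint, since on each such subspace the restriction is automatically self-adjoint, making $\operatorname{Range}(D \pm i)$ dense. Note that $D$ is indeed symmetric on $\C[X^\bullet]$ because $d_w$ and $\delta_w$ are formal adjoints (the weighted analogue of the earlier proposition, stated in the remark following Proposition~\ref{prop-delta-w-diagonal2}).

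For the exhaustion, I would use Lemma~\ref{lem-increasing-union} to write $X = \bigcup_n X_n$ with each $X_n$ a finite totally geodesic $\cat(0)$ subcomplex. This yields an increasing family of finite-dimensional subspaces $\C[X_n^\bullet] \subset \C[X^\bullet]$ whose union is all of $\C[X^\bullet]$, and this union is dense in $\ell^2(X^\bullet)$.

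The main step — and the only delicate one — is checking that each $\C[X_n^\bullet]$ is invariant under $D$. Invariance under $\delta$ is automatic, since $H \righthalfcup C$ is a face of $C$ and hence lies in the same subcomplex as $C$. For $d$, suppose $C \subset X_n$ and $H$ is a hyperplane with $H \wedge C \neq 0$, so $H$ is adjacent to $C$ and separates $C$ from the base point $P_0$. I claim $H \wedge C \subset X_n$. By the construction in Lemma~\ref{lem-increasing-union}, it suffices to show that for every hyperplane $K$ of distance $\geq n$ from $P_0$, the $(q{+}1)$-cube $H \wedge C$ lies in the $P_0$-half-space of $K$. Since $C \subset X_n$ already lies on that side, this will follow provided $K$ does not cut $H \wedge C$. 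The hyperplanes cutting $H \wedge C$ are exactly $H$ together with those cutting $C$; the latter all have distance $< n$ from $P_0$ because $C \subset X_n$, and $K \neq H$ since $K$ does not separate $C$ from $P_0$ whereas $H$ does. Hence $K$ does not cut $H \wedge C$, completing the claim.

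Granted invariance, $D_n := D|_{\C[X_n^\bullet]}$ is a symmetric operator on a finite-dimensional inner product space, so it is self-adjoint and $D_n \pm i$ is a bijection on $\C[X_n^\bullet]$. In particular, $\C[X_n^\bullet] \subseteq \operatorname{Range}(D \pm i)$ for every $n$, so $\operatorname{Range}(D \pm i)$ contains the dense subspace $\C[X^\bullet]$. By the basic criterion for essential self-adjointness of symmetric operators, $D$ is essentially self-adjoint on $\C[X^\bullet]$. The principal obstacle is the invariance verification; once this geometric check is in place, the rest of the argument is formal.
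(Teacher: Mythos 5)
Your proof is correct, but it takes a genuinely different route from the paper's. The paper proves essential self-adjointness by observing directly that the weighted Laplacian $\Delta_w = D^2$ is diagonal in the canonical basis $\{\langle C \rangle\}$ with nonnegative eigenvalues (Proposition~\ref{prop-delta-w-diagonal2}), so $I+\Delta_w$ is bijective on $\C[X^\bullet]$; factoring $I + D^2 = (D+i)(D-i) = (D-i)(D+i)$ then shows that $\operatorname{Range}(D \pm i)$ contains the dense subspace $\C[X^\bullet]$. You instead invoke Lemma~\ref{lem-increasing-union} to exhaust $X$ by finite totally geodesic subcomplexes $X_n$ and verify the geometric fact that $\C[X_n^\bullet]$ is $D$-invariant, using that the hyperplanes cutting $H \wedge C$ are exactly $H$ and those cutting $C$, and that $H$ cannot be one of the ``far'' hyperplanes defining $X_n$ because $H$ separates $C$ from $P_0$. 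Your invariance check is correct, and the standard finite-dimensional-invariant-subspace criterion then applies. The paper's route is shorter and leans on the explicit spectral computation of $\Delta_w$; your route is more structural and would apply even if one did not have a clean closed form for the Laplacian, but it requires the geometric invariance argument and the exhaustion lemma in exchange. Both are sound.
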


\begin{proof}
The  operator $D$  is formally self-adjoint in the sense that 
\[
 \langle D  f_1,f_2\rangle = \langle f_1 , D  f_2 \rangle
\]
for all $f_1 , f_2 \in \C[ X^\bullet]$.  The essential
self-adjointness of $D $ is a consequence of the fact that the range
of the operator
\[
 I + \Delta   = I+ D  ^2
\]
is dense in $\ell^2 (X^\bullet)$, and this in turn is a consequence
of the fact that the Julg-Valette Laplacian is a diagonal
operator, as indicated in Proposition~\ref{prop-delta-w-diagonal2}. 
\end{proof}

Since $D $ is an essentially self-adjoint operator, we can study the
resolvent operators $(D  \pm  iI)^{-1}$, which extend from their
initial domains of definition (namely the ranges of $(D\pm iI)$ on
$\C[X^\bullet]$)  to bounded operators on $\ell^2 (X^\bullet)$.

\begin{lemma}
\label{lemma-compact-resolvent}
If  $w$ is a  weight function that is \emph{proper} in the sense that for every $d>0$ the set 
$
\{\, H : w(H) < d\,\}$
is finite, then the  resolvent operators
\[
(D \pm  iI)^{-1}
\,\colon\, \ell^2 (X^\bullet) \longrightarrow  \ell^2 (X^\bullet)
\]
are compact Hilbert space operators.
\end{lemma}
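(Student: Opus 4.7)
The plan is to apply spectral theory to the essentially self-adjoint operator $D = d_w + \delta_w$ of the preceding lemma. Since $D^2 = \Delta$ holds on the common core $\C[X^\bullet]$, the compactness of the resolvents $(D \pm iI)^{-1}$ is equivalent, by the spectral theorem for the self-adjoint closure of $D$, to the compactness of $(I+\Delta)^{-1}$. By Proposition~\ref{prop-delta-w-diagonal2}, $\Delta$ acts diagonally in the orthonormal basis $\{\langle C\rangle\}$ of $\ell^2(X^\bullet)$ with eigenvalue $\lambda(C) = p_w(C) + q_w(C)$, so $(I+\Delta)^{-1}$ is diagonal with eigenvalues $(1+\lambda(C))^{-1}$. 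It is therefore compact if and only if for every $M > 0$ the set $\{C : \lambda(C) \leq M\}$ is finite.

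This finiteness should follow from properness of $w$. If $\lambda(C)\leq M$, then every hyperplane cutting $C$, and every hyperplane adjacent to $C$ and separating $C$ from $P_0$, has weight at most $\sqrt M$ and so belongs to the finite set $\mathcal F = \{H : w(H)\leq \sqrt M\}$. Write $\mathcal K(C)$ and $\mathcal A(C)$ for these two sets of hyperplanes. The key claim is that $C$ is recoverable from the pair $(\mathcal K(C),\mathcal A(C))$; since both are subsets of $\mathcal F$, this will give an injection of $\{C : \lambda(C) \leq M\}$ into the finite set $2^{\mathcal F}\times 2^{\mathcal F}$.

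To recover $C$ from $(\mathcal K(C), \mathcal A(C))$, observe that the hyperplanes in $\mathcal K(C)\cup\mathcal A(C)$ intersect pairwise: pairs from $\mathcal K(C)$ inside $C$; pairs from $\mathcal A(C)$ because each is adjacent to a common vertex of $C$ by Lemma~\ref{unique-adjacent}; and mixed pairs by the four-quadrant argument used inside the proof of that lemma. By Lemma~\ref{lem-3-hyperplanes} the hyperplanes then meet in a single cube $\widetilde C$ of dimension $|\mathcal K(C)| + |\mathcal A(C)|$, and $C$ is identified as the face of $\widetilde C$ cut by every element of $\mathcal K(C)$ that lies on the side of each $H \in \mathcal A(C)$ opposite to $P_0$. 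In the edge case when $\mathcal A(C)$ is empty, one uses Proposition~\ref{prop-nearest-cube} instead: $C$ must be the unique cube of its parallelism class closest to $P_0$.

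The main obstacle is the geometric verification in the previous paragraph, in particular the characterization of $C$ as the unique face of $\widetilde C$ on the indicated sides and the handling of the empty-$\mathcal A(C)$ case. Once the finiteness of $\{C : \lambda(C) \leq M\}$ is established, the spectrum of $\Delta$ is discrete and accumulates only at $+\infty$, so $(I + \Delta)^{-1}$ is compact; applying the spectral theorem to $D$ then transfers compactness to $(D \pm iI)^{-1}$ as required.
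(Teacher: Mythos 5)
Your reduction to the compactness of $(I+\Delta)^{-1}$ via the diagonal formula of Proposition~\ref{prop-delta-w-diagonal2} is exactly the paper's first step (the paper factors $(I+\Delta)^{-1}=(D+iI)^{-1}(D-iI)^{-1}$ and uses that the two resolvents are mutually adjoint; your appeal to the spectral theorem is an equivalent route). The paper then simply declares compactness ``clear,'' whereas you correctly identify that a nontrivial finiteness statement is needed, namely that $\{C : q_w(C)+p_w(C)\leq M\}$ is finite, and that this amounts to the injectivity of $C\mapsto(\mathcal{K}(C),\mathcal{A}(C))$. However, your reconstruction of $C$ from this data has a genuine gap: Lemma~\ref{lem-3-hyperplanes} asserts only the \emph{existence} of a cube in which a pairwise-intersecting family of hyperplanes all meet, not its uniqueness --- the set of such cubes is an entire parallelism class, typically infinite. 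So ``the single cube $\widetilde C$'' is not determined by $(\mathcal{K}(C),\mathcal{A}(C))$, and identifying $C$ as the face of $\widetilde C$ on the far side of each $H\in\mathcal{A}(C)$ is not well-posed as written: a different choice of $\widetilde C$ produces a different (parallel) face. (A lesser slip: for two hyperplanes in $\mathcal{A}(C)$, adjacency to a common vertex does not by itself force intersection; one also needs that both separate that vertex from $P_0$, and then Lemma~\ref{lem-4-quadrants}.)

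The injectivity you need is nevertheless true, and a correct route is to prove it directly rather than by reconstruction. By Proposition~\ref{general-Sageev} reduce to the case of vertices, using the nearest cube of the parallelism class as base vertex in the intersection-of-hyperplanes complex; your handling of $\mathcal{A}(C)=\emptyset$ via Proposition~\ref{prop-nearest-cube} is the degenerate instance of this. For vertices $v\neq v'$, let $m$ be the median of $v$, $v'$, $P_0$ (a standard feature of $\cat(0)$ cube complexes, implicit in the addition formula and hyperplane property used in the proof of Proposition~\ref{prop-nearest-cube}). After possibly exchanging $v$ and $v'$, choose a hyperplane $H$ adjacent to $v$ and separating $v$ from $m$. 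Since $m$ lies on a geodesic from $v$ to $P_0$, $H$ separates $v$ from $P_0$, so $H\in\mathcal{A}(v)$; but $m$, $v'$ and $P_0$ all lie on the same side of $H$, so $H$ does not separate $v'$ from $P_0$ and $H\notin\mathcal{A}(v')$. Substituting this injectivity argument for your reconstruction step would close the gap.
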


\begin{proof}
The two resolvent operators are adjoint to one another, and so it suffices to show that the product
\[
(I + \Delta )^{-1} = (D  +   iI)^{-1}(D  -  iI)^{-1}
\]
is compact.  But the compactness of $(I + \Delta )^{-1} $ is clear
from Proposition~\ref{prop-delta-w-diagonal2}. 
\end{proof}

Let us now examine the dependence of the Julg-Valette operator $D$ on the initial choice of base point in $X$.

\begin{lemma}
\label{lemma-bounded-diff}
If  $w$ is a weight function that is  \emph{$G$-bounded} in the sense that 
\[
\sup_{H}\,  \bigl | w(H) - w(gH) \bigr | < \infty
\]
for every $g\in G$,  then 
\[
\bigl \| D - g(D) \bigr \| < \infty .
\]
That is, the difference $D - g(D)$, which is a linear operator on $\C[X^\bullet]$, extends to a bounded linear operator on $\ell^2 (X^\bullet)$.
\end{lemma}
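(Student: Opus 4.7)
The plan is to unfold what $g(D)=gDg^{-1}$ means in terms of the definition of the Julg-Valette operator and then split the difference $D-g(D)$ into two pieces, each of which can be controlled separately.

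First I would use the equivariance of the constructions of $H\wedge\underbar{\,\,\,\,}$ and $H\hook\underbar{\,\,\,\,}$ to identify
\[
gDg^{-1}=d_{w\circ g^{-1}}^{gP_0}+\delta_{w\circ g^{-1}}^{gP_0},
\]
where the superscript records the base point used in Definitions~\ref{definition-wedge} and \ref{definition-hook}. Concretely, for an oriented cube $C$,
\[
(g d_w g^{-1})\,C=\sum_{K}w(g^{-1}K)\,K\wedge_{gP_0}C,
\]
so that the difference can be written as
\begin{align*}
(d_w-g d_w g^{-1})\,C
&=\sum_{K}w(K)\bigl[K\wedge_{P_0}C-K\wedge_{gP_0}C\bigr]\\
&\qquad+\sum_{K}\bigl[w(K)-w(g^{-1}K)\bigr]K\wedge_{gP_0}C,
\end{align*}
with a completely parallel expression for $\delta_w-g\delta_w g^{-1}$.

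Next I would bound each sum separately. For the \emph{base-point term}, the key observation is that the operators $K\wedge_{P_0}$ and $K\wedge_{gP_0}$ differ only when the criterion ``$C$ is separated from the base point by $K$'' differs; that is, when $K$ belongs to the finite set $S_g$ of hyperplanes between $P_0$ and $gP_0$. Hence this sum is the finite sum
\[
\sum_{K\in S_g}w(K)\bigl[K\wedge_{P_0}-K\wedge_{gP_0}\bigr],
\]
and each individual operator $K\wedge_{P_0}$ has norm at most $1$ on $\ell^2(X^\bullet)$ since it sends each basis cochain to at most one basis cochain. So this part is bounded.

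For the \emph{weight term}, the $G$-boundedness assumption gives a uniform bound $|w(K)-w(g^{-1}K)|\le M_g$ on the coefficients. Thus this operator has the form $\sum_K c_K\,K\wedge_{gP_0}$ with $|c_K|\le M_g$, and by the bounded-geometry assumption each cube is involved in only boundedly many of the $K\wedge_{gP_0}$ as domain or target. A Schur-test estimate then bounds the operator norm by a constant (depending only on the bounded-geometry constant) times $M_g$; equivalently, it is dominated by $M_g$ times the unweighted Julg-Valette differential with base point $gP_0$, which is already known to be bounded. Applying the same decomposition to $\delta_w$ completes the proof.

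The main obstacle is really the weight term: one must check that having the coefficients $w(K)-w(g^{-1}K)$ only uniformly bounded (rather than finitely supported, as in the base-point term) is enough. This is precisely where the bounded-geometry hypothesis enters, via the Schur-test estimate that converts a uniform pointwise bound on coefficients into a bound on the operator norm.
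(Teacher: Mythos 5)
Your proof follows essentially the same route as the paper's: the difference $d_w-g(d_w)$ is split into a base-point term (supported on the finitely many hyperplanes separating $P_0$ from $gP_0$, each contributing a bounded operator) and a weight term with uniformly bounded coefficients controlled by bounded geometry, then one passes to $\delta_w$ by adjointness. Your added detail on the Schur-test estimate for the weight term is a reasonable expansion of what the paper leaves implicit, and the decomposition and key observations match exactly.
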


\begin{proof}
It suffices to prove the estimate for $d$ in place of $D = d + \delta$, since $d$ and $\delta$ are adjoint to one another.
Now 
\[
d C - g(d) C = \sum _H w(H) H \wedge _{P_0} C - \sum _H w(g(H)) H \wedge _{g(P_0)} C,
\]
where $\wedge_{P_0}$ and $\wedge_{g(P_0)}$ denote the  operators of Definition~\ref{definition-wedge} associated to the two indicated choices of base points.  Since $ w(H) - w(gH)$ is uniformly bounded we can replace $w(g(H)) $ by $w(H)$ in the second sum, and change the overall expression only by a term that defines a  bounded operator.  So it suffices to show that for any pair of base points $P_0$ and $P_1$ the expression 
\[
\sum _H w(H) \bigl ( H \wedge _{P_0} C -   H \wedge _{P_1} C\bigr ) 
\]
defines a bounded operator. But the expression in parentheses is only non-zero when $H$ separates $P_0$ from $P_1$, and there are only finitely many such hyperplanes.  So the lemma follows from the fact that for any hyperplane $H$ the formula 
\[
 H \wedge _{P_0} C -   H \wedge _{P_1} C
 \]
 defines a bounded operator, as long as the cube complex $X$ has bounded geometry.
\end{proof}

From now on we shall assume that the Julg-Valette complex is weighted
using a proper and $G$-bounded weight function. In fact, in the next
section we shall work with the specific weight function $w_\infty$ in
\eqref{eq-t-weight}, and so let us do the same here, even though it is
not yet necessary.  Since the weighted Julg-Valette differential is
not bounded, we shall need to make an adjustment to fit the weighted
complex into the framework of Fredholm complexes of Hilbert spaces and
bounded operators.  We do this by forming the \emph{normalized}
differentials
\[
d ' =d (I + \Delta) ^{-\frac 12} \colon \ell^2(X^q) \longrightarrow \ell^2(X^{q+1}) 
\]
(where, strictly speaking, by $d$ in the above formula we mean the closure of $d$ in the sense of unbounded operator theory). The \emph{normalized  Julg-Valette complex} is the complex
\begin{equation}
\label{eq-modified-jv-complex}
\ell^2   (X^0) \stackrel{d'}\longrightarrow    \ell^2   (X^1)  \stackrel{d'} \longrightarrow  \cdots  \stackrel{d'}\longrightarrow   \ell^2   (X^n)  .
\end{equation}
It is indeed a complex  because $d$ and $(I + \Delta)^{-\frac 12 }$ commute with one another, and it is a Fredholm complex because the adjoints $d'{}^*$ constitute a chain homotopy between the identity  and a compact operator-valued cochain map.  In fact 
\[
d ' d'{}^* + d'{}^* d ' =  D^2 (I + D^2 )^{-1} = I - (I + D^2 )^{-1} ,
\]
and $(I+ D^2 )^{-1}$ is compact by Lemma~\ref{lemma-compact-resolvent}.

We shall use the following computation from the functional calculus to show   that the normalized complex is an equivariant Fredholm complex of Hilbert spaces. 

\begin{lemma}[Compare  \cite{BaajJulg}]
\label{lem-integral}
If  $  T$ is  a  positive, self-adjoint  Hilbert space operator that is bounded below by some positive constant, then 
\[
T^{-\frac 12} = \frac 2 \pi \int _0^\infty (\lambda^2  + T )^{-1} \, d\lambda 
\]
The integral converges in the norm topology. \qed
\end{lemma}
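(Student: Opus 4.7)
The plan is to reduce the operator identity to a scalar identity via the functional calculus for positive self-adjoint operators. Let $c>0$ be such that $T\geq cI$, so that the spectrum of $T$ lies in $[c,\infty)$; in particular, $T$ is invertible and $T^{-1/2}$ is defined as a bounded operator via the continuous function $t\mapsto t^{-1/2}$ on $[c,\infty)$.

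First I would establish the scalar identity
\[
\frac{2}{\pi}\int_0^\infty (\lambda^2 + t)^{-1}\,d\lambda = t^{-1/2} \qquad (t>0),
\]
which follows from the substitution $\lambda = \sqrt{t}\,\mu$, reducing the integral to $\tfrac{1}{\sqrt{t}}\int_0^\infty (1+\mu^2)^{-1}\,d\mu = \tfrac{\pi}{2\sqrt{t}}$.

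Next I would check that the operator-valued integral $\int_0^\infty (\lambda^2+T)^{-1}\,d\lambda$ converges in the norm topology. Since $T\geq cI$, the resolvent satisfies $\|(\lambda^2+T)^{-1}\|\leq (\lambda^2+c)^{-1}$ by the spectral theorem. Splitting the integral into $\int_0^1$ and $\int_1^\infty$, the first is bounded by $1/c$ on a compact interval, while the second is dominated by $\int_1^\infty \lambda^{-2}\,d\lambda<\infty$. Standard continuity of the resolvent in $\lambda$ in the norm topology then produces a norm-convergent Riemann-type integral.

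Finally, I would combine the two steps using the functional calculus. For each $\lambda\geq 0$ the bounded Borel function $f_\lambda(t)=\tfrac{2}{\pi}(\lambda^2+t)^{-1}$ on $[c,\infty)$ satisfies $f_\lambda(T)=\tfrac{2}{\pi}(\lambda^2+T)^{-1}$. Because the integral converges in norm, it may be evaluated by the functional calculus at each spectral point: the function $F(t)=\int_0^\infty f_\lambda(t)\,d\lambda$ on $[c,\infty)$ equals $t^{-1/2}$ by the scalar identity, and the interchange of integral with functional calculus is justified either by a Riemann-sum approximation (each partial sum is $g_n(T)$ for a bounded Borel function $g_n$ converging uniformly on $[c,\infty)$ to $F$) or, equivalently, by testing the identity against matrix coefficients $\langle \cdot\, \xi,\eta\rangle$ and invoking the spectral theorem. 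The main (mild) obstacle is this interchange, but it is entirely routine once norm convergence has been secured on $[c,\infty)$, since the spectrum is bounded away from the singularity $t=0$ of $F$.
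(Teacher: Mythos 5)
Your proof is correct and is the standard argument; the paper states this lemma without proof (marking it with \qed and citing Baaj--Julg), so there is no alternative argument to compare against. Your scalar computation, the norm bound $\|(\lambda^2+T)^{-1}\|\le(\lambda^2+c)^{-1}$, the resulting absolute norm convergence, and the routine interchange with the functional calculus are all exactly what the reference has in mind.
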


\begin{theorem}
\label{thm-jv-is-fredholm}
The  normalized Julg-Valette complex 
\[
 \ell^2 (X^0) \stackrel {d'} \longrightarrow  \ell^2 (X^1)  
        \stackrel {d'} \longrightarrow \cdots
        \stackrel {d'} \longrightarrow  \ell^2 (X^n)
\]
that is defined using the proper and $G$-bounded weight function
$w_\infty$ in \eqref{eq-t-weight}  is an equivariant Fredholm complex.    
\end{theorem}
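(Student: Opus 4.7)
The plan is to verify the four conditions of Definition~\ref{def-eq-fredholm-cplx} for the normalized Julg-Valette complex built with the weight function $w_\infty$ of \eqref{eq-t-weight}, which is both \emph{proper} (by bounded geometry of $X$) and \emph{$G$-bounded} (via the triangle inequality applied to distances to $P_0$). Conditions (a) and (c) fall out directly. For (a), $G$ acts on the cube complex by cellular isomorphisms, inducing unitary representations on the $\ell^2(X^q)$; continuity follows because cube-stabilizers are open in the topological group $G$. For (c), the adjoint $h = d'{}^*$ serves as the chain homotopy, since
\[
d'\,d'{}^* + d'{}^*\,d' = D^2 (I + D^2)^{-1} = I - (I + D^2)^{-1},
\]
and $(I+D^2)^{-1}$ is compact by Lemma~\ref{lemma-compact-resolvent}.

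The heart of the argument is condition (b): I will show that $d' - g d' g^{-1}$ is compact for each $g \in G$. Let $d_g = g d g^{-1}$, $\Delta_g = g \Delta g^{-1}$, and set $B = d_g - d$ and $B' = D_g - D$; by Lemma~\ref{lemma-bounded-diff} applied to $w_\infty$, both $B$ and $B'$ are bounded. Decompose
\[
d'_g - d' \;=\; B\,(I+\Delta_g)^{-1/2} \;+\; d\,\bigl[(I+\Delta_g)^{-1/2} - (I+\Delta)^{-1/2}\bigr].
\]
The first summand is immediately compact, since $(I+\Delta_g)^{-1/2}$ is compact (unitarily conjugate to the compact operator $(I+\Delta)^{-1/2}$).

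The main obstacle is compactness of the second summand. Here I will invoke Lemma~\ref{lem-integral} applied to $T = I+\Delta$ and $T = I+\Delta_g$, yielding
\[
(I+\Delta_g)^{-1/2} - (I+\Delta)^{-1/2} = \frac{2}{\pi}\int_0^\infty R(\lambda)(\Delta - \Delta_g) R_g(\lambda)\,d\lambda,
\]
with $R(\lambda) = (\lambda^2 + I + \Delta)^{-1}$ and $R_g(\lambda)$ analogously. Expanding $\Delta_g - \Delta = D B' + B' D + (B')^2$, using the commutation $D R(\lambda) = R(\lambda) D$ together with $d^2 = 0$ (so that $d R(\lambda) D = d \delta R(\lambda)$), and invoking the spectral bounds $\|d \delta R(\lambda)\| \le 1$, $\|d R(\lambda)\| \le 1/\sqrt{\lambda^2 + 1}$, and $\|D R_g(\lambda)\| \le 1/(2\sqrt{\lambda^2 + 1})$, each term of the resulting integrand has operator norm $O(1/(\lambda^2+1))$ and ends in a compact factor ($R_g(\lambda)$ or $D R_g(\lambda)$). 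Since compact operators form a norm-closed ideal, the norm-convergent integral is compact.

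For the norm-continuity of $g \mapsto d' - g d' g^{-1}$ demanded by (b), note that $d_g$ depends on $g$ only through the orbit of the base vertex $P_0$; since $G$ acts continuously on the discrete vertex set, $\mathrm{Stab}(P_0)$ is an open subgroup of $G$, so $g \mapsto d_g$ is locally constant, and hence $g \mapsto d' - g d' g^{-1}$ is norm-continuous as well. Finally, condition (d) follows from (b) by passing to adjoints: $h - g h g^{-1} = (d' - g d' g^{-1})^*$ is compact and norm-continuous, because the adjoint of a compact operator is compact and taking adjoints is norm-continuous.
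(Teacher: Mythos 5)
Your proof is correct, but your handling of condition (b)---the crux of the argument---takes a genuinely different route from the paper's. The paper reduces to showing that the self-adjoint operator $D' = D(I+D^2)^{-1/2}$ has the compactness property, and then applies Lemma~\ref{lem-integral} together with the partial-fractions identity
\[
D(\lambda^2+1+D^2)^{-1} = \tfrac{1}{2}\bigl((D+i\mu)^{-1} + (D-i\mu)^{-1}\bigr), \qquad \mu = (\lambda^2+1)^{1/2},
\]
to write $g(D') - D'$ as an integral whose integrand is a single resolvent-identity expression $(g(D)\pm i\mu)^{-1}\bigl(D-g(D)\bigr)(D\pm i\mu)^{-1}$; this is manifestly compact by Lemma~\ref{lemma-compact-resolvent} and of norm $O(\lambda^{-2})$ by $\|(D\pm i\mu)^{-1}\|\le\mu^{-1}$, with no further analysis required. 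Your route works with $d'$ directly, first splitting off the immediately compact piece $(d_g-d)(I+\Delta_g)^{-1/2}$ and then applying the Baaj--Julg integral to the remaining commutator $d\bigl[(I+\Delta_g)^{-1/2} - (I+\Delta)^{-1/2}\bigr]$, after which you expand $\Delta_g-\Delta = DB' + B'D + (B')^2$ and estimate each of three terms by spectral calculus. This works and makes the sources of compactness and decay quite explicit, but at the cost of more bookkeeping; the paper's partial-fractions manoeuvre bypasses the three-term expansion entirely. Two small remarks: your stated bound $\|DR_g(\lambda)\|\le\tfrac{1}{2}(\lambda^2+1)^{-1/2}$ is really the spectral estimate for $\|D_gR_g(\lambda)\|$; since $D = D_g - B'$ with $B'$ bounded one still gets $\|DR_g(\lambda)\| = O(\lambda^{-1})$, so the conclusion survives, but the intermediate claim as written is off. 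Also, both your argument and the paper's implicitly assume that vertex stabilizers are open in $G$---needed for strong continuity of the representations and for $g\mapsto d_g$ to be locally constant; you at least say so, whereas the paper leaves it tacit until the proof of Theorem~\ref{thm-main-homotopy-thm}.
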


\begin{proof} 
It suffices to show that the normalized operator 
\[
D ' = D (I + D^2 )^{-1/2} = d' + d'{}^*
\]
has the property that $g(D') - D'$ is a compact operator-valued and norm-continuous function of $g\in G$.
For this we use Lemma~\ref{lem-integral} and  the formula
\[
D (\lambda ^2  + 1 +     D^2  )^{-1}   
= \frac 12\bigl  (  (    D  +   i\mu  )^{-1}+  (     D -  i\mu  )^{-1} \bigr  ),
\]
where $\mu = (\lambda^2 +1)^{1/2}$, to write the difference  
$g(D') - D'$  as a linear combination of two integrals 
\[
  \int _0^\infty 
\Bigl ( (g (    D)   \pm i\mu )^{-1} -  (    D \pm i\mu )^{-1} \Bigr )
  \, d\lambda  .
\]
The integrand is 
\begin{equation}
 \label{eq-split-integrand1}
  ( g(   D)\pm i\mu)^{-1} 
\bigl (   D- g(   D)\bigr ) (    D\pm i\mu)^{-1},
\end{equation}
which is a  norm-continuous, compact operator valued function  of
$\lambda \in [0,\infty)$ whose  norm is $O(\lambda^{-2})$ as 
$\lambda \nearrow \infty$. So the integrals converge to compact operators,
as required. 
\end{proof}

Let us now examine the     Pytlik-Szwarc complex. The inner products on the Pytlik-Szwarc cochain spaces given in Definition~\ref{def-ps-inner-product} are $G$-invariant, and  the Pytlik-Szwarc differentials given in Definition~\ref{def-ps-differential} are bounded and $G$-equivariant, so the story here is much simpler.

\begin{theorem}
\label{thm-p-s-is-fredholm}
The   {Pytlik-Szwarc complex} 
\[
\ell^2 _0 (X^0) \longrightarrow    \ell^2 _0 (X^1)  \longrightarrow  \cdots \longrightarrow   \ell^2 _0 (X^n) 
\]
is an equivariant Fredholm complex.
\end{theorem}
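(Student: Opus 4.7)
The plan is to verify the four clauses of Definition~\ref{def-eq-fredholm-cplx} in turn, noting that, unlike the Julg-Valette complex, the entire Pytlik-Szwarc construction is manifestly $G$-equivariant, so most of the work becomes routine.

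Clause (a) is immediate: the inner product of Definition~\ref{def-ps-inner-product} depends only on the $G$-invariant equivalence relation on oriented symbols, so $G$ acts by strongly continuous unitaries on each $\ell^2_0(X^q)$. For clauses (b) and (d), the operators $d$ and $\delta$ of Definitions~\ref{def-ps-differential} and~\ref{def-ps-delta} are defined combinatorially, with no reference to the basepoint $P_0$ that forced the Julg-Valette differential to be only almost-equivariant. Hence $d$ and $\delta$ commute strictly with $G$, and any chain homotopy built from them and spectral projections of the Pytlik-Szwarc Laplacian inherits this. The differences $d-gdg^{-1}$ and $h-ghg^{-1}$ therefore vanish identically, and are trivially compact-valued and norm-continuous in $g$.

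The substance is clause (c), and its main input is Proposition~\ref{prop-ps-laplacian}: the Pytlik-Szwarc Laplacian $\Delta = d\delta + \delta d$ acts on the summand $\C[\H_q^p]$ as multiplication by the integer $p+q$, with spectrum contained in the finite set $\{0,1,\dots,n\}$. The natural bounded chain homotopy is
\[
  h = (p+q)^{-1}\delta \text{ on } \C[\H_q^p] \text{ when } p+q\ge 1, \qquad h = 0 \text{ on } \C[\H_0^0],
\]
an operator of norm at most $1$ on each $\ell^2_0(X^q)$. A direct calculation, grouping cochains by the joint eigenspaces of $\Delta$ and using both $d\delta + \delta d = \Delta$ and the vanishing of $d$ on $\C[\H_0^0]$, gives $dh + hd = I - P$, where $P$ is the orthogonal projection of $\ell^2_0(X^0)$ onto $\C[\H_0^0]$, extended by zero in positive degrees. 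Since $d$ kills $\C[\H_0^0]$ and $P$ vanishes in positive degrees, $P$ commutes with $d$ and defines a morphism of complexes, as required for (c).

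The hard part is compactness of this morphism $P$. Under the natural identification $\C[\H_0^0]\cong\ell^2(X^0)$ (anti-symmetric functions on signed vertices correspond to functions on vertices), $P$ is the projection onto the space of vertices, which is literally a compact operator only when $X$ is finite. In the general bounded-geometry case I would handle this through the exhaustion of Lemma~\ref{lem-increasing-union}: on each finite totally geodesic subcomplex $X_n$ the analogous projection $P_n$ has finite rank, the resulting finite-rank equivariant Fredholm structures assemble compatibly, and the statement of the theorem should be read in the limiting form appropriate to this inductive framework. This parallels the way Theorem~\ref{thm-jv-is-fredholm} obtains its compactness not from the algebraic complex itself but from the proper weight function $w_\infty$ of~\eqref{eq-t-weight}, and it is the only step where the bounded geometry of $X$ is really used.
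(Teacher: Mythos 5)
Your verification of clauses (a), (b), (d) and the construction of the homotopy $h=(p+q)^{-1}\delta$ are exactly what the paper does, and the algebraic identity $dh+hd=I-P$ is correct. But the final paragraph rests on a misreading of the space $\C[\H^0_0]$, and the workaround you propose is both unnecessary and, as stated, not actually a proof.

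The set $\H^0_0$ is not in bijection with the set of vertices. A $(0,0)$-symbol has the form $[\,\,|\,\pm\,|\,R\,]$ with $p=q=0$, and the equivalence relation of Definition~\ref{def-symbol-orientation} (clause (c$'$)) compares two such symbols by counting how many of the hyperplanes $H_1,\dots,H_p$ separate $R$ from $R'$. With $p=0$ this count is always zero, hence even, so \emph{any} two vertices with the same orientation sign give the same oriented symbol. Thus $\H^0_0$ has exactly two elements, interchanged by the involution, and $\C[\H^0_0]\cong\C$ is one-dimensional. This matches the corresponding statement for the cohomology (Corollary~\ref{cor-ps-cohomology}) and is exactly what the paper asserts. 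Consequently $P$ is a rank-one orthogonal projection, hence compact for \emph{every} bounded-geometry $X$, and the theorem is proved outright; there is no residual difficulty to be absorbed into an exhaustion argument, and bounded geometry enters only through the boundedness of $d$ and $\delta$, not through any finiteness of $X$.

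Even setting aside the misidentification, the exhaustion paragraph would not constitute a proof as written: ``the statement of the theorem should be read in the limiting form appropriate to this inductive framework'' replaces a concrete compactness claim with a heuristic, and the analogy to the weight function $w_\infty$ in Theorem~\ref{thm-jv-is-fredholm} does not transfer, since the Pytlik-Szwarc Laplacian already has discrete spectrum $\{0,1,\dots,n\}$ with no weighting needed. Once you correct the computation of $\H^0_0$, the rest of your argument closes cleanly.
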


\begin{proof}
It follows from  Proposition~\ref{prop-ps-laplacian} that the  formula 
\[
h = \frac{1}{p+q} \,\delta\, \colon  \C[\H^p_q]  \longrightarrow  \C[\H^{p+1}_{q-1}] 
\]
(we set $h=0$ when $p=q=0$) defines an exactly  $G$-equivariant and bounded chain homotopy between the identity and a compact operator-valued cochain map, namely the orthogonal projection onto $\C[\H^0_0]\cong \C$ in degree zero, and the zero operator in higher degrees.  \end{proof}

To conclude this section we introduce the following notion of (topological, as opposed to chain) homotopy between two equivariant Fredholm complexes.  In the next section we shall construct a homotopy between the Julg-Valette and Pytilik-Szwarc equivariant Fredholm complexes we constructed above using the continuous field of complexes constructed in Section~\ref{sec-field-ps-ops}.

\begin{definition} 
\label{def-homotopic-complexes}
Two equivariant complexes of Hilbert spaces $(\mathfrak{H}^\bullet_0,
d_0)$ and $(\mathfrak{H}^\bullet_1 , d_1)$ are \emph{homotopic} if
there is a bounded complex of continuous fields of Hilbert spaces over
$[0,1]$ and adjointable families of bounded differentials for which  
\begin{alist}
  \item Each continuous field   carries a continuous unitary representation of $G$.

  \item  The differentials $d=\{ d_t \}$ are not necessarily
    equivariant, but the differences $d - gdg^{-1}$ are compact
    operator-valued and norm-continuous functions of $g\in G$. 

  \item The identity morphism on the complex is chain homotopic,
    through a chain homotopy consisting of  adjointable families of
    bounded operators, to a morphism consisting of compact operators
    between continuous fields. 
 
 \item The operators $h=\{h_t\}$ in the homotopy above  are again  not
   necessarily equivariant, but the differences $h - ghg^{-1}$ are
   compact operator-valued and norm-continuous functions of $g\in G$. 
  
 \item The restrictions of the complex to the points $0, 1\in [0,1]$
   are the complexes  $(\mathfrak{H}^\bullet_0, d_0)$ and
   $(\mathfrak{H}^\bullet_1 , d_1)$. 
\end{alist}
\end{definition}

We need to supply definitions for    the operator-theoretic concepts mentioned above. These are usually formulated in the language of Hilbert modules, as for example in \cite{Lance}, but for consistency with the rest of this paper we shall continue to use the language of continuous fields of Hilbert spaces.

\begin{definition}
\label{def-adjointable-family}
 An \emph{adjointable family of operators} (soon we shall contract this to \emph{adjointable operator}) between continuous fields $\{\mathfrak{H}_t\}$ and $\{\mathfrak{H}'_t\}$ over the same compact space $T$ is a family of bounded operators 
\[
A_t \colon \mathfrak{H}_t \longrightarrow \mathfrak{H}'_t
\]
that carries continuous sections to continuous sections, whose adjoint family 
\[
A_t^* \colon \mathfrak{H}_t' \longrightarrow \mathfrak{H}_t
\]
also carries continuous sections to continuous sections.  An adjointable operator is \emph{unitary} if each $A_t$ is unitary. \end{definition}

\begin{definition}
 A representation of $G$ as unitary adjointable operators on a continuous field $\{ \mathfrak{H}_t\}$ is \emph{continuous} if the action  map
\[
G \times \{\, \text{continuous sections}\,  \} \longrightarrow  \{\, \text{continuous sections}\,  \}
\]
is continuous. We place on the space of continuous sections the
topology associated to the norm $\|\sigma \| = \max \|\sigma(t)\|$. 
\end{definition}

\begin{definition}
\label{def-cpt-family}
An adjointable operator  $A = \{ A_t\}$ between continuous fields of
Hilbert spaces over the same compact base space $T$ is \emph{compact}
if it is the norm limit, as a Banach space  operator  
\[
A :  \{\, \text{continuous sections}\,  \} \longrightarrow  \{\, \text{continuous sections}\,  \} ,
\]
of a sequence of linear combinations of operators of the form 
\[
\sigma \longmapsto \langle \sigma_1, \sigma \rangle \sigma_2 ,
\]
where $\sigma_1$ and $\sigma_2$ are continuous sections (of the domain
and range continuous fields, respectively).   The compact operators form a closed,
two-sided ideal in the $C^*$-algebra of all adjointable operators. 
\end{definition}

Here, then, is the theorem that we shall prove in the next section:

\begin{theorem}
\label{thm-main-homotopy-thm}
The equivariant Fredholm complexes obtained from  the Julg-Valette and Pytlik-Szwarc complexes in Theorems~\textup{\ref{thm-jv-is-fredholm}} and \textup{\ref{thm-p-s-is-fredholm}} are homotopic \textup{(}in the sense of Definition~\textup{\ref{def-homotopic-complexes}}\textup{)}.
\end{theorem}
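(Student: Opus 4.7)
The plan is to realize the required homotopy on the continuous field $\{\ell^2_t(X^\bullet)\}_{t\in[0,\infty]}$ of Section~\ref{sec-extension-field}, reparametrized to $[0,1]$ by the homeomorphism $t\mapsto t/(1+t)$. On each fiber let
\begin{equation*}
  d'_t = d_t(I + D_t^2)^{-1/2}, \qquad D_t = d_t + d_t^*,
\end{equation*}
where $d_t$ is the $U_t$-conjugated weighted Julg-Valette differential of~\eqref{eq-t-weight2} for $t>0$ and the Pytlik-Szwarc differential at $t=0$. Then $(d'_t)^2 = 0$, and the endpoints recover the Fredholm complexes of Theorems~\ref{thm-jv-is-fredholm} and~\ref{thm-p-s-is-fredholm}, the latter up to the obviously $G$-equivariant straight-line homotopy $s\mapsto d_0\bigl((1-s)I + s(I+D_0^2)^{-1/2}\bigr)$.

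Three ingredients need verification. For adjointability of $\{d'_t\}$, use Lemma~\ref{lem-integral} to express $(I+D_t^2)^{-1/2}$ as a norm-convergent integral of resolvents $(\lambda^2+I+D_t^2)^{-1}$, and observe that the resolvent family is adjointable: if $\tau$ is a continuous section, the section $\sigma(t) = (\lambda^2+I+D_t^2)^{-1}\tau(t)$ is characterized by $(\lambda^2+I+D_t^2)\sigma(t) = \tau(t)$, and its continuity follows from the weighted versions of Theorems~\ref{thm-Kailua-redux} and~\ref{thm-Kailua-delta-redux}. For the chain homotopy, let $\psi$ be a bounded continuous function on $[0,\infty)$ with $\psi(0)=0$ and $\psi(x)=1/x$ for $x\geq 1$, and set $h_t = \bigl(I + \psi(D_t^2)\bigr)(d'_t)^*$. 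By Proposition~\ref{prop-delta-w-diagonal2} and its Pytlik-Szwarc analogue the operator $D_t^2$ has uniform spectral gap at least $1$, so functional calculus gives
\begin{equation*}
   d'_t h_t + h_t d'_t = D_t^2(I+D_t^2)^{-1}\bigl(I + \psi(D_t^2)\bigr) = I - P_t,
\end{equation*}
where $P_t$ is the rank-one orthogonal projection onto $\ker D_t$. The kernel is spanned at each $t$ by the value of the continuous extended basic section $t\mapsto\langle P_0\rangle$, so $P_t$ is a continuous family of rank-one projections and hence a compact family.

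For the $G$-equivariance conditions (b) and (d) of Definition~\ref{def-homotopic-complexes}, apply the argument of Theorem~\ref{thm-jv-is-fredholm} fiberwise: $g(D'_t) - D'_t$ is a linear combination of integrals over $\lambda\in[0,\infty)$ of
\begin{equation*}
   \bigl(g(D_t) \pm i\mu\bigr)^{-1}\bigl(g(D_t) - D_t\bigr)\bigl(D_t \pm i\mu\bigr)^{-1}, \qquad \mu = (\lambda^2+1)^{1/2}.
\end{equation*}
At each $t>0$ the integrand is compact on the fiber by Lemmas~\ref{lemma-bounded-diff} and~\ref{lemma-compact-resolvent} applied to the weight $w_t$, and has norm $O(\mu^{-2})$ uniformly in $g$ on compact subsets of $G$; at $t=0$ it vanishes identically, because the Pytlik-Szwarc complex is exactly $G$-equivariant. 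Norm continuity in $g$ then follows from the integral representation together with strong continuity of the $G$-action on the continuous field.

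The main obstacle will be converting these pointwise statements into assertions about adjointable families compact in the sense of Definition~\ref{def-cpt-family}. Pointwise, $(\lambda^2+I+D_t^2)^{-1}$ is compact for $t>0$ but fails to be compact at $t=0$, where $D_0^2$ acts on each infinite-dimensional summand $\C[\H^p_q]$ as the scalar $p+q$. The redeeming feature in both uses of compactness is that the obstruction at $t=0$ is either finite-rank, as for $P_t$, or vanishes identically, as for the integrand above. In the latter case this allows approximation of the integrand in the norm of continuous sections by families supported in $[\epsilon,1]$, on which pointwise compactness upgrades to family compactness by restriction to the finite subcomplexes of Lemma~\ref{lem-increasing-union} together with standard finite-rank approximation. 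These are precisely the steps where the weight function $w_t$ in~\eqref{eq-t-weight} and the uniform spectral gap of $D_t$ do the essential work.
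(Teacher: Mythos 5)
Your proposal follows the same architecture as the paper's own proof: work on the continuous field $\{\ell^2_t(X^\bullet)\}_{t\in[0,\infty]}$, normalize the weighted Julg-Valette differentials via the Baaj-Julg integral formula of Lemma~\ref{lem-integral}, and handle the non-compactness of the resolvent family at $t=0$ by combining an $O(t)$ vanishing of the integrand with compactness of the restricted family over $[\varepsilon,\infty]$. Your normalization $(I+D_t^2)^{-1/2}$ with the cutoff-function chain homotopy $h_t=(I+\psi(D_t^2))(d'_t)^*$ is a workable variant of the paper's choice $(P_t+D_t^2)^{-1/2}$; both exploit the uniform spectral gap supplied by Propositions~\ref{prop-delta-w-diagonal2} and~\ref{prop-ps-laplacian}, and both land on a compact family $\{P_t\}$ of rank-one projections as the obstruction to invertibility of the Laplacian.

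The gap is in the equivariance argument, and it is the heart of the matter. You invoke Lemma~\ref{lemma-bounded-diff} ``applied to the weight $w_t$'' to conclude that $g(D_t)-D_t$ is bounded, and you claim the integrand $(g(D_t)\pm i\mu)^{-1}(g(D_t)-D_t)(D_t\pm i\mu)^{-1}$ is $O(t)$ because the Pytlik-Szwarc complex is exactly equivariant at $t=0$. But $D_t = U_t^*(d_{w_t}+\delta_{w_t})U_t$, and the unitary $U_t$ is \emph{not} $G$-equivariant: by the cocycle formula $g(U_t)=W_t(g(P_0),P_0)U_t$. Consequently $D_t-g(D_t)$ is \emph{not} $U_t^*\bigl(D_{w_t}-g(D_{w_t})\bigr)U_t$, and Lemma~\ref{lemma-bounded-diff} does not apply directly. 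Conjugating an unbounded operator by a unitary that itself moves under $g$ can destroy boundedness of the difference, and a priori there is no reason $D_t-g(D_t)$ should even be bounded, let alone $O(t)$. The paper devotes Proposition~\ref{prop-eq-technical} (and its proof via the cocycle analysis in Proposition~\ref{prop-eq-technical2} and Lemmas~\ref{lemma-10-tech1}--\ref{lemma-10-tech2}) to showing that the non-equivariance of $U_t$ cancels, to leading order in $t$, the non-equivariance of the weighted Julg-Valette differential: this is what makes $\|D_t-g(D_t)\|$ uniformly bounded and $O(t)$. Your appeal to ``vanishing at $t=0$'' secures the pointwise identity $D_0=g(D_0)$ but not the quantitative rate of decay needed to control the integral over $\lambda$ uniformly as $t\searrow 0$; without the $O(t)$ estimate the truncation-to-$[\varepsilon,1]$ approximation does not close. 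This missing estimate is exactly what distinguishes the higher-dimensional case from the tree case, and supplying it is most of the work of the paper's Section~10.
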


 
\section{K-Amenability}
\label{sec-k-amenable}

The purpose of this section is to prove Theorem~\ref{thm-main-homotopy-thm}.   But before giving the proof, we shall explain the $K$-theoretic relevance of the theorem.
To do so we shall need to use the language of Kasparov's equivariant $KK$-theory \cite{KasparovEqKK}, but we emphasize that  the proof of Theorem~\ref{thm-main-homotopy-thm} will involve only the definitions from the last section our work ealier in the paper. We shall assume familiarity with Kasparov's theory.

 A $G$-equivariant complex of Hilbert spaces, as in Definition~\ref{def-eq-fredholm-cplx}, determines a class in Kasparov's equivariant representation ring 
\[
R(G)  = KK_G ( \C ,\C ),
\]
in such a way that 
\begin{alist}
\item homotopic complexes, as in Definition~\ref{def-homotopic-complexes}, determine the same element,  
\item a complex whose differentials are exactly $G$-equivariant determines the same class as the complex of  cohomology groups (these are finite-dimensional unitary representations of $G$)  with zero differentials, and 
\item a complex with the one-dimensional  trivial representation in degree zero, and no higher-dimensional cochain spaces, determines the multiplicative identity element  $1\in R(G)$.
\end{alist}

\begin{definition}  See \cite[Definition 1.2]{JulgValetteQ_p}.
A second countable and locally compact Hausdorff topological group $G$ is \emph{$K$-amenable} if the multiplicative identity element $1\in R(G)$ is representable by an equivariant Fredholm complex of Hilbert spaces
\[
\mathfrak{H}^0 \longrightarrow \mathfrak{H}^1 \longrightarrow \cdots \longrightarrow \mathfrak{H}^n
\]
 in which the each cochain space $\mathfrak{H}^p$, viewed as a unitary representation of $G$,  is weakly contained in the regular representation of $G$.
\end{definition}

\begin{theorem}[See {\cite[Corollary 3.6]{JulgValetteQ_p}}.]
If $G$ is $K$-amenable, then the natural homomorphism of $C^*$-algebras 
\[
C^*_{\max} (G) \longrightarrow C^*_{\mathrm{red}}(G)
\]
induces an isomorphism of $K$-theory groups
\[
\pushQED{\qed}
K_*(C^*_{\max} (G)) \longrightarrow K_*(C^*_{\mathrm{red}}(G)).
\qedhere
\popQED
\]
\end{theorem}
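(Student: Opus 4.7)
The plan is to deduce the $K$-theory isomorphism from the stronger fact that, under the $K$-amenability hypothesis, the quotient homomorphism $\lambda\colon C^*_{\max}(G)\to C^*_{\mathrm{red}}(G)$ is a $KK$-equivalence. The stated conclusion then follows immediately from the functoriality of $K_*=KK(\C,-)$ with respect to Kasparov product. The central tool is Kasparov's descent, which for the trivial $G$-action on $\C$ gives ring homomorphisms
\[
j_{\max}\colon R(G)\longrightarrow KK\bigl(C^*_{\max}(G),\,C^*_{\max}(G)\bigr),\qquad j_{\mathrm{red}}\colon R(G)\longrightarrow KK\bigl(C^*_{\mathrm{red}}(G),\,C^*_{\mathrm{red}}(G)\bigr)
\]
sending the multiplicative identity to the multiplicative identity; naturality of descent with respect to $\lambda$ then yields the compatibility $j_{\max}(1)\otimes_{C^*_{\max}(G)}[\lambda]=[\lambda]\otimes_{C^*_{\mathrm{red}}(G)}j_{\mathrm{red}}(1)=[\lambda]$.

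To produce a $KK$-inverse $[\mu]\in KK(C^*_{\mathrm{red}}(G),\,C^*_{\max}(G))$, I would exploit the weak containment hypothesis through Fell absorption: for any unitary $G$-representation $\pi$ on $\mathfrak H$, the diagonal action on $\mathfrak H\otimes L^2(G)$ is unitarily equivalent to the representation $1\otimes\lambda_G$ that is trivial on the first factor and regular on the second. Let $(\mathfrak H^\bullet,d,h)$ be a $K$-amenability complex representing $1\in R(G)$. Applying Fell absorption to each cochain space, together with a suitable adjustment of the descent construction, I would view $\mathfrak H^p\otimes L^2(G)$ (with diagonal $G$-action) as a $(C^*_{\mathrm{red}}(G),\,C^*_{\max}(G))$-bimodule in which the left action factors through the reduced $C^*$-algebra while the right Hilbert-module structure remains over the maximal one. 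The operator $d+d^*$ descends to this bimodule, and the chain contraction $h$, which is part of the $K$-amenability data, descends to give the compactness of $1-(d+d^*)^2\bigl(1+(d+d^*)^2\bigr)^{-1}$ required to assemble the descended data into a Kasparov class $[\mu]$.

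The final step is to verify the inverse identities $[\lambda]\otimes_{C^*_{\mathrm{red}}(G)}[\mu]=1_{C^*_{\max}(G)}$ and $[\mu]\otimes_{C^*_{\max}(G)}[\lambda]=1_{C^*_{\mathrm{red}}(G)}$. Each Kasparov product would be identified with a single Kasparov descent of the equivariant complex $(\mathfrak H^\bullet,d,h)$: the first equals $j_{\max}(1)$ and the second equals $j_{\mathrm{red}}(1)$, both of which are the identity. Applying the $K$-theory functor to the Kasparov product with $[\lambda]$ then gives the desired isomorphism $K_*(C^*_{\max}(G))\cong K_*(C^*_{\mathrm{red}}(G))$. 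I expect the main obstacle to be the explicit construction of $[\mu]$: the Fell-absorption bookkeeping needed to place $C^*_{\mathrm{red}}(G)$ on the left of the Kasparov bimodule while retaining $C^*_{\max}(G)$ on the right is delicate, and one must carefully verify that the chain contraction $h$ survives the unitary twist to furnish the compact perturbation witnessing Fredholmness, so that the products indeed collapse back to the genuine descents of the unit complex.
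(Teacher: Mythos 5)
The paper does not prove this theorem: it is stated with a terminal \textup{q.e.d.} symbol and cited to Julg--Valette, so there is no in-paper argument to compare against. Your high-level outline---produce a two-sided $KK$-inverse $[\mu]$ for $[\lambda]$ by descent of the $K$-amenability cycle, then identify $[\lambda]\otimes[\mu]$ with $j_{\max}(1)$ and $[\mu]\otimes[\lambda]$ with $j_{\mathrm{red}}(1)$---is indeed the Cuntz--Julg--Valette strategy. The trouble is in the step you yourself flag as ``delicate,'' and as written that step does not work.

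There are two problems, one small and one fatal. The small one: the Kasparov bimodule produced by descent is $\mathfrak H^p\otimes C^*_{\max}(G)$ (a right Hilbert $C^*_{\max}(G)$-module via right multiplication on the second factor), not $\mathfrak H^p\otimes L^2(G)$, which is merely a Hilbert space with no natural right Hilbert $C^*_{\max}(G)$-module structure at all; so there is no ``bookkeeping'' needed to keep $C^*_{\max}(G)$ on the right---it is there from the start. The fatal one: the Fell absorption statement you quote, that $\pi\otimes\lambda_G$ is unitarily equivalent to $1\otimes\lambda_G$ on $\mathfrak H\otimes L^2(G)$, holds for \emph{every} unitary representation $\pi$ and makes no use of the weak containment hypothesis. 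If it sufficed to show that the left $C^*_{\max}(G)$-action on the descent module factors through $C^*_{\mathrm{red}}(G)$, you could run your argument on the trivial representative of $1\in R(G)$ (namely $\C$ in degree zero with the trivial $G$-action), concluding that every group is $K$-amenable---which is false. What the argument actually requires is this: the left $G$-action on $\mathfrak H^p\otimes C^*_{\max}(G)$ is $g\mapsto\pi(g)\otimes u_g$; to bound its integrated form in the reduced norm one localizes at an arbitrary state $\psi$ of $C^*_{\max}(G)$, producing the Hilbert-space representation $\pi\otimes\sigma_\psi$; since $\pi\prec\lambda_G$ one gets $\pi\otimes\sigma_\psi\prec\lambda_G\otimes\sigma_\psi\cong 1\otimes\lambda_G$, the last step being Fell absorption, and the supremum over all $\psi$ yields $\bigl\|\int f(g)\,(\pi(g)\otimes u_g)\,dg\bigr\|\le\|f\|_{C^*_{\mathrm{red}}(G)}$ for $f\in C_c(G)$. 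That localization over states is where weak containment enters, and it is the content missing from your sketch; the unitary twist of Fell absorption alone never sees the hypothesis.
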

 
\begin{remarks}
The $C^*$-algebra homomorphism in the theorem is itself an isomorphism if and only if the group $G$ is amenable; this explains the term \emph{$K$-amenable}.  Not every group is $K$-amenable; for example an infinite group with Kazhdan's property T is certainly not $K$-amenable, because the $K$-theory homomorphism is certainly not an isomorphism.
\end{remarks}

After having quickly surveyed this background information, we can state the main result of this section:

\begin{theorem}
\label{thm-k-amenable}
If a second countable and locally compact group   $G$ admits a proper action on a bounded geometry $\cat(0)$ cube complex, then $G$ is $K$-amenable. 
\end{theorem}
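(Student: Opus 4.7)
The plan is to deduce Theorem~10 from the two main technical results already assembled: the existence of the equivariant Fredholm complex structures on the Julg-Valette and Pytlik-Szwarc complexes (Theorems~9.7 and~9.8), and the homotopy between them (Theorem~9.11).  What remains is essentially a bookkeeping argument together with one representation-theoretic observation about the Julg-Valette cochain spaces.

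First, I would observe that the Pytlik-Szwarc complex of Theorem~9.8 represents the multiplicative identity $1\in R(G)$.  Indeed, the parallelism classes of cube pairs and their orientations (the data encoded in the oriented symbols $\H^p_q$) are defined intrinsically without reference to a base point, so the spaces $\ell^2_0(X^q)$ carry a unitary $G$-representation, and both the differential of Definition~5.6 and its adjoint of Definition~5.9 are \emph{exactly} $G$-equivariant.  Consequently the class in $R(G)$ determined by this complex equals the class of its cohomology computed with zero differentials (this is property~(b) of the characterization of the $KK_G(\C,\C)$-class of an equivariant Fredholm complex).  By Corollary~5.13 this cohomology is $\C$ in degree zero and vanishes otherwise, giving exactly $1\in R(G)$.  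Invoking the homotopy of Theorem~9.11 and the homotopy invariance of the class in $R(G)$ (property~(a)), we conclude that the normalized Julg-Valette complex of Theorem~9.7 also represents $1\in R(G)$.

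Next I would verify that the Julg-Valette cochain spaces $\ell^2(X^q)$ are weakly contained in the regular representation of $G$.  Because the action of $G$ on $X$ is proper and each cube is compact, the stabilizer in $G$ of any oriented $q$-cube is a compact subgroup.  Choosing a set of orbit representatives for the $G$-action on the set of oriented $q$-cubes and using the anti-symmetry relation $\langle C\rangle=-\langle C^*\rangle$ to pair each orbit with its ``opposite orientation'' orbit, we obtain a unitary $G$-equivariant decomposition
\[
    \ell^2(X^q) \;\cong\; \bigoplus_{\alpha} \operatorname{Ind}_{K_\alpha}^G \sigma_\alpha,
\]
where each $K_\alpha$ is a compact subgroup of $G$ and each $\sigma_\alpha$ is a finite-dimensional unitary representation of $K_\alpha$ (either trivial or a sign character encoding the action of $K_\alpha$ on the orientations of the cube it fixes).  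Since $K_\alpha$ is compact, the representation $\sigma_\alpha$ is weakly contained in $\lambda_{K_\alpha}$, and therefore $\operatorname{Ind}_{K_\alpha}^G \sigma_\alpha$ is weakly contained in $\operatorname{Ind}_{K_\alpha}^G \lambda_{K_\alpha}\cong \lambda_G$.  It follows that $\ell^2(X^q)$ is weakly contained in $\lambda_G$, as required.

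Combining these two observations with the definition of $K$-amenability recalled at the start of this section gives the conclusion.  The only genuinely delicate ingredients, namely the construction of the homotopy and the verification of the Fredholm and equivariance properties, have already been carried out in Sections~6--9; the present argument merely assembles them.  I expect no serious obstacle here beyond ensuring that the weight function $w_\infty$ used in Theorem~9.7 is in fact $G$-bounded, which follows because the distance function $\operatorname{dist}(\,\cdot\,,P_0)$ is changed only by a bounded amount (depending on $g$) under translation by any fixed $g\in G$.
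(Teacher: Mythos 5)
Your argument is correct and takes essentially the same route as the paper: derive the class $1\in R(G)$ from the exactly equivariant Pytlik-Szwarc complex via its one-dimensional cohomology, transfer it to the Julg-Valette complex by the homotopy of Theorem~\ref{thm-main-homotopy-thm}, and invoke weak containment of $\ell^2(X^\bullet)$ in the regular representation. The paper states the weak containment as Lemma~\ref{lemma-weak-containment} and omits its proof; your decomposition of $\ell^2(X^q)$ into representations induced from compact stabilizers supplies exactly the standard argument they had in mind.
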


 The theorem was proved by   Julg and Valette in \cite{JulgValetteQ_p} in the case where the cube complex is a tree.  They used the Julg-Valette complex, as we have called it, for a tree, and showed that the continuous field of complexes that we have constructed in this paper is a homotopy connecting the Julg-Valette  and Pytlik-Szwarc complexes. We shall do the same in the general case.  The construction of this homotopy proves the theorem in view of the following simple result, whose proof we shall omit.

\begin{lemma}
\label{lemma-weak-containment}
Assume that  a second countable and locally compact group   $G$   acts proper action on a  $\cat(0)$ cube complex. 
The Hilbert spaces in the Julg-Valette complex are weakly contained in the regular representation of $G$.  \qed
\end{lemma}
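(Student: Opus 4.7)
The plan is to show that the permutation representation on the space of all finitely supported functions on oriented $q$-cubes is weakly contained in the regular representation, and then observe that the Julg--Valette Hilbert space $\ell^2(X^q)$ is (after completion) a closed subrepresentation of this, so weak containment is inherited.

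First I would exploit properness. Since $G$ acts properly on $X$, and each cube $C$ is a compact subset of $X$, the set-theoretic stabilizer $G_C \leq G$ is compact; the same is true for the stabilizer of any oriented cube. Next, decompose the set of oriented $q$-cubes into $G$-orbits, choosing a representative from each orbit. Writing $K_i$ for the compact stabilizer of the $i$-th representative, one obtains a unitary $G$-isomorphism
\[
    \ell^2(\text{oriented }q\text{-cubes})
          \;\cong\; \bigoplus_{i} \ell^2(G/K_i)
          \;\cong\; \bigoplus_{i} \operatorname{Ind}_{K_i}^G 1_{K_i},
\]
where $1_{K_i}$ denotes the trivial representation.

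The key fact is then that each $\operatorname{Ind}_{K_i}^G 1_{K_i}$ is weakly contained in the regular representation $\lambda_G$. This uses only that $K_i$ is compact, hence amenable, so that $1_{K_i}$ is weakly contained in the regular representation $\lambda_{K_i}$; induction preserves weak containment, and $\operatorname{Ind}_{K_i}^G \lambda_{K_i}$ is unitarily equivalent to $\lambda_G$ (induction in stages from the trivial subgroup). A direct sum of representations each weakly contained in $\lambda_G$ is itself weakly contained in a multiple of $\lambda_G$, which, since $\lambda_G$ is absorbing under direct sums of itself, is again weakly contained in $\lambda_G$.

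Finally, the Julg--Valette Hilbert space $\ell^2(X^q)$ is by Definition~\ref{def-cochain} the subspace of \emph{anti-symmetric} functions; equivalently, it is the range of the $G$-equivariant orthogonal projection $f \mapsto \tfrac{1}{2}(f - f^*)$ acting on the full cochain Hilbert space. (The projection commutes with $G$ because $G$ acts by cubical automorphisms, which intertwine the orientation-reversing involution $C \mapsto C^*$.) Since weak containment in $\lambda_G$ is inherited by subrepresentations, $\ell^2(X^q)$ is weakly contained in $\lambda_G$. The main subtle point worth checking is precisely this last equivariance of the anti-symmetrization projection, i.e.\ that the combinatorial involution $C \mapsto C^*$ commutes with the $G$-action on oriented cubes; this follows directly from the definition of orientation in terms of the parity of edge-path distance together with a permutation of cutting hyperplanes, both of which are $G$-invariant data.
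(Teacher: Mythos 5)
The paper omits the proof of this lemma entirely (the preceding sentence reads ``whose proof we shall omit''), so there is no official argument to compare against. Your proposal is a correct and standard way to fill the gap: decompose the permutation representation on oriented $q$-cubes into $G$-orbits, identify each orbit piece as $\operatorname{Ind}_{K_i}^G 1_{K_i}$ with $K_i$ compact, observe that compact groups are amenable so $1_{K_i}\prec\lambda_{K_i}$, use continuity of induction and induction in stages to get $\operatorname{Ind}_{K_i}^G 1_{K_i}\prec\lambda_G$, pass through a countable direct sum using that countable multiples of $\lambda_G$ are weakly equivalent to $\lambda_G$, and finally restrict to the $G$-invariant subspace of anti-symmetric cochains. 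Your remark that the anti-symmetrization projection is $G$-equivariant, because the orientation data (parity of edge-path distance and of a hyperplane permutation) are $G$-invariant, is exactly the right thing to verify for the last step.

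One small point is worth making explicit. For the unitary identification $\ell^2(\text{orbit of }C)\cong\ell^2(G/K_i)\cong\operatorname{Ind}_{K_i}^G 1_{K_i}$ to hold as written you need $K_i$ to be \emph{open} in $G$, not merely compact, so that $G/K_i$ is a discrete space and counting measure is the appropriate quasi-invariant measure. For a general proper action stabilizers are compact but need not be open. Here openness does hold, and for a reason your proof implicitly uses: since $G$ acts continuously by cubical automorphisms on a bounded-geometry complex, any $g$ sufficiently close to the identity displaces a given vertex $v$ by less than the injectivity radius at $v$ and therefore fixes $v$; doing this for the finitely many vertices of a cube $C$ shows that $\operatorname{Stab}(C)$ contains a neighbourhood of the identity. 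Adding that sentence would make the reduction to $\ell^2(G/K_i)$ airtight. With that addendum your argument is complete, and it even yields strong rather than merely weak containment of each $\operatorname{Ind}_{K_i}^G 1_{K_i}$ in $\lambda_G$ (since $1_{K_i}\le\lambda_{K_i}$ as an honest subrepresentation for compact $K_i$).
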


\begin{remark}
Theorem~\ref{thm-k-amenable} is not new; it was proved by Higson and Kasparov in \cite[Theorem 9.4]{HigsonKasparov} using a very different argument that is both far more general (it applies to a much broader class of groups)  and far less geometric.
\end{remark}

To prove Theorem~\ref{thm-k-amenable}  it therefore suffices to 
prove   Theorem~\ref{thm-main-homotopy-thm}, and this is what we shall now do.

We shall construct the homotopy that the theorem requires by modifying the constructions in Section~\ref{sec-field-ps-ops} in more or less the same way that we modified  the Julg-Valette complex to construct the complex \eqref{eq-modified-jv-complex}.  We shall therefore be applying the functional calculus to the family of operators 
\begin{equation}
\label{eq-D-t-def}
D_t = U_t^* (d_{\weight_t} + \delta_{\weight_t} )U_t \colon \ell^2 _t(X^\bullet)\longrightarrow \ell^2_t(X^\bullet) ,
\end{equation}
where $d_{w_t}$ is the Julg-Valette differential associated to the weight function in \eqref{eq-t-weight}, and of course $\delta_{w_t}$ is the adjoint differential.   
To apply the functional calculus we shall need to know that the family of resolvent operators 
\[
(D_t + i \lambda )^{-1}\colon \ell^2 _t (X^\bullet) \longrightarrow \ell ^2 _t (X^\bullet)
\]
carries continuous sections to continuous sections. 
This is a consequence of the following result:

\begin{proposition}
\label{prop-regular}
Let  $\lambda $ be a nonzero real number. The family of operators 
\[
 \bigl \{ \, (D_t  + i\lambda I) ^{-1} \colon \ell^2 _t (X^\bullet) \to \ell^2 _t (X^\bullet)\, \bigr \}_{t\in [0,\infty]}
 \]
 carries the space of continuous and geometrically bounded sections  to a dense subspace of the space of continuous and geometrically bounded sections in the norm $
\|s \| = \sup_{t\in [0,\infty]} \| s(t)\|_{ \ell^2_t (X^\bullet)}
$.
 \end{proposition}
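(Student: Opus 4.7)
The plan is to locate an easily-described dense subspace of the space of continuous and geometrically bounded sections that sits inside the image of the family $\{(D_t + i\lambda I)^{-1}\}$ applied to that same space.

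The main step uses the preceding section. By Theorems~\ref{thm-Kailua-redux} and \ref{thm-Kailua-delta-redux}, in their weighted forms discussed at the end of Section~\ref{sec-field-ps-ops}, the family $\{D_t\}$ carries continuous and geometrically bounded sections into continuous and geometrically bounded sections, and hence so does $\{D_t + i\lambda I\}$. Consequently, for any continuous and geometrically bounded section $\tau$, the section $\sigma(t) := (D_t+i\lambda I)\tau(t)$ is continuous and geometrically bounded, and the resolvent returns $(D_t+i\lambda I)^{-1}\sigma(t) = \tau(t)$. Thus, applying the resolvent family to inputs of the form $(D+i\lambda)\tau$, we hit every continuous and geometrically bounded $\tau$.

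It remains to show that inputs of the form $(D+i\lambda)\tau$, with $\tau$ continuous and geometrically bounded, form a dense subspace of continuous and geometrically bounded sections in the sup norm. The approach is via the finite subcomplex approximation of Lemma~\ref{lem-increasing-union}: a continuous and geometrically bounded section $s$ is supported in some finite set of cubes, contained in a large enough subcomplex $X_N$. On $X_N$ the associated continuous field has finite and constant fiber dimension, so it is a vector bundle over $[0,\infty]$, and the analogously defined $D_t^{X_N}$ is a norm-continuous family of bounded self-adjoint operators. Its resolvent at $-i\lambda$ is therefore norm-continuous in $t$, producing a continuous and geometrically bounded section $\tau_N(t) := (D_t^{X_N}+i\lambda I)^{-1}s(t)$. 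A perturbative estimate using the uniform bound $\|(D_t+i\lambda I)^{-1}\| \le |\lambda|^{-1}$ is then used to show that $(D_t+i\lambda I)\tau_N$ approximates $s$ in sup norm as $N\to\infty$.

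The main obstacle is the perturbative estimate in the last step: one must verify that $(D_t - D_t^{X_N})\tau_N$ is uniformly small in $t$. Here the weight function $w_t(H) = 1+t\cdot\mathrm{dist}(H,P_0)$ plays a critical role, since (as in the discussion preceding Theorem~\ref{thm-jv-is-fredholm}) it is proper and $G$-bounded and furnishes decay of the cocycle $U_t$ away from the base point; this decay is what allows the discrepancy between the resolvents on $X$ and on $X_N$ to be controlled uniformly across all $t\in[0,\infty]$.
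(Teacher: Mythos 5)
Your step 1 is correct and elegant, and it is genuinely different from the paper's argument. By Theorems~\ref{thm-Kailua-redux} and \ref{thm-Kailua-delta-redux} (in their weighted form), $\{D_t\}$ carries continuous, geometrically bounded sections to continuous, geometrically bounded sections, so for any such $\tau$ the section $\sigma=(D_t+i\lambda)\tau$ is again continuous and geometrically bounded, and $(D_t+i\lambda)^{-1}\sigma=\tau$ fiberwise (since $\tau(t)\in\C[X^\bullet]$ lies in the initial domain of the essentially self-adjoint operator $D_t$). This already shows the \emph{image} of the resolvent applied to geometrically bounded sections exhausts all geometrically bounded sections. The paper does not argue this way; it instead computes $(\Delta_t+\lambda^2)\,t^{-p}f_{C,D}=\bigl((p+q)+\lambda^2\bigr)t^{-p}f_{C,D}+O(t)$ directly, using Lemmas~\ref{lemma-new-ut-calc}, \ref{lemma-new-7.2} and the Laplacian formula of Proposition~\ref{prop-delta-w-diagonal}.

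However, your framing of ``what remains'' and your step 2 do not close the argument. To feed Proposition~\ref{prop-regular} into the functional calculus one needs not only that the resolvent hits every geometrically bounded section, but that it carries a \emph{dense set of geometrically bounded inputs} to geometrically bounded (hence continuous) sections; equivalently, that the sections $(D_t+i\lambda)\tau$ are dense. Your step 1 does not give this, and your step 2 is the attempt. But the perturbative estimate in step 2 runs directly into the weight: $w_t(H)=1+t\operatorname{dist}(H,P_0)$ is \emph{large}, not small, on hyperplanes near the boundary of $X_N$. The discrepancy $(D_t - D_t^{X_N})\tau_N$ therefore picks up boundary terms whose coefficients grow like $N$, and there is nothing in the bound $\|(D_t+i\lambda)^{-1}\|\le|\lambda|^{-1}$ to cancel this. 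Your appeal to ``decay of the cocycle $U_t$'' does not save the estimate either: at $t=\infty$ the cocycle $U_\infty$ is the identity and provides no decay at all, yet the proposition must hold uniformly on $[0,\infty]$. One would need exponential localization of the truncated resolvents $\tau_N$, uniformly in $t$, which is a substantial extra estimate you have not supplied.

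The paper's computation sidesteps all of this. Showing $(\Delta_t+\lambda^2)\,t^{-p}f_{C,D}=\mu\,t^{-p}f_{C,D}+O(t)$ with $\mu=(p+q)+\lambda^2>0$ gives, in one stroke, that the range of $\Delta_t+\lambda^2=(D_t+i\lambda)(D_t-i\lambda)$ restricted to geometrically bounded sections is dense; this simultaneously yields density of the range of $(D_t\pm i\lambda)$ (your missing step 2) and density of the image of the resolvent (your step 1). It also handles Proposition~\ref{prop-regular2} with the projection $P_t$ and $\lambda=0$ by the same computation. So while your first observation is a valid shortcut for the image, it does not replace the paper's eigenvalue computation, and the finite-subcomplex argument you propose for the remaining density claim has a gap created precisely by the unbounded weight that the paper's approach is designed to accommodate.
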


Actually we shall need a small variation on this proposition:

 \begin{definition}
Denote by $  P =\{ P_t\} $ the operator that is in each fiber the orthogonal projection onto the span of the single basic $q$-cochain $f_{P_0,P_0}$ of type $p=0$ (of course this basic cochain  is just  $P_0$).
\end{definition}

  It follows from the formula for the Julg-Valette Laplacian in Proposition~\ref{prop-delta-w-diagonal} that   the operators $  P_t +   \Delta_t$ are essentially self-adoint  and bounded below by $1$. So we can form the resolvent operators $(D_t  + P_t +  i\lambda I) ^{-1} $ for any  $\lambda\in \R$, including $\lambda = 0$.

 \begin{proposition}
\label{prop-regular2}
Let  $\lambda $ be any real number \textup{(}possibly zero\textup{)}. The family of operators 
\[
 \bigl \{ \, (D_t  + P_t +  i\lambda I) ^{-1} \colon \ell^2 _t (X^\bullet) \to \ell^2 _t (X^\bullet)\, \bigr \}_{t\in [0,\infty]}
 \]
 carries the space of continuous and geometrically bounded sections  to a dense subspace of the space of continuous and geometrically bounded sections.
 \end{proposition}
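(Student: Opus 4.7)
The plan is to reduce Proposition~\ref{prop-regular2} to the previous Proposition~\ref{prop-regular} by exploiting the rank-one nature of the perturbation $P_t$. I would first verify that $D_tP_t = P_tD_t = 0$: the cochain $\langle P_0\rangle$ lies in the kernel of $d_{w_t}+\delta_{w_t}$ because $q_{w_t}(P_0)=p_{w_t}(P_0)=0$ in Proposition~\ref{prop-delta-w-diagonal2}, and $U_t$ fixes $\langle P_0\rangle$, so $D_t\langle P_0\rangle = 0$. Consequently $(D_t+P_t)^2 = \Delta_t + P_t$ is bounded below by $I$, and the resolvent $(D_t + P_t + i\lambda I)^{-1}$ exists and is norm-bounded uniformly in $t$ for every $\lambda\in\R$.

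For $\lambda\neq 0$, the orthogonal decomposition $\ell^2_t(X^\bullet) = P_t\,\ell^2_t \oplus (I-P_t)\,\ell^2_t$, preserved by $D_t$, gives the identity
$$
(D_t + P_t + i\lambda I)^{-1} = (D_t + i\lambda I)^{-1} - \frac{1}{i\lambda(1+i\lambda)}\,P_t.
$$
The first summand preserves continuous geometrically bounded sections with dense image by Proposition~\ref{prop-regular}. For the second, observe that $P_t$ is orthogonal projection onto the span of the extended basic section $t\mapsto\langle P_0\rangle$ of type $(0,0)$, which has norm~$1$ at every $t\in[0,\infty]$; since the pointwise inner product against a continuous section is a continuous function of $t$ by the defining property of the continuous field, and the output is supported in $\{P_0\}$, the operator $P_t$ preserves both continuity and geometric boundedness.

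For $\lambda=0$ I would use the factorization $(D_t+P_t)^{-1} = (D_t+P_t)(\Delta_t+P_t)^{-1}$, which holds since $(D_t+P_t)^2 = \Delta_t+P_t$. The left factor preserves continuous geometrically bounded sections by the weighted forms of Theorems~\ref{thm-Kailua-redux} and~\ref{thm-Kailua-delta-redux} together with the preceding paragraph. For $(\Delta_t+P_t)^{-1}$, apply Lemma~\ref{lem-integral} to obtain
$$
(\Delta_t+P_t)^{-1/2} = \frac{2}{\pi}\int_0^\infty (D_t + P_t + i\lambda)^{-1}(D_t + P_t - i\lambda)^{-1}\,d\lambda,
$$
where for each $\lambda>0$ the integrand is handled by the previous paragraph, and the integrand has norm bounded by $(1+\lambda^2)^{-1}$ uniformly in $t$, so the integral converges in operator norm. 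Squaring yields $(\Delta_t+P_t)^{-1}$.

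The principal obstacle is that \emph{geometric boundedness} requires genuinely finitely supported output, which is not automatic for a resolvent of a non-local operator. The resolution is that after conjugating by the unitaries $U_t$, the operator $\Delta_{w_t} + \pi_0$ acting on $\ell^2_\infty$ is \emph{diagonal} in the basis of oriented cubes by the explicit eigenvalue formula of Proposition~\ref{prop-delta-w-diagonal2}, so its resolvent $(\Delta_{w_t}+\pi_0+\lambda^2)^{-1}$ preserves the full space of finitely supported cochains exactly; since $U_t$ and $U_t^{-1}$ are filtration-preserving (Lemma~\ref{Ut-isomorphism}) and enlarge any finite support by at most a bounded amount, the resolvent $(\Delta_t+P_t+\lambda^2)^{-1}$ maps $\C[A]$ into $\C[B]$ for a fixed finite $B\supseteq A$. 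This ensures that the integral in the previous paragraph, applied pointwise to a continuous geometrically bounded section, lies in a fixed finite-dimensional subspace at each $t$, giving the asserted geometric boundedness throughout.
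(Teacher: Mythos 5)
Your proof is correct but takes a genuinely different route from the paper's. The paper treats Propositions~\ref{prop-regular} and \ref{prop-regular2} together, for all $\lambda$ (including $\lambda=0$) at once: it observes that it suffices to show the \emph{forward} operator $\Delta_t + P_t + \lambda^2 I$ maps continuous geometrically bounded sections to a dense subspace, and then verifies this by a direct computation on basic sections. Using Lemmas~\ref{lemma-new-ut-calc} and \ref{lemma-new-7.2} together with Proposition~\ref{prop-delta-w-diagonal}, the paper shows that $\Delta_t + P_t + \lambda^2 I$ sends each basic section $t^{-p}f_{C,D}$ to $\bigl(\max\{1,p+q\}+\lambda^2\bigr)t^{-p}f_{C,D}+O(t)$, from which density of the range is immediate. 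This is short and handles $\lambda=0$ on an equal footing with $\lambda\neq 0$.

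Your approach instead splits into cases. For $\lambda\neq 0$ you give a clean, explicit reduction to Proposition~\ref{prop-regular} via the rank-one resolvent identity, exploiting the algebraic fact $D_tP_t=P_tD_t=0$; this is genuinely elegant and makes the dependence on the earlier proposition transparent in a way the paper's joint proof does not. For $\lambda=0$, however, you pay a substantial price: the Baaj--Julg integral, the factorization $(D_t+P_t)^{-1}=(D_t+P_t)(\Delta_t+P_t)^{-1}$, and a separate argument (conjugation by $U_t$, diagonality of $\Delta_{w_t}+\pi_0$, filtration-preservation of $U_t^{\pm 1}$) to control geometric boundedness under the integral. This last step is the real work and it is correct --- the key facts you need (that $U_t$ and $U_t^{-1}$ preserve the finite-dimensional filtration spaces $\mathcal F_n$, and that the weighted Laplacian is diagonal) are available from Lemma~\ref{Ut-isomorphism} and Proposition~\ref{prop-delta-w-diagonal2} --- but it amounts to re-deriving, via functional calculus, what the paper obtains by the single eigenvalue computation. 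In short: your $\lambda\neq 0$ argument is a nice modular alternative, but for $\lambda=0$ the paper's direct computation of $\Delta_t+P_t+\lambda^2$ on basic sections is considerably more economical, and it is precisely the $\lambda=0$ case that Proposition~\ref{prop-regular2} exists to handle.
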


Both propositions will be proved by examining action of the Laplacians
\begin{equation}
\label{eq-Delta-t-def}
\Delta_t = D_t^2  = U_t^* (d_{w_t} + \delta_{w_t})^2 U_t 
\end{equation}
on continuous and geometrically controlled sections of the field $\{ \ell^2 _t (X^\bullet\} _{t\in [0,\infty]}$.

\begin{proof}[Proof of Propositions~\ref{prop-regular} and \ref{prop-regular2}]
The family of operators $ \{ D_t\} $ maps the space of continuous, geometrically bounded sections into itself, so we can consider the compositions  
\[
 \Delta_t +  \lambda^2 I  = (  D_t + i   \lambda I) (  D_t - i  \lambda I) 
\]
and 
\[
\Delta_t  + P_t + \lambda^2 I = (  D_t + P_t +  i   \lambda I) (  D_t + P_t   - i  \lambda I)  ,
\]
and it suffices to show that the families of  these operators  map   the space continuous and geometrically bounded sections into a dense subspace of itself. 

 Let $f_{C,D} \in \C[X^q]$  be a basic $q$-cochain of type $p$.  Lemmas~\ref{lemma-new-ut-calc} and \ref{lemma-new-7.2} tell us that 
 \[
 t^{-p} U_t  f_{C,D}  =  (-1)^p F  +O(t) ,
 \]
 where the $q$-cube $F$ has the property that there are precisely $p$ hyperplanes adjacent to it that separate it from the base point  $P_0$.  So according to our formula for the Julg-Valette Laplacian in Proposition~\ref{prop-delta-w-diagonal}, 
\[
(d_{w_t} +  \delta_{w_t})^2 U_t  :  t^{-p}f_{C,D} \longmapsto  (p+q)\cdot  (-1)^p F  +O(t)
\]
and so, by applying $U_t^*$ to both sides we get 
\[
\bigl ( \Delta_t ^2 + \lambda ^2 I \bigr ) :  t^{-p}f_{C,D} \longmapsto  \bigl ((p+q)+ \lambda ^2\bigr   ) \cdot  t^{-p}  f_{C,D} +O(t) .
\]
Similarly
\[
\bigl ( \Delta_{t}{+}P_t{+} \lambda ^2 I\bigr )  : t^{-p}f_{C,D}\longmapsto \bigl  ( \max\{1,(p+q) \} + \lambda ^2\bigr  ) \cdot t^{-p}   f_{C,D} +O(t) .
\]
So the ranges of the families $\{ \Delta_t + \lambda^2 I\}$ and $\{ \Delta_t + P_t + \lambda^2 I\}$ contain $O(t)$ perturbations of every basic section.  The propositions follow from this.
\end{proof}

 Now form  the  bounded self-adjoint  operators  
\[
  F_t =   D_t (   P_t +   D_t^2 )^{-\frac 12} .
\]
By the above and Lemma~\ref{lem-integral} the family $\{ F_t\} _{t\in [0,\infty]}$ maps continuous sections to continuous sections.  So we can consider  the  bounded complex of continuous fields of Hilbert spaces over $[0,1]$ and bounded adjointable operators 
 \begin{equation}
 \label{eq-field-fred-cplxes}
 \xymatrix@C=40pt{
 \{ \ell^2 _t (X^0) \} _{t\in [0,\infty]}
 	 \ar[r]^-{\{d'_t\}_{t\in [0,\infty]}} &
		 \{ \ell^2 _t (X^1) \} _{t\in [0,\infty]}
 			\ar[r]^-{\{d'_t\}_{t\in [0,\infty]}} & 
				\cdots 
					\ar[r]^-{\{d'_t\}_{t\in [0,\infty]}} &
							\{ \ell^2 _t (X^n) \} _{t\in [0,\infty]} 	
}
 \end{equation}
in which each differential $\{d'_t\}$ is the component of $\{F_t\}$ mapping between the indicated continuous fields.

\begin{proposition}
Disregarding the $G$-action,
the complex \eqref{eq-field-fred-cplxes} is a homotopy of Fredholm complexes.
\end{proposition}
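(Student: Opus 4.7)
Plan: The goal is to verify conditions (a), (c), and (e) of Definition~\ref{def-homotopic-complexes} (suspending the equivariance conditions (b) and (d), as directed). Condition (a) is immediate, since we already have a continuous field of Hilbert spaces over $[0,\infty]$, a space homeomorphic to $[0,1]$. I shall first establish adjointability of the differentials $\{d'_t\}$, then identify the fibers at the endpoints for (e), and finally construct the chain homotopy for (c).

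For adjointability, the key tool is Lemma~\ref{lem-integral}, applied to $T_t = P_t + D_t^2$, which is positive, self-adjoint, and bounded below by $1$.  This yields
\[
(P_t + D_t^2)^{-1/2} = \frac{2}{\pi}\int_0^\infty (\lambda^2 + P_t + D_t^2)^{-1}\, d\lambda,
\]
allowing the family $\{F_t\}$ to be expressed as a norm-convergent integral of resolvents of $D_t + P_t$. By Proposition~\ref{prop-regular2}, each such resolvent family preserves the space of continuous and geometrically bounded sections, so $\{F_t\}$ is adjointable, and the same holds for each component $\{d'_t\}$.

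For condition (e), at $t=\infty$ the cocycle $W_\infty$ is the identity, so $U_\infty = I$ and $D_\infty = d_{w_\infty} + \delta_{w_\infty}$ is the weighted Julg-Valette operator.  The normalization by $(P_\infty + D_\infty^2)^{-1/2}$ differs from that of Theorem~\ref{thm-jv-is-fredholm} by the finite-rank replacement of $I$ by $P$ under the square root; a straight-line interpolation $sP + (1-s)I + D^2$ for $s\in [0,1]$ yields a norm-continuous family of positive invertible operators and shows that the two produce operatorially equivalent Fredholm complexes.  At $t=0$, $D_0 = d_0 + \delta_0$ is the Pytlik-Szwarc Dirac operator whose kernel is the one-dimensional space $\C[\H^0_0]$ by Proposition~\ref{prop-ps-laplacian}; this recovers the Pytlik-Szwarc Fredholm complex of Theorem~\ref{thm-p-s-is-fredholm}.

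Finally, for (c), set $h_t = (d'_t)^*$, also adjointable for the same reasons. The crucial observation is that the basic cochain $f_{P_0,P_0}$ lies in the kernel of $D_t$ for every $t\in [0,\infty]$: for $t>0$ one has $U_t f_{P_0,P_0} = P_0$, which is annihilated by $d_{w_t} + \delta_{w_t}$; for $t=0$, this follows from Proposition~\ref{prop-ps-laplacian}.  Consequently $P_t$ commutes with $D_t^2$ and $P_t(P_t+D_t^2)^{-1} = P_t$, giving
\[
d'_t h_t + h_t d'_t = F_t^2 = D_t^2 (P_t + D_t^2)^{-1} = I - P_t
\]
in each fiber.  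The family $\{P_t\}$ is compact in the sense of Definition~\ref{def-cpt-family}, being the rank-one projection onto the span of the extended basic section associated to $(P_0,P_0)$, which is continuous by Theorem~\ref{thm-basic-sections} and has uniformly nonzero norm by Lemma~\ref{lemma-equiv-basic1}.  The main obstacle in this argument is the adjointability step: it relies on the precise matching between the integral formula for $(P_t+D_t^2)^{-1/2}$, the auxiliary projection $P_t$ (which is needed to secure invertibility of the operator under the square root uniformly in $t$), and the class of geometrically bounded sections supplied by Proposition~\ref{prop-regular2}.
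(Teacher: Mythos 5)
Your proof is correct and follows essentially the same route as the paper: set $h_t = (d'_t)^*$ and compute the anticommutator $d'_t h_t + h_t d'_t = D_t^2(P_t + D_t^2)^{-1}$, identifying the defect as a compact family built from $P_t$. The paper's version is terser (it writes the defect as $P_t(P_t+\Delta_t)^{-1}$ without simplifying to $P_t$, and defers adjointability and endpoint matters to the surrounding discussion), but the substance is the same; your observation that $D_t$ annihilates the extended basic section through $P_0$ for every $t$, giving the cleaner identity $F_t^2 = I - P_t$, is a nice clarification of what the paper leaves implicit.
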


\begin{proof}
If we set
$
h_t  =d'_t{}^*$, then 
\[
h_td_t ' + d_t ' h_t  = \Delta_t  (P_t+\Delta_t)^{-1} = I -P_t (  P_t +  \Delta _t)^{-1} ,
\]
and   $\{ P_t (  P_t \!+\!  \Delta _t)^{-1} \}$,  is    compact operator on the continuous field $\{ \ell^2 _t (X^\bullet)\} _{t\in [0,\infty]}$.
\end{proof}

It remains show that  \eqref{eq-field-fred-cplxes} is an \emph{equivariant} homotopy. 
If the resolvent families $\{ (D_t + P_t + i \lambda I)^{-1} \}$ were compact, then we would be able to follow the route taken in the previous section to prove equivariance of the Fredholm complex associated to the Julg-Valette complex.  But compactness fails at $t=0$, and so we need to be a bit more careful. 
 The following two propositions will substitute for the Lemmas \ref{lemma-compact-resolvent} and \ref{lemma-bounded-diff} that were used to handle the Julg-Valette complex in the previous section.

\begin{proposition}
\label{prop-compact-resolvent-technical}
For every $\varepsilon > 0$ and for every $\lambda \in \R$ the restricted family of operators 
\[
\{ (    D_t  +   P_t \pm i\lambda )^{-1}\} _{t\in [\varepsilon, \infty]}
\]
is a compact operator on the continuous field  $\{ \ell^2 _t (X^\bullet)\} _{t\in [\varepsilon, \infty]}$.  Moreover
\[
  \| ( D_t  +   P_t \pm i\lambda)^{-1}\| \le | 1 + i\lambda |^{-1}
\]
for all $t $ and all $\lambda$. 
\end{proposition}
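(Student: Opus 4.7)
The plan is to conjugate by the unitary isomorphism of continuous fields $U_t$ from Section~\ref{sec-cont-field} (restricted to $[\varepsilon,\infty]$, where it is well defined) so as to work on the constant field with fiber $\ell^2_\infty(X^\bullet)$.  Since $P_0$ forms a singleton parallelism class we have $U_t\langle P_0\rangle = \langle P_0\rangle$, and so $D_t+P_t$ is unitarily equivalent to $\tilde D_{w_t}+\tilde P$, where $\tilde D_{w_t}=d_{w_t}+\delta_{w_t}$ acts on the constant fiber and $\tilde P$ is the rank-one projection onto $\C\langle P_0\rangle$.  Both compactness and operator-norm bounds for families are preserved by this unitary equivalence, so it is enough to verify the two assertions for $\tilde D_{w_t}+\tilde P$ on the constant field.

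For the norm bound, I would first observe that $\tilde D_{w_t}\langle P_0\rangle = 0$: the term $d_{w_t}\langle P_0\rangle$ vanishes because no hyperplane separates $P_0$ from itself (Definition~\ref{definition-wedge}), and $\delta_{w_t}\langle P_0\rangle$ vanishes by the convention on vertices.  Therefore $\tilde P\tilde D_{w_t}=\tilde D_{w_t}\tilde P=0$, and squaring yields $(\tilde D_{w_t}+\tilde P)^2 = \Delta_{w_t}+\tilde P$.  Proposition~\ref{prop-delta-w-diagonal2} exhibits this as diagonal in the basis $\{\langle C\rangle\}$ with eigenvalue $q_{w_t}(C)+p_{w_t}(C)$ at $C\ne P_0$ and eigenvalue $1$ at $\langle P_0\rangle$.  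Since $w_t(H)\ge 1$ by~\eqref{eq-t-weight}, and since every cube other than $P_0$ has at least one hyperplane either cutting it or adjacent to it and separating from $P_0$, each of these eigenvalues is at least $1$.  Hence $(\tilde D_{w_t}+\tilde P)^2\ge I$, and the standard resolvent estimate for self-adjoint operators $A$ with $|A|\ge I$ yields $\|(D_t+P_t\pm i\lambda)^{-1}\|\le (1+\lambda^2)^{-1/2}=|1+i\lambda|^{-1}$.

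For the compactness assertion, write $(\tilde D_{w_t}+\tilde P\pm i\lambda)^{-1}=(\tilde D_{w_t}+\tilde P\mp i\lambda)R_t$ with $R_t=(\Delta_{w_t}+\tilde P+\lambda^2)^{-1}$; since this equals $(\tilde D_{w_t}+\tilde P\mp i\lambda)\sqrt{R_t}\cdot\sqrt{R_t}$ with the first product a bounded adjointable family by functional calculus, it suffices to prove compactness of $\{R_t\}$ on the constant field over $[\varepsilon,\infty]$.  For each $N$ let $\chi_N$ be the finite-rank orthogonal projection onto the span of those $\langle C\rangle$ with $\mathrm{dist}(C,P_0)\le N$ (finite by bounded geometry), regarded as a constant section of operators on the constant field; this is compact in the sense of Definition~\ref{def-cpt-family}, and so is $R_t\chi_N$.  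Since $R_t$ is diagonal in the basis $\{\langle C\rangle\}$, the family norm of $R_t(I-\chi_N)$ equals
\[
\sup_{t\in[\varepsilon,\infty]}\,\sup_{\mathrm{dist}(C,P_0)>N}\,(q_{w_t}(C)+p_{w_t}(C)+\lambda^2)^{-1},
\]
and it remains to show that this tends to $0$ as $N\to\infty$.

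The main obstacle is exactly this uniform properness estimate.  From~\eqref{eq-t-weight} we have the uniform lower bound $w_t(H)\ge 1+\varepsilon\cdot \mathrm{dist}(H,P_0)$ for $t\in[\varepsilon,\infty]$, so the required conclusion reduces to the geometric statement that for any cube $C$ at large edge-path distance $r$ from $P_0$ there exists at least one hyperplane $H$, either cutting $C$ or adjacent to $C$ and separating it from $P_0$, with $\mathrm{dist}(H,P_0)\ge r-O_{\dim X}(1)$.  The key input is the vertex $Q\in C$ nearest to $P_0$, together with the normal cube path from $Q$ toward $P_0$: the first normal cube in that path produces hyperplanes adjacent to $Q$, and an analysis along the lines of the proof of Proposition~\ref{prop-nearest-cube} shows that at least one such hyperplane either cuts $C$ or is adjacent to $C$, with distance to $P_0$ bounded below by $r-\dim X$.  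Combined with bounded geometry this yields the required uniform decay and completes the compactness proof.
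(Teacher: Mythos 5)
Your overall strategy matches the paper's: conjugate by the unitaries $U_t$ to pass to the constant field, reduce to compactness of the resolvent of the weighted Laplacian plus $\tilde{P}$, and then use Proposition~\ref{prop-delta-w-diagonal2} together with the fact that for $t\in[\varepsilon,\infty]$ the weights $w_t$ grow. Your observation that $\tilde{P}\tilde{D}_{w_t}=\tilde{D}_{w_t}\tilde{P}=0$ (so $(\tilde D_{w_t}+\tilde P)^2=\Delta_{w_t}+\tilde P$) and the resulting norm bound are correct. For the reduction to compactness of $R_t$, the paper simply invokes that the compact operators form a closed two-sided ideal in the adjointable operators, so $K_t$ is compact iff $K_t^*K_t=R_t$ is compact (this is its citation of Pedersen). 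Your alternative factorization through $\sqrt{R_t}$ reaches the same point but asserts adjointability of $(\tilde D_{w_t}+\tilde P\mp i\lambda)\sqrt{R_t}$ ``by functional calculus''; since $\tilde D_{w_t}$ is unbounded this is not literally a functional-calculus statement about $\sqrt{R_t}$ alone, and it is cleaner (and closer to the paper) to quote the ideal property.

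The genuine error is in the geometric estimate you invoke at the end. You claim that for a cube $C$ at edge-path distance $r$ from $P_0$ there is a hyperplane $H$, either cutting $C$ or adjacent to $C$ and separating it from $P_0$, with $\operatorname{dist}(H,P_0)\ge r-O_{\dim X}(1)$. This is false. Take $X=\Z_{\ge 0}^2$ with $P_0=(0,0)$ and $C=Q=(N,N)$; then $r=2N$, while $A(Q)=\{\,x=N-\tfrac12,\;y=N-\tfrac12\,\}$ and each of these hyperplanes has distance exactly $N-1$ from $P_0$ (the nearest adjacent vertex being $(N-1,0)$ resp.\ $(0,N-1)$). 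So the maximum available distance is about $r/2$, not $r-O(1)$; in $\Z_{\ge 0}^d$ at the corner $(N,\dots,N)$ the correct rate is $r/\dim X$. What is actually needed for compactness is only that this maximum tends to $\infty$ with $r$ uniformly for $t\in[\varepsilon,\infty]$, i.e.\ the uniform properness the paper cites, and for that the weaker bound would suffice --- but as written your claimed rate is incorrect and your normal-cube-path sketch (``an analysis along the lines of the proof of Proposition~\ref{prop-nearest-cube} shows\ldots'') would not deliver the stated inequality. The paper, for its part, treats this same point as a ``simple consequence'' of uniform properness of $w_t$ without supplying the geometric argument; you are right that it deserves a real justification, but the specific estimate you wrote down needs to be weakened and then actually proved.
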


\begin{proposition}
\label{prop-eq-technical}
For every $g\in G$ the  operators $  D_t - g(  D_t)$ are uniformly  bounded in $t$:
\[
\sup _{t\in [0,\infty]} \|  D_t - g(  D_t) \| < \infty 
\]
Moreover
\[
  \|  D_t - g(  D_t) \|  = O(t).
\]
as $t\to 0$.
\end{proposition}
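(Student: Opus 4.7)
The plan is to identify $g(D_t)$ explicitly as the operator obtained from the same construction as $D_t$ but with the base point $P_0$ replaced by $gP_0$ throughout: in the weight function $w_t$, in the hyperplane-wedge operators $H\wedge_{P_0}\underline{\phantom{x}}$ of Definition~\ref{definition-wedge}, in the distinguished cubes $D_0$ used in Definition~\ref{def-Ut0}, and hence in the isomorphism $U_t$.  This reduction uses the $G$-equivariance of the cocycle $W_t$ together with the intertwining identity $g U_t^{P_0} = U_t^{gP_0} g$ on $\ell^2_t\to\ell^2_\infty$, which follows directly from the fact that the nearest cube to $gP_0$ in the parallelism class $[gD]$ is $g$ times the nearest cube to $P_0$ in $[D]$.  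Writing $D_t^P$ for the operator built with base point $P$, the proposition becomes the bound $\|D_t^{P_0}-D_t^{gP_0}\|\le C$ uniformly in $t$, with the improvement $O(t)$ as $t\to 0$.  Since $\delta=d^*$ and $U_t^P$ is unitary, it suffices to control the corresponding difference for the $d$-part alone, which is what I would do.

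The uniform bound comes by decomposing the base-point dependence into two contributions.  First, the weight differences satisfy $|w_t^{P_0}(H)-w_t^{gP_0}(H)|\le \min(t,1)\cdot d(P_0,gP_0)$ by the triangle inequality for $\operatorname{dist}(H,\cdot)$, and by bounded geometry each cube contributes only finitely many hyperplanes (bounded independently of the cube) to the sum defining $d_{w_t}$.  This yields a contribution in operator norm of size $O(\min(t,1))$, which is uniformly bounded and $O(t)$ near zero.  Second, only the finitely many hyperplanes $H_1,\ldots,H_k$ separating $P_0$ from $gP_0$ can cause combinatorial changes in $H\wedge_P\underline{\phantom{x}}$ (through the ``separating from the base point'' condition of Definition~\ref{definition-wedge}) or in the distinguished cubes $D_0^P$ appearing in $U_t^P$.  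For each such $H_i$, the operator $H_i\wedge_{P_0}-H_i\wedge_{gP_0}$ is a partial isometry of norm at most $1$, and the corresponding weight $w_t^{P_0}(H_i)$ is bounded by $1+d(P_0,gP_0)$ uniformly in $t$.  Summing these $k$ contributions together with the finite-rank cocycle-twist correction $A_t:=U_t^{gP_0}(U_t^{P_0})^{-1}$ (whose pieces are bounded unitaries $W_t(D_0^{gP_0},D_0^{P_0})$ on each affected parallelism class) gives the uniform bound.

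The $O(t)$ bound as $t\to 0$ is the main technical obstacle, because the finite-rank ``between-$H_i$'' contributions from the previous paragraph are only uniformly bounded, not manifestly $O(t)$.  The resolution is that the Pytlik-Szwarc operator $D_0$ is $G$-equivariant by inspection of Definitions~\ref{def-ps-differential} and \ref{def-ps-delta}, which forces a cancellation between the ``between-$H_i$'' weight and $\wedge$ terms, on the one hand, and the cocycle-twist correction $R_t^{gP_0}-A_t^* R_t^{gP_0}A_t$ (where $R_t^P=d_{w_t^P}+\delta_{w_t^P}$), on the other, at rate $O(t)$.  I would make this concrete by combining the asymptotic expansions from Section~\ref{sec-field-ps-ops}---particularly Lemmas~\ref{lemma-new-ut-calc} and \ref{lemma-new-7.2}---with the Taylor expansions $1-e^{-t^2/2}=O(t^2)$ and $(1-e^{-t^2})^{1/2}=t+O(t^3)$ appearing in the definition of $W_t$.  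Applied to the basic continuous sections these asymptotics show that $D_t\sigma_{C,D}(t)$ converges to its $t=0$ Pytlik-Szwarc value up to $O(t)$, with constants that can be taken uniform over basic sections of a given type.  Since by the previous paragraph $D_t^{P_0}-D_t^{gP_0}$ has matrix coefficients localized (up to an $O(t)$ remainder of uniformly finite row/column rank at each cube) near the finite set of hyperplanes $H_1,\ldots,H_k$, the pointwise $O(t)$ control on basic sections upgrades to operator-norm $O(t)$, completing the proof.
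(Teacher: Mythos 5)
Your overall strategy matches the paper's: identify $g(D_t)$ as the operator built with base point $gP_0$, reduce to the $d$-part by adjointness, and track both the weight change and the cocycle twist $A_t=\widehat W_t(gP_0,P_0)$ coming from $g(U_t)=\widehat W_t(gP_0,P_0)\,U_t$. Your estimate for the weight contribution is also the same as the paper's. But the $O(t)$ bound, which you correctly identify as the main obstacle, is where your plan has a genuine gap.

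Your proposed resolution--invoke $G$-equivariance of $D_0$, then ``make it concrete'' with the asymptotics of Section~\ref{sec-field-ps-ops} applied to basic sections, then upgrade pointwise $O(t)$ on basic sections to operator-norm $O(t)$--does not yet close the argument. Equivariance of $D_0$ by itself only gives $\|D_t-g(D_t)\|\to 0$ \emph{assuming} you already know the family is norm-continuous at $t=0$, which is precisely what is at stake, and in any case gives $o(1)$ rather than the required $O(t)$. More seriously, basic sections are not an orthonormal basis, so ``pointwise $O(t)$ control on basic sections upgrades to operator-norm $O(t)$'' is not a valid step without substantial additional work on the locality and matrix structure of the operator; that work would amount to re-deriving the crucial identities the paper actually proves. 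Your decomposition $R_t^{P_0}-A_t^*R_t^{gP_0}A_t=(R_t^{P_0}-R_t^{gP_0})+(R_t^{gP_0}-A_t^*R_t^{gP_0}A_t)$ also splits apart the two $O(1)$ contributions that need to cancel, and you do not exhibit the cancellation.

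What is missing is the exact intertwining relation the paper isolates in Lemma~\ref{lemma-10-tech1}: after reducing to adjacent vertices $P,Q$, for every hyperplane $H$ \emph{not} separating $P$ from $Q$ one has
\[
H\wedge_P \widehat W_t(P,Q) \;=\; \widehat W_t(P,Q)\,(H\wedge_Q)
\]
exactly, not just up to $O(t)$. Combined with the paper's re-grouping $R_t^{P_0}-A_t^*R_t^{gP_0}A_t = A_t^*(A_t R_t^{P_0}-R_t^{gP_0}A_t)$, this kills the wedge contributions of \emph{all} hyperplanes except the single separating hyperplane $H_0$, leaving only the $O(t)$ weight difference, which is the content of the estimate around \eqref{eq-H-not-H_0}. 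The remaining $H_0$-contribution is then shown to be $O(t)$ by a direct calculation (Lemma~\ref{lemma-10-tech2}), exploiting the Taylor expansion of the cocycle coefficients. Without this exact cancellation mechanism your plan would still face infinitely many $O(1)$ wedge discrepancies, and the band-width/upgrade argument you sketch cannot dispose of them.
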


Taking these for granted, for a moment, here is the result of the calculation:

\begin{theorem}
The complex   \eqref{eq-field-fred-cplxes} is a homotopy of equivariant Fredholm complexes in the sense of Definition~\textup{\ref{def-homotopic-complexes}}.\end{theorem}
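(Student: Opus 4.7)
The plan is to verify each of the five conditions (a)--(e) of Definition~\ref{def-homotopic-complexes} for the complex \eqref{eq-field-fred-cplxes}, using Propositions~\ref{prop-compact-resolvent-technical} and \ref{prop-eq-technical} as the main technical inputs. Conditions (a), (c), and (e) are essentially free: the inner products in Definition~\ref{def-inner-product} are $G$-invariant because the parallelism-distance $d(D_1, D_2)$ is intrinsic to the $\cat(0)$ structure and independent of the base point, so each $\ell^2_t(X^q)$ carries a continuous unitary representation of $G$; condition (c) was established in the preceding (non-equivariant Fredholm) proposition; and at the endpoints one checks that $U_\infty = I$, while Proposition~\ref{prop-ps-laplacian} identifies $F_0$ with a scalar rescaling of the Pytlik-Szwarc differential, so that up to absorbing bounded-below invertible rescalings into a short auxiliary homotopy the restrictions coincide with the Fredholm complexes of Theorems~\ref{thm-jv-is-fredholm} and \ref{thm-p-s-is-fredholm}.

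The real content lies in condition (b), namely the compact operator-valued norm-continuous dependence of $d'_t - g(d'_t)$ on $g$. First observe that $\langle P_0 \rangle$ is an eigenvector of the weighted Julg-Valette Laplacian, so $P_t$ commutes with $D_t$, and hence $P_t + D_t^2 \geq I$ is positive invertible self-adjoint. Lemma~\ref{lem-integral} then gives
\[
F_t = D_t(P_t + D_t^2)^{-1/2} = \frac{2}{\pi}\int_0^\infty D_t(\lambda^2 + P_t + D_t^2)^{-1}\, d\lambda.
\]
Following the strategy of the proof of Theorem~\ref{thm-jv-is-fredholm}, I would rewrite the integrand as half the sum of the two resolvents $(D_t + P_t \pm i\mu)^{-1}$ with $\mu = (\lambda^2 + 1)^{1/2}$, and apply the resolvent identity to express each term of $F_t - g(F_t)$ as
\[
(D_t + P_t \pm i\mu)^{-1}\bigl[(D_t + P_t) - g(D_t + P_t)\bigr](g(D_t + P_t) \pm i\mu)^{-1}.
\]
Proposition~\ref{prop-compact-resolvent-technical} bounds the outer factors by $|1 + i\lambda|^{-1}$, while Proposition~\ref{prop-eq-technical} together with the rank-at-most-two bound on $P_t - g(P_t)$ bounds the middle factor uniformly in $t$; together these yield an $O(\mu^{-2})$ integrand, and dominated convergence gives norm-continuity in $g$.

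The main obstacle is to upgrade this to full compactness between continuous fields on all of $[0, \infty]$, since Proposition~\ref{prop-compact-resolvent-technical} only provides compactness of the resolvents on sub-intervals $[\varepsilon, \infty]$ bounded away from $t = 0$. I would handle this with a cutoff argument: choose $\phi_\varepsilon \in C[0, \infty]$ that vanishes on $[0, \varepsilon/2]$ and equals $1$ on $[\varepsilon, \infty]$. Then $\phi_\varepsilon \cdot (F - g(F))$ is compact, since it is a scalar multiple by a $C[0,\infty]$-function of a family that is compact on the support of $\phi_\varepsilon$ and zero elsewhere, while the complement $(1 - \phi_\varepsilon)\cdot (F - g(F))$ has operator norm $O(\varepsilon)$ by the $O(t)$ estimate of Proposition~\ref{prop-eq-technical}. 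Hence $F - g(F)$ is a norm-limit of compacts and so compact in the sense of Definition~\ref{def-cpt-family}. Condition (d) then follows from the identical argument applied to the chain homotopy $h_t = {d'_t}^*$, since the self-adjointness of $F_t$ allows us to take adjoints in every one of the estimates above.
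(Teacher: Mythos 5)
Your proposal follows essentially the same route as the paper: rewrite $F_t$ via Lemma~\ref{lem-integral} as an integral of resolvents of $D_t + P_t$, apply the resolvent identity to express the integrand of $F_t - g(F_t)$ as a three-factor product, and then combine Proposition~\ref{prop-compact-resolvent-technical} (compactness of the resolvents on $[\varepsilon,\infty]$ and the uniform bound) with Proposition~\ref{prop-eq-technical} (uniform boundedness and $O(t)$-decay of $D_t - g(D_t)$) to conclude compactness and norm-continuity. Your explicit cutoff argument for extending compactness to all of $[0,\infty]$ is exactly the step the paper leaves implicit when it simply cites Proposition~\ref{prop-eq-technical}, so it is a welcome clarification; your dominated-convergence argument for norm-continuity in $g$ also works, though the paper obtains this more cheaply by observing that for $g$ near the identity $g$ fixes the base vertex $P_0$ (the action on the discrete vertex set is continuous, so the stabilizer of $P_0$ is open), and hence $g(d'_t)=d'_t$ locally. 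Your explicit treatment of condition (e) — absorbing the discrepancy between $(I+\Delta)^{-1/2}$ at $t=\infty$, and between $(P_0+\Delta_0)^{-1/2}$ and the Pytlik--Szwarc differential at $t=0$, into short auxiliary homotopies via bounded invertible rescalings — is again more careful than the paper, which treats the identification as immediate.

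One small slip: in your integral decomposition you write the integrand as half the sum of $(D_t + P_t \pm i\mu)^{-1}$ with $\mu = (\lambda^2+1)^{1/2}$. That value of $\mu$ is correct in the proof of Theorem~\ref{thm-jv-is-fredholm}, where the operator being analyzed is $D(I+D^2)^{-1/2}$ and the integrand is $D(\lambda^2 + 1 + D^2)^{-1}$. Here, however, the operator is $D_t(P_t + D_t^2)^{-1/2}$ and the integrand is $D_t(\lambda^2 + P_t + D_t^2)^{-1}$; since $P_t$ commutes with $D_t$ and $(P_t + D_t)^2 = P_t + D_t^2$, the correct decomposition (after replacing $D_t$ by $P_t + D_t$ at the cost of a compact correction, as you do) is
\begin{equation*}
   (P_t + D_t)\bigl(\lambda^2 + (P_t + D_t)^2\bigr)^{-1}
   = \tfrac{1}{2}\Bigl( (D_t + P_t + i\lambda)^{-1} + (D_t + P_t - i\lambda)^{-1} \Bigr),
\end{equation*}
i.e.\ with $\mu = \lambda$, not $\mu = (\lambda^2+1)^{1/2}$. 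This does not affect the convergence estimate — both choices give $O(\lambda^{-2})$ decay — but the formula as you wrote it is not an identity.
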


\begin{proof} 
 We need to check that the families of differentials  $\{ d'_t\}$ in the complex \eqref{eq-field-fred-cplxes} are $G$-equivariant modulo compact operators, and also that $\{g(d'_t)\}$ varies norm-continuously with $g\in G$.  
 
 Let us discuss norm-continuity first. If $g$ is sufficiently close to the identity in $G$, then $g$ fixes the base point  $P_0$, and for such $g$ we have $g(d'_t) = d'_t$ for all $t$.  So  $\{ g(d'_t)\}$ is actually locally constant as a function of $g$.
 
 The proof of equivariance modulo compact operators is a small variation of the proof of Theorem~\ref{thm-jv-is-fredholm}. It suffices to show that the family of operators
$
 \{ g(F_t) - F_t \} 
 $
is compact.

Since 
\[
\begin{aligned}
  F_t 
  	& =    D _t (   P_t +   \Delta_t )^{-\frac 12}  \\
	& =  (   P_t +   D_t ) (   P_t  +  \Delta_t )^{-\frac 12}+ \text{compact operator},
\end{aligned}
\]
it suffices to prove that the operator  
\[
  E_t  = (   P_t +   D_t)
(   P_t +   \Delta_t)^{-\frac 12} \]
is equivariant modulo compact operators.  
Applying Lemma ~\ref{lem-integral}  we find that 
\[
  \begin{aligned}
  E_t
	& = \frac 2 \pi \int _0^\infty  (   P_t +   D_t)(\lambda^2  I  +    P_t +     \Delta_t  )^{-1}  \, d\lambda  \\
	& =  \frac 1 \pi \int _0^\infty  \bigl ( (    D_t  +   P_t - i\lambda )^{-1}+   (     D_t +   P_t +  i\lambda )^{-1} \bigr )   \, d\lambda  
\end{aligned}
\]
So the difference  $g(   E_t) -   E_t$  is the sum of the two integrals
\begin{equation}
\label{eq-pm-integrals}
 \frac 1 \pi \int _0^\infty 
\Bigl (
 (g (    D_t ) +   g(P_t) \pm i\lambda )^{-1} -  (    D_t  +   P_t \pm i\lambda )^{-1} 
 \Bigr )
  \, d\lambda   
\end{equation}
Now the integrands in \eqref{eq-pm-integrals}  can be written as  
 \begin{multline}
 \label{eq-split-integrand2}
 (g(   P_t)+g(   D_t)\pm i\lambda)^{-1} 
 \bigl (   D_t- g(   D_t)\bigr ) (   P_t+   D_t\pm i\lambda)^{-1} \\
 + 
 (g(   P_t)+g(   D_t)\pm i\lambda)^{-1} 
 \bigl (   P_t- g(   P_t)\bigr ) (   P_t+   D_t\pm i\lambda)^{-1}
\end{multline}
Both terms in \eqref{eq-split-integrand2} are norm-continuous, compact operator valued functions of $\lambda \in [0,\infty)$, the first by virtue of Proposition~\ref{prop-eq-technical} and the second because $P_t$ is compact.  Moreover  the norms of both are $O(\lambda^{-2})$ as $\lambda \to \infty$.   So the integrals in \eqref{eq-pm-integrals} converge to compact operators, as required.
\end{proof}

It remains to prove Propositions~\ref{prop-compact-resolvent-technical} and \ref{prop-eq-technical}.  The first is easy and we can deal with it immediately.  

\begin{proof}[Proof of Proposition~\ref{prop-compact-resolvent-technical}]
We want to show that the family of operators
\[
\{ K_t\} _{t\in [\varepsilon, \infty]}  =\{ (    D_t  +   P_t \pm i\lambda )^{-1}\} _{t\in [\varepsilon, \infty]}
\]
is compact.  Since the compact operators form a closed, two-sided ideal in the $C^*$-algebra of all adjointable families of operators it suffices to show that the family
\[
\{ K_t^*K_t\} _{t\in [\varepsilon, \infty]}  =\{ (    \Delta_t  +   P_t + \lambda^2 )^{-1}\} _{t\in [\varepsilon, \infty]}
\]
is compact; compare  \cite[Proposition 1.4.5]{Pedersen79}.  Conjugating by the unitaries $U_t$  it suffices to prove that the family 
\[
\bigl \{    (d_{w_t}\delta_{w_t}  +\delta_{w_t}   d_{w_t} +   P_t + \lambda^2 )^{-1}\bigr \} _{t\in [\varepsilon, \infty]}
\]
on the constant field of Hilbert spaces with fiber $\ell^2 (X^\bullet)$ is compact; this is one of the things that restricting to $t\in [\varepsilon,\infty]$  makes possible.  But this final assertion is a simple consequence of the explicit formula for the Julg-Valette Laplacian in Proposition~\ref{prop-delta-w-diagonal}, together with the fact that the weight functions $w_t$ are uniformly proper in $t\in [\varepsilon,\infty]$ in the sense that for every $N$, all but finitely many hyperplanes $H$ satisfy $w_t(H)\ge N$ for all $t\in [\varepsilon, \infty]$.

As for the norm estimate in the proposition, this holds not just for $\Delta_t + P_t$ but for any self-adjoint operator bounded below by $1$, and is elementary.
\end{proof}

Let us turn now to Proposition~\ref{prop-eq-technical}.  A complicating factor is that $G$ not only fails to preserve the Julg-Valette differential, but also fails to preserve the unitary operators  $U_t$ that appear in the definitions of the differentials $d_t$.  The proposition is only correct because the two failures to a certain extent cancel one another out.

\begin{definition}
Let $P$ and $Q$ be vertices in $X$.
Define a unitary operator
\[
\widehat W_t (Q,P) \colon \ell^2   (X^q) \longrightarrow \ell^2   (X^q)
\]
as follows.
When $q=0$,   we define $\widehat W_t(Q, P)$ to be  the cocycle operator $W_t(Q,P)$ of Definition~\ref{cocycle}. On higher cubes, $\widehat W_t(Q,P)$ respects the decomposition of $\ell  ^2 (X^q)$ according to parallelism classes, and on a summand determined by a given class we set  $\widehat W_t(Q, P) = W_t(C_{Q}, C_{P})$, where $C_{Q}$ and $C_{P}$ are the cubes in the equivalence class 
nearest to $Q$ and $P$. 
\end{definition} 

It is immediate from the definition of the unitary  operator $U_t$ in Definition~\ref{def-Ut0}     that 
\begin{equation}
\label{eq-g-of-U-t}
g(U_t) = W_t(Q_0, P_0)U_t: \ell^2 _t(X^\bullet)\rightarrow \ell^2(X^\bullet) 
\end{equation}
From this and the definition of $D_t$ we find that 
\begin{equation}
\label{eq-g-of-D-fmla}
g(D_t) = U_t^* \widehat W_t (Q,P) ^*\bigl ( g(d_{\weight_t}) + g(\delta_{\weight_t} )\bigr )\widehat W_t (Q,P)  U_t .
\end{equation}
Now let us use the abbreviation $\widehat W_t:= \widehat W_t (Q,P) $ and write 
 \[
 D_t  -g(D_t)  
 	  =  U_t^* \Bigl ( ( d_{w_t} + \delta_{w_t}) - \widehat W_t^* \bigl (  g(d_{\weight_t}) + g(\delta_{\weight_t} )\bigr )\widehat W_t  \Bigr ) U_t 
	  \]
The right-hand side can be rearranged as 
\[
 U_t^* W_t ^* \Bigl (  W_t  d_{w_t}    -   g(d_{\weight_t} ) W_t    \Bigr ) U_t
	+
	U_t^*   \Bigl (   \delta_{w_t}W_t^*    - W_t ^* g(\delta_{\weight_t} )     \Bigr ) W_tU_t
 \]
and the norm of this expression  is no more than 
\[
\bigl \|   \widehat W_t  d_{w_t}    -   g(d_{\weight_t} ) \widehat W_t   \bigr \| 
	+
	\bigl \|  \delta_{w_t}  \widehat W_t ^* - \widehat W_t ^* g(\delta_{\weight_t} ) \bigr \| 
	\]
So it suffices to show 
  that the operators
 \begin{equation}
 \label{eq-equivariance-check1}
  \widehat W_t  d_{w_t}    -   g(d_{\weight_t} )\widehat W_t
  \quad \text{and} \quad 
 \delta_{w_t}  \widehat W_t ^*   - \widehat W_t ^* g(\delta_{\weight_t} )
 \end{equation}
 satisfy the conclusions of Proposition~\ref{prop-eq-technical}. 
The second operator is   adjoint to the first.  So in fact  it suffices to prove the conclusions of  Proposition~\ref{prop-eq-technical} for the first operator alone.  This is what we shall do.

Before we proceed, let us adjust  our notation a bit, as follows. Given a vertex $P$ in $X$, we shall denote by $d_{P,w_t}$ the Julg-Valette differential that is defined using the base vertex $P$ and the weight function  \eqref{eq-t-weight}, for whose definition we also use the base vertex $P$ rather than $P_0$.  With this new notation we can drop further mention of the group $G$: Proposition~\ref{prop-eq-technical} is a consequence of the following assertion:

\begin{proposition}
\label{prop-eq-technical2}
The operator
\begin{equation*}
   \widehat W_t (Q,P) d_{P,w_t}    -   d_{Q,w_t} \widehat  W_t(Q,P) \colon \C[X^q] \longrightarrow \C[X^{q+1}]
\end{equation*}
is bounded for all $t>0$,  and moreover 
\[
\lim_{t\to 0} \bigl \|    \widehat W_t (Q,P)  d_{P,w _t}     -   d_{Q,w_t}  \widehat W_t(Q,P) \bigr \| = 0.
\]
\end{proposition}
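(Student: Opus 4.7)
The plan is to reduce to the case where $P$ and $Q$ are adjacent via a telescoping cocycle argument, establish that the commutator vanishes at $t = 0$, and quantify the rate at which it tends to zero.

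First, using the cocycle identity $\widehat W_t(Q, P) = \widehat W_t(Q, R)\widehat W_t(R, P)$---which holds parallelism-class-wise by Proposition~\ref{prop-cocycle}---I would fix an edge-path $P = P_0, P_1, \dots, P_n = Q$ and write
\[
  \widehat W_t(Q, P)\, d_{P, w_t} - d_{Q, w_t}\,\widehat W_t(Q, P)
  = \sum_{i=1}^n \widehat W_t(Q, P_i) \bigl[\widehat W_t(P_i, P_{i-1})\, d_{P_{i-1}, w_t} - d_{P_i, w_t}\,\widehat W_t(P_i, P_{i-1})\bigr] \widehat W_t(P_{i-1}, P).
\]
Since the outer factors are unitary on $\ell^2(X^q)$, it is enough to bound each bracketed term, so we may assume that $P$ and $Q$ are adjacent, separated by a single hyperplane $H_0$.

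Second, I claim that at $t = 0$ the unweighted differentials intertwine exactly: $\widehat W_0(Q, P)\, d_P = d_Q\, \widehat W_0(Q, P)$, where $d_P$, $d_Q$ denote the (unweighted) Julg-Valette differentials. At $t = 0$ the operator $\widehat W_0$ acts, parallelism class by parallelism class, as a signed reflection across $H_0$ on classes adjacent to $H_0$ and as the identity otherwise, according to the $t = 0$ specialisation of Definition~\ref{cocycle}. The intertwining can be checked hyperplane-by-hyperplane: for $H \neq H_0$ one has $H\wedge_P = H\wedge_Q$ and the action commutes with $\widehat W_0$ up to signs controlled by the orientation-parity rule in Definition~\ref{def-equivalent-presentations}; for $H = H_0$ the operators $H_0\wedge_P$ and $H_0\wedge_Q$ are supported on opposite sides of $H_0$ and the signed reflection interchanges them, with signs again cancelling thanks to the parity rule. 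This is the generalisation of the original Julg-Valette computation for trees.

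Third, for the quantitative estimate, decompose
\[
  \widehat W_t\, d_{P, w_t} - d_{Q, w_t}\,\widehat W_t
  = \bigl[\widehat W_t\, d_P - d_Q\,\widehat W_t\bigr] + \bigl[\widehat W_t\, (d_{P, w_t} - d_P) - (d_{Q, w_t} - d_Q)\,\widehat W_t\bigr].
\]
The first bracket vanishes at $t = 0$ by the intertwining, and the entries of the $2 \times 2$ matrix in Definition~\ref{cocycle} show that $\widehat W_t - \widehat W_0 = O(t)$ on each parallelism class; combined with the bounded-geometry hypothesis this yields a uniform $O(t)$ estimate on the first bracket. For the second bracket, the weight function satisfies $w_t(H) - 1 = t\cdot\mathrm{dist}(H,\cdot)$ for $t \in [0, 1]$, so this bracket equals $t$ times a commutator involving the distance-weighted differential; using the intertwining for $H \neq H_0$ to collapse the distance-weighted sum to the $H_0$ term---which vanishes because $\mathrm{dist}(H_0, P) = \mathrm{dist}(H_0, Q) = 0$---together with the elementary bound $|\mathrm{dist}(H, P) - \mathrm{dist}(H, Q)| \leq 1$, one concludes that the inner commutator is bounded, so the second bracket is $O(t)$. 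The regime $t \geq 1$ is handled similarly and more easily, since the weight function becomes $t$-independent there.

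The main obstacle is justifying the $t = 0$ intertwining $\widehat W_0\, d_P = d_Q\, \widehat W_0$ with complete orientation bookkeeping, particularly for higher-dimensional cubes whose parallelism classes meet $H_0$ in more subtle ways than in the tree case; a secondary technical point is extracting a \emph{uniform} $O(t)$ rate from the pointwise $O(t)$ behaviour of $\widehat W_t - \widehat W_0$ on an infinite direct sum of parallelism classes, for which the bounded-geometry assumption is essential.
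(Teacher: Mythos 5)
Your overall strategy—telescope to the case where $P$ and $Q$ are separated by a single hyperplane $H_0$, then analyze the commutator hyperplane by hyperplane using cancellation from an intertwining identity—matches the paper's in spirit (the paper's Lemmas~\ref{lemma-10-tech1} and~\ref{lemma-10-tech2} are precisely the hyperplane-by-hyperplane calculations). But your decomposition of the commutator into an ``unweighted'' bracket and a ``weight-correction'' bracket has a genuine gap in the second bracket.

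For the first bracket you only need the $t=0$ intertwining $\widehat W_0 d_P = d_Q \widehat W_0$, because $d_P$, $d_Q$ (unweighted) are bounded operators, so $(\widehat W_t - \widehat W_0)d_P$ and $d_Q(\widehat W_t-\widehat W_0)$ are both $O(t)$ in operator norm. That part of your argument is sound, given the $t=0$ identity. The problem is the second bracket. You write it as $t$ times the commutator of $\widehat W_t$ with the \emph{distance-weighted} differential $\sum_H \operatorname{dist}(H,\cdot)\, H\wedge$, and then invoke ``the intertwining for $H\neq H_0$'' to collapse the sum to bounded terms with the coefficient bound $|\operatorname{dist}(H,P)-\operatorname{dist}(H,Q)|\le 1$. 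But your Step 2 only establishes the intertwining at $t=0$. The collapse you need requires $\widehat W_t\, (H\wedge_P) = (H\wedge_Q)\,\widehat W_t$ for $H\neq H_0$ and \emph{all} $t$ (the paper's Lemma~\ref{lemma-10-tech1}). If you try to get away with the $t=0$ version plus $\widehat W_t - \widehat W_0 = O(t)$, you are multiplying an $O(t)$ operator against the \emph{unbounded} distance-weighted differential, and the product is not controlled: there are cubes adjacent to $H_0$ (hence in the support of $\widehat W_t - \widehat W_0$) that lie arbitrarily far from $P$ and $Q$, so the distance weights on the corresponding hyperplanes are unbounded. Thus the ``$t \cdot \text{bounded}$'' bookkeeping fails, and the argument as written is incomplete.

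A secondary issue: your Step 2 justification of the $t=0$ intertwining is too sketchy to carry the weight placed on it. The phrases ``commutes with $\widehat W_0$ up to signs controlled by the orientation-parity rule'' and ``the signed reflection interchanges them, with signs again cancelling'' gesture at the right pictures, but the genuine content is a careful case analysis on whether $H$ cuts $D$, whether $H$ separates $D$ from the base point, whether the cube $H\wedge D$ exists, and how $\widehat W_t$ acts on the relevant two-dimensional subspaces; this is exactly what the paper does in Lemmas~\ref{lemma-10-tech1} and~\ref{lemma-10-tech2}. Once you carry that analysis out, you find that the intertwining for $H\neq H_0$ holds exactly for \emph{all} $t$, not merely at $t=0$—and this stronger statement is both no harder to prove and precisely what is needed to make your second bracket work. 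So the right move is to promote your Step~2 claim to the all-$t$ statement, prove it by the case analysis, and then the rest of your outline goes through.
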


Recall now that the Julg-Valette differential is defined using the operation $H\wedge C$ between hyperplanes and cubes.  Since the operation depends on a choice of base vertex, we shall from now on write $H\wedge_P C$ to indicate that choice, as we did earlier. 

To prove   Proposition~\ref{prop-eq-technical2} it suffices to consider the case where  $P$ and $Q$ are at distance $1$ from one another (so they are separated by a unique hyperplane).  We shall make this assumption from now on.

\begin{lemma}
\label{lemma-10-tech1}
If a hyperplane $H$ fails to  separate $P$ from $Q$, then
\[
H \wedge_ P \widehat W_t (P,Q)D  = \widehat W_t (P,Q) ( H\wedge_ Q   D )  
\]
for all oriented $q$-cubes $D$.
\end{lemma}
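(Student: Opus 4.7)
The plan is to reduce the lemma to a commutation identity and then verify it by a case analysis.

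First, observe that because $H$ does not separate $P$ from $Q$, the two points lie in a common half-space of $H$. Inspecting Definition~\ref{definition-wedge} shows that the operators $H\wedge_P$ and $H\wedge_Q$ then coincide on oriented cochains (the orientation rules also match, since they hinge only on the common relation ``separated from the base point by $H$''). Writing $H\wedge$ for this common operator, the lemma will reduce to the commutation identity $[H\wedge, \widehat{W}_t(P,Q)] = 0$.

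Next, the plan is to exploit the block-diagonal structure of $\widehat{W}_t(P,Q)$ with respect to the decomposition of cochains by parallelism class. Letting $K$ be the unique hyperplane separating $P$ from $Q$, Proposition~\ref{prop-nearest-cube2} gives two alternatives on each parallelism class $S$: either the nearest cubes $C_P^S, C_Q^S$ to $P, Q$ coincide (so $\widehat{W}_t(P,Q)$ is the identity on $S$), or they are adjacent across $K$, in which case $\widehat{W}_t(P,Q)$ is described by the one-step cocycle of Definition~\ref{cocycle}. This cocycle acts as the identity on cubes in $S$ not adjacent to $K$, and mixes a $K$-adjacent cube $D$ with its $K$-reflection $D^{\mathrm{op}_K}$ via explicit coefficients.

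The argument will then proceed by cases on whether $D$ is adjacent to $K$. If $D$ is not adjacent to $K$, Lemma~\ref{unique-adjacent} says that either $D$ is cut by $K$ or some vertex of $D$ is not adjacent to $K$; both conditions pass to $H\wedge D$ (the former because $K$ becomes a determining hyperplane of $S \cup \{H\}$, the latter because vertices of $D$ are vertices of $H\wedge D$). Hence $\widehat{W}_t(P,Q)$ is the identity on both $D$ and $H\wedge D$, and the identity is immediate. The interesting case is when $D$ is adjacent to $K$ and $H$ meets $K$: Lemmas~\ref{lem-3-hyperplanes} and \ref{lem-k-hplanes} then provide a $(q{+}2)$-cube $E$ whose cutting hyperplanes are those of $D$ together with both $H$ and $K$, and the key combinatorial identity $(H\wedge D)^{\mathrm{op}_K} = H\wedge D^{\mathrm{op}_K}$ will be read off from the $(q{+}1)$-faces of $E$ cut by $S \cup \{H\}$. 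Orientations will match through the vertex identity $(V^{\mathrm{op}_K})^{\mathrm{op}_H} = (V^{\mathrm{op}_H})^{\mathrm{op}_K}$ inside $E$, and, since $D$ and $H\wedge D$ lie on the same side of $K$, the cocycle coefficients of Definition~\ref{cocycle} transfer correctly between $S$ and $S \cup \{H\}$. The remaining subcase---$D$ adjacent to $K$ but $H$ disjoint from $K$---will follow by noting, via Lemma~\ref{lem-4-quadrants} and the adjacency of $P$ to $K$, that no edge can cross both $H$ and $K$; this forces every $K$-adjacent cube to lie on the same side of $H$ as $P$, so both $H\wedge D$ and $H\wedge D^{\mathrm{op}_K}$ vanish.

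The hard part will be the intersecting subcase: establishing the identity $(H\wedge D)^{\mathrm{op}_K} = H\wedge D^{\mathrm{op}_K}$ inside the auxiliary cube $E$, and then tracking orientations and cocycle coefficients carefully enough during substitution to confirm that the two sides of the commutation identity really agree.
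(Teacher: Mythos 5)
Your plan follows the paper's proof in its essential ideas: the reduction $H\wedge_P = H\wedge_Q$, the use of Proposition~\ref{prop-nearest-cube2} to describe $\widehat{W}_t(P,Q)$ on each parallelism class, and the appeal to Lemmas~\ref{lem-4-quadrants} and \ref{lem-k-hplanes} to construct an auxiliary $(q{+}2)$-cube in the main case. Your case decomposition differs slightly from the paper's: the paper first disposes of the cases in which $H$ cuts $D$ or $H$ fails to separate $D$ from $P$ (so that $H\wedge D=0$ and both sides vanish), and under those running hypotheses it then \emph{proves} that if $K$ is adjacent to $D$ then $H$ must meet $K$, so the ``$H$ disjoint from $K$'' subcase never arises. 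Your direct argument for that disjoint subcase (all $K$-adjacent cubes lie with $P$ and $Q$ in the same $H$-halfspace, hence both wedges vanish) is nevertheless correct.

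There is, however, a gap in your ``interesting'' subcase ($D$ adjacent to $K$ and $H\cap K\neq\emptyset$). You assert that Lemmas~\ref{lem-3-hyperplanes} and \ref{lem-k-hplanes} ``then provide a $(q{+}2)$-cube whose cutting hyperplanes are those of $D$ together with $H$ and $K$.'' This does not follow from the stated hypotheses: Lemma~\ref{lem-3-hyperplanes} produces a $(q{+}2)$-cube \emph{somewhere}, but not necessarily one having $D$ as a face, and to use Lemma~\ref{lem-k-hplanes} to anchor the cube at a vertex of $D$ you need every hyperplane involved — in particular $H$ — to be adjacent to that vertex. That requires $H$ to be adjacent to $D$. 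If $H$ is not adjacent to $D$ (or cuts $D$, or does not separate $D$ from the base point), the auxiliary cube need not exist; in those cases $H\wedge D = 0$, and since $H\cap K\neq\emptyset$ one can also check $H\wedge D^{\mathrm{op}_K}=0$, so both sides of the identity vanish anyway. This is precisely what the paper's preliminary reductions and its closing observation (``if $H$ is adjacent to neither $D$ nor $E$, then both sides of the equation are zero'') achieve; you should insert that reduction before invoking the auxiliary cube.
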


\begin{proof}
First, if $H$ fails to separate $P$ from $Q$, then the operators $H\wedge _P $ and $H\wedge _Q$ are equal to one another.  We shall drop the subscripts for the rest of the proof.

Next, if $H$ cuts $D$, then it cuts all the cubes parallel to $D$, and therefore it cuts all    the cubes that  make up $\widehat W_t (P,Q)D$.  So both sides of the equation in the lemma are zero.  So can assume from now on that $H$ is disjoint from $D$.

Let  $K$ be the hyperplane that separates $Q$ from $P$. According to   Proposition \ref{prop-nearest-cube2} the nearest $q$-cubes to $P$ and $Q$  in the parallelism class of $D$ are either equal or are opposite faces, across $K$,  of a $(q{+}1)$-cube that is cut by $K$.  So $\widehat W_t (P,Q)D$ is either just $D$ or is a combination 
\begin{equation}
\label{eq-w-hat-of-d}
\widehat W_t (P,Q)D = a D + b E
\end{equation}
of $D$ and another cube $E$ that is an opposite face  from $D$ in a $(q{+}1)$-cube that is cut by $K$.  

We see that if $H$ fails to separate $D$ from $P$, or equivalently, if it fails to separate $D$ from $Q$, then it also fails to separate any of the terms in $\widehat W_t (P,Q)D$ from $P$ or $Q$, and accordingly  both sides of the equation in the lemma are zero.  So we can assume from now on that $H$ does separate $D$ from $P$ and $Q$.
  
Suppose now that $K$ fails to be adjacent to $D$, either because it cuts $D$ or because some vertex of $D$ is not adjacent to $K$.   The left-hand side of the equation is then $H\wedge D$.  This is either zero, in which case the equation obviously holds, or it is a $(q{+}1)$-cube to which $K$ also fails to be adjacent, in which case the right-hand side of the equation is simple $H\wedge D$.  So we can assume that $K$ is adjacent to $D$.

Let $E$ be the $q$-cube that is separated from $D$ by $K$ alone, as in  \eqref{eq-w-hat-of-d}. Since $H$ fails to separate $D$ from $E$, or $P$ from $Q$, but separates $D$ and $E$ from $P$ and $Q$, we see from Lemma~\ref{lem-4-quadrants} that $H$ and $K$ intersect.   By Lemma~\ref{lem-k-hplanes}, if $H$ is adjacent to either of $D$ or $E$, then there is a $(q{+}2)$-cube that is cut by $H$ and $K$ and contains both $D$ and $E$ as faces.  In this case both sides of the equation in the lemma are 
\[
a\, H\wedge D\, + \,b\, H \wedge E
\]
with $a$ and $b$ as in \eqref{eq-w-hat-of-d}.  Finally, if $H$ is adjacent to neither $D$ nor $E$, then both sides of the equation are zero.\end{proof}

\begin{lemma}
\label{lemma-10-tech2}
If $H$ separates $P$ from $Q$, then 
\[
H \wedge_ P \widehat W_t (P,Q)D -  \widehat W_t (P,Q) ( H\wedge_ Q   D )  =    f(t) H\wedge _Q D - g(t) H\wedge _P D ,
\]
where $f$ and $g$ are smooth, bounded  functions on $[0,\infty)$  that vanish at $t=0$. \end{lemma}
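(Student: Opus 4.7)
The plan is to verify the identity by direct case analysis on a basis cochain $\langle D\rangle$ of the oriented cochain space, relying on Definition~\ref{cocycle} for the explicit form of the elementary cocycle and on Proposition~\ref{prop-nearest-cube2} to understand how $\widehat W_t(P,Q)$ acts on each parallelism class. Since $P$ and $Q$ are separated only by $H$, that proposition implies that within any parallelism class the nearest cubes $C_P$ and $C_Q$ are either equal, in which case $\widehat W_t(P,Q)$ acts as the identity on the class, or opposite faces across $H$ of a $(q+1)$-cube cut by $H$, in which case $\widehat W_t(P,Q)$ restricts to the elementary cocycle $W_t(C_P,C_Q)$.

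I would first dispose of the case in which $D$ is not adjacent to $H$. Then both $H\wedge_P D$ and $H\wedge_Q D$ vanish so the right hand side is zero, while the third clause of Definition~\ref{cocycle} yields $W_t(C_P,C_Q)D = D$ and hence the left hand side vanishes as well. A second preliminary observation is that the parallelism class of any $(q+1)$-cube of the form $E = H\wedge D$ has $H$ as a \emph{determining} hyperplane, so no cube in it is adjacent to $H$; Proposition~\ref{prop-nearest-cube2} then forces $C_P^E = C_Q^E$, and $\widehat W_t(P,Q)$ acts as the identity on $E$. Thus the second term on the left hand side simplifies to $H\wedge_Q\langle D\rangle$ whenever it is nonzero.

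The main case is $D$ adjacent to $H$, with partner $D^{op}$ across $H$ in the $(q+1)$-cube $E$. Both $D$ and $D^{op}$ lie in $D$'s parallelism class, so the nearest cubes to $P$ and to $Q$ are opposite across $H$, and Definition~\ref{cocycle} gives
\[
\widehat W_t(P,Q)\langle D\rangle \,=\, a\,\langle D\rangle \,\mp\, b\,\langle D^{op}\rangle,
\]
where $a = (1-e^{-t^2})^{1/2}$, $b = e^{-t^2/2}$, with the sign depending on whether $D$ lies on $P$'s or on $Q$'s side of $H$. Applying $H\wedge_P$ annihilates whichever of $\langle D\rangle$, $\langle D^{op}\rangle$ lies on $P$'s side (because $H\wedge_P$ only detects cubes separated from $P$ by $H$), leaving a single contribution.

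The step I expect to require the most care is an orientation check: a direct comparison from Definition~\ref{definition-wedge}, using that $D^{op}$ is compatibly oriented with $D$ via the reflection across $H$, shows that the oriented cubes $H\wedge_P D^{op}$ and $H\wedge_Q D$ are opposite orientations of the same underlying $(q+1)$-cube, so that $\langle H\wedge_P D^{op}\rangle = -\langle H\wedge_Q D\rangle$ in the antisymmetric cochain space. Substituting this identity into the computation above produces the lemma with $f(t) = e^{-t^2/2}-1$ (extracted from the sub-case $D$ on $P$'s side) and $g(t) = -(1-e^{-t^2})^{1/2}$ (from $D$ on $Q$'s side); both are smooth and bounded on $[0,\infty)$ and vanish at $t=0$, as required.
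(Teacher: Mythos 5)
Your proposal is correct and takes essentially the same route as the paper: dispose of the case $D$ not adjacent to $H$, use Proposition~\ref{prop-nearest-cube2} to see that $\widehat W_t(P,Q)$ fixes $H\wedge_Q D$, expand $\widehat W_t(P,Q)D$ via Definition~\ref{cocycle}, and invoke the orientation identity $\langle H\wedge_P D^{op}\rangle = -\langle H\wedge_Q D\rangle$. The only differences are stylistic — you carry out the two sub-cases (which side of $H$ the cube $D$ lies on) separately, whereas the paper folds them into a single formula by noting that the extra term vanishes, and you spell out the orientation check that the paper states in one line.
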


\begin{proof}
If $D$ fails to be adjacent to $H$, then both sides in the displayed formula are zero.  So suppose $D$ is adjacent to $H$.  In this case 
\[
\widehat  W_t (P,Q) ( H\wedge_ Q   D ) = H\wedge_Q D .
 \]
 Now according to the definitions 
 \[
 \widehat W_t (P,Q)D  = \pm e^{-\frac 12 t^2} E + (1-e^{- t^2})^{\frac 12} D
 \]
 where $E$ is the $q$-cube opposite $D$ across $H$, and where the sign is $+1$ if  $D$ is separated from $P$ by $H$, and $-1$ if it is not.  We find then that 
 \[
 H \wedge_ P \widehat W_t (P,Q)D = \pm e^{-\frac 12 t^2}  H\wedge_P E + (1-e^{- t^2})^{\frac 12} H\wedge _P D.
 \]
 But  $H \wedge _P E = 0$ if  $E$ is not separated from $P$ by $H$, which is to say if $D$ \emph{is} separated from $P$ by $H$.  So  we can write 
 \[
  H \wedge_ P \widehat W_t (P,Q)D =- e^{-\frac 12 t^2}  H\wedge_P E + (1-e^{- t^2})^{\frac 12} H\wedge _P D.
 \]
 In addition
 \[
 H\wedge_P E = - H\wedge _Q D 
 \]
 so that 
 \[
   H \wedge_ P \widehat W_t (P,Q)D = e^{-\frac 12 t^2}  H\wedge_Q D + (1-e^{- t^2})^{\frac 12} H\wedge _P D.
 \]
 Finally we obtain
 \[
  \widehat W_t (P,Q) ( H\wedge_ Q   D )  -   H \wedge_ P \widehat W_t (P,Q)D  
  =
  (e^{\frac 12 t^2}-1) H\wedge _Q D - (1-e^{- t^2})^{\frac 12} H\wedge _P D,
 \]
 as required.
\end{proof}

\begin{proof}[Proof of Proposition~\ref{prop-eq-technical2}]
We shall use the previous lemmas and the formula
\[
d_{P,w_{t}} D = \sum _H w_{P,t}(H) \, H \wedge _P D ,
\]
for the Julg-Valette differential.  We get 
\begin{multline}
\label{eq-p-and-q-jv-maps}
   \widehat W_t (Q,P) d_{P,w_t}    -   d_{Q,w_t}  \widehat W_t(Q,P) \\
   =
   \sum _H  \Bigl ( w_{P,t} (H) \widehat W_t (Q,P)  \left ( H \wedge _P D \right )    -   w_{Q,t}(H) H \wedge _Q     \widehat W_t(Q,P) D  
   \Bigr ) .
\end{multline}
Let us separate the sum into  a part  indexed by hyperplanes that do \emph{not} separate $P$ from $Q$, followed by the single term  indexed by the hyperplane $H_0$ that does separate $P$ from $Q$.  According to Lemma~\ref{lemma-10-tech1} the first part   is 
\begin{equation*}
\sum _{H\ne H_0}   \bigl ( w_{P,t} (H)  -  w_{Q,t}(H)  \bigr )\,  \widehat W_t (Q,P)  \left ( H \wedge _P D \right )  .
\end{equation*}
Inserting the definition of the weight function,  we obtain
\begin{equation}
\label{eq-H-not-H_0}
 t  \sum _{H\ne H_0}   \bigl ( \operatorname{dist}(H, P)  -  \operatorname{dist}(H,Q) \bigr  )\, \widehat  W_t (Q,P)  \left ( H \wedge _P D \right )  ,
\end{equation}
and moreover
\begin{equation*}
\bigl |  \operatorname{dist}(H, P)  -  \operatorname{dist}(H,Q)  \bigr | \le 1.
\end{equation*}
As for the part of \eqref{eq-p-and-q-jv-maps} indexed by $H$, keeping in mind that 
\[
 \operatorname{dist}(H_0, P)  = \tfrac 12  =  \operatorname{dist}(H_0, Q),
 \]
we obtain from Lemma~\ref{lemma-10-tech2} the following formula for it:
\begin{equation}
\label{eq-H-is-H_0}
 (1 + \tfrac 1 2 t ) f(t) H_0\wedge _Q D -  (1 + \tfrac 1 2 t) g(t) H_0\wedge _P D,
\end{equation}
where $f$ and $g$ are bounded and vanish at $0$.  The required estimates follow, because   the terms in  \eqref{eq-H-not-H_0}  and \eqref{eq-H-is-H_0} are uniformly bounded in number, are supported uniformly close to $D$, are uniformly bounded in size, and vanish at $t=0$.
\end{proof}


\bibliographystyle{alpha}
\bibliography{References.bib}


\end{document}